\theoremstyle{plain}
\newtheorem{theorem}{Theorem}[section]
\newtheorem{lemma}[theorem]{Lemma}
\newtheorem{corollary}[theorem]{Corollary}
\newtheorem{proposition}[theorem]{Proposition}
\newtheorem{prop}[theorem]{Proposition}
\theoremstyle{definition}
\newtheorem{definition}[theorem]{Definition}
\theoremstyle{remark}
\newtheorem{remark}[theorem]{Remark}
\newcommand{\R}{\mathbb{R}}
\newcommand{\C}{\mathbb{C}}
\newcommand{\cK}{\mathcal{K}} 
\newcommand{\D}{\mathcal{D}} 
\newcommand{\I}{\mathcal{I}}
\newcommand{\eps}{\epsilon}
\newcommand{\p}{\partial}
\newcommand{\wt}{\widetilde}
\newcommand{\wh}{\widehat}
\newcommand{\Int}{\mathrm{Int\,}}
\newcommand{\ind}{\mathrm{ind}}
\newcommand{\Ker}{\mathrm{Ker\,}}
\newcommand{\dist}{\mathrm{dist}}
\newcommand{\Id}{\mathrm{Id}}
 \newcommand{\Op}{ {\mathcal O}{\it p}\,}
  \newcommand{\ol}{\overline}
   \newcommand{\om}{\omega}
   \newcommand{\corner}{\mathrm{Corner}}
      \newcommand{\Coker}{\mathrm{Coker\,}}
        \newcommand{\Skel}{\mathrm{Skel}}
          \newcommand{\st}{\mathrm{st}}
\begin{document}

\dedicatory{To Norbert A'Campo with admiration}  \title{Honda-Huang's work on contact convexity revisited}
\author{Yakov Eliashberg }
\address{Department of Mathematics\\Stanford University \\ Stanford, CA 94305 USA}
\email{eliash@stanford.edu}
\thanks{YE was partially supported by NSF grant DMS-2104473 and a grant from the Institute
for Advanced Study}

\author{Dishant Pancholi}
\address{The Institute of Mathematical Sciences\\
Taramani,
Chennai 600 113,
Tamil Nadu, India}
 \email{dishant@imsc.res.in} 
 \thanks{DP was partially supported by NSF grant DMS-1926686 during his stay at the Institute for Advanced Study}	 
\subjclass{Primary: 57R17; Secondary: 37C10}

\begin{abstract}
Following the overall strategy  of the paper \cite{HH} by  Ko Honda and Yang  Huang on contact convexity in high dimensions,  we present  a simplified proof of their main result.
\end{abstract}
\maketitle

\section{Introduction}

A hypersurface in a contact manifold is said to be convex  if it admits  a transverse contact vector field  (see Section \ref{sec:conv} below for precise definitions). The central   result of  the article \emph{``Convex hypersurfaces in contact topology"} by  Ko Honda and Yang Huang is  the following:
\begin{theorem}[Ko Honda and Yang Huang,   \cite{HH}]\label{thm:HH}
Let $(M, \xi_M)$ be a manifold with a co-orientable contact structure and $\Sigma\subset M$    a  co-oriented hypersurface. Then there exists a $C^0$--small isotopy sending $\Sigma$ to a convex
 hypersurface $\widetilde{\Sigma}$.
\end{theorem}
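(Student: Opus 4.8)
The plan is to reduce convexity to an intrinsic condition on the germ of $\xi_M$ along $\Sigma$ and then to realize that condition by a $C^0$--small isotopy built inductively over a fine triangulation.

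\textbf{Step 1: reformulating convexity.}
If $X$ is a contact vector field transverse to $\Sigma$, its flow identifies a neighbourhood of $\Sigma$ in $M$ with $\Sigma\times\R$ so that $X=\p_t$ and $\xi_M$ becomes invariant under translation in the $\R$--factor; an invariant contact form may then be written $\alpha=\beta+u\,dt$ with $\beta\in\Omega^1(\Sigma)$, $u\in C^\infty(\Sigma)$ independent of $t$ and $\ker\beta=\xi_M\cap T\Sigma$. From $d\alpha=d\beta+du\wedge dt$ one obtains $\alpha\wedge(d\alpha)^{n}=\kappa_{\beta,u}\wedge dt$, where $\kappa_{\beta,u}:=u\,(d\beta)^{n}+n\,\beta\wedge(d\beta)^{n-1}\wedge du$ and $\dim M=2n+1$, so $\Sigma$ is convex exactly when $\kappa_{\beta,u}$ is a positive volume form on $\Sigma$ for some such pair $(\beta,u)$. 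Since $\bigl(d(u^{-1}\beta)\bigr)^{n}=u^{-n-1}\,\kappa_{\beta,u}$ away from $\Gamma:=u^{-1}(0)$, this is Giroux's criterion: $\Sigma$ is convex iff there is a dividing hypersurface $\Gamma$ splitting $\Sigma$ into pieces $\Sigma_{\pm}$ on which $u^{-1}\beta$ restricts to ideal Liouville forms, inducing the same ideal contact structure on $\Gamma$ and with Liouville flow directing the characteristic foliation $\Sigma_\xi$. The ``only if'' direction is this computation read backwards; the ``if'' direction reconstructs the invariant germ, hence the transverse contact field, from such data. So it suffices to show: \emph{every co-oriented hypersurface admits, after a $C^0$--small isotopy, an adapted splitting of this kind.}

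\textbf{Step 2: the inductive scheme.}
In dimension three convexity is $C^\infty$--dense (Giroux), so there the result is immediate; in higher dimensions convexity is not generic and the isotopy must be constructed by hand. I would fix a triangulation of $M$ in which $\Sigma$ is a subcomplex and so fine that a neighbourhood in $M$ of each simplex lies in a Darboux chart, and --- after a preliminary generic perturbation controlling the singular locus and dynamics of $\Sigma_\xi$ --- take the triangulation subordinate to that structure. Then I would isolate a notion of $\Sigma$ being \emph{in standard form along a subcomplex $K$}: near $K$ the germ of $\xi_M$ along $\Sigma$ agrees with an explicit invariant model carrying a prescribed dividing set and a prescribed Weinstein germ (this is what a standard-form condition such as $\Lst$ is meant to capture). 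The theorem then reduces to two claims: (a) a $C^0$--small isotopy puts $\Sigma$ in standard form along the $0$--skeleton; and (b) if $\Sigma$ is standard along $\Skel_{k}$, then a $C^0$--small isotopy supported near the open $(k{+}1)$--cells makes it standard along $\Skel_{k+1}$ without disturbing it near $\Skel_{k}$. Iterating up to $k=\dim\Sigma$ and interpolating over the top cells produces a global adapted splitting, and the isotopy is automatically $C^0$--small because every modification is confined to a single Darboux simplex at a scale finer than the triangulation.

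\textbf{Step 3: the local steps and the main obstacle.}
The base case (a) is a flexibility statement for a disc in a Darboux ball: the characteristic foliation on a small disc can be $C^0$--straightened onto a standard linear model, there being no global dynamical obstruction on a ball. The inductive step (b) is the heart of the matter and, I expect, the principal difficulty. Over a $(k{+}1)$--cell $e$ one must (i) replace the germ of $\xi_M$ along $e$ by one with a prescribed dividing set, matching the already-fixed germ along $\p e$, and (ii) do this compatibly along the lower-dimensional corners where several cells meet. Obstruction (i) is genuinely rigid --- the characteristic foliation is tied to the ambient contact structure, not prescribable at will --- and the way around it is to install over $e$ a local Weinstein/ideal Liouville handle whose skeleton realizes the combinatorics of $e$ and whose boundary behaviour is the prescribed one, invoking the flexibility of Weinstein structures and of ideal Liouville germs at this local scale. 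Obstruction (ii) forces the inductive step to be proved in a parametric form, relative to $\p e$, and the models chosen on adjacent cells to be reconciled on their overlaps; it is this corner bookkeeping --- rather than a single hard lemma --- that makes the argument technically heavy, and it is exactly what Honda--Huang's ``convex gluing'' handles and what one hopes here to carry out more economically.
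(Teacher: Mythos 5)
Your Step~1 --- reformulating convexity as the existence of a pair $(\beta,u)$ with $u\,(d\beta)^n+n\,\beta\wedge(d\beta)^{n-1}\wedge du>0$, equivalently an adapted dividing set whose complementary halves carry (ideal) Liouville forms --- is exactly the criterion the paper isolates in Lemma~\ref{lm:Giroux-inverse} and Lemma~\ref{lm:W-convex}. The paper, however, immediately converts this into a \emph{dynamical} condition on the characteristic foliation: $\Sigma$ is convex once the directing vector field $X$ admits a good Lyapunov function (Weinstein convexity), which by Corollary~\ref{cor:good-Lyapunov} reduces to property (L1) (every orbit limits to zeros of $X$) plus Morse--Smale with no retrograde connections. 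That restatement is what the rest of the paper actually targets, and it is what makes the argument tractable.

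Where you diverge is in Steps~2--3. You propose an induction over the skeleta of a fine triangulation of $M$, putting $\Sigma$ into a prescribed ``standard form'' along $\Skel_k$ cell by cell and reconciling the local models on overlapping cells via Weinstein-handle flexibility and corner bookkeeping. That is essentially the skeleton/normal-form strategy of the original Honda--Huang argument (contact open books, bypass attachments, convex gluing along corners), which this paper is explicitly written to avoid --- the introduction says as much. The paper's replacement is not a normal form on a skeleton at all; it leaves the germ of $\xi$ along $\Sigma$ essentially arbitrary away from a finite family of small flow boxes and \emph{surgers the dynamics} inside them. Concretely: one chooses a ``blocking collection'' of small transverse standard contact discs $D_j\subset\Sigma$ (Lemmas~\ref{lm:exist-discs}, \ref{lm:exist-discs-cont}) through which every sufficiently long orbit arc must pass, thickens each $D_j$ to a flow box $Q_j$, and replaces $X|_{Q_j}$ by a \emph{$\sigma$-plug} (Lemma~\ref{lm:sigma-plug}) that traps every orbit entering $D_j$ at distance $\geq\sigma$ from $\partial D_j$. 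Installing these plugs forces (L1) globally and leaves the isotopy $C^0$-small because it is supported in the flow boxes. The heart of the paper is then the construction of the plugs themselves by induction on dimension (Proposition~\ref{prop:main-plug}): the $2$-dimensional plug is built directly from Giroux--Fuchs birth/death of singularities (Lemma~\ref{lm:2d-plug}); in higher dimensions one composes ``preliminary plugs'' over three Weinstein cylinders in ``good position'' inside a ball whose boundary characteristic foliation has been made $\sigma$-short using the inductive hypothesis (Propositions~\ref{prop:3-blocks}, \ref{prop:making-short-approx}), and a contact-scaling argument (Section~\ref{sec:plug-height}) reduces the plug's height. There is no triangulation of $M$, no germ normal form along a skeleton, and no corner gluing of contact germs.

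The practical comparison: your plan buys an appealing picture but leaves the genuinely hard local step --- installing an ideal Liouville germ over a $(k{+}1)$-cell $e$ that matches a prescribed germ on $\p e$ and is compatible on all lower-dimensional corners --- as a sketch with no concrete mechanism, and you yourself flag that as the bottleneck. That step is where essentially all of the difficulty resides, so as written the proposal is a plan for the (hard) Honda--Huang original rather than a proof, and it does not reconstruct the simplification that is this paper's contribution. The paper's plug strategy sidesteps the gluing problem entirely: each modification is confined to a flow box, there is nothing to reconcile across cells, and $C^0$-smallness comes for free; the cost is the elaborate dynamical bookkeeping (parametric Lyapunov families, $\sigma$-shortness of boundary foliations, good positions of Weinstein cylinders, the height-reduction scaling).
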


If $\dim M=2$ then Theorem \ref{thm:HH} holds -- according to a classical result of Emmanuel Giroux \cite{Gi} -- in a stronger form  with a $C^\infty$-small 
isotopy instead of a $C^0$-small isotopy.  The purpose of this article is to provide a   more accessible  proof of Theorem~\ref{thm:HH}. While the proof  follows  the overall strategy of  \cite{HH}  it is  significantly  different in  its implementation. In particular, we do not use any contact open  book techniques. Besides Theorem \ref{thm:HH} we do not discuss  in this paper any other results formulated in \cite{HH}.
 \medskip

 \medskip\noindent{\sl Acknowledgements.} The current paper was originated while the authors   visited the Institut for Advanced Study at Princeton, and was completed when the first author visited the Institut for Theoretical Study at the ETH, Zurich.
 The authors thank both Institutes for their hospitality. The paper grew up from a seminar at the IAS, where the second author   discussed Honda-Huang's work.
 The results of this paper were presented by the first author  at  seminars at Stanford, Neuch\^atel, Brussels and the ITS ETH, Zurich. The authors are very grateful to participants of all these seminars for   questions and comments. Our special thanks to Dietmar Salamon for asking many pointed questions and for his  proposed simplifications of several proofs presented in Section
 \ref{sec:conv-big} of this paper. In particular,   we use his proof for Lemma \ref{lm:W-convex}.
 We thank  Fran\c{c}ois-Simon Fauteux-Chapleau  for clarifying some of the aspects of Honda-Huang's paper, Ko Honda for answering many questions concerning their  proof, and Emmanuel Giroux for his comments on the preliminary version of this paper. We are grateful to N. Mishachev for making Fig. 
 \ref{fig:birthdeath}.


\section{Dynamics of vector fields}\label{sec:dynamics}
This  section   and  Section \ref{sec:conv-big} contains some background material which is mostly well-known.  \subsection{Lyapunov functions}
An isolated zero $p$ of a vector field $X$ on an $m$-dimensional manifold $\Sigma$  is called {\em non-degenerate} if $d_pX$ is non-degenerate, and it  is called an {\em embryo} or {\em death-birth} singularity if the corank of its linearization $d_pX$ is  equal to 1 and    the quadratic differential $d^2_pX:\Ker d_pX\to \Coker  d_pX$, which is  defined   up to scaling by a non-zero coefficient, does not vanish. We will call a non-degenerate   or   death-birth zero  {\em hyperbolic} if $d_pX$ has  no pure  imaginary (non-zero) eigenvalues.

Let $X$ be a vector field on a compact  manifold $\Sigma$.  Let  us endow $\Sigma$ with   a   Riemannian metric. A function $f:\Sigma\to\R$ is called   {\em Lyapunov} for $X$ if 
$ df(X)\geq C(||X||^2+||df||^2)$ for a positive constant $C$.
   Equivalently, one says that $X$ is a {\em gradient like} vector field for $f$.

It is a standard fact that  isolated hyperbolic zeroes, non-degenerate or embryos,     admit local Lyapunov function,   e.g. see \cite{Arn73}.  The {\em stable manifold}  of a hyperbolic zero is diffeomorphic to $\R^k$ for some $k=0,\dots, m$ in the non-degenerate case, and to $\R^k_+$ in the case of an embryo.   The dimension  $k$ of the stable manifold of a non-degenerate hyperbolic  zero  $O$ is called its  {\em index} and denoted by $\ind(O)$. For an  embryo the index  is usually defined  to be equal to  $k-\frac12$.

     \begin{figure}[ht]
 
\includegraphics[scale=0.85]{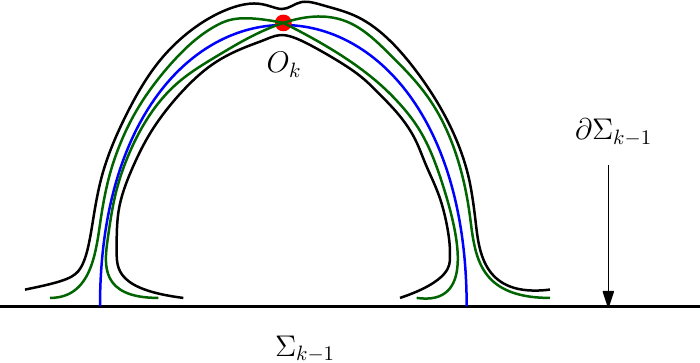}
 
\caption{Surrounding the stable disc of a zero $O_k$ by level sets of a Lyapunov function..}
\label{fig:handle_attachment}
\end{figure}

\begin{lemma}\label{lm:Lyapunov-criterion}

 Let $X$ be a vector field  with isolated hyperbolic zeroes which are non-degenerate or of embryo type on a closed $m$-dimensional manifold $\Sigma$.
Then $X$ admits a Lyapunov function if and only if the following conditions are satisfied:
\begin{itemize}
\item[(L1)]   every trajectory of $X$ originates and terminates at a  zero of $X$;
\item[(L2)]  there exists an  ordering  $O_1,\dots, O_N$ of zeroes such that there are no trajectories of $X$ which originate at $O_i$ and terminate in $O_j$ if $i>j$.
\end{itemize}
\end{lemma}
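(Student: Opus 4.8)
I would prove necessity and sufficiency separately, necessity being short. Given a Lyapunov function $f$ and a non-constant trajectory $\gamma$, one has $\frac{d}{dt}f(\gamma(t))=df(X)\ge C(\|X\|^2+\|df\|^2)>0$ wherever $X\neq 0$, so $f\circ\gamma$ is strictly increasing; since $\Sigma$ is compact, $f\circ\gamma$ has finite limits $f^\pm$ at $\pm\infty$. The set $\omega(\gamma)$ is nonempty, compact, connected and invariant, and $f\equiv f^+$ on it; invariance forces $df(X)\equiv 0$ on $\omega(\gamma)$, and then the Lyapunov inequality forces $X\equiv 0$ there, so $\omega(\gamma)$ --- a connected subset of the finite zero set --- is a single zero, and likewise for $\alpha(\gamma)$: this is (L1). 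Ordering the zeroes so that $f(O_1)\le\cdots\le f(O_N)$, a trajectory from $O_i$ to $O_j$ satisfies $f(O_i)=f^-<f^+=f(O_j)$ (strict, being non-constant), hence $i<j$; this is (L2), zeroes of equal $f$-value carrying no connecting trajectory in either direction.

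\textbf{Sufficiency: the filtration.} Fix an ordering as in (L2). By (L1) every trajectory is a connecting orbit $O_a\rightsquigarrow O_b$, and by (L2) $a\le b$; the same holds for broken trajectories $O_a\rightsquigarrow\cdots\rightsquigarrow O_b$, and $a<b$ for non-constant trajectories, a homoclinic orbit admitting no Lyapunov function (and being excluded by (L1), (L2)). For $0\le k\le N$ set
\[
\cK_k:=\overline{\bigcup_{i>N-k}W^u(O_i)},\qquad \cK_k^*:=\bigcup_{i\le N-k}W^s(O_i).
\]
Then $\cK_0=\emptyset\subset\cK_1\subset\cdots\subset\cK_N=\Sigma$, each $\cK_k$ is compact and invariant, $\cK_k^*$ is its complementary repeller, and $\cK_k\sm\cK_{k-1}$ contains the single zero $O_{N-k+1}$. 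The point at which (L1) and (L2) do the work is the verification that $\cK_k$ is a genuine attractor (admits a trapping neighbourhood) and omits $O_1,\dots,O_{N-k}$: were $p_n\to O_j$ with $p_n\in W^u(O_i)$ and $i>N-k\ge j$, a Hausdorff limit of the orbit closures of the $p_n$ would furnish a broken trajectory $O_i\rightsquigarrow\cdots\rightsquigarrow O_j$, forcing $i\le j$ by (L2), a contradiction; the same limiting argument produces the trapping neighbourhood. (In particular $O_1$ is a source and $O_N$ a sink.)

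\textbf{Sufficiency: building $f$.} For $k=1,\dots,N-1$, the standard construction for an attractor--repeller pair provides a smooth $g_k\co\Sigma\to[0,1]$ equal to $0$ on $\cK_k$, equal to $1$ on $\cK_k^*$, and strictly decreasing along trajectories in $\Sigma\sm(\cK_k\cup\cK_k^*)$. Put $f_0:=\sum_{k=1}^{N-1}c_k(1-g_k)$ with constants $c_k>0$. Each $g_k$ is non-increasing along every trajectory, and along a non-constant $O_a\rightsquigarrow O_b$ ($a<b$) at least one $g_k$ strictly decreases (any $k$ with $N-b<k\le N-a$), so $f_0$ is continuous, smooth off the zeroes, strictly increasing along non-constant trajectories, and takes distinct values $f_0(O_1)<\cdots<f_0(O_N)$. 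Finally, in a small ball around each $O_j$ replace $f_0$ by a rescaled, translated copy of the \emph{local} Lyapunov function supplied by the cited result, interpolating in a collar; since $df_0(X)$ is bounded below off the zeroes by compactness, the resulting $\wt f$ obeys $d\wt f(X)\ge C(\|X\|^2+\|d\wt f\|^2)$ everywhere.

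\textbf{Expected main obstacle.} The crux is the middle step: showing that the candidate sets $\cK_k$ are honest attractors, i.e. that (L1) and (L2) genuinely exclude the recurrence and tangency phenomena that would obstruct the filtration. This rests on the ``no broken trajectory lowers the index'' consequence of (L2) together with a compactness argument for limits of orbits (note that (L1) alone still permits heteroclinic cycles among the zeroes). The remainder --- the attractor--repeller Lyapunov functions and the local smoothing near hyperbolic zeroes --- is routine. An equivalent but more hands-on route would instead construct the $\cK_k$ directly as smooth compact domains with $X$-transverse boundary, passing from $\cK_{k-1}$ to $\cK_k$ by attaching the handle of Figure~\ref{fig:handle_attachment} around $O_{N-k+1}$, and define $f$ by prescribing it on these domains; the analytic content is the same.
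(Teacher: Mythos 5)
Your necessity argument is a correct and more detailed version of what the paper dismisses as ``straightforward.'' For sufficiency you take a genuinely different route: the paper builds $f$ inductively from the bottom up, extending a local Lyapunov function over successive sublevel sets $\Sigma_{k-1}$ by attaching handles around $O_k$ along the (necessarily embedded, by (L1) and (L2)) truncated stable manifold $\wt P_k = P_k\setminus\Int\Sigma_{k-1}$, whereas you first define the attractor filtration $\cK_k$ intrinsically via closures of unstable manifolds, invoke Conley's attractor--repeller Lyapunov functions $g_k$, sum them, and only then patch in the local normal forms near zeroes. Both are standard Morse/Conley-theoretic strategies. The paper's construction delivers the correct local behaviour at each $O_j$ for free, since it starts from the local Lyapunov functions; your construction is more conceptual and index-free (it treats embryos identically to non-degenerate zeroes and does not require choosing a Morse-like chart until the last step), but the price is that the final smoothing/interpolation step is not quite as automatic as you suggest: in the collar where you blend $f_0$ with the rescaled local model $\phi_j$, the cross term $(\phi_j - f_0)\,d\rho(X)$ in $d\wt f(X)$ has no sign, and one must shrink the collar and normalise $\phi_j$ so that $|\phi_j - f_0|$ is small compared to the lower bound on $df_0(X)$ there; moreover the Conley functions $g_k$ need not vanish to first order at zeroes inside $\cK_k^*$, so the smoothing near each $O_j$ really is required, not just convenient. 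You have also correctly isolated the two places where (L2) enters (excluding index-lowering broken orbits, so the $\cK_k$ are disjoint from $\cK_k^*$), which is exactly where the paper uses it to ensure $\wt P_k$ is a compact embedded disc. One small caveat worth recording: (L2) as stated only constrains $i>j$, so it does not literally forbid homoclinic loops; this is silently assumed away both in your argument and in the paper's (where it is needed for $\wt P_k$ to be an embedded disc rather than a pinched sphere), and a line ruling out homoclinics from (L1) for nearby trajectories would tighten either proof.
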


  \begin{proof} 
  If $X$ admits a Lyapunov function then both conditions (L1)  and (L2) are straightforward. Suppose that these conditions are satisfied.
We construct    a Lyapunov function  $f:\Sigma\to\R$ by extending it inductively to neighborhoods of stable manifolds of zeroes $O_j$.
 
We start with a local Lyapunov function near $O_1$ (which has to be of index $0$)  and    set     $f(O_1)=1$.   We assume  that $\Sigma_1:=\{f\leq \frac32\}$  is a small ball surrounding $O_1$, with boundary transverse to $X$.
 
Suppose that  we already constructed   $f$  on a domain $\Sigma_{k-1}: =\{f\leq k-\frac12\}$, $2\leq k\leq N$,   such that    zeroes $O_1,\dots, O_{k-1}$  and their stable manifolds  are contained in  $\Int\Sigma_{k-1}$ and $f(O_{k-1})=k-1$.  
The stable manifold   $P_k$  of $O_k$  transversely intersects $\p\Sigma_{k-1}$. Denote    $\wt P_k:=P_k\setminus\Int\Sigma_{k-1}$. Then  $\wt P_k$ is  an embedded disk (or  a half-disc, if $O_k$ is an embryo) of dimension $\ind(O_k)$ with boundary transverse to $\p\Sigma_{k-1}$. Extend     $f$    to a neighborhood $U_k\supset \Sigma_{k-1}\cup \wt P_k$  as a   Lyapunov function  for $X$ such that   $f(O_k)=k $  and the regular level set $\{f=k+\frac12\}$ is compact and  is contained in $U_k$, see  Figure~\ref{fig:handle_attachment} and \cite{CE12} for details. Denote $\Sigma_k = \{f\leq k+\frac12\}$. For $k=N$ we have $\Sigma_N= \{f\leq N \}=\Sigma$, and this completes the construction.  
\end{proof}
 
Note that condition (L2) is guaranteed by the  {\em Morse-Smale property}, i.e. transversality of stable and unstable manifolds for any pair of zeros. While  the Morse-Smale property can be arranged by a $C^\infty$-small perturbation, it is not clear to us whether this perturbation can be always done without destroying property (L1).

Lemma \ref{lm:Lyapunov-criterion} can be extended to 1-parametric families.
\begin{lemma}\label{lm:Lyapunov-criterion-param} Any family of vector 
fields $X_s$, $s\in[0,1]$, which satisfy conditions (L1) and (L2) admits a family  of    Lyapunov functions.
\end{lemma}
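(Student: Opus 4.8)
The plan is to reduce the statement to a local-in-$s$ assertion and then patch, using the elementary fact that for a fixed vector field $X$ the set of its Lyapunov functions is \emph{convex}. Indeed, if $df(X)\ge C(||X||^2+||df||^2)$ and $dg(X)\ge C'(||X||^2+||dg||^2)$ and $t\in[0,1]$, then $h:=tf+(1-t)g$ satisfies $dh(X)=t\,df(X)+(1-t)\,dg(X)\ge \min(C,C')\bigl(||X||^2+t||df||^2+(1-t)||dg||^2\bigr)$, while convexity of $u\mapsto u^2$ gives $||dh||^2\le t||df||^2+(1-t)||dg||^2$; hence $h$ is again Lyapunov for $X$. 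Consequently it would suffice to produce, for each $s_0\in[0,1]$, a neighborhood $V_{s_0}$ of $s_0$ in $[0,1]$ and a smooth family $\{f^{s_0}_s\}_{s\in V_{s_0}}$ of Lyapunov functions for $\{X_s\}$. Choosing a finite subcover $\{V_i\}$ of $[0,1]$ and a subordinate partition of unity $\{\rho_i\}$, the function $f_s:=\sum_i\rho_i(s)\,f^i_s$ is, for every $s$, a convex combination of Lyapunov functions of $X_s$, hence Lyapunov for $X_s$; smoothness in $s$ is clear.

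To build the local family near a fixed $s_0$, I would first fix an $s$-independent ordering of the zeroes valid throughout a neighborhood of $s_0$. The field $X_{s_0}$ has finitely many zeroes, each non-degenerate or of embryo type. Write $O\rightsquigarrow_s O'$ if some trajectory of $X_s$ runs from $O$ to $O'$. By a standard ODE compactness argument on the compact manifold $\Sigma$, a sequence of such connecting trajectories for $X_{s_n}$ with $s_n\to s_0$ subconverges to a (possibly broken) connecting trajectory for $X_{s_0}$; hence for $s$ near $s_0$ the relation $\rightsquigarrow_s$ on the zeroes of $X_s$ is contained in the transitive closure $\wh R$ of $\rightsquigarrow_{s_0}$, where a zero born out of an embryo $E$ of $X_{s_0}$ is regarded as sitting at the position of $E$. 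Since (L2) for $X_{s_0}$ forces $\rightsquigarrow_{s_0}$ to be acyclic, $\wh R$ is a partial order; I would fix a total order refining it, with the stipulation that when an embryo unfolds into a canceling pair of indices $k,k+1$ the two new zeroes occupy consecutive slots, the index-$k$ one first — so the connecting trajectory, which runs from the index-$k$ zero to the index-$(k+1)$ zero, goes from a smaller to a larger slot. Using the compactness argument once more, this ordering satisfies (L2) for every $s$ in a neighborhood $V$ of $s_0$.

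With this $s$-independent ordering in hand, I would run the construction of Lemma~\ref{lm:Lyapunov-criterion} with parameter $s\in V$: start from local Lyapunov functions near the zeroes of $X_s$, which exist and may be chosen to depend smoothly on $s$ — including across a birth--death moment, since embryos carry local Lyapunov functions — and then extend $f_s$ inductively over neighborhoods of the families of truncated stable manifolds $\wt P_k(s)$, normalizing the critical values so they vary smoothly and consistently with the ordering. Away from bifurcation moments of the family, the sets $\bigcup_s\{s\}\times\wt P_k(s)$ are smooth families of discs or half-discs and the handle-attachment step of Lemma~\ref{lm:Lyapunov-criterion} goes through verbatim with $s$ as a parameter; across a birth--death moment the relevant object is a manifold with corners and the extension is the parametric birth--death handle attachment, for which I would refer to \cite{CE12}. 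This yields $\{f^{s_0}_s\}$ on a neighborhood of $s_0$, and by the first paragraph completes the proof.

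I expect the main obstacle to be exactly this last point: performing the parametric extension over the stable manifolds through a birth--death bifurcation so that the Lyapunov inequality $df_s(X_s)\ge C(||X_s||^2+||df_s||^2)$ and the regularity and compactness of the intermediate level sets $\{f_s=c\}$ are maintained uniformly in $s$, precisely at the instant when a stable disc collapses to a half-disc and two zeroes collide. This is the piece of parametric Morse theory one imports from \cite{CE12}; the remaining ingredients — the convexity patching, the ODE compactness, and the bookkeeping of orderings and critical values — are routine.
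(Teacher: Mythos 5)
Your proof is correct, and it uses the same two key ingredients as the paper: (a) the set of Lyapunov functions for a fixed $X$ is convex, hence contractible; and (b) a Lyapunov function for $X_{s_0}$ can be perturbed into a smooth family of Lyapunov functions for $X_s$ over a neighborhood of $s_0$. The difference is in how these are combined. The paper packages (a) and (b) into the abstract statement that the forgetful projection from pairs (field, Lyapunov function) to fields is a micro-fibration with nonempty contractible fiber, and then invokes the theorem of Gromov--Weiss that such a map is a Serre fibration, so a global section exists. You instead globalize by hand: cover $[0,1]$ by finitely many intervals $V_i$ carrying local families $f^i_s$, take a subordinate partition of unity $\rho_i$, and set $f_s=\sum_i\rho_i(s)f^i_s$; for each fixed $s$ this is a convex combination of Lyapunov functions for $X_s$, hence Lyapunov for $X_s$ (with constant $\min_i C_i>0$, which is bounded below since there are finitely many $i$ and each $f^i_s$ varies smoothly over the compact $V_i$). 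Your route is more elementary and self-contained -- it avoids any appeal to the micro-fibration $\Rightarrow$ Serre fibration theorem -- while the paper's is terser and recycles standard homotopy-theoretic machinery. Your expansion of the local step (choosing an $s$-independent ordering near $s_0$ via a broken-trajectory compactness argument, then running the inductive construction of Lemma \ref{lm:Lyapunov-criterion} with $s$ as a parameter) is also more detailed than what the paper offers: the paper simply asserts the local extendability. You correctly flag the genuine technical point both proofs leave to the reader -- the parametric handle-attachment across a birth--death moment, where a stable disc degenerates to a half-disc -- and your reference to \cite{CE12} is the appropriate one.
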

 \begin{proof} The space of Lyapunov functions for a given vector field $X$ is contractible,  because  a  convex linear combination of two Lyapunov functions for $X$ is again a Lyapunov function for $X$.   Also note that a Lyapunov  function $f_{s_0}$ for $X_{s_0}$  can  always be included into a family $f_s$ of   Lyapunov functions for $X_s$ for $s$ close to $s_0$.  Hence, the projection of the space of pairs  ((L1)+(L2) field,  Lyapunov function) to the space of (L1)+(L2)  fields is a micro-fibration with  a (non-empty!) contractible fiber, and hence, it is a Serre fibration, see \cite{Gro86, We05}.
 \end{proof} 

Let us also formulate a version of Lemma \ref{lm:Lyapunov-criterion} for a (trivial) cobordism. Let $W$ be an $(m-1)$-dimensional manifold with boundary and $\Sigma:=W\times [0,1]$. Denote by $y$ the coordinate  which corresponds to the second factor. Let $X$ be  a vector field on $\Sigma$ which coincides with $\frac{\p}{\p y}$ near $\p \Sigma$.
\begin{lemma}\label{lm:Lyapunov-criterion-rel}
Suppose that
\begin{itemize}
\item[(L$1'$)]   every trajectory of $X$ originates and terminates at a  zero of $X$ or at a point of $\p\Sigma$;
\item[(L2)]  there exists an  ordering  $O_1,\dots, O_N$ of zeroes such that there are no trajectories of $X$ which originate at $O_i$ and terminate in $O_j$ if $i>j$.
\end{itemize}
Then $X$ admits a Lyapunov function which is equal  to $y$ near $\p \Sigma$.
\end{lemma}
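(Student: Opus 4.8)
The plan is to reduce Lemma~\ref{lm:Lyapunov-criterion-rel} to the closed case, Lemma~\ref{lm:Lyapunov-criterion}, by a doubling (or capping) construction. The rough idea: since $X=\frac{\p}{\p y}$ near $\p\Sigma = W\times\{0,1\}$, the flow near the boundary has no recurrence, and we can glue on collar ``caps'' to turn $\Sigma=W\times[0,1]$ into a closed manifold $\widehat\Sigma$ carrying an extended vector field $\widehat X$ whose only zeroes, besides those of $X$, are a small collection of non-degenerate hyperbolic zeroes living in the caps, arranged so that the order condition (L2) is preserved and so that (L1) holds on all of $\widehat\Sigma$. Applying Lemma~\ref{lm:Lyapunov-criterion} to $(\widehat\Sigma,\widehat X)$ produces a global Lyapunov function $\widehat f$, and restricting (after composing with an affine reparametrization) gives the desired $f$ on $\Sigma$. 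The one point requiring care is that we want $f$ to equal $y$ \emph{exactly} near $\p\Sigma$, not just to be Lyapunov there; this we arrange by choosing the caps and the extension so that the construction in the proof of Lemma~\ref{lm:Lyapunov-criterion} begins from $y$ on the collar.

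Concretely, I would cap each boundary component as follows. Near $W\times\{1\}$ enlarge the collar to $W\times[1,2]$ with $X=\frac{\p}{\p y}$, and then cap $W\times\{2\}$ off with a handle decomposition of $W$ read from top: i.e. choose a Morse function on $W$ and build, on the cap, a gradient-like field whose zeroes are hyperbolic, all of which are ``later'' than every $O_i$ and every zero coming from the bottom cap in the ordering. Symmetrically, near $W\times\{0\}$ extend to $W\times[-1,0]$ with $X=\frac{\p}{\p y}$ and cap $W\times\{-1\}$ with a reversed handle decomposition of $W$ whose zeroes are all ``earlier'' than every $O_i$. After smoothing the gluings, $\widehat\Sigma$ is a closed $m$-manifold, $\widehat X$ has isolated hyperbolic (non-degenerate or embryo) zeroes, condition (L2) holds for the concatenated ordering (bottom-cap zeroes, then $O_1,\dots,O_N$, then top-cap zeroes) because no trajectory can cross from the middle cylinder into a cap against the direction $\frac{\p}{\p y}$, and (L1) holds because (L$1'$) says every trajectory of $X$ in $\Sigma$ that does not hit $\p\Sigma$ limits onto zeroes, while a trajectory that exits through $W\times\{1\}$ (resp.\ enters through $W\times\{0\}$) now continues into the top (resp.\ bottom) cap and, by the usual property of gradient-like fields for a handle decomposition, limits onto a zero there.

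Then Lemma~\ref{lm:Lyapunov-criterion} gives a Lyapunov function $\widehat f$ for $\widehat X$ on $\widehat\Sigma$. To get the boundary normalization, observe that on the product collar $W\times[-1,2]$ (where $\widehat X=\frac{\p}{\p y}$) we may \emph{choose} the inductive construction so that, between the bottom-cap zeroes and the top-cap zeroes, $\widehat f$ depends only on $y$ and equals $y$ there; this is possible because the inductive step of Lemma~\ref{lm:Lyapunov-criterion} is local near the stable manifolds being attached and is otherwise free, so on the zero-free product region we are simply interpolating monotonically in $y$ between the two level sets coming from the caps, and an affine function of $y$ does the job. Restricting $\widehat f$ to $\Sigma=W\times[0,1]\subset W\times[-1,2]$ and post-composing with the affine map sending the value at $y=0$ to $0$ and the value at $y=1$ to $1$ yields a Lyapunov function for $X$ on $\Sigma$ equal to $y$ near $\p\Sigma$, as required.

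The main obstacle is purely bookkeeping: ensuring simultaneously that (i) the capped field is gradient-like near the gluing region so no new recurrence or heteroclinic connections violating (L2) are created, and (ii) the extension of $\widehat f$ across the product collar can be taken to be exactly $y$. Both are handled by the standard handle-attachment / Lyapunov-extension machinery of \cite{CE12} already invoked in the proof of Lemma~\ref{lm:Lyapunov-criterion}, applied now with the product collar playing the role of a cylindrical piece on which one simply prescribes $f=y$; there is nothing genuinely new, which is why we only sketch it.
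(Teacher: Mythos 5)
Your capping approach is a genuinely different route from the paper's. The paper does not reduce to the closed case; instead it runs the inductive construction of Lemma~\ref{lm:Lyapunov-criterion} directly on $\Sigma=W\times[0,1]$, treating $W\times 0$ and $W\times 1$ as the first and last ``zeroes'' $O_0$ and $O_{N+1}$ (so the induction starts by prescribing $f$ on a collar of $W\times 0$ and ends by closing off at $W\times 1$), and then normalizes $f$ near $\p W\times[0,1]$ at the very end by making it linear in $y$ and rescaling. This is shorter because it never leaves $\Sigma$.

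There are two issues with your version as written. First, and more seriously, $W$ has nonempty boundary, so $\p\Sigma$ has three pieces: $W\times\{0\}$, $W\times\{1\}$, and the ``side'' $\p W\times[0,1]$, with $X=\frac{\p}{\p y}$ near all three. Your caps only close off the top and bottom; after gluing a manifold $C$ with $\p C\cong W$ to $W\times\{2\}$ (and similarly at the bottom) you are left with the side boundary $\p W\times[-1,2]$ together with the corresponding boundary strata of the caps, so $\widehat\Sigma$ is not closed and Lemma~\ref{lm:Lyapunov-criterion} does not apply. You would need an additional step, e.g.\ first doubling $\Sigma$ along $\p W\times[0,1]$ (extending $X$ by $\frac{\p}{\p y}$ across the double, which is harmless since there are no zeroes there) to replace $W$ by its closed double $\widehat W$, and only then capping $\widehat W\times\{0\}$ and $\widehat W\times\{1\}$. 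Second, your normalization step asserts that $\widehat X=\frac{\p}{\p y}$ on the whole ``product collar'' $W\times[-1,2]$, but that region contains the interior of $\Sigma$, where $X$ has its zeroes $O_1,\dots,O_N$; the hypothesis only gives $X=\frac{\p}{\p y}$ \emph{near} $\p\Sigma$. The normalization you want (making $f$ equal to $y$ exactly near $\p\Sigma$) should instead be phrased as in the paper: arrange $f$ to depend only on $y$ on the boundary collar where $X=\frac{\p}{\p y}$, then rescale affinely so that $f|_{W\times 0}=0$ and $f|_{W\times 1}=1$. With these two corrections, the capping strategy does go through, but it is strictly more work than the paper's direct induction.
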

\begin{proof}
We construct $f$ by the  process described in the proof of Lemma
  \ref{lm:Lyapunov-criterion} with $W\times 0$ and $W\times 1$ playing the role of the first and last zeroes, $O_0$ and $O_{N+1}$. We then   adjust $f$ near $\p W\times[0,1]$,  by making it linear with respect to $y$ and  then scaling it to make equal to $1$ on $W\times 1$.
  \end{proof}

\subsection{Blocking collections}\label{sec:blocking} The material of this section is fairly standard and its various versions appear in many places (e.g. see \cite{GE71,Wi}). In particular, Lemma~\ref{lm:exist-discs} is a corollary of ~\cite[Lemma 2]{Wi}.

A non-vanishing vector field $X$ in a neighborhood of a hypersurface $V$ in an $m$-dimensional manifold $\Sigma$ is called  {\em in  general position}  with respect to $V$ if it has   Thom-Boardman-Morin  tangency singularities of type
$\Sigma^{1, \ldots ,1}$, see~\cite{Thom, Bo}. Let us fix a Riemannian metric on $\Sigma$. Arguing by induction over strata of tangency singularity, it is straightforward to prove the following statement (e.g. it is a corollary of Morin's normal forms \cite{Morin} for $\Sigma^{1, \ldots ,1}$-singularities).
\begin{lemma}\label{lm:Morin-eps} Suppose that $X$ is in  general position  with respect to $V$. Then there exists $\eps_0>0$ such that
 for any $\eps\in(0,\eps_0)$ there exists $\delta>0$ such that  any connected trajectory arc   of length $\eps$ contains a connected sub-arc of length $C(m)\eps$ which does not intersect   the $\delta$-neighborhood of $V$.  Here $C(m)$ denotes a   constant which depends only on the dimension $m.$  \end{lemma}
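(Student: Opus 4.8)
The plan is to prove Lemma~\ref{lm:Morin-eps} by an induction over the Thom-Boardman-Morin stratification of the tangency locus, using Morin's normal forms \cite{Morin} on each stratum to reduce to an explicit local model. Recall that when $X$ is in general position with respect to $V$, the set of points of $V$ where $X$ is tangent to $V$ is a hypersurface $V_1\subset V$; the set where $X$ is tangent to $V_1$ inside $V$ is a codimension-two submanifold $V_2\subset V_1$; and so on, producing a nested flag $V\supset V_1\supset V_2\supset\cdots\supset V_{m-1}$, with $V_j$ of codimension $j$ in $V$ and $V_{m-1}$ discrete. The key point, which is exactly what Morin's normal forms encode, is that along $V_j\setminus V_{j+1}$ the flow line of $X$ through a point $q$ meets $V$ with finite order of tangency exactly $j+1$: after a local change of coordinates $(t,x_1,\dots,x_{m-1})$ adapted to the flow (so $X=\partial/\partial t$), the hypersurface $V$ is given near $q$ by the vanishing of a function whose restriction to the $t$-axis is, up to a unit and lower-order terms in $x$, $t^{j+1}$ plus the universal unfolding terms $x_1 t^{j-1}+\dots+x_{j-1} t$.

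First I would treat the generic stratum $V\setminus V_1$, where $X$ is transverse to $V$. Here each trajectory crosses $V$ cleanly, and away from $V_1$ the angle of crossing is bounded below on compact sets; so a trajectory arc of length $\eps$ can meet the $\delta$-neighborhood of $V$ only in a sub-arc of length $O(\delta)$, and the complementary portion already contains a sub-arc of length comparable to $\eps$ that avoids the $\delta$-neighborhood, provided $\delta$ is small relative to $\eps$ and to the injectivity/transversality scale. Next I would run the inductive step: suppose the statement (with a dimension-dependent constant) is known for trajectory arcs staying near $V_{j+1}$, and consider an arc of length $\eps$ passing near a point of $V_j\setminus V_{j+1}$. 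Using the Morin normal form above, the function $g(t):=(\text{defining function of }V)|_{\text{flow line}}$ is, after rescaling, a small perturbation of $t^{j+1}+x_1t^{j-1}+\dots+x_{j-1}t$ with $|x_i|$ small. Such a polynomial of degree $j+1$ has at most $j+1$ real zeros, hence the sub-level set $\{|g|\le c\delta\}$ is contained in a union of at most $j+1$ intervals whose total length is $O(\delta^{1/(j+1)})$ by a standard estimate on where a monic polynomial is small (e.g. via the Chebyshev/transference bound, or directly by noting the zeros split the line and near a zero of multiplicity $r\le j+1$ the function grows like the $r$-th power). Therefore the flow line spends, inside any unit-length window, total time $O(\delta^{1/(m)})$ in the $\delta$-neighborhood of $V$; choosing $\delta$ small enough that this total is less than, say, $\tfrac12\eps$ forces the existence of a gap of length $\ge \tfrac1{2(m+1)}\eps$ free of the $\delta$-neighborhood, and one sets $C(m)=\tfrac1{2(m+1)}$ (or whatever explicit fraction the bookkeeping yields). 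Points near the top stratum $V_{m-1}$ (isolated) are handled by the same model with $j=m-1$, which is the worst case and fixes the exponent.

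The genuine technical work — and the step I expect to be the main obstacle — is making the normal-form reduction uniform: Morin's theorem gives, for each point $q$ of the $\Sigma^{1,\dots,1}$-stratum, a local chart in which $V$ and $X$ take the model form, but one needs these charts to have size, and the associated coordinate changes to have derivative bounds, controlled uniformly over the compact region under consideration, so that a single pair $(\eps_0,\delta)$ works everywhere. This is a routine compactness argument in principle — cover the tangency flag by finitely many Morin charts, take $\eps_0$ smaller than the common Lebesgue number and the scale on which the model estimates are valid, and let $\delta$ be determined by the worst (largest-codimension) chart — but it requires care because a trajectory arc of length $\eps$ may pass through several charts of different strata in succession, so one must patch the per-chart estimates along the arc. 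The clean way to organize this is to estimate, once and for all, the total one-dimensional measure of $\{s : \dist(\gamma(s),V)<\delta\}$ inside any arc $\gamma|_{[0,\eps]}$ by summing the contributions chart-by-chart; since in each chart the contribution is $O(\delta^{1/(m)})$ per unit length and the number of chart-crossings along an arc of length $\eps$ is bounded in terms of $\eps$ and the (fixed) chart geometry, the total is $o(\eps)$ as $\delta\to0$, which is all that is needed. The remaining assertions — that the good sub-arc has length exactly a dimensional constant times $\eps$, and that it can be taken connected — then follow by a pigeonhole argument on the complement of a set of small measure inside an interval of length $\eps$.
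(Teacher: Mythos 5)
Your approach, built on Morin normal forms and quantitative polynomial sub-level estimates, is correct in outline but considerably heavier than what the paper does, and the final step as you wrote it has a real gap. The paper's proof is soft: the general-position hypothesis is used only once, qualitatively, to ensure that every tangency of a trajectory with $V$ has finite order $\le m$. By compactness of $V$ one then gets a uniform $\eps_0>0$ such that for every $p\in V$ the arc $\gamma_{3\eps_0}(p):=\bigcup_{|u|\le 3\eps_0}X^u(p)$ meets $V$ in at most $m$ points, all contained in $\gamma_{\eps_0}(p)$. For fixed $\eps<\eps_0$, continuity lets one shrink $\delta$ until $\gamma_{3\eps_0}(p)\cap N_\delta(V)$ has at most $m$ components, each of length $<\eps/(2m+2)$; any arc $\sigma$ of length $\eps$ meeting $N_\delta(V)$ lies in some $\gamma_{3\eps_0}(p)$, hence meets $N_\delta(V)$ in at most $m$ sub-arcs of that length, and a pigeonhole over $m+1$ equal pieces of $\sigma$ yields a free sub-arc of length $\ge\eps/(2m+2)$. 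No normal forms, no coordinate-change bookkeeping, no patching across strata. What your route buys in exchange for the extra machinery is an explicit rate for $\delta$ in terms of $\eps$; but the lemma requires no rate, and the only quantitative ingredient the argument genuinely needs is the bound on the \emph{number} of intersection points (or bad components), which the soft compactness argument already supplies.

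The gap is in your last paragraph: you infer the existence of a long connected free sub-arc from the total-measure bound ``$O(\delta^{1/m})<\tfrac12\eps$''. That inference fails: a subset of $[0,\eps]$ of measure $<\eps/2$ can consist of arbitrarily many tiny intervals whose complement has no gap of length comparable to $\eps$, so a small measure alone gives no connected gap of the required length. What is needed is an a priori bound on the number of components of the bad set, which is precisely what the paper's proof runs on (at most $m$, from the finite tangency order). You do have the ingredients for such a bound elsewhere in your write-up --- you note the bad set is contained in at most $j+1\le m$ intervals within one Morin chart, and you propose bounding the number of chart crossings along a length-$\eps$ arc --- but the concluding pigeonhole must be run against that component count, not against the measure estimate. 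Once you make that substitution the argument closes; at that point, though, the polynomial sub-level estimate and the Morin normal forms are doing no work that the simple compactness argument does not do more cheaply.
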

\begin{proof} We can assume that the vector field $X$ on a neighborhood of $V$ has a unit length. We will be measuring below   arcs  $\gamma$ of $X$-trajectories by the flow-parameter. This measurement, which we call {\em length}  is equivalent  to  the diameter of $\gamma$  for sufficiently short arcs.

For any point $p\in V$ and $\eps>0$ denote $ \gamma_\eps(p)=\bigcup\limits_{u\in[-\eps,\eps]}X^u(p)$.
There exists $\eps_0>0$ such that for each point $p\in  V$ the arc $\gamma_{3\eps_0}(p)$  intersects $V$ at no more than $m$ points, and moreover $V\cap  \gamma_{3\eps_0}(p)\subset  \gamma_{ \eps_0 }(p).$ Given $\delta>0$ denote
by $N_\delta(V)$ the $\delta$-tubular neighborhood of $V$. 
For any $\eps<\eps_0$ there exists $\delta>0$ such that for every  $p\in V$  the intersection $\gamma_{3\eps_0}(p)\cap
N_\delta(V)$ consists of no more than $m$ components  of length  $<\frac{\eps}{2m+2} $.
Any trajectory arc $\sigma$ of length $\eps$ which intersects $N_\delta(V)$ is contained in $\gamma_{3\eps_0}(p)$ for some $p\in V$. Hence $ N_\delta(V)\cap \sigma$ consists of no more than $m$  arcs of length $<\frac{\eps}{2m+2} $. Thus, the complement $\sigma\setminus N_\delta(V)$  contains an arc of length $>\frac{\eps}{m+1}-\frac\eps{2m+2}=
\frac\eps{2m+2}.$
\end{proof}
 
\medskip

Let $X$ be a vector field on a compact $m$-dimensional manifold $\Sigma$, possibly with boundary. 
  Given $\eps>0$,   
  a  finite collection  $\{D_j\}_{1\leq j\leq K}$ of transverse to $X$ embedded into $\Int \Sigma$  codimension one discs  of diameter $<\eps$ is called {\em $\eps$-blocking  }
     if any connected  trajectory arc   of diameter $>\eps$ intersects $\bigcup \limits_1^K \Int D_j$.  
     
\begin{lemma}\label{lm:exist-discs} Let $X$ be a vector field on a compact $m$-dimensional manifold $\Sigma$, possibly  with boundary. Suppose that all zeros of $X$ are  in $\Int \Sigma$, isolated and hyperbolic (non-degenerate or embryos).    Suppose that $X$ is in  general position with respect to $\p\Sigma$.
Then for any $\eps>0$  the field $X$  admits an $\eps$-blocking collection.
\end{lemma}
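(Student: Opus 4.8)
The plan is to cover $\Sigma$ by small flow boxes and place one blocking disc inside each, using Lemma~\ref{lm:Morin-eps} to handle the points near $\p\Sigma$ and the fact that zeros are isolated and hyperbolic to handle a neighborhood of the zero set. First I would fix $\eps>0$. Near each zero $O_i$ of $X$ there is a local Lyapunov function $f_i$ on a neighborhood $U_i$; shrinking $U_i$ we may assume $\mathrm{diam}\,U_i<\eps$, so that no trajectory arc lying entirely in $U_i$ can have diameter $>\eps$, and moreover a trajectory arc of diameter $>\eps$ that enters $U_i$ must cross a regular level set $\{f_i=c_i\}$ of $f_i$, which is a codimension one submanifold transverse to $X$. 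Cover this level set by finitely many embedded transverse discs of diameter $<\eps$ contained in $\Int U_i\subset\Int\Sigma$; these discs block every long arc that passes through $U_i$. Call the union of all the $U_i$ the set $\nbd$.

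On the complement $\Sigma\sm\nbd$ the field $X$ is non-vanishing. I would apply Lemma~\ref{lm:Morin-eps} with $V=\p\Sigma$: after shrinking $\eps$ if necessary, there is a $\delta>0$ and a constant $C(m)$ such that any trajectory arc of length (flow-parameter) comparable to $\eps$ contains a sub-arc of length $C(m)\eps$ disjoint from the $\delta$-neighborhood $N_\delta(\p\Sigma)$. On the compact set $K:=\Sigma\sm(\nbd\cup N_\delta(\p\Sigma))$, where $X$ is non-vanishing, pick around each point $p\in K$ a flow box $B_p\cong D^{m-1}\times(-\tau,\tau)$ with $D_p:=D^{m-1}\times\{0\}$ a transverse disc of diameter $<\eps$; by compactness finitely many of the $B_p$ cover $K$, with $\tau$ bounded below, and I would choose $\tau$ small enough (in terms of the Lebesgue number of this cover relative to the flow) that any trajectory sub-arc of flow-length $C(m)\eps$ contained in $K$ must meet some $D_p$ in its interior. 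Any arc of diameter $>\eps$ either passes through some $U_i$, in which case it is blocked by the level-set discs, or stays in $\Sigma\sm\nbd$; in the latter case, rescaling diameter to flow-length (they are comparable for short arcs, and we may pass to a sub-arc to make it short), Lemma~\ref{lm:Morin-eps} extracts a sub-arc of flow-length $C(m)\eps$ avoiding $N_\delta(\p\Sigma)$, hence lying in $K$, hence meeting some $\Int D_p$. The finite collection of all the level-set discs together with all the $D_p$ is then $\eps$-blocking.

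The main obstacle is bookkeeping the passage between metric diameter and flow-parameter length, and quantifying "long arc"; one must choose the various small parameters ($\mathrm{diam}\,U_i$, $\delta$, $\tau$, and possibly a further shrink of $\eps$) in the right order so that the constant $C(m)$ from Lemma~\ref{lm:Morin-eps} still leaves a sub-arc long enough to be caught by a flow box of diameter $<\eps$. A second, more geometric point is making sure the chosen discs are genuinely embedded into $\Int\Sigma$ and pairwise behave well, but since we only need a \emph{finite} collection and may shrink each disc slightly, this is routine. I do not expect any difficulty from the embryo zeros: their local Lyapunov functions and level sets behave exactly as in the non-degenerate case for the purpose of blocking, the only difference being that the stable manifold is a half-disc, which is irrelevant here since we only use a regular level set of $f_i$.
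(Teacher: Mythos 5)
Your strategy is genuinely different from the paper's, but it has a gap in the treatment of zero-neighborhoods. The claim that ``a trajectory arc of diameter $>\eps$ that enters $U_i$ must cross a regular level set $\{f_i=c_i\}$'' is false when $O_i$ is a hyperbolic zero of mixed index (a saddle). For a concrete instance, take $m=2$, $X=(-3x_1,x_2)$, $O_i=0$, $U_i$ the unit ball, and the local Lyapunov function $f_i=-x_1^2+x_2^2$. The tangency locus of $X$ with $\p U_i$ is $\{-3x_1^2+x_2^2=0\}$, and on this locus $f_i=2x_1^2>0$. A trajectory tangent to $\p U_i$ at such a point has nearby trajectories that dip briefly into $\Int U_i$, and by continuity these stay entirely in $\{f_i>0\}$ while inside $U_i$; since $f_i$ is monotone along them, they never cross $\{f_i=0\}$, and by a similar tangency argument (or by perturbing $c_i$) one produces for any fixed $c_i$ trajectories through $U_i$ missing $\{f_i=c_i\}\cap U_i$ altogether. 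The essential point is that for an unbalanced saddle the entry and exit regions of $\p U_i$ overlap on both sides of any level set of $f_i$, so no single regular level set separates ``in'' from ``out.'' These grazing trajectories are not caught by your $U_i$-discs, and your dichotomy (``passes through $U_i$, hence blocked by level-set discs'' vs.\ ``stays in $\Sigma\sm\nbd$'') is not exhaustive. To save the argument you would have to show that an arc of diameter $>\eps$ always has a sufficiently long connected sub-arc inside $K=\Sigma\sm(\nbd\cup N_\delta(\p\Sigma))$ even when it threads through several $U_i$'s (using, e.g., that a hyperbolic-zero ball is entered at most once so the number of $\nbd$-crossings is bounded, and that $\sum_i\diam U_i\ll\eps$); that is precisely the kind of bookkeeping that is easy to get wrong, and you have not supplied it.

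For comparison, the paper sidesteps this entirely by never trying to block inside the zero-neighborhoods. It takes an arbitrary global function $f$, sets $V:=\{df(X)=0\}\subset\wt\Sigma:=\Sigma\sm\Int B$ (where $B$ is the union of small balls around the zeros), and applies Lemma~\ref{lm:Morin-eps} to $V$: this produces sub-arcs lying in $\wh\Sigma_\pm:=\{\pm df(X)\ge 0\}\sm\Int N_\delta(V)$, where $\pm f$ is a genuine critical-point-free Lyapunov function. Part I of the paper's proof then puts blocking discs on finely spaced regular level sets of $f$ on each piece. This gives a uniform ``level-set'' mechanism on both pieces instead of a mixed level-set/flow-box mechanism, and in particular avoids having to decide which trajectories are blocked near the zeros. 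Your flow-box covering of $K$ is essentially Wilson's argument and is sound in spirit, and your use of Lemma~\ref{lm:Morin-eps} with $V=\p\Sigma$ matches the paper's treatment of the boundary; the difference, and the source of the gap, is in how the zeros are handled.
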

\begin{proof}

\noindent {\sl Part I.} Suppose first that the vector field $X|_{\Sigma}$ admits a Lyapunov function $f:\wt\Sigma\to\R$. 
without critical points.  
Suppose that $\min f=0,\max f=1$ and choose $N$  large enough to guarantee that any connected arc of an  $X$-trajectory in $\{\frac jN\leq f\leq \frac{j+1}N\}$ has its diameter $<\frac\eps2$,   $j=0,\dots, N-1$. Suppose that $\eps$ is chosen $<\eps_0$ from Lemma \ref{lm:Morin-eps} and $\delta$ is chosen so small that  any connected trajectory arc   of length $\eps$ contains a connected sub-arc of length $\frac\eps{C(m)}$ which does not intersect   the $\delta$-neighborhood of $V$.  Here $C(m)$  is the constant from Lemma \ref{lm:Morin-eps}.
  Choose an interior tubular collar $\p\Sigma \times[-1,0]\subset\Sigma$ such that $\p\Sigma=\p\Sigma\times 0$ and $\p\Sigma\times (-1)$ is at a distance $\delta$ from $\p\Sigma$. Denote $ \Sigma_0:= \Sigma\setminus\left(\p \Sigma \times(-1,0]\right)$.
For each $j=1,\dots, N-1$ choose   finitely many closed discs of radius $\eps$ in  $\Int \{f=\frac jN\}$ whose interiors cover  $\{f=\frac jN\}\cap \Sigma_0$.   By shifting these discs to disjoint level sets  $\{f=t_{j,k}\},\; t_{j,k}\in(\frac{2j-1}{2N},\frac{2j+1}{2N}),$ we get  the required $\eps$-blocking collection.

\smallskip\noindent{\sl Part II.}
In the general case  let us choose any smooth function $f:\Sigma\to\R$.
 Let us surround zeroes of $X$ by  the union $B$ of disjoint  closed  $\eps$-balls. We can assume that $X$ is in  general position with respect to $\p B$. Denote $\wt\Sigma:=\Sigma\setminus \Int B$.

 Denote $$\wt\Sigma_+:=\{df(X)\geq 0\},\; \wt\Sigma_-:=\{df(X)\leq 0\},\;   V:=\{df(X)= 0\}=\wt\Sigma_+\cap\wt\Sigma_-.$$
By  $C^\infty$-perturbing   $f$, if necessary, we can arrange that $V$ is a codimension 1 submanifold, and $X$ is   in  general position   with respect to $V$. Let us assume that $\delta>0$ is chosen in such a way that any connected trajectory arc   of length $\eps$ contains a connected sub-arc of length $\frac\eps{C(m)}$ which does not intersect   the $\delta$-neighborhood of $V$.  
 Consider a $\delta$-tubular neighborhood $N\supset V, N\subset\Int\Sigma$. Denote $\wh\Sigma_\pm:=\wt\Sigma_\pm\setminus\Int N$. We can assume that $\p \wh\Sigma_\pm$ is in general position with respect to $X$. By applying Part 1 we can construct  $\frac{\eps}{2C(m)}$-blocking collections for   $\wh\Sigma_+$ and $\wh\Sigma_-$.
The union of these collections is the required $\eps$-blocking collection 
for $X$ on  $\Sigma$.

\end{proof}

   Note that a compact arc $\gamma$ of a non-constant trajectory $X$ has a {\em flow-box} neighborhood  $U=D\times [0,c]$ such that $D\times 0$ is an embedded transverse disc, and $x\times[0,c], x\in D$ are trajectories of $X$. Denote by $\pi_U:U\to D$ the projection of the flow-box neighborhood to the first factor.
   We will call an  $\eps$-blocking collection $\{D_j\}$ generic, if for any flow-box $U$  projections  $ \pi_j := \pi_U|_{ \p D_j\cap U}:\p D_j\cap U\to D$ are transverse to each other.  Any $\eps$-blocking collection can be made generic by a $C^\infty$-perturbation.

   \subsection{Plugs}\label{sec:plug}

Given an $\eps$-blocking collection  $  \{D_j\}$,  let us thicken  discs $D_j$ to disjoint   flow-boxes  $Q_j=D_j\times[0,a]$ such that    intervals $x\times[0,a]$, $x\in D_j $  are  time $a$  trajectories of $X$ originated at $x\in D_j=D_j\times 0$. We will assume that $a$ is chosen small enough to guarantee that flow-boxes $Q_j$ have diameter $<2\eps$. 

Let $D$ be an $(m-1)$-dimensional disc. A vector field $Y$  on $D\times[0,a]$ is called 
a  {\em $\sigma$-plug}  if the following conditions are satisfied:
\begin{itemize}
\item[P1.] $Y$ coincides with $\frac{\p}{\p y}$ on $ \p Q$, where $y$ is the coordinate on $D\times[0,a]$ corresponding to the second factor;
\item[P2.] $Y$ satisfies the Morse-Smale condition and  admits a Morse Lyapunov function;  
\item[P3.] for  any point $p\in D$   with $\dist (p,\p D)>\sigma$ the  trajectory of $Y$    through     $\ p\times 0$ converges  to a critical point of $Y$;\item[P4.] given any  point $p\in D$  the  trajectory of $Y$    through    $p\times 0 $  either converges  to a critical point of $Y$, or exit $Q$ at a point $ p'\times a   $ where $\dist (   p', p)<\sigma$.
\end{itemize}
\begin{lemma}\label{lm:sigma-plug}
Let   $\Sigma$ be a closed  manifold of dimension $m$, and    $X$    a  vector field on $\Sigma$ with non-degenerate hyperbolic zeroes.  
Let $\{D_j\}$ be  a  generic  $\eps$-blocking collection,  and 
    $\{Q_j\} $   a collection of their    disjoint flow-boxes  of diameter $ <2\eps$. Then, if $\eps$ is sufficiently small  there exists    $\sigma>0$ such that  by replacing      for each $j$ the vector field $X|_{Q_j}$ by a    $\sigma$-plug $Y$ one gets  a vector field $\wh X$ which satisfies condition (L1) and such that
 all  its trajectories   has diameter $<10\eps$. Moreover,  property (L1) survives a sufficiently small $C^1$-perturbation of  $X$ away from flow-boxes and    neighborhoods of zeroes of $X$. 
 \end{lemma}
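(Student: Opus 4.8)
The plan is to produce the $\sigma$-plugs by a cut-and-paste argument on the flow-boxes, where the key idea is that a blocking collection already stops every long trajectory, so we only need to repair the dynamics inside the boxes so that trajectories entering a box either die at a new critical point or leave almost immediately. Concretely, in each $Q_j = D_j\times[0,a]$ I would first understand the ``first-return-type'' map: because $\{D_j\}$ is $\eps$-blocking, every trajectory arc of diameter $>\eps$ hits some $\Int D_k$, so along the flow of $X$ the successive intersections with $\bigcup D_k$ are spaced at diameter at most $\eps$. Using the genericity of the collection (transversality of the boundary projections $\pi_U|_{\partial D_j\cap U}$ under flow-boxes), the set of ``bad'' entry points of $D_j$ — those whose forward $X$-orbit re-enters $\bigcup\Int D_k$ before escaping to a neighborhood of a zero — is a union of relatively open subsets of $D_j$ cut out by preimages of the other discs, and its complement is compact and contained in the interior. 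This is where the parameter $\sigma$ enters: choose $\sigma$ small enough that the ``bad'' locus of every $D_j$ lies in the $\sigma$-collar $\{\dist(\cdot,\partial D_j)<\sigma\}$ is too much to hope for directly; instead I'd iterate the construction of Lemma~\ref{lm:sigma-plug}'s statement: the plug need only push bad points out through $D_j\times a$ by a displacement $<\sigma$, and then the blocking property of the \emph{other} discs takes over.

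**Next I would build the plug itself.** Inside $D_j\times[0,a]$, partition $D_j$ into the compact ``good'' core $K_j$ (whose forward orbit under the original $X$ converges to a zero, away from the other discs) and the collar where trajectories must be re-routed. On a slightly shrunk sub-disc I insert a standard Morse--Smale model — a single pair of a source and a sink, or more simply a vector field with one hyperbolic zero of index $0$ (a sink) inside $D_j\times\{a/2\}$ together with gradient-like behaviour steering the core toward it — chosen so that it admits a Morse Lyapunov function (P2) and so that core trajectories converge to it (P3). Near $\partial Q_j$ the field is already $\partial/\partial y$ by the flow-box structure, so (P1) is automatic; the interpolation region is where one must be careful that no new recurrence is created, which is exactly what the Lyapunov function in (P2) rules out. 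Collar points are steered so their trajectory exits through $D_j\times a$ with lateral displacement $<\sigma$ (P4); this is a matter of choosing the interpolation to be $C^0$-close to $\partial/\partial y$ in the collar, and the needed $\sigma$ can be taken as small as we like by shrinking the collar, at the cost of taking $a$ small.

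**Then comes the global conclusion.** With $Y$ replacing $X|_{Q_j}$ for every $j$, the resulting field $\wh X$ has, near each $Q_j$, only trajectories that (i) never enter $Q_j$, (ii) enter and converge to a new zero, or (iii) enter and leave within lateral distance $\sigma$. A trajectory of $\wh X$ that never converges to a zero must therefore pass through infinitely many discs or wander forever in $\Sigma\setminus\bigcup\Int Q_j$; but on that complement $\wh X = X$ and by the $\eps$-blocking property any $X$-arc of diameter $>\eps$ hits some $\Int D_k$, hence enters the corresponding $Q_k$. So an infinite non-convergent $\wh X$-trajectory must re-enter the boxes infinitely often, each time escaping with displacement $<\sigma$, which (choosing $\sigma$ small against the geometry of the configuration of the $D_k$, using the compactness of $\Sigma$ and the genericity that makes the ``escape and re-enter immediately'' locus controlled) forces it to actually be trapped — contradiction. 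This gives (L1), and the diameter bound: each arc either stays in one $\{$region between consecutive disc-crossings$\}$ of $X$-diameter $<\eps$, or sits in a flow-box of diameter $<2\eps$, so consecutive pieces glue to give diameter $<3\eps$. Finally, (L1) is an open condition in the relevant sense: a small $C^1$-perturbation of $X$ away from the flow-boxes and zero-neighborhoods does not create new non-convergent behaviour there (the hyperbolic zeros and their local Lyapunov functions persist, the blocking discs remain blocking after a $C^1$-small change since the transversality in the definition of $\eps$-blocking is stable), so $\wh X$-perturbed still satisfies (L1).

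**The main obstacle** is the global trapping argument in the last step: ruling out a trajectory that keeps entering boxes, escaping by less than $\sigma$ each time, and somehow surviving forever. Honestly handling this requires either (a) a clever choice of $\sigma$ relative to a uniform ``recurrence gap'' for the blocking collection, so that $\sigma$-small escapes cannot chain into a genuine long excursion, or (b) a Lyapunov-type function built piecewise from the plug Lyapunov functions and a function controlling the complement, which is exactly the strategy foreshadowed by Lemma~\ref{lm:Lyapunov-criterion}. I expect the cleanest route is (b): show the plugged field admits something close to a Lyapunov function, deduce (L1) from Lemma~\ref{lm:Lyapunov-criterion}'s criterion (or its proof scheme), and get persistence under $C^1$-perturbation for free from the Lyapunov description.
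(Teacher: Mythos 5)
Your overall reading of the situation is right, but the proof as written does not actually close the gap, and you explicitly acknowledge this by calling the last step ``the main obstacle.'' The lemma's content is precisely to resolve that obstacle, and the paper does it via route (a), not route (b). Two concrete issues.

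First, a misreading of the hypotheses: your middle paragraph (``Next I would build the plug itself'') constructs the $\sigma$-plug, but the lemma takes the $\sigma$-plug $Y$ as \emph{given} data. The nontrivial content is the choice of $\sigma$ and the global dynamical argument. (Constructing plugs that arise as characteristic foliations of $C^0$-small deformations is the subject of Proposition~\ref{prop:main-plug} and occupies most of the paper; it is irrelevant here.)

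Second, and more importantly, your option (b) --- deducing (L1) from a globally assembled Lyapunov function --- is circular in the paper's logical order. Lemma~\ref{lm:Lyapunov-criterion} uses (L1) together with (L2) as \emph{input} to produce a Lyapunov function; conversely, if $X$ already admitted a global Lyapunov function you would not need plugs at all. In the complement of the flow-boxes $\wh X$ equals the unmodified $X$, whose dynamics you do not control, so there is no piecewise-assembled Lyapunov function available before (L1) is established. The paper's argument is exactly your option (a), and the quantitative device you were missing is this: by genericity of the blocking collection, an $X$-arc that avoids $\bigcup\Int D_j$ meets $\bigcup\partial D_j$ in at most $m-1$ points (transversality of the projections of the $\partial D_j$ inside any flow-box gives a zero-dimensional intersection for $m-1$ of them). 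One shrinks each $D_j$ to a concentric disc $D_{j,-}\subset\Int D_j$ so that the blocking property persists with the $D_{j,-}$; by compactness the shrinking can be small enough that an $X$-arc missing $\bigcup D_{k,-}$ meets at most $m-1$ of the annuli $A_k=D_k\setminus\Int D_{k,-}$. Then choose $\sigma<\tfrac{1}{m+1}\min_j\dist(\partial D_{j,-},\partial D_j)$. A non-trapped $\wh X$-trajectory can only enter plugs within $\sigma$ of the boundary, i.e.\ in the annuli, and gets displaced by $<\sigma$ at each annulus pass; since consecutive deep passages through inner discs of the underlying $X$-orbit are separated by at most $m-1$ annulus crossings, the cumulative drift is $<m\sigma$. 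This is strictly less than the annulus width minus $\sigma$, so the $\wh X$-orbit still arrives at the next blocking disc at distance $>\sigma$ from its boundary, whence P3 traps it. This is the ``uniform recurrence gap'' you were gesturing at; the point is that genericity yields the explicit bound $m-1$, and the $(m+1)$ in the choice of $\sigma$ is tailored to it. The $C^1$-stability clause is then immediate, since all ingredients (transversality of the blocking collection, the annulus width, the diameter estimates) are $C^1$-open, and the plugs themselves are unchanged.

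Your diameter-$<3\eps$ accounting and your reading of (P1)--(P4) are fine; the fix you need is to drop the construction of $Y$, drop the Lyapunov route, and replace the final paragraph with the quantitative shrinking argument above.
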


 \begin{proof} 
Let us surround zeroes of $X$ by disjoint  balls $B_1, \dots, B_K$  of radius $<\eps$ such that the flowboxes $Q_j$ are contained in $\Sigma':=\Sigma\setminus\bigcup\limits_1^K B_j$. Any connected arc $\gamma$ of diameter $5\eps$ of an $X$-trajectory has a connected sub-arc $\gamma'\subset \gamma$ of length $2\eps$ in  $\Sigma':=\Sigma\setminus\bigcup\limits_1^K B_j$, and hence by definition of an $\eps$-blocking system there exists a sub-arc $\gamma''\subset\gamma$ of length $<\eps$ with  both its ends at
   $\Int D_i$ and $\Int D_j$ for some  $i,j$.
    By compactness argument we can find smaller closed discs $D_{j,-}\subset \Int D_j$ such that in  the above condition one can replace   $\Int D_j$  by  $D_{j,-}$.
 
 The genericity  property for the blocking collection $D_j$ implies that if  an  arc $\gamma$ of an $X$-trajectory does not intersect $\bigcup\limits_1^K \Int D_{j}$ then it cannot have more than $m-1$ intersection points with $\p D_j$. Hence, we can choose the discs    $D_{j,-}$ so close to $D_j$ that any arc    which does not intersect
 $\bigcup\limits_1^K D_{i,-}$ intersects no more than $m-1$ annuli $A_j:=D_j\setminus\Int D_{j,-}$. 
 for any arc $\delta$    connecting points $p\in D_i$ and $q\in D_j$ let $A_\delta:T_pD_i\to T_q D_j$ be the linearized monodromy map along the flow of $X$. By compactness of $\Sigma'$  there is a universal upper bound $C$ for $||A_\delta||$ for  all arcs $\delta\subset\Sigma'$ of diameter $\leq\eps$ connecting points on blocking discs.
  Choose $\sigma<\frac1{Cm+1}\min\limits_j(\dist(\p D_{j,-}.\p D_j)$. We claim that the vector field $\wh X$ obtained by replacing $X|_{Q_j}$ by $\sigma$-plugs has the required properties. Indeed,  consider any trajectory  $\wh\gamma$ of $\wh X$ which enters a flow-box $Q_i$ through  a point $p\in  D_{i,-}=D_{i,-}\times 0$. Let $\gamma$ be an $X$-trajectory through $p$ which intersect $D_{j,-}$ at a point $p'$, and  does not contain  any other points from  $\bigcup\limits_1^K D_{k,-}$. Then if $\wh\gamma$ is not blocked in $Q_i$, or any other of $<m$ plugs $Q_{k}$ for discs $D_{k}$ which intersect  $\gamma$, then it enters $Q_j$ through a point $p''$ with $\dist(p',p'')< Cm\sigma$, and thus $\dist(p'',\p D_i)>\sigma$. But this   means that the trajectory $\wh\gamma$ converges to a   zero of $\wh X$ in $Q_j$.  Moreover,  the trajectories of  the vector  field $\wh X$  have their diameter bounded by $10\eps$. Finally, we observe that the above analysis remains valid if   $X$ is  perturbed  by a sufficiently $C^1$-small homotopy outside flow-boxes.

   \end{proof}

\begin{figure}[ht]
 
 \includegraphics[scale=0.7]{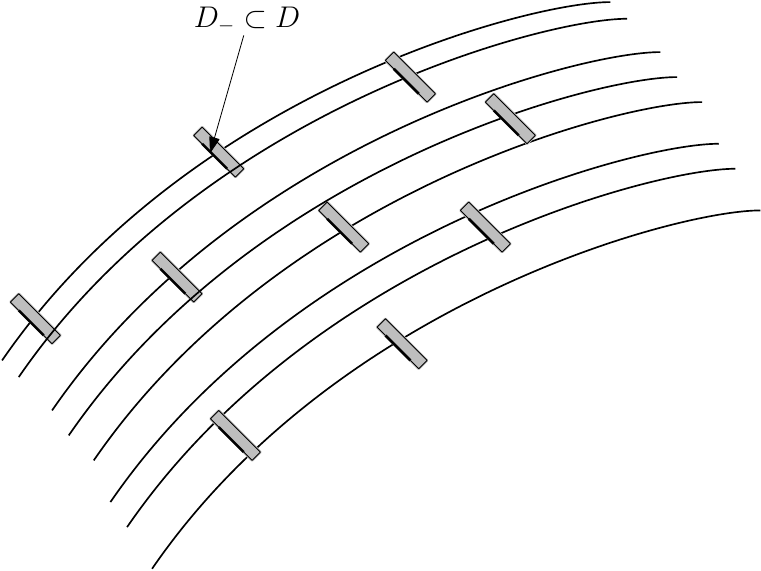}
 
\caption{ Blocking discs   with  their flow-boxes. }
\label{fig:blocking_collection}
\end{figure}

\section{Contact convexity}\label{sec:conv-big}
\subsection{Characteristic foliation}
 
Let $M$ be a contact manifold of  dimension $2n+1$ with a co-oriented   contact structure $\xi=\{\alpha=0\}$.  The volume form $\mu:=\alpha\wedge(d\alpha)^n$ defines an orientation of $M$, and $(d\alpha)^n|_{\xi}$  defines an orientation of $\xi$. If $n=2k+1$ then the former orientation, and if  $n=2k$ than the latter orientation,  depends only on $\xi$.

Let $\Sigma\subset M$ be a co-oriented hypersurface.  If  $\nu$ is a vector field defining its  co-orientation then the orientation of  $\Sigma$ is given  by the   $2n$-form  $\iota(\nu)\mu$.     At any point $p\in\Sigma$ where $\xi_p\pitchfork T_p\Sigma ,$ there  is defined a {\em characteristic line} $\ell_p:=\Ker \{d\alpha|_{\xi_p\cap T_p\Sigma}\}\subset \xi_p\cap T_p\Sigma$.  Note that the co-orientation of $\Sigma$ defines a co-orientation of  $\xi_p\cap T_p\Sigma$ in $\xi_p$. We orient $\ell_p$ by a vector $X_p\in\ell_p$ such that the 1-form  $\iota(X_p)d\alpha|_{\xi_p}$ defines   that  co-orientation.  

The line field $\ell$,  which is  defined in the complement of the tangency locus $T$  between $\xi$ and $\Sigma$, integrates to a singular foliation on $\Sigma$ with singularities at the points of  $T$. We will keep the notation $\ell$ for this foliation, and write $\ell_\Sigma$, $\ell_\xi$ or $\ell_{\xi,\Sigma}$ when it is  important to stress the dependence of $\ell$ on $\Sigma, \xi$, or both.

The singular locus $T$  splits   as a union of disjoint closed subsets, $T=T_+\cup T_-$, where $T_+$ (resp. $T_-$) consists of {\em positive} (resp. {\em negative}) points, where  the orientations  of $\xi_p$ and $T_p(\Sigma)$  coincide (resp. opposite). On    neighborhoods $U_\pm\supset T_\pm$, $U_\pm\subset\Sigma$, the form $d\beta$, $\beta=\alpha|_\Sigma$, is symplectic. We define a vector field $X$ on $\Sigma$ which directs  $\ell$ as equal to the Liouville field $d\beta$-dual to $ \beta$ on $U_+$,  and as a vector field     $d\beta$-dual to $- \beta$ on $U_-$, and extend it to the rest of $\Sigma$ as any non-vanishing vector field. The following lemma is due to E. Giroux in \cite{Gi} and was  pointed out to us by  D. Salamon. It provides an equivalent characterization of a vector field directing the characteristic foliation.
 Choose a  positive volume form $\rho$ on $\Sigma$  equal to $ (d\beta)^n$  on $U_+$  and  to $ -(d\beta)^n$    on $U_-$. 
 \begin{lemma} 
\label{lm:Dietmar2}
The vector field $X$ defined by the equation
  \begin{align}\label{eq:char}
  \iota(X)\rho=n\beta\wedge (d\beta)^{n-1}.
  \end{align}
    directs the characteristic foliation $\ell$.
    \end{lemma}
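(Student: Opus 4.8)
The plan is to verify equation \eqref{eq:char} pointwise on the open dense set $\Sigma\setminus T$ where $\xi$ is transverse to $\Sigma$, and to show that the vector field $X$ it defines there directs $\ell$; by continuity this pins down $X$ on all of $\Sigma$ up to the (irrelevant) choice of nonvanishing extension across $T$, and on the neighborhoods $U_\pm$ a direct symplectic computation identifies $X$ with the Liouville field dual to $\pm\beta$. So the content is really a local linear-algebra statement at a point $p$ where $\dim(\xi_p\cap T_p\Sigma)=2n-1$.

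First I would set up coordinates adapted to the geometry at such a $p$: pick a basis so that $T_p\Sigma$ is a hyperplane, $W:=\xi_p\cap T_p\Sigma$ is a codimension-one subspace of $\xi_p$ carrying the (possibly degenerate-looking, but actually rank $2n-2$) form $d\alpha|_W$, and write $\ell_p=\ker(d\alpha|_W)\subset W$. Choose a vector $e_0$ spanning $\ell_p$ and complete to a symplectic-type basis $e_0,f_0,e_1,f_1,\dots,e_{n-1},f_{n-1}$ of $\xi_p$ in which $d\alpha$ is standard, with $e_0\in W$, $\langle e_1,f_1,\dots\rangle\subset W$, and $f_0$ the direction in $\xi_p$ transverse to $W$; note $f_0\notin T_p\Sigma$ since otherwise $\xi_p\subset T_p\Sigma$. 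Then $\beta=\alpha|_\Sigma$ and $d\beta=d\alpha|_{T_p\Sigma}$; the crucial point is that $d\beta$ has rank $2n-2$ on $T_p\Sigma$ with kernel exactly spanned by $e_0$ together with the co-normal-ish direction $\nu$-image inside $T_p\Sigma$. Plugging $X=X_p\in\ell_p$, i.e. $X_p$ a multiple of $e_0$, into the right-hand side $n\beta\wedge(d\beta)^{n-1}$ and contracting, one sees that $(d\beta)^{n-1}$ restricted to the symplectic part $\langle e_1,f_1,\dots,e_{n-1},f_{n-1}\rangle$ is the top form there, $\iota(e_0)$ kills the $\beta$ factor unless $\beta(e_0)\ne 0$ but in any case the contraction $\iota(X_p)\bigl(n\beta\wedge(d\beta)^{n-1}\bigr)$ reduces to a nonzero multiple of the $(2n)$-form $\beta\wedge\iota(e_0)(d\beta)^{n-1}$ dual (via $\rho$) to the remaining hyperplane, which is precisely the condition characterizing $X_p$ up to positive scale. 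In other words, \eqref{eq:char} has a one-dimensional solution space at each point of $\Sigma\setminus T$, it is spanned by a vector in $\ell_p$, and one then checks the sign: $\iota(X_p)d\alpha|_{\xi_p}$ evaluated against the co-orientation direction comes out positive exactly when $\rho$ is chosen with the stated sign convention relative to $(d\beta)^n$ on $U_\pm$, which is how $\rho$ was defined.

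The second half is to match this with the Liouville description on $U_\pm$. On $U_+$, where $d\beta$ is symplectic and $\rho=(d\beta)^n$, the Liouville vector field $Z$ dual to $\beta$ satisfies $\iota(Z)d\beta=\beta$, hence $\iota(Z)(d\beta)^n=n\,\beta\wedge(d\beta)^{n-1}$, which is exactly \eqref{eq:char} with $X=Z$; on $U_-$ we have $\rho=-(d\beta)^n$ and $X$ dual to $-\beta$, so $\iota(X)d\beta=-\beta$ and $\iota(X)\bigl(-(d\beta)^n\bigr)=-n(-\beta)\wedge(d\beta)^{n-1}=n\beta\wedge(d\beta)^{n-1}$, again \eqref{eq:char}. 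Since \eqref{eq:char} determines $X$ uniquely wherever $\rho\ne 0$ (it is a nondegenerate top-form so contraction with it is an isomorphism from vector fields to $(2n)$-forms), the $X$ of the lemma agrees with the previously defined characteristic vector field on $U_\pm$, and by the linear-algebra computation above it directs $\ell$ everywhere on $\Sigma\setminus T$, hence on all of $\Sigma$ by continuity of $\ell$ as a singular foliation.

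The main obstacle I expect is the bookkeeping of orientations and signs: one must track the co-orientation $\nu$ of $\Sigma$, the induced co-orientation of $W=\xi_p\cap T_p\Sigma$ inside $\xi_p$, the resulting orientation of $\ell_p$ via $\iota(X_p)d\alpha|_{\xi_p}$, and the orientation of $\Sigma$ given by $\iota(\nu)\mu$, and verify that the sign of $\rho$ prescribed in the statement (namely $+(d\beta)^n$ near $T_+$ and $-(d\beta)^n$ near $T_-$, where $T_\pm$ is defined by whether the orientations of $\xi_p$ and $T_p\Sigma$ agree) is exactly the one making the contraction \eqref{eq:char} pick out the positively-oriented characteristic direction rather than its negative. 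The non-sign part of the computation is genuinely just the observation that contracting $\beta\wedge(d\beta)^{n-1}$ with a kernel vector of $d\beta|_{T_p\Sigma}$ produces a nonzero multiple of a fixed top form, together with the fact that $\iota(\cdot)\rho$ is a linear isomorphism; neither of these is hard. I would organize the write-up as: (1) reduce to a pointwise statement on $\Sigma\setminus T$; (2) the Liouville computation on $U_\pm$ giving existence and the identification there; (3) a short linear-algebra lemma that \eqref{eq:char} has a one-dimensional, $\ell_p$-spanning solution space at each point, with the sign check; (4) conclude by uniqueness and continuity.
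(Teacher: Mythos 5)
Your overall plan is sound and parallels the paper's structure: verify the equation separately on $U_\pm$ (where the Liouville identity $\iota(Z)(d\beta)^n = n\beta\wedge(d\beta)^{n-1}$ does the work, exactly as you write), then reduce the remaining claim to pointwise linear algebra on $\Sigma\setminus T$, using that $\iota(\cdot)\rho$ is an isomorphism from $T_p\Sigma$ onto $\Lambda^{2n-1}T_p^*\Sigma$ so \eqref{eq:char} has a unique solution at each point. However, the linear-algebra step that is supposed to close the argument is not actually carried out, and as written it contains errors. You speak of ``plugging $X_p\in\ell_p$ into the right-hand side and contracting,'' but $\iota(X_p)\bigl(n\beta\wedge(d\beta)^{n-1}\bigr)=\iota(X_p)\iota(X_p)\rho=0$ identically; contracting the RHS by $X_p$ is not a way to verify \eqref{eq:char}, it just recovers the trivial identity $\iota(X)^2=0$. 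Further, ``$\beta\wedge\iota(e_0)(d\beta)^{n-1}$'' is a $(2n-2)$-form, not a $(2n)$-form, so the claim that it is ``dual via $\rho$ to a hyperplane'' does not type-check. What actually needs to be shown is that the $(2n-1)$-form $\beta\wedge(d\beta)^{n-1}$ has kernel exactly $\ell_p$ (when nonzero); equivalently, that for the $X$ determined by \eqref{eq:char}, the relation $\iota(X)(\beta\wedge(d\beta)^{n-1})=0$ forces $X\in\ell_p$. Your adapted basis would indeed let you verify this, but the verification is the whole content of the lemma and cannot be skipped.

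For comparison, the paper's proof is coordinate-free and goes as follows. Since $\iota(X)\iota(X)\rho=0$, \eqref{eq:char} gives $\iota(X)(\beta\wedge(d\beta)^{n-1})=0$. Expanding, $\beta(X)(d\beta)^{n-1}+(n-1)(\iota(X)d\beta)\wedge\beta\wedge(d\beta)^{n-2}=0$. Restricting to $\ker\beta=\xi_p\cap T_p\Sigma$, the second summand drops out and $(d\beta)^{n-1}|_{\ker\beta}\neq 0$ (as $\ker\beta$ is a coisotropic hyperplane of $(\xi_p,d\alpha_p)$), so $\beta(X)=0$, i.e.\ $X\in\ker\beta$. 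The remaining relation $(\iota(X)d\beta)\wedge\beta\wedge(d\beta)^{n-2}=0$ descends to the symplectic quotient $Q_p:=(\xi_p\cap T_p\Sigma)/\ell_p$ of dimension $2n-2$, where multiplication by $(d\beta)^{n-2}$ is an isomorphism between $1$-forms and $(2n-3)$-forms; hence $\iota(X)d\beta|_{\ker\beta}=0$, i.e.\ $X\in\ell_p$. This is the argument your basis computation would have to reproduce. Two small further remarks: the $n=1$ case needs to be taken separately (there is no $(d\beta)^{n-2}$ factor; $\iota(X)\beta=0$ is immediate), and your phrase ``nonvanishing extension across $T$'' is misleading --- $\beta_p=0$ at $p\in T$, so the $X$ of \eqref{eq:char} genuinely vanishes on $T$, which is exactly what a vector field directing a singular characteristic foliation must do.
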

    \begin{proof}
 On $U_\pm$  equation \eqref{eq:char} is equivalent to
  $\iota(X)d\beta=\pm \beta$, i.e. $X$ coincides with the Liouville field  dual to
  $\beta$ on $U_+$  and to the $d\beta$-dual to $- \beta$ vector field  on $U_-$.
   Elsewhere, $X\neq 0$ and  $  \iota(X)(\beta\wedge(d\beta)^{n-1})=0.$ If $n=1$ this implies that $\beta(X)=0$ and hence, $X\in\ell$. For $n\geq 2$ we have  
  \begin{align*} &0=\iota(X)(\beta\wedge(d\beta)^{n-1})=
  \beta(X)(d\beta)^{n-1}+(n-1) \left(\iota(X)d\beta\right)\wedge\beta\wedge  d\beta^{n-2}. 
  \end{align*}
    By restricting the from on the right hand side of the above equation  to $\Ker\beta$ we conclude that   $\beta(X)=0.$ This is because when restricted 
    to  $\Ker\beta$   the form $\left(\iota(X)d\beta\right)\wedge\beta\wedge d\beta^{n-2}=0.$  But then we get,
    $ \left(\iota(X)d\beta\right)|_{\Ker\beta} =0$. Indeed, the form $d\beta$ descends   as a symplectic form to the $(2n-2)$-dimensional quotient space $Q_p:=(\xi_p\cap T_p\Sigma)/T\ell_p$ as a symplectic form. Hence, the multiplication  by   $d\beta^{n-2}$ defines an isomorphism between $1$- and $(2n-3)$-forms on $Q_p$, and the claim follows.

\end{proof}
 Let us recall that the contact structure on a neighborhood of a hypersurface $\Sigma$ is determined by its restriction to the hypersurface.
  \begin{prop}[A. Givental, \cite{AG}]\label{prop:AG}
 Let $\xi=\Ker\alpha, \xi'=\Ker\alpha'$ be two contact structures defined on a neighborhood of $\Sigma=\Sigma\times 0\subset\Sigma\times\R$. Suppose that 
 $\alpha|_{\Sigma}=h \alpha'|_{\Sigma'}$ for a positive function $h:\Sigma\to\R$.
 Then there exists a diffeomorphism $g:\Op\Sigma\to\Op\Sigma$ which  is fixed on $\Sigma$ and such that $dg(\xi)=\xi'$.
 \end{prop}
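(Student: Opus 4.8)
The plan is to prove Givental's neighborhood theorem by a standard Moser-type deformation argument on a neighborhood of $\Sigma$ in $\Sigma\times\R$. First I would reduce to the case where the two contact forms literally agree on $\Sigma$: using the positive function $h$ and the coorientation, one can precompose $\alpha'$ with a diffeomorphism supported near $\Sigma$ that rescales the normal direction, or simply replace $\alpha'$ by $h\alpha'$ (which has the same kernel $\xi'$), so that after this harmless adjustment $\alpha|_\Sigma=\alpha'|_\Sigma$ as honest $1$-forms on $\Sigma$. Next I would observe that the two forms, restricted to $\Sigma$, having equal pullback forces $d\alpha|_{T\Sigma}=d\alpha'|_{T\Sigma}$, so the linear data of $\xi$ and $\xi'$ along $\Sigma$ (the hyperplane field and the conformal symplectic form on it) coincide. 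By a further diffeomorphism fixing $\Sigma$ one can then arrange that $\alpha$ and $\alpha'$ agree to first order along $\Sigma$, i.e. $\alpha=\alpha'$ and $d\alpha=d\alpha'$ at every point of $\Sigma\times 0$; this is the standard preparatory step and uses only linear algebra in the normal bundle plus an isotopy extension.

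With $\alpha$ and $\alpha'$ equal to first order along $\Sigma$, I would set $\alpha_t:=(1-t)\alpha+t\alpha'$ for $t\in[0,1]$ and check that on a sufficiently small neighborhood $\Op\Sigma$ each $\alpha_t$ is a contact form: the contact condition $\alpha_t\wedge(d\alpha_t)^n\neq 0$ holds on $\Sigma$ because it holds there for both endpoints and, along $\Sigma$, $\alpha_t$ and $d\alpha_t$ are independent of $t$; by openness it persists on a neighborhood, uniformly in $t$. Then I would seek a time-dependent vector field $Z_t$, vanishing on $\Sigma$ (to keep $\Sigma$ fixed), whose flow $g_t$ satisfies $g_t^*\alpha_t=\lambda_t\alpha_0$ for some positive functions $\lambda_t$. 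Differentiating gives the Moser equation $\dot\alpha_t+\mathcal L_{Z_t}\alpha_t=\frac{\dot\lambda_t}{\lambda_t}\alpha_t$, and with $\dot\alpha_t=\alpha'-\alpha$ and the Cartan formula this becomes $(\alpha'-\alpha)+d(\iota(Z_t)\alpha_t)+\iota(Z_t)d\alpha_t=\mu_t\alpha_t$. Writing $Z_t=Z_t'+s_tR_t$ with $Z_t'\in\ker\alpha_t$ and $R_t$ the Reeb field of $\alpha_t$, one first uses the component along $R_t$ and a pairing with $R_t$ to solve algebraically for the function controlling $\mu_t$ and $s_t$, then uses nondegeneracy of $d\alpha_t$ on $\ker\alpha_t$ to solve uniquely for $Z_t'$; this is the classical computation (e.g. as in the proof of the Darboux-Givental theorem in Arnold-Givental or Geiges's book). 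Because $\alpha'-\alpha$ and its derivatives vanish on $\Sigma$, the resulting $Z_t$ vanishes on $\Sigma$, so its flow is defined for all $t\in[0,1]$ on a possibly smaller neighborhood and fixes $\Sigma$ pointwise.

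The output diffeomorphism is $g:=g_1$, which fixes $\Sigma$ and satisfies $g^*\alpha_1=\lambda_1\alpha_0$, hence $dg(\xi_0)=\xi_1$, i.e. $dg(\xi)=\xi'$ as required (after undoing the harmless rescaling from the first step, which only changes $\alpha'$ within its kernel and so does not affect $\xi'$). The main obstacle, and the only place any care is needed, is the uniform-in-$t$ contact condition together with the solvability of the Moser equation on a neighborhood that does not shrink to $\Sigma$ as $t$ varies: one must check that the flow of $Z_t$ does not escape the chosen neighborhood before time $1$, which follows from $Z_t$ vanishing on $\Sigma$ (so trajectories starting near $\Sigma$ move slowly) together with a Gronwall estimate. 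Everything else is routine linear algebra and the standard Moser trick, so I would present the argument compactly, citing the analogous proofs of the Darboux and Weinstein-type theorems in contact geometry.
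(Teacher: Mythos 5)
The paper does not actually give a proof of this proposition; it is attributed to A.~Givental and the text explicitly defers the details to D.~Salamon's notes. So the only thing to assess is the internal correctness of your argument, and there is a genuine gap in the step you treat as routine.

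Your claim that equality of the pullbacks $\alpha|_{T\Sigma}=\alpha'|_{T\Sigma}$ forces the hyperplane fields $\xi|_\Sigma$ and $\xi'|_\Sigma$ (as subbundles of $TM|_\Sigma$) to coincide is false. The pullback to $T\Sigma$ says nothing about the $dt$-components of $\alpha$ and $\alpha'$, and at a point $p\in\Sigma$ the kernel $\ker\alpha_p\subset T_pM$ depends on that normal component. A concrete example on $\R^2\times\R$: $\alpha=dt+x\,dy$ and $\alpha'=-dt+x\,dy$ satisfy $\alpha|_{T\Sigma}=\alpha'|_{T\Sigma}=x\,dy$, yet at $(1,0,0)$ one has $\ker\alpha=\mathrm{span}(\partial_x,\partial_y-\partial_t)$ while $\ker\alpha'=\mathrm{span}(\partial_x,\partial_y+\partial_t)$, which are different planes. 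The same example shows why the issue cannot be waved away: the convex interpolation $\alpha_t=(1-t)\alpha+t\alpha'=(1-2t)\,dt+x\,dy$ is identically non-contact at $t=1/2$. So without first achieving $\alpha_p=\alpha'_p$ as covectors on $T_pM$ (and the same orientation of $\alpha\wedge(d\alpha)^n$ and $\alpha'\wedge(d\alpha')^n$), the Moser argument never starts.

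Consequently the ``standard preparatory step'' you invoke --- adjusting by a diffeomorphism fixing $\Sigma$ so that $\alpha$ and $\alpha'$ agree to first order along $\Sigma$ --- is precisely where the real content of the proposition sits, and it is not merely linear algebra in the normal bundle. One must produce a fibrewise linear automorphism of $TM|_\Sigma$ that is the identity on $T\Sigma$ and carries $\xi|_\Sigma$ to $\xi'|_\Sigma$, then integrate it to a diffeomorphism of $\Op\Sigma$. Doing so smoothly requires, at least, checking that the ratio of the normal components $\alpha(\partial_t)/\alpha'(\partial_t)$ has a consistent sign across the tangency locus (this follows from the fact that $\alpha\wedge(d\alpha)^n$ and $\alpha'\wedge(d\alpha')^n$ are nowhere-zero top forms on a connected neighborhood and hence differ by a function of constant sign), possibly after precomposing with the reflection $(x,t)\mapsto(x,-t)$ to match overall orientations, and handling the places where the normal component of $\alpha'$ vanishes at non-tangency points. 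None of this is addressed, and your justification for it rests on the incorrect statement that the hyperplane fields already coincide. The Moser part after first-order agreement, and the observation that $Z_t$ vanishing on $\Sigma$ keeps the flow from escaping, are fine; the gap is entirely in the reduction.
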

In fact, the statement formulated in \cite{AG} is slightly weaker. We thank D. Salamon for  providing the details of the proof of the above result.

All singularities of  a vector field $X$  directing the characteristic foliation $\ell$ can be made non-degenerate  and hyperbolic  by a $C^\infty$-small perturbation of $\Sigma$, see e.g. \cite{CE12}.   For a generic 1-parametric family of characteristic foliations, the directing vector field $X_s$ can  also  have (hyperbolic) embryo singularities for isolated values of the parameter $s$.
 
 We will need the following version of Lemma \ref{lm:exist-discs} for a vector field  $X$ directing a characteristic foliation.  Define the {\em standard contact $(2n-1)$-disc} $(D=D^{2n-1},\xi_\st)$ as contactomorphic to the  hemisphere
$D=S^{2n-1}_+:=S^{2n-1}\cap \{y_n\geq 0\}$ endowed with the contact structure $\xi_\st:=\left\{\sum\limits_1^n( x_jdy_j-y_jdx_j)|_{S^{2n-1}}=0\right\}$.

\begin{lemma}\label{lm:exist-discs-cont} Let $X$ be a vector field directing a characteristic foliation on a closed hypersurface $\Sigma$ in a contact manifold of dimension $2n+1$. Suppose that all zeros of $X$ are  isolated and hyperbolic (non-degenerate or embryos).   Then for any $\eps>0$  the field $X$  admits an $\eps$-blocking collection $\{D_j\}$  which consists of standard contact $(2n-1)$-dimensional  discs.
\end{lemma}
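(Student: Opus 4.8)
The plan is to reduce this to Lemma~\ref{lm:exist-discs} by first producing an $\eps$-blocking collection of arbitrary transverse codimension-one discs, and then replacing each such disc by a standard contact disc inside its flow-box. First I would check that $X$, after a $C^\infty$-small perturbation of $\Sigma$ supported away from the zeros of $X$, is in general position (i.e. has only Thom–Boardman–Morin $\Sigma^{1,\dots,1}$ tangencies) with respect to any fixed reference hypersurface; more to the point, I will simply invoke Lemma~\ref{lm:exist-discs} for $X$ on the closed manifold $\Sigma$ (no boundary, so the general-position hypothesis on $\partial\Sigma$ is vacuous), obtaining an $\eps/2$-blocking collection $\{D'_j\}$ of transverse discs of diameter $<\eps/2$. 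By the remark following that lemma, after a further $C^\infty$-small perturbation this collection may be assumed generic, and I may also assume (shrinking each $D'_j$ slightly and moving it along the flow) that each $D'_j$ is contained in the interior of a flow-box.

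The heart of the argument is local: given a transverse disc $D'_j$ of small diameter, I want to find, inside a neighborhood of $D'_j$, a standard contact $(2n-1)$-disc $D_j$ which still blocks every trajectory arc that $D'_j$ blocked. Here is where I would use the contact-geometric hypothesis. Near any point $p$ of $\Sigma$ where $\xi\pitchfork T_p\Sigma$, Givental's normal form (Proposition~\ref{prop:AG}) together with the standard Darboux-type model shows that a small neighborhood of $p$ in $\Sigma$, with its characteristic foliation, is contactomorphic to a neighborhood of a point in $(S^{2n-1}_+,\xi_\st)$ with its characteristic foliation; more precisely, $\beta=\alpha|_\Sigma$ is, up to a positive conformal factor and a diffeomorphism fixing $p$, the standard contact form on the sphere, and the directing field $X$ is determined by $\beta$ via equation~\eqref{eq:char} of Lemma~\ref{lm:Dietmar2}. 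Thus on a small enough neighborhood of $p$ the pair $(\Sigma,\ell)$ looks like $(S^{2n-1}_+,\ell_\st)$; inside this model I can find a standard contact disc $D_j$ transverse to the flow which is $C^0$-close to $D'_j$ and whose interior contains $\pi_U(D'_j)$ for the relevant flow-box $U$ — so that any trajectory arc meeting $\Int D'_j$ also meets $\Int D_j$, after perhaps enlarging $\eps/2$ to $\eps$ to absorb the diameters. Replacing each $D'_j$ by the corresponding $D_j$ (and discarding any redundant ones) yields the desired $\eps$-blocking collection of standard contact discs.

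The main obstacle I anticipate is the second step: showing that a neighborhood of a point of $\Sigma\setminus T$ is contactomorphic, \emph{preserving the characteristic foliation}, to a neighborhood in the standard contact hemisphere, and then that a standard contact disc of prescribed (small) size and position can be fitted transversally to the flow inside that model. The contactomorphism part should follow from Givental's theorem once one normalizes $\beta$ at $p$, but one must be careful that the standard disc $S^{2n-1}_+$ is genuinely a transverse disc for the model characteristic foliation and that its interior can be made to cover a prescribed small ball in a chosen level set — this requires understanding the characteristic foliation of $(S^{2n-1}_+,\xi_\st)$ explicitly (it is the Liouville/anti-Liouville picture away from the two singular points $\pm e_n$, where the north and south poles are the tangency points $T_\pm$), and choosing $D_j$ to avoid those singular points. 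A secondary, routine point is bookkeeping with the genericity condition and the constants $\eps,\eps/2$: one must ensure that the finitely many substitutions do not interact, which follows because the flow-boxes $Q_j$ are disjoint and each replacement is confined to its own $Q_j$.
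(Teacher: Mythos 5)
Your overall strategy matches the paper's: produce transverse blocking discs via Lemma~\ref{lm:exist-discs}, then arrange that they be standard contact discs. But the second step as written is off. You say a neighborhood of a nonsingular point $p\in\Sigma$, with its characteristic foliation, is ``contactomorphic'' to a neighborhood in $(S^{2n-1}_+,\xi_{\st})$; this does not parse, since $\Sigma$ is $2n$-dimensional and carries no contact structure, and Proposition~\ref{prop:AG} (Givental) normalizes germs of $\xi$ along hypersurfaces of $M$, not line fields on $\Sigma$. The fact actually needed --- and the one the paper uses --- is the elementary observation that any hypersurface $V\subset\Sigma$ transverse to the characteristic foliation is a codimension-two \emph{contact} submanifold of $(M,\xi)$: transversality to $\ell=\Ker\bigl(d\alpha|_{\xi\cap T\Sigma}\bigr)$ is precisely the condition that $d\alpha$ restricts nondegenerately to $\xi\cap TV$. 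In the proof of Lemma~\ref{lm:exist-discs} the discs $D_j$ already arise as a covering of such a transverse level set of the Lyapunov function; applying Darboux's theorem to that codimension-two contact submanifold, one simply chooses the covering to consist of small Darboux balls, which is the paper's one-line argument. Your ``replace each $D'_j$'' scheme can be rescued by the same token, since $\pi_U(D'_j)$ is a small compact set inside a transverse contact hypersurface and hence lies inside a Darboux ball there, but it is extra bookkeeping once you see that the original covering discs can simply be chosen standard from the start.
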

\begin{proof} We only need to ensure  that discs forming the blocking collections can be chosen contactomorphic to the standard  contact disc. We recall that in the proof of   Lemma \ref{lm:exist-discs} discs $D_j$  arise as  elements of a covering of a transverse contact hypersurface.  But the covering can  always be  chosen  to be formed   by standard small Darboux balls.
\end{proof}

\subsection{Lyapunov functions for characteristic foliations}

For a  vector field $X$ directing a characteristic foliation   $\ell$ on a hypersurface $\Sigma$ stable manifolds of  positive zeroes   are isotropic with respect to $d\beta$,  while unstable are coisotropic, see \cite{CE12}.
Near  negative zeroes the field $-X$ is Liouville, and thus stable manifolds of negative zeroes    are  co-isotropic  while unstable are isotropic,    In particular,  a  local Lyapunov function on $\Op (T_+\cup T_-)$, have critical points of index $\leq n$ at  the positive points, and of  index $\geq n$ in the negative ones. It is important to note  that  critical points   of index $n$ can be either negative or positive.
The stable manifold of a positive (resp. negative)  embryo   is an isotropic
(resp. co-isotropic) half-space.

We call a Lyapunov function $f:\Sigma\to\R$ for $X$ {\em good}
if there exists a regular value $c$ such that all  positive zeroes of $X$ are in $\{f>c\}$ and all negative ones are in $\{f<c\}$.   Sometimes we will call $f$  a Lyapunov function for  $\ell$, rather than  $X$.

 Following Giroux, we call a trajectory $\gamma$ of $X$ a {\em retrograde connection} if it originates at a negative point of $X$ and terminates at a positive one.
 Lemma \ref{lm:Lyapunov-criterion} implies  
 \begin{corollary}\label{cor:good-Lyapunov}
 Suppose that a vector field $X$ satisfies conditions (L1) and (L2) from   Lemma \ref{lm:Lyapunov-criterion}.  Then it admits a good Lyapunov function if and only if it  has no retrograde connections. In particular, any  $X$ which satisfies (L1) and the   Morse-Smale condition admits a good Lyapunov function. \end{corollary}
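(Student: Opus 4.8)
The plan is to reduce Corollary~\ref{cor:good-Lyapunov} to Lemma~\ref{lm:Lyapunov-criterion} by translating the ``good'' condition into the combinatorial hypothesis (L2) together with the absence of retrograde connections. First I would recall from the discussion preceding the statement that, for a vector field $X$ directing a characteristic foliation, every positive zero has index $\leq n$ and every negative zero has index $\geq n$, with index-$n$ zeroes of either sign. The key observation is that a trajectory running from a zero of index $k$ to a zero of index $\ell$ (with both nondegenerate, in the Morse--Smale case) forces $k \geq \ell$ generically, but in any case the only trajectories that could run ``backwards'' from the $\{f<c\}$ region to the $\{f>c\}$ region are precisely those from a negative zero to a positive zero, i.e. retrograde connections. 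So a good Lyapunov function exists iff one can order the zeroes compatibly with (L2) while simultaneously placing all negative zeroes below level $c$ and all positive zeroes above it.

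The forward direction is immediate: if $f$ is a good Lyapunov function with separating regular value $c$, then $f$ strictly decreases along every nonconstant trajectory, so a trajectory from a negative zero (in $\{f<c\}$) can never reach a positive zero (in $\{f>c\}$); hence there are no retrograde connections. For the converse, suppose $X$ satisfies (L1), (L2) and has no retrograde connections. I would argue that one can choose the ordering $O_1,\dots,O_N$ in (L2) so that all negative zeroes precede all positive zeroes. Indeed, consider the partial order generated by $O_i \prec O_j$ whenever there is a trajectory from $O_i$ to $O_j$; by (L2) this is a genuine partial order (acyclic). Since there are no retrograde connections, no positive zero lies below any negative zero in this partial order, so the partial order that additionally declares every negative zero to be below every positive zero is still acyclic. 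Any linear extension of this enlarged partial order is an admissible ordering for (L2) in which all negatives come first. Feeding this ordering into Lemma~\ref{lm:Lyapunov-criterion} produces a Lyapunov function $f$ with $f(O_1) < \dots < f(O_N)$; taking $c$ to be any value strictly between $f(O_{N_-})$ and $f(O_{N_-+1})$, where $O_{N_-}$ is the last negative zero, and noting that $c$ can be taken regular, gives a good Lyapunov function.

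For the final sentence of the corollary, I would invoke the remark already made in the paper after Lemma~\ref{lm:Lyapunov-criterion}: the Morse--Smale property implies (L2). It also implies, via the index inequalities for characteristic foliations, that there are no retrograde connections: a retrograde connection would be a transverse intersection of the unstable manifold of a negative zero (of dimension $\leq n$, since the stable manifold has dimension $\geq n$) with the stable manifold of a positive zero (of dimension $\leq n$); transversality inside the $2n$-dimensional $\Sigma$ forces the sum of these dimensions to be at least $2n$, hence both equal to $n$, and then a dimension count shows the connection is isolated, but a closer look at the Liouville structure near the two ends — the unstable manifold of a negative zero is isotropic and the stable manifold of a positive zero is isotropic — rules it out; alternatively one simply cites that (L1) plus Morse--Smale is exactly the situation already covered. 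Combined with (L1), the converse direction of the corollary then applies and yields a good Lyapunov function.

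The main obstacle I anticipate is the converse step's claim that the enlarged partial order is acyclic: one must be careful that ``no retrograde connections'' genuinely prevents the creation of a cycle when we force negatives below positives, i.e. that one cannot have a chain $\text{(negative)} \prec \cdots \prec \text{(positive)} \prec \cdots$ closing up — but any such closure would itself require, somewhere along the chain, a trajectory from a positive zero back to a negative one, which is a retrograde connection (after possibly composing trajectories through intermediate zeroes, using (L1)). Making this composition argument precise — that the transitive closure of the trajectory relation among zeroes cannot send a positive zero to a negative zero unless a single retrograde connection exists — is the one point that needs a genuine (if short) argument; everything else is bookkeeping on top of Lemma~\ref{lm:Lyapunov-criterion}.
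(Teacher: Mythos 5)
Your overall strategy matches the paper's: use Lemma~\ref{lm:Lyapunov-criterion}, choose the ordering in (L2) so that all zeroes of one sign precede all of the other, and argue that ``no retrograde connections'' is exactly what makes such an ordering consistent with the trajectory partial order. The paper's proof is the two-sentence version of this, and your transitive-closure / chain-crossing argument for acyclicity is a reasonable way to flesh out the paper's ``one can always order zeroes in such a way that\ldots''. That part of the idea is genuinely helpful.

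However, you have the ordering backwards, and as a consequence the key combinatorial step is wrong. The paper orders the \emph{positive} zeroes first and the negative ones last; you propose to put the negative ones first. Since the Lyapunov function produced by Lemma~\ref{lm:Lyapunov-criterion} is \emph{increasing} along trajectories (the defining inequality is $df(X)\ge C(\|X\|^2+\|df\|^2)\ge 0$, not $\le$ as your ``$f$ strictly decreases'' asserts), the ordering number is the $f$-value, and the obstruction to an ordering with negatives first is a chain $\mathrm{positive}\prec\cdots\prec\mathrm{negative}$. But a retrograde connection is by definition a single trajectory from a \emph{negative} to a \emph{positive} zero, so ``no retrograde connections'' controls exactly the opposite kind of chain: any chain $\mathrm{negative}\prec\cdots\prec\mathrm{positive}$ must cross the sign at some consecutive pair, and that crossing is a retrograde connection. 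Thus your claim that ``no retrograde connections $\Rightarrow$ no positive zero lies below any negative zero in this partial order'' does not follow; what does follow is that no \emph{negative} zero lies below any \emph{positive} zero, which is precisely what you need if you instead put the positive zeroes first, as the paper does. (Note also that the paper's use of ``good'' in the proof of Lemma~\ref{lm:W-convex} places $h<0$ on $U_+$ and $h>0$ on $U_-$, i.e.\ positive zeroes at low values; this is the convention that makes the Corollary and its proof consistent.) With the ordering reversed and the obstruction corrected, the rest of your bookkeeping goes through.

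On the final ``in particular'' sentence: your dimension count is slightly off. For a Morse--Smale field, a connecting trajectory forces $\dim W^u(\mathrm{neg})+\dim W^s(\mathrm{pos})\ge 2n+1$ (a transverse one-dimensional intersection), while the index constraints give $\dim W^u(\mathrm{neg})\le n$ and $\dim W^s(\mathrm{pos})\le n$, hence sum $\le 2n$ --- so transversality alone already rules out retrograde connections, without the further appeal to the isotropic/coisotropic structure or to isolatedness of the connection.
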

\begin{proof} If there are no retrograde connections, then one can always order zeroes in such a way that positive zeroes go first, and hence the construction in \ref{lm:Lyapunov-criterion} yields a good Lyapunov function. The necessity of the absence of retrograde connections for existence of good Lyapunov function is straightforward.
\end{proof}

Similarly,  using Lemma \ref{lm:Lyapunov-criterion-param} we get a parametric version of this statement.
  \begin{corollary}\label{cor:Lyapunov-convexity-param} Any family  $X_s$, $s\in[0,1]$,  which satisfy (L1) and (L2)  and have no retrograde connections admits a family  of  good Lyapunov functions.
\end{corollary}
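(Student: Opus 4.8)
The plan is to deduce Corollary~\ref{cor:Lyapunov-convexity-param} from the parametric Lyapunov criterion, Lemma~\ref{lm:Lyapunov-criterion-param}, in essentially the same way that Corollary~\ref{cor:good-Lyapunov} was deduced from the non-parametric Lemma~\ref{lm:Lyapunov-criterion}. The only new point is that the ordering of zeroes witnessing condition (L2) must be chosen compatibly across the family so that it simultaneously places all positive zeroes before all negative ones — this is the \emph{goodness} requirement — and that such a coherent choice can always be made when there are no retrograde connections.

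First I would observe that, since each $X_s$ satisfies (L1), (L2) and has no retrograde connection, for each fixed $s$ Corollary~\ref{cor:good-Lyapunov} applies: one may choose an ordering $O_1(s),\dots,O_N(s)$ of the zeroes of $X_s$ in which every positive zero precedes every negative zero, and with no $X_s$-trajectory running from $O_i(s)$ to $O_j(s)$ for $i>j$. The content of the parametric statement is to promote this to a family. I would run the Serre-fibration argument of Lemma~\ref{lm:Lyapunov-criterion-param} verbatim, but applied to the space of \emph{good} Lyapunov functions rather than all Lyapunov functions: the space of good Lyapunov functions for a fixed $X$ is still convex (a convex combination of two functions, each having all positive zeroes above some regular level and all negative zeroes below it, is again such a function — one only needs that the two separating regular values can be taken to lie in a common interval of regular values, which holds after a preliminary normalization), hence contractible; a good Lyapunov function for $X_{s_0}$ extends to a family of good Lyapunov functions for nearby $X_s$ since the separation property is open; therefore the projection from the space of pairs (field satisfying (L1)+(L2) with no retrograde connection, good Lyapunov function) to the base is a micro-fibration with non-empty contractible fibers, hence a Serre fibration by \cite{Gro86,We05}, and the section over $\{0\}$ extends over $[0,1]$.

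The step I expect to be the main (though modest) obstacle is verifying the two local assertions that feed the micro-fibration argument in the good setting: that the fiber is non-empty for every $s$ — which is exactly Corollary~\ref{cor:good-Lyapunov} applied pointwise — and that a good Lyapunov function at one parameter value genuinely extends to a \emph{good} family nearby. For the latter one must be slightly careful when $X_s$ passes through an embryo value: a birth or death of a cancelling pair of zeroes is always internal to $\{f>c\}$ or to $\{f<c\}$ (a positive-positive or negative-negative cancellation, since the index parities force the two zeroes of a cancelling pair to lie on the same side), so the regular value $c$ separating positive from negative zeroes can be kept fixed through such events. Granting this, the contractibility of the good-Lyapunov fiber and the openness of the extension property are routine, and the conclusion follows exactly as in Lemma~\ref{lm:Lyapunov-criterion-param}.
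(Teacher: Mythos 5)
Your argument is essentially the paper's: run the micro-fibration/Serre-fibration argument of Lemma~\ref{lm:Lyapunov-criterion-param} with the space of Lyapunov functions replaced by the space of \emph{good} Lyapunov functions, noting that each fiber is non-empty by Corollary~\ref{cor:good-Lyapunov} and contractible because convex combinations of good Lyapunov functions, once normalized so that the level $0$ separates positive from negative zeroes, are again good. Your added care about the embryo case is a correct supplement that the paper leaves implicit, though the cleaner reason that both members of a birth/death pair lie on the same side is that the sign of a zero of the characteristic foliation is a continuously varying invariant — it is determined by whether $(d\beta)^n$ is positive or negative near the zero — rather than any index-parity consideration.
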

 \begin{proof} An additional observation which is needed for the proof, in addition  to the argument in \ref{lm:Lyapunov-criterion-param}, is  that the space of good Lyapunov functions is contractible.  Indeed,   if we normalize  Lyapunov functions by the condition that the  $0$ level  is separating positive and negative points, then their convex linear combination   is again a good Lyapunov function.
 \end{proof}

\subsection{Flavors of contact convexity}\label{sec:conv}
The notion of contact convexity was first defined in \cite{EG91}, and then explored by Emmanuel Giroux,  see \cite{Gi}, Ko Honda, see \cite{Ho00}, and others.
\begin{definition}
\begin{enumerate}\item  A hypersurface $\Sigma\subset (M,\xi)$ is called {\em convex} if it admits a transverse contact vector field $\Upsilon$.
\item  A hypersurface $\Sigma$ is called {\em Weinstein convex} if
    its characteristic foliation $\ell$ admits a good Lyapunov function.
  \end{enumerate}
  \end{definition}
  As we will see below in Lemma \ref{lm:W-convex}, Weinstein convexity is a stronger condition which  implies   convexity.

  E. Giroux proved in \cite{Gi} that for  $2$-dimensional surfaces contact convexity can be achieved by a $C^\infty$-perturbation.
    
    Using Corollary \ref{cor:good-Lyapunov} we can equivalently characterize Weinstein convexity by conditions (L1) and (L2)  (or, equivalently, existence of  {\em any} Lyapunov function) for $X$ and absence of retrograde connections.
  As it was pointed out above, condition (L2) is implied by the    Morse-Smale condition, which is generic for individual hypersurfaces.   
 \bigskip

Following Giroux, the set   $S:=\{x\in\Sigma;\Upsilon(x)\in\xi_x\}$ is called the {\em dividing set} of $\Sigma$.
\begin{lemma}[E. Giroux, \cite{Gi}]\label{lm:div-form}
Suppose $X$ is  a contact vector field transverse to a hypersurface $\Sigma$ and $S$ the corresponding dividing set. Let $t$   be the flow   coordinate such that $\Sigma=\{t=0\}$ and $X=\frac{\p}{\p t}$. Then  $\xi$ on $\Op  \Sigma$ can be defined by   a  contact 1-form
$f(x)dt+\beta$, where  $f:\Sigma \to\R$ is a function transversely changing sign across $S$. \end{lemma}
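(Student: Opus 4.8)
The plan is to produce the contact form $f(x)dt+\beta$ directly from the given data. Start with any contact form $\alpha$ defining $\xi$ on a neighborhood $\Op\Sigma$, and use the flow coordinate $t$ of $\Upsilon$ to write $\Op\Sigma=\Sigma\times(-\delta,\delta)$ with $\Upsilon=\frac{\p}{\p t}$. Decompose $\alpha=g\,dt+\lambda$, where $g:=\alpha(\Upsilon)$ is a function on $\Op\Sigma$ and $\lambda$ is a $t$-dependent $1$-form on $\Sigma$ (i.e. $\iota_{\p/\p t}\lambda=0$). The condition that $\Upsilon$ is a \emph{contact} vector field means $L_\Upsilon\alpha=\mu\alpha$ for some function $\mu$; since $\iota_\Upsilon\alpha=g$ and $\iota_\Upsilon d\alpha=L_\Upsilon\alpha-d g=\mu\alpha-dg$, the contact condition is equivalent to a linear first-order ODE in the $t$-direction for the pair $(g,\lambda)$. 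The key consequence is that the germ of $\alpha$ along $\Sigma$ is determined by its restriction to $\Sigma$ together with the function $g$, and more usefully, that the sign-changing behaviour of $g|_\Sigma$ is exactly the defining condition for the dividing set.

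Next I would carry out the following steps. First, observe $S=\{g|_\Sigma=0\}$ directly from the definition $S=\{x:\Upsilon(x)\in\xi_x\}$, since $\Upsilon(x)\in\xi_x\iff\alpha(\Upsilon)(x)=0$. Second, show $g|_\Sigma$ vanishes transversally along $S$: differentiate the contact condition along $\Sigma$. At a point $p\in S$ we have $g(p)=0$, so the contact condition $\mu\alpha=\iota_\Upsilon d\alpha+dg$ restricted to $T_p\Sigma$ and evaluated against $\xi_p\cap T_p\Sigma$ forces $dg|_{T_p\Sigma}\neq 0$ (otherwise $\alpha\wedge(d\alpha)^n$ would degenerate at $p$ — this is the standard computation that the contact condition for $f\,dt+\beta$ is $nf(d\beta)^n+(\text{terms})\neq0$, which on $S$ reduces to a condition on $df$). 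Third, rescale: set $f:=g|_\Sigma$ and $\beta:=\lambda|_{\Sigma\times 0}=\alpha|_\Sigma$, so $f\,dt+\beta$ and $\alpha$ agree on $\Sigma=\{t=0\}$ with matching $\p/\p t$-component. Finally, invoke Givental's theorem (Proposition~\ref{prop:AG}): both $\alpha$ and $\alpha':=f\,dt+\beta$ are contact forms on $\Op\Sigma$ (one checks $f\,dt+\beta$ is contact near $\Sigma$ because the contact condition holds on $\Sigma$ by construction and is open) with $\alpha|_\Sigma=\alpha'|_\Sigma$, hence there is a diffeomorphism of $\Op\Sigma$, fixed on $\Sigma$, carrying $\xi=\Ker\alpha$ to $\Ker\alpha'$; pushing forward, $\xi$ is defined near $\Sigma$ by a form of the desired shape $f(x)dt+\beta$. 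One should be slightly careful that Givental gives a diffeomorphism taking $\xi$ to $\xi'$, not $\alpha$ to $\alpha'$, but since both forms define co-oriented structures this is exactly what is needed; alternatively one can use the contact form version of the Moser/Givental argument to match the forms up to a positive factor, which is harmless.

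The main obstacle I expect is the transversality of the sign change of $f$ across $S$, i.e. verifying $dg|_{TS}$ has full rank, or rather that $S$ is a smooth hypersurface and $g$ changes sign transversally. This requires unwinding the contact condition for $\alpha$ and extracting from $\alpha\wedge(d\alpha)^n\neq0$ the precise statement that on the locus $\{g=0\}$ the differential $dg$ cannot vanish in the directions tangent to $\Sigma$. Concretely: writing the contact condition for $g\,dt+\lambda$ on $\Op\Sigma$ and evaluating at a point of $S$, the $dt$-free part of $(d(g\,dt+\lambda))^n$ wedged with $g\,dt$ vanishes, so the top form is governed by $dg\wedge dt\wedge(\text{stuff})+(d\lambda)^n$-type terms, and nonvanishing of this at $g=0$ is equivalent to $dg|_{\Sigma}\neq0$ there combined with the rank of $d\lambda$. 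This is the one genuinely computational point; everything else is either a definition-chase or an appeal to Proposition~\ref{prop:AG}. A secondary minor point is making sure the flow of $\Upsilon$ gives honest product coordinates on a full neighborhood of the compact (or at least locally compact) $\Sigma$, which is standard.
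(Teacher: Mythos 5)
Your first two paragraphs contain the right observation but then abandon it for a detour that does not quite prove the lemma as stated. The lemma asserts that $\xi$ is $\ker(f(x)\,dt+\beta)$ \emph{in the given flow coordinate $t$ of the contact vector field $\Upsilon$}, i.e.\ that the contact form can be taken $t$-independent in these specific coordinates. Givental's theorem (Proposition~\ref{prop:AG}) supplies a diffeomorphism $g$ of $\Op\Sigma$, fixed on $\Sigma$, with $g_*\xi=\Ker\alpha'$; pulling back, $\xi=\Ker(g^*\alpha')$, but $g^*\alpha'=(f\circ g)\,d(t\circ g)+g^*\beta$ need not be of the form $\wt f\,dt+\wt\beta$ with $\wt f,\wt\beta$ independent of $t$, because $g$ has no reason to commute with the flow of $\Upsilon$. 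So you have only shown that $\xi$ is \emph{contactomorphic} to something of the desired form, not that it \emph{equals} the kernel of a $t$-independent form in the $\Upsilon$-flow coordinates. (A secondary point: the claim that $\alpha'=f\,dt+\beta$ is contact near $\Sigma$ ``by construction and openness'' is not a tautology — it genuinely requires the contact-vector-field relation $\dot\lambda=\mu\lambda$ at $t=0$ to kill the $\dot\lambda$-dependent term in $\alpha\wedge(d\alpha)^n$ so that the contact conditions for $\alpha$ and $\alpha'$ agree along $\Sigma$.)

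The fix is already in your own setup and is simpler than Givental. Writing $\alpha=g\,dt+\lambda_t$ and imposing $L_\Upsilon\alpha=\mu\alpha$, you get the linear ODEs $\dot g=\mu g$ and $\dot\lambda_t=\mu\lambda_t$. These integrate to $g(x,t)=e^{h(x,t)}f(x)$ and $\lambda_t=e^{h(x,t)}\beta$ with $h(x,t)=\int_0^t\mu(x,s)\,ds$, $f=g|_{t=0}$, $\beta=\lambda_0=\alpha|_\Sigma$. Hence $\alpha=e^h(f\,dt+\beta)$, so $\xi=\Ker\alpha=\Ker(f\,dt+\beta)$ outright — no auxiliary diffeomorphism is needed, and the coordinates are unchanged. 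Equivalently, replace $\alpha$ by $e^{-h}\alpha$ to make it $\Upsilon$-invariant; an $\Upsilon$-invariant contact form is automatically $t$-independent in flow coordinates. Your transversality argument for $df\neq 0$ along $S=\{f=0\}$ (from $\alpha'\wedge(d\alpha')^n|_S=-n\,dt\wedge\beta\wedge(d\beta)^{n-1}\wedge df$) is fine and matches the remark the paper makes after the lemma.
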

Note that the contact condition implies that $df\neq 0$ along $S$, and $\alpha|_{S}$ is a contact form.  In particular, the characteristic foliation $\ell_\Sigma$ transverse to $S$.

  Hence, we have the following:
\begin{lemma}[E. Giroux, \cite{Gi}]\label{lm:div-set}
 Dividing  set $S$ is a smooth submanifold, which is transverse to the characteristic foliation, and independent of the choice of a contact vector field transverse to $\Sigma$,  up to an isotopy  transverse to the characteristic foliation.
\end{lemma}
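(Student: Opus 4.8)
The plan is to deduce Lemma \ref{lm:div-set} directly from the normal-form statement of Lemma \ref{lm:div-form}. First I would invoke Lemma \ref{lm:div-form} to write $\xi$ near $\Sigma$ as the kernel of $\alpha = f\,dt + \beta$, where $f\colon\Sigma\to\R$ transversely changes sign across $S=\{f=0\}$ and $\beta=\alpha|_\Sigma$. Since $f$ changes sign transversely, $0$ is a regular value of $f$, so $S=f^{-1}(0)$ is automatically a smooth codimension-one submanifold of $\Sigma$; this handles the first assertion. For transversality to the characteristic foliation, I would differentiate the contact condition $\alpha\wedge(d\alpha)^n\neq 0$ along $S$. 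Writing things out, $d\alpha = df\wedge dt + d\beta$, and restricting the top form to a point of $S$ where $f=0$ forces $df\wedge(d\beta)^{n}\neq 0$ there, which in particular gives $df\neq 0$ along $S$ (as already noted after Lemma \ref{lm:div-form}) and, more to the point, shows $d\beta|_S$ is nondegenerate on $\Ker(df|_S)=T_xS$; equivalently $\alpha|_S$ is a contact form on $S$. Then the characteristic line $\ell_p\subset \xi_p\cap T_p\Sigma$ is by definition the kernel of $d\beta$ restricted to $\xi_p\cap T_p\Sigma$, and since $d\beta|_{T_pS}$ is nondegenerate, $\ell_p$ cannot lie in $T_pS$; hence $\ell$ is transverse to $S$.

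For the independence statement, suppose $\Upsilon_0,\Upsilon_1$ are two contact vector fields transverse to $\Sigma$, with dividing sets $S_0,S_1$. The standard move is to connect them through a family: consider $\Upsilon_s$ obtained by a path of contact vector fields transverse to $\Sigma$ (the transversality being an open condition, one can at least do this locally, and then patch). Applying Lemma \ref{lm:div-form} to each $\Upsilon_s$ gives a family of functions $f_s\colon\Sigma\to\R$ transversely changing sign across $S_s=f_s^{-1}(0)$. Since each $f_s$ has $0$ as a regular value, the zero sets $S_s$ form a smooth isotopy of submanifolds; the ambient isotopy realizing it is generated by a vector field $Z_s$ on $\Sigma$ with $df_s(Z_s)=-\dot f_s$ on $S_s$, and I would choose $Z_s$ inside the characteristic line field $\ell$ away from its singularities — this is possible precisely because, by the transversality just established, $\ell$ is never tangent to $S_s$, so $df_s|_{\ell_p}\neq 0$ and one can solve for the $\ell$-component of $Z_s$ pointwise along $S_s$, then cut off. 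Integrating $Z_s$ yields an isotopy of $\Sigma$ carrying $S_0$ to $S_1$ and tangent to $\ell$ along the dividing sets, i.e. an isotopy transverse to the characteristic foliation in the sense claimed.

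The main obstacle I anticipate is the global construction of the path $\Upsilon_s$ and, relatedly, making the isotopy genuinely ambient and tangent to $\ell$ near the singular locus $T=T_+\cup T_-$ of the foliation, where $\ell$ degenerates. Near those singular points, though, $df_s\neq 0$ still holds (it holds along all of $S_s$), and $S_s$ itself stays away from $T$ since $\ell$ is transverse to $S_s$ and $\ell$ is singular exactly on $T$; so the isotopy only needs to be supported in a neighborhood of $\bigcup_s S_s$, which is disjoint from $T$, and there $\ell$ is a genuine line field. This localizes the problem and removes the singularity issue. The remaining point — that any two transverse contact vector fields can be joined by a path of transverse contact vector fields, at least after restricting to a neighborhood of $\Sigma$ — follows because the space of contact forms for $\xi$ on $\Op\Sigma$ is convex (multiplying by positive functions), and for a fixed contact form the transverse Reeb-type vector fields vary in a convex set as well; combining these, one interpolates, keeping transversality by the openness of that condition along the compact $\Sigma$. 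I would present these as the key reductions and leave the pointwise cut-off computations as routine.
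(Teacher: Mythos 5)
Your overall plan---deduce smoothness and transversality from the normal form $\alpha=f\,dt+\beta$ of Lemma~\ref{lm:div-form}, and get the independence statement by connecting two transverse contact vector fields through a path and then producing an isotopy tangent to $\ell$---is the same approach the paper takes (very tersely). But there is a concrete slip in the transversality step. You claim that restricting $\alpha\wedge(d\alpha)^n$ to a point of $S$ ``forces $df\wedge(d\beta)^n\neq 0$'' and then that ``$d\beta|_S$ is nondegenerate on $\Ker(df|_S)=T_xS$.'' Neither can be right: $df\wedge(d\beta)^n$ is a $(2n+1)$-form assembled entirely from forms pulled back from the $2n$-dimensional $\Sigma$, so it is identically zero, and $T_xS$ has odd dimension $2n-1$, so no $2$-form can be nondegenerate on it. The correct expansion (identical to the one carried out in the proof of Lemma~\ref{lm:W-convex}) is
\[
\alpha\wedge(d\alpha)^n \;=\; dt\wedge\Bigl(f\,(d\beta)^n + n\,\beta\wedge(d\beta)^{n-1}\wedge df\Bigr),
\]
and setting $f=0$ gives $\beta\wedge(d\beta)^{n-1}\wedge df\neq 0$ along $S$. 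From this one reads off $df\neq 0$ and that $\alpha|_S=\beta|_S$ is a \emph{contact} form on $S$---which is exactly what the paper records after Lemma~\ref{lm:div-form}. Transversality of $\ell$ to $S$ then does follow in the way you intended, but via the contact condition on $S$: if $\ell_p\subset T_pS$, then since $\ell_p\subset\xi_p$ one would have $\ell_p\subset T_pS\cap\xi_p$, and as $\ell_p=\Ker\bigl(d\alpha|_{\xi_p\cap T_p\Sigma}\bigr)$ this would put a nonzero vector in the kernel of $d\alpha$ on the symplectic subspace $T_pS\cap\xi_p$, contradicting that $\alpha|_S$ is contact. (Incidentally, $S$ automatically avoids the tangency locus $T$, since at a point of $T$ one has $\beta=0$ and hence $\alpha = f\,dt$, so contactness forces $f\neq 0$ there.)

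On the independence half your conclusion is right but the route is unnecessarily circuitous. You worry about building $\Upsilon_s$ locally and patching, and separately about convexity of contact forms versus ``Reeb-type'' fields. The paper's one-line observation dissolves this: contact vector fields for $\xi$ form a genuine vector space (if $\mathcal{L}_{X_i}\alpha=g_i\alpha$ then $\mathcal{L}_{aX_1+bX_2}\alpha=(ag_1+bg_2)\alpha$), transversality to the co-oriented $\Sigma$ is a convex open condition inside it, so the space of transverse contact vector fields is convex and hence contractible; $(1-s)\Upsilon_0+s\Upsilon_1$ is already the desired path. Your subsequent explicit construction of the isotopy---solving $df_s(Z_s)=-\dot f_s$ along $S_s$ for a vector field $Z_s$ tangent to $\ell$, which is possible precisely because $df_s|_{\ell}\neq 0$ along $S_s$ by the transversality just established, then cutting off near $\bigcup_s S_s$ which is disjoint from $T$---is a correct and useful fleshing-out of a step the paper leaves implicit.
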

Indeed, the space of contact vector fields transverse to $\Sigma$ is convex subset of the vector space of all contact vector fields, and hence, contractible.

\medskip The  dividing hypersurface  $S\subset\Sigma$ divides $\Sigma$ into
$\Sigma_+:=\{f>0\}$, $\Sigma_-:=\{f<0\}$.
The form $\alpha=(f(x)dt+\beta)|_{\Sigma\setminus S}$ can be divided 
by $f$,
$$\frac{\alpha}{f}=dt+\frac{\beta}f.$$
Denote $\lambda_\pm:=\frac{\beta}f|_{\Sigma_\pm}$.
The contact condition then is equivalent to $(d\lambda_\pm)^n\neq 0$.
In other words,  $\lambda_\pm$ are Liouville forms on $\Sigma_\pm$.
Note that the corresponding Liouville fields $Z_\pm$ directs the characteristic foliation on $\Sigma$. Indeed, $\lambda_\pm\wedge \iota(Z_\pm) d\lambda_\pm=\lambda_\pm\wedge\lambda_\pm=0.$

\begin{lemma}\label{lm:Giroux-inverse}
Let $\Sigma\subset (M,\xi=\Ker\alpha)$ be   a co-oriented hypersurface.  Denote $\beta:=\alpha|_\Sigma$.
Then $\Sigma$ is convex if and only if there exists a function $f:\Sigma\to\R$ such that the form $\beta+fdt$ is contact on $\Sigma\times\R$.
\end{lemma}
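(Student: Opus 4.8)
The plan is to prove the two directions separately, leveraging the structure already developed. For the "only if" direction, suppose $\Sigma$ is convex with transverse contact vector field $\Upsilon$. By Lemma \ref{lm:div-form}, after choosing the flow coordinate $t$ so that $\Sigma = \{t=0\}$ and $\Upsilon = \frac{\partial}{\partial t}$, the contact structure $\xi$ near $\Sigma$ is defined by a contact form of the shape $f(x)\,dt + \gamma$ where $\gamma$ is a $1$-form on $\Sigma$ and $f$ transversely changes sign along the dividing set. A priori this form is only contact-equivalent to $\alpha$, so $\gamma$ need not equal $\beta = \alpha|_\Sigma$; however $\gamma|_\Sigma$ and $\beta$ define the same contact structure $\xi \cap T\Sigma$ wherever that intersection is a hyperplane, and more to the point, the restriction of the Givental-type form to $\Sigma$ is $\gamma$, which is conformally equivalent (via a positive function) to $\beta$ after possibly rescaling $\alpha$. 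So I would first reduce to the case $\gamma = \beta$: since multiplying $\alpha$ by a positive function on $\Op\Sigma$ does not change $\xi$ or convexity, and the freedom to rescale lets me match $\alpha|_\Sigma$, I can arrange the contact form to be exactly $\beta + f\,dt$, giving the desired $f$.

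For the "if" direction, suppose $f:\Sigma\to\R$ is such that $\beta + f\,dt$ is contact on $\Sigma\times\R$. Since $\alpha|_\Sigma = \beta$ agrees with $(\beta+f\,dt)|_{\Sigma\times 0}$, Givental's Proposition \ref{prop:AG} produces a diffeomorphism $g$ of a neighborhood of $\Sigma$, fixed on $\Sigma$, carrying $\xi$ to $\Ker(\beta + f\,dt)$. The form $\beta + f\,dt$ is manifestly invariant under translation in $t$, so $\frac{\partial}{\partial t}$ is a contact vector field for $\Ker(\beta+f\,dt)$, transverse to $\Sigma\times 0$. Pulling back by $g$ gives a contact vector field for $\xi$ transverse to $\Sigma$, so $\Sigma$ is convex.

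The main obstacle is the bookkeeping in the "only if" direction: Lemma \ref{lm:div-form} delivers a \emph{model} for $\xi$ near $\Sigma$, but its restriction to $\Sigma$ is the $1$-form appearing there, not literally $\beta$, and one must check that the conformal rescaling of $\alpha$ needed to fix this up can be done consistently on a neighborhood of $\Sigma$ (not just on $\Sigma$) without disturbing the product structure provided by the contact vector field. The clean way around this is to observe that convexity and the existence of the desired $f$ are both invariant under replacing $\alpha$ by $h\alpha$ for positive $h$: on the convexity side this is obvious, and on the $f$-side, replacing $\beta$ by $(h|_\Sigma)\beta$ and $f$ by $(h|_\Sigma)f$ preserves the contact condition on $\beta + f\,dt$ up to the same conformal factor. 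So one is free to normalize $\alpha|_\Sigma$ to match the model from Lemma \ref{lm:div-form}, and the argument closes. I would also remark that this lemma is essentially Giroux's criterion read in reverse, and that combined with Lemma \ref{lm:Giroux-inverse} the problem of making $\Sigma$ convex is reduced to finding, after a $C^0$-small isotopy, a function $f$ with $\beta + f\,dt$ contact — which is the target of the rest of the paper.
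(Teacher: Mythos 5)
Your proof is correct and takes essentially the same route as the paper: necessity is a reformulation of Lemma \ref{lm:div-form} (your extra remarks about conformal rescaling just make explicit why the $1$-form appearing there may be taken to be $\alpha|_\Sigma$ itself, and why the existence of the required $f$ is insensitive to such a rescaling), and sufficiency invokes Proposition \ref{prop:AG} to identify a neighborhood of $\Sigma$ in $(M,\xi)$ with a neighborhood of $\Sigma\times 0$ in $(\Sigma\times\R,\Ker(\beta+f\,dt))$, where $\partial/\partial t$ is a manifestly transverse contact vector field.
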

\begin{proof} The necessity is a reformulation of Lemma \ref{lm:div-form}. To see the sufficiency we observe that $\Sigma=\Sigma\times 0\subset (\Sigma\times\R,\Ker(\beta+fdt))$ is convex because the field $\Upsilon:=\frac{\p}{\p t}$ is  manifestly contact.
On the other hand, by Proposition \ref{prop:AG} neighborhoods of $\Sigma$ in $(M,\xi)$ and  $\Sigma\times 0$ in  $(\Sigma\times\R,\Ker(\beta+fdt))$ are contactomorphic.
\end{proof}

 \medskip

  \begin{lemma}\label{lm:W-convex} Any Weinstein convex hypersurface is convex.
  \end{lemma}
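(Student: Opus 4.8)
The plan is to reduce Lemma \ref{lm:W-convex} to the criterion of Lemma \ref{lm:Giroux-inverse}: given a Weinstein convex hypersurface $\Sigma$ with good Lyapunov function $f$ for a vector field $X$ directing the characteristic foliation, I must produce a function $g\colon\Sigma\to\R$, transversely changing sign along a hypersurface, such that $\beta+g\,dt$ is a contact form on $\Sigma\times\R$, where $\beta=\alpha|_\Sigma$. The natural candidate is to take $g$ to be (a rescaling of) the good Lyapunov function $f$ itself, shifted so that the separating regular value $c$ becomes $0$; then $\{g>0\}\supset T_+$ and $\{g<0\}\supset T_-$, and on these regions $X$ is $\pm$-Liouville for $\beta$. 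The contact condition for $\beta+g\,dt$ on $\Sigma\times\R$ is $g\,(d\beta)^n + n\,dg\wedge\beta\wedge(d\beta)^{n-1}\neq 0$ at every point (up to sign this is $(\beta+g\,dt)\wedge(d\beta)^n + n\,g\,dg\wedge dt\wedge\beta\wedge(d\beta)^{n-1}$, but the $t$-independent part is what must be checked pointwise on $\Sigma$), so the real content is the pointwise inequality
\begin{align}\label{eq:Wconv-pointwise}
g\,(d\beta)^n + n\,dg\wedge\beta\wedge(d\beta)^{n-1}\neq 0.
\end{align}

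First I would check \eqref{eq:Wconv-pointwise} near the tangency locus $T=T_+\cup T_-$. There $(d\beta)^n$ is a volume form (with the sign of $\rho$), and near $T_+$ the field $X$ is the Liouville field of $\beta$, i.e. $\iota(X)d\beta=\beta$; since $f$ is Lyapunov, $df(X)>0$ away from zeros, and one computes $dg\wedge\beta\wedge(d\beta)^{n-1}$ in terms of $df(X)$ and $(d\beta)^n$ using the identity $\beta\wedge(d\beta)^{n-1}=\frac1n\iota(X)(d\beta)^n$ valid on $U_+$. This makes the second term a positive multiple of $(d\beta)^n$ exactly where $g>0$, so both terms in \eqref{eq:Wconv-pointwise} have the same sign there; symmetrically near $T_-$ where $g<0$ and $-X$ is Liouville. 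So on $\Op(T)$ the two terms cooperate and the sum is nonzero. The key point of the whole lemma is then to handle the region away from $T$, i.e. where $(d\beta)^n$ may vanish and all the weight must come from the term $n\,dg\wedge\beta\wedge(d\beta)^{n-1}$: there $X$ merely directs $\ell$, so $\beta(X)=0$ and $\iota(X)d\beta$ vanishes on $\Ker\beta$, which (as in the proof of Lemma \ref{lm:Dietmar2}) forces $\beta\wedge(d\beta)^{n-1}$ to be, up to a nonzero scalar, $\iota(X)$ of a volume form; hence $dg\wedge\beta\wedge(d\beta)^{n-1}$ is a nonzero multiple of a volume form precisely when $df(X)\neq 0$, which holds away from the zeros of $X$.

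The remaining difficulty — and I expect this to be the main obstacle — is the transition region: near a zero $O$ of $X$, both $df(X)$ and possibly $(d\beta)^n$ degenerate, and one cannot simply take $g=f$. Here I would use the flexibility in the construction of the Lyapunov function: near a positive zero $O$ (index $\le n$, stable manifold isotropic) one has a local Weinstein/Liouville model in which $\beta$ is a standard Liouville form, $(d\beta)^n$ is a genuine volume form, and $f$ can be taken to be the standard Morse function; then \eqref{eq:Wconv-pointwise} reduces to the same computation as near $T_+$. Because $f$ is a \emph{good} Lyapunov function, all positive zeros sit in $\{g>0\}$ where the first term of \eqref{eq:Wconv-pointwise} already has the right sign, and all negative zeros in $\{g<0\}$ where it has the opposite right sign, so the local models glue with the global formula without sign conflict. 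One has to be a little careful to rescale $g$ by a positive function so that the collar where $X$ is genuinely Liouville matches the collar where the Morse model is used, and to note that the condition is open, so a $C^1$-small adjustment of $f$ does no harm. Assembling these pieces — cooperation near $T$, the directing-field computation away from zeros, and the Weinstein local models at the zeros, all with consistent signs because $f$ is good — yields a function $g$ satisfying the hypothesis of Lemma \ref{lm:Giroux-inverse}, and hence $\Sigma$ is convex.
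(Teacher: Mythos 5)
Your reduction to Lemma \ref{lm:Giroux-inverse} and the pointwise contact inequality \eqref{eq:Wconv-pointwise} is exactly the paper's starting point, and your analysis near $T_\pm$ (where $(d\beta)^n$ is a volume form and $X$ is $\pm$Liouville) and near the zeros is sound. But there is a genuine gap in the ``middle'' region, and it is precisely the point the paper's formula is designed to handle. Away from $T$, the $2n$-form $(d\beta)^n$ is not constrained: it can be nonzero with \emph{either} sign and of arbitrary magnitude (it only has to be a volume form of the correct sign near $T_\pm$). You correctly observe that away from the zeros of $X$ the second term $n\,dg\wedge\beta\wedge(d\beta)^{n-1}$ is a nonzero multiple of a volume form (since $\beta\wedge(d\beta)^{n-1}=\tfrac1n\,\iota(X)\rho$ and $dg(X)\neq 0$), but you do not address the possibility that the first term $g\,(d\beta)^n$ is nonzero with the \emph{opposite} sign and large enough to overwhelm it. Taking $g$ to be (a shift of) the Lyapunov function gives no control over this competition: at a point with $h$ and $dh(X)$ both of fixed order, and $(d\beta)^n$ large of the wrong sign, the sum \eqref{eq:Wconv-pointwise} can vanish. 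Your remark about ``rescaling $g$ by a positive function'' is aimed at matching local collars and does not touch this issue.

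The paper resolves this by replacing $h$ with $f:=-e^{Ch^2/2}h$. The key feature is that $df(X)=-e^{Ch^2/2}(1+Ch^2)\,dh(X)$ carries the extra factor $(1+Ch^2)$: after writing $(d\beta)^n=g_\rho\,\rho$ (in the paper's notation $g$) and reducing \eqref{eq:Wconv-pointwise} to the scalar inequality $dh(X)-h\,g_\rho+Ch^2\,dh(X)>0$, one sees that near $S=\{h=0\}$ the term $dh(X)$ dominates (the second term is $O(h)$), near $T$ one has $-h\,g_\rho>0$ and $dh(X)\ge0$, and in the remaining compact region both $h^2$ and $dh(X)$ are bounded away from zero, so $Ch^2\,dh(X)$ can be made to dominate $|h\,g_\rho|$ by taking $C$ large. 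This third regime is exactly what your argument omits. Two smaller remarks: first, the local Weinstein model at the zeros is not needed in the paper's argument, since the zeros lie inside $U_\pm$ where the first term already has the favorable sign; second, the paper's proof actually uses the convention that the good Lyapunov function $h$ is \emph{negative} on $U_+$ (so that $f=-e^{Ch^2/2}h$ is positive there), which is the opposite of what the definition in Section 3.2 literally says and of what you adopted --- this is a typo in the paper and not a substantive objection to your argument, but be aware of it when filling in the signs.
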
 
   \begin{proof} 
   According to Lemma \ref{lm:Giroux-inverse} it is sufficient to find a function $f:\Sigma\to\R$ such that
   the form $\wt\alpha:=\beta+fdt$ is contact.
 We claim that in turn  this   condition is equivalent to the inequality
  \begin{align}\label{eq:Salamon}
  f(d\beta)^n+n\beta\wedge (d\beta)^{n-1}\wedge df>0.
 \end{align}
Indeed,
\begin{align*}
&\wt\alpha\wedge (d\wt\alpha)^n=(\beta+fdt)\wedge
(d\beta+df\wedge dt)^n=fdt\wedge(d\beta)^n+ n\beta\wedge (d\beta)^{n-1}\wedge df\wedge dt\\
&=
dt\wedge\left( f(d\beta)^n+n\beta\wedge (d\beta)^{n-1}\wedge df\right),
\end{align*}
and hence, the inequality $ \wt\alpha\wedge (d\wt\alpha)^n >0$ is equivalent to \eqref{eq:Salamon}.
  
   Suppose $\rho$ is a volume form on $\Sigma$ chosen as in Lemma \ref{lm:Dietmar2}, and the vector field $X$ directing $\ell$ satisfies equation
   \eqref{eq:char}.  Let $h:\Sigma\to\R$ be a good Lyapunov function on $\Sigma$.
   We will assume that $S:=\{h=0\}$ is a regular level set of $h$ separating  values in negative and positive zeroes of $X$. In particular, $h|_{U_+}<0, h|_{U_-}>0$ for neighborhoods $U_{\pm}\supset T_\pm$ of the singular point loci.
   
   Define a function $g:\Sigma\to\R$ by the equation
   $ (d\beta)^n=g\rho$. 
   We have $g|_{U_+}>0$ and $g|_{U_-}<0$. Hence, $gh<0$ on $U:=U_+\cup U_-$. We have $dh(X)>0$  on $ \Sigma\setminus ((T:=T_+\cup T_-)\cup S)$. Furthermore,   
   $dh(X)-hg>0$  on     a neighborhood  of  $S=\{h=0\}$, and   for a sufficiently large constant $C>0$ we have $$dh(X)-hg+Ch^2dh(X)>0$$ everywhere on $\Sigma$. 
   The 
 function $f:\Sigma\to\R$,  which satisfies \eqref{eq:Salamon}  can now be defined by the formula $f:=-e^{\frac{Ch^2}2}h.$ 
   Indeed,   we have $df=-e^{\frac{Ch^2}2}(1+Ch^2)dh$ and therefore,
   
   \begin{align*}
   &fd\beta^n+n\beta\wedge (d\beta)^{n-1}\wedge df=fg\rho+n(\iota(X)\rho)\wedge df.
   \end{align*}
   But for any 1-form $\gamma$ we have  $\iota(X)\rho\wedge\gamma=-\iota(X)(\rho\wedge\gamma)-\gamma(X)\rho=\gamma(X)\rho,$ and hence
      \begin{align*} 
& fg\rho+n(\iota(X)\rho)\wedge df=(fg-df(X))\rho=-e^{\frac{Ch^2}2}\left(hg-(1+Ch^2)dh(X)\right)\rho\\
&=e^{\frac{Ch^2}2}(dh(X)-hg+Ch^2dh(X))>0.
\end{align*}
  \end{proof}
Corollary  \ref{cor:good-Lyapunov} and 
Lemma  \ref{lm:W-convex}  reduce  Theorem \ref{thm:HH} (in a stronger form replacing convexity by Weinstein convexity)  to
\begin{theorem}[Honda-Huang,\cite{HH}]\label{thm:main-precise}
Let $\Sigma\subset (M,\xi)$ be a co-orientable hypersurface in a contact manifold with a co-orientable contact structure. Then there exists a $C^0$-small isotopy deforming $\Sigma $ into $\wt\Sigma\subset (M,\xi)$ such that the characteristic foliation induced on $\wt \Sigma$  satisfies condition  (L1) and the Morse-Smale property.   
\end{theorem}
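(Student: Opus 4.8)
The plan is to build $\wt\Sigma$ by a sequence of $C^0$-small modifications, using the blocking-disc/plug machinery of Section \ref{sec:dynamics} to kill ``long'' trajectories, and then to arrange the Morse-Smale property without recreating them. First I would start with a hypersurface $\Sigma$ whose characteristic foliation is directed by a vector field $X$ with only non-degenerate hyperbolic zeroes (arranged by a $C^\infty$-small perturbation, as noted after Proposition \ref{prop:AG}). The goal is condition (L1): every trajectory of $X$ must originate and terminate at a zero. The obstruction to (L1) is the presence of trajectories that are ``recurrent'' or that wander for a long time; by Lemma \ref{lm:exist-discs-cont}, for any $\eps>0$ we may choose an $\eps$-blocking collection $\{D_j\}$ consisting of standard contact $(2n-1)$-discs, so that every trajectory arc of diameter $>\eps$ crosses some $\Int D_j$.

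The heart of the argument is then to realize the abstract ``plug'' replacement of Lemma \ref{lm:sigma-plug} geometrically, i.e.\ by a $C^0$-small isotopy of $\Sigma$ inside $(M,\xi)$. The point of choosing the blocking discs to be \emph{standard contact discs} is exactly this: a flow-box neighborhood $Q_j = D_j\times[0,a]$ of such a disc sits inside a Darboux chart, and there one has enough flexibility to push $\Sigma$ across $Q_j$ to a new hypersurface whose characteristic foliation on $Q_j$ is an arbitrary prescribed $\sigma$-plug (this is where a local model computation is needed — one must check that the required plug vector fields are realizable as characteristic foliations of $C^0$-small graphs over $D_j\times[0,a]$ inside the standard contact disc times $\R$, and that the modification agrees with the old foliation near $\p Q_j$ so it globalizes). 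Granting this, Lemma \ref{lm:sigma-plug} tells us that the resulting vector field $\wh X$ on the perturbed hypersurface $\wt\Sigma$ satisfies (L1), with all trajectories of diameter $<3\eps$; since all the isotopies take place inside flow-boxes of diameter $<2\eps$ (plus small balls around zeroes), the total isotopy of $\Sigma$ is $C^0$-small.

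Finally I would address the Morse-Smale property. Lemma \ref{lm:sigma-plug} states that (L1) is stable under sufficiently $C^1$-small perturbations of $X$ away from the flow-boxes and neighborhoods of the zeroes; since Morse-Smale transversality of stable and unstable manifolds is achieved by an arbitrarily $C^\infty$-small perturbation supported away from the zeroes, we can perform such a perturbation — realized by a further $C^\infty$-small (hence $C^0$-small) isotopy of $\wt\Sigma$ — without destroying (L1). This resolves the concern raised in the remark after Lemma \ref{lm:Lyapunov-criterion}, because here (L1) is robust precisely due to the plug construction: the ``locking'' of trajectories inside plugs is an open condition. Combining, $\wt\Sigma$ has characteristic foliation satisfying (L1) and Morse-Smale, and is $C^0$-close to $\Sigma$.

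The main obstacle I expect is the local realization step: showing that inside a standard contact disc times an interval one can prescribe, as a characteristic foliation of a $C^0$-small graphical perturbation of the hypersurface, a vector field that is a $\sigma$-plug and matches the original characteristic foliation near the boundary of the flow-box. This requires understanding how the characteristic foliation changes under a graphical perturbation $\Sigma \leadsto \{t = \phi(x)\}$ — essentially the formula $\beta \leadsto \beta + d(\text{something involving }\phi)$ from Lemmas \ref{lm:div-form}–\ref{lm:Giroux-inverse} in the relative setting — and then checking that the plug models of Section \ref{sec:plug} (which are soft, $h$-principle-type objects on the level of vector fields) can actually be hit within this constrained family. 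A secondary subtlety is the bookkeeping to ensure the separate plug replacements in the disjoint flow-boxes, together with the perturbations near the zeroes, assemble into a single globally-defined $C^0$-small isotopy of $\Sigma$.
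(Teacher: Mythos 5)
Your reduction mirrors the paper's proof exactly: perturb $\Sigma$ to get hyperbolic zeros, take an $\eps$-blocking collection of standard contact discs (Lemma \ref{lm:exist-discs-cont}), install $\sigma$-plugs in the flow-boxes so that Lemma \ref{lm:sigma-plug} yields (L1), and then use the stated robustness of (L1) under $C^1$-small perturbations away from the flow-boxes to also arrange the Morse-Smale condition by a further $C^\infty$-small isotopy. The step you correctly single out as the main obstacle — realizing a $\sigma$-plug as the characteristic foliation of a $C^0$-small graphical deformation of $\Sigma$ inside a standard contact flow-box — is precisely the content of Proposition \ref{prop:main-plug}, whose proof occupies the remaining sections of the paper, so your argument and the paper's proof of this theorem coincide modulo that proposition.
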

For the case  $\dim M=2$   this result can be deduced from    E. Giroux's theorem about $C^\infty$-genericity of contact convexity in $3$-dimensional contact manifolds, \cite{Gi}.
 
  \section{Construction of plugs}\label{sec:plugs}
     \subsection{Main proposition}

In view  of Lemmas \ref{lm:sigma-plug}  and \ref{lm:exist-discs-cont}, the proof of Theorem \ref{thm:main-precise} will be completed if  for any $\sigma>0$ one    can  create a  $\sigma$-plug by a $C^0$-small isotopy of the flow box of a  standard, transverse to the flow contact disc. The next proposition asserts that this is possible. As the statement will be proven by induction, we need   $\sigma$-plugs with some additional properties  in order for the induction to go through.
We discuss this in Proposition~\ref{prop:main-plug}.

Let $(D^{2n-1},\alpha_{st})$ be the standard contact disc. Choose $c,b>0$ and 
consider $U^b:=D\times T^*[0,b]$ endowed with a contact form $\alpha_\st+xdy$.
Denote $$Q^b:=\{x=0\}\subset  U=D\times[0,b],\; U^b_c:=\{|x|\leq c\}\subset U^b.$$
We will omit the superscript $b$ when $b=1$.
\begin{prop}\label{prop:main-plug}
  For any  positive $ \eps$ and $\sigma\ll\eps$ there exists an    isotopy
 $ h_{s}: Q\to  U_\eps$, $s\in [0,1]$, which is fixed on $\Op \p  Q$, begins with       the inclusion $h_0: Q\hookrightarrow U_\eps$ and has the following properties:
 \begin{itemize}
  \item[a)] $(Q,X_{1})$ is a $\sigma$-plug, where we denoted by $X_{s}$ the vector field directing the characteristic foliation $\ell_{s}$    induced by $h_{s}^*(\alpha_\st+xdy)$; 
 \item[b)] for any $\sigma_1\ll\sigma$ there exists a family of compact  manifolds with boundary $C^+_s\subset\Int  Q$ and $C^-_s\subset\Int  Q,$ and an extension of  the isotopy $h_s$ to a 2-parametric isotopy
 $h_{s,t}, \; s,t\in[0,1]$,  such that
 \begin{itemize}
 \item[(i)] $h_{s,0}=h_s,\; h_{0,t}=h_0$ for all $s,t \in[0,1]$;
  \item the foliation $\ell_{s,1}$  induced on $h_{s,1}( Q)$ has no singular points;
  \item[(ii)]  for any fixed $s\in[0,1]$ the isotopy $h_{s,t}$, $t\in[0,1]$, is $\sigma_1$-small in the $C^0$-sense and supported in a $\sigma_1$-neighborhood of $C_s$;
  \item[(iii)] For each $s\in[0,1]$ the submanifold $C^+_s$ (resp. $C_s^-$)  contains all positive (resp. negative) singularities of $X_s,$ $C_s^+ \cap 
  C_S^- = \emptyset$ and $C^+_s$ (resp. $C^-_s$) is  invariant with respect  to the backward (resp. forward) flow of $X_s$;
 
 \item[(iv)]  there exists a family of  generalized Morse  Lyapunov functions $\psi_{s,t}:Q\to\R$ for $X_{s,t}$ such that  $\psi_{s,t}|_{\Op\p  Q}=y$;
 \item[(v)] there exists a stratified  $(n-1)$-dimensional subset $E\subset  Q\cap\{y=0\}$ which contains all  the intersection points of $Q\cap\{y=0\}$ with stable manifolds of   positive singular point of $X_{s,t}$ for all $s,t\in[0,1]$.  
  \end{itemize}
  \end{itemize}
   \end{prop}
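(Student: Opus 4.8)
The plan is to construct the plug explicitly as a "cancelling pair" model and then promote it through the successive properties (a)--(b)(v) by induction on $n$, exactly as the statement of Proposition~\ref{prop:main-plug} is framed. The starting point is the one-dimensional Giroux picture: in a $3$-manifold the flow box $Q = D^1\times[0,1]$ admits a $C^0$-small isotopy creating a single pair of hyperbolic zeroes of the characteristic foliation --- one positive, one negative --- joined by a connecting trajectory running in the direction of the flow, so that every incoming trajectory either gets trapped by the sink or exits within distance $\sigma$ of its entry point. This is the base case; the content is purely planar and can be drawn directly (cf. Fig.~\ref{fig:birthdeath}). The function $f$ such that $\beta+f\,dt$ is contact is produced as in Lemma~\ref{lm:W-convex}, and the whole deformation is realized as a graph $h_s(q) = (q, \phi_s(q))$ over $Q\subset U_\eps$, which is automatically $C^0$-small if $\phi_s$ is.

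For the inductive step I would take the $(2n-3)$-dimensional plug $Y'$ already constructed on $D^{2n-3}\times[0,1]$ and suspend it in the two extra contact directions. Concretely, write the standard contact disc as (a subdomain of) $D^{2n-3}\times T^*[0,b]$-type coordinates plus one more conjugate pair $(x_n,y_n)$, so that $D^{2n-1}$ fibers over $D^{2n-3}$ with symplectic $\R^2$-fibers; then spread $Y'$ over the fibers by adding a fiberwise Liouville/gradient term that has the base plug's zeroes as its critical locus, arranging the signs so that positive zeroes of $Y'$ become positive zeroes upstairs (index goes up by one on the stable side) and likewise for negative ones. The combinatorial skeleton $E\subset Q\cap\{y=0\}$ in (v) is then the union of the old $E'$ with the new stable-manifold slices, giving a stratified set of the advertised dimension; the sets $C^\pm_s$ are thickenings of the positive/negative critical loci, made flow-invariant in backward/forward time by the standard trick of flowing a small ball and taking the closure, and disjoint because a good Lyapunov function separates them.

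The properties (b)(i)--(iv) are then handled in sequence. Property (i)--(iii): the auxiliary $t$-deformation that kills all singularities is the "reverse cancellation" of the pair --- a $\sigma_1$-small wiggle supported near $C_s$ that merges each positive zero with a negative one through an embryo and then annihilates it, leaving a nonsingular foliation; that this can be done inside a $\sigma_1$-neighbourhood of $C_s$ is where the smallness of $\sigma_1\ll\sigma$ is used, and invariance of $C^\pm_s$ guarantees the support stays put. Property (iv): the generalized Morse Lyapunov functions $\psi_{s,t}$ equal to $y$ near $\p Q$ come from Lemma~\ref{lm:Lyapunov-criterion-rel} applied to the cobordism $Q = D\times[0,1]$ --- one needs (L$1'$) and (L2) for $X_{s,t}$, which hold by construction (the only trajectories that neither limit to a zero nor hit $\p D\times[0,1]$ are excluded by the plug property P3--P4, and (L2) is the ordering positive-then-negative); the parametric statement follows from the fibration/contractibility argument of Lemma~\ref{lm:Lyapunov-criterion-param}. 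Property (a) is then just the assertion that the time-$1$ field is a $\sigma$-plug, i.e.\ P1--P4, all of which are visible from the fiberwise-suspended normal form.

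**The main obstacle** I expect is keeping the construction $C^0$-small \emph{simultaneously} with making the new singularities genuinely hyperbolic and the new connecting trajectory of controlled diameter: the fiberwise Liouville term one adds to suspend $Y'$ naturally has size comparable to the fiber radius $\eps$, and one must check that squeezing it to be $C^0$-small does not force the flow to carry points a distance $>\sigma$ before they are captured, nor create retrograde connections between the new zeroes and old ones at other parameter values. This is exactly the point where the hypothesis $\sigma\ll\eps$ (and $\sigma_1\ll\sigma$) is essential, and where a careful choice of cutoff functions --- steep in the $x$-directions, with the capture region $\{|x|\le c\}$ doing the trapping --- is needed; it is also where the bookkeeping of which zeroes are positive vs.\ negative (so that the separating level set of $h$ survives) has to be maintained uniformly in $s$.
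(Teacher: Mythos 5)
Your plan diverges from the paper's in a way that leaves two genuine gaps, and the second one is the heart of the matter.

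First, you never address the height constraint. Proposition~\ref{prop:main-plug} requires $h_s(Q)\subset U_\eps=\{|x|\le\eps\}$ for an \emph{arbitrary} small $\eps$. Your remark that ``the whole deformation is realized as a graph\ldots which is automatically $C^0$-small'' controls the size of the isotopy in $\Sigma$, not the excursion of $h_s(Q)$ in the $x$-direction, and the natural constructions have $x$-excursion of order one. The paper spends all of Section~\ref{sec:plug-height} reducing the height: one proves the weaker Proposition~\ref{prop:main-plug-weak} (height $K(\sigma)$ allowed to be large), and then tiles $R_{1,1}\times[0,1]$ by $2^{2n-2}$ layers of small sheared boxes $P_I$, installing rescaled plugs in each layer and carefully tracking the accumulated drift and the multiplicative effect of the scaling on $\sigma$ versus on the height. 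Nothing in your proposal replaces this.

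Second, and more fundamentally, a direct product/suspension of a lower-dimensional plug with a Weinstein factor does \emph{not} produce a $\sigma$-plug, because of boundary escape. In the paper's computation (Lemma~\ref{lm:widehatX}) the characteristic vector field on $W\times R$ has the form $X=Y+aZ$ (plus a Reeb term near $\p W$), so the $W$-component moves along the Liouville flow $Z$ with a sign governed by the 2D dynamics. For a trajectory entering near $\p W$ the drift in the $W$-direction is of order one, controlled only by the Reeb/Liouville flow on $\p W$ --- it has no reason to be $\le\sigma$. This is exactly why Proposition~\ref{prop:prelim-plug} calls the construction a \emph{preliminary} plug: it traps only the trajectories entering at distance $\ge\sigma$ from $\p W$, and near $\p W$ it merely records how the exit point is carried by the characteristic flow of the boundary. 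Your ``fiberwise Liouville/gradient term'' suspension has the same defect, and your paragraph on ``the main obstacle'' locates the difficulty in the wrong place (hyperbolicity and $C^0$-smallness of the wiggle) rather than in the uncontrolled boundary drift. To convert a preliminary plug into an actual plug the paper needs three further ingredients you do not supply: (i) arranging three Weinstein cylinders in a \emph{good position} (Proposition~\ref{prop:3-blocks}) so that stacking preliminary plugs over them yields a $\sigma$-\emph{quasi}-plug (Lemma~\ref{lm:good-positions-cor}), whose escape drift is controlled by the characteristic foliation of the composite boundary $\p\wh V$; (ii) using the $(2n-3)$-dimensional plug --- this is the \emph{only} place the induction hypothesis enters, and only to make $\ell_{\p V}$ $\sigma$-short (Propositions~\ref{prop:plugs-in-convex} and~\ref{prop:making-short-approx}), not to suspend the lower-dimensional plug into the higher-dimensional one; and (iii) the observation (Lemma~\ref{lm:remove-quasi}) that a $\sigma$-quasi-plug over a domain with $\sigma$-short boundary foliation is a genuine $3\sigma$-plug. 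Your proposal, as written, would produce a gadget satisfying P1--P2 but neither P3 nor P4 near the fiber boundary, and the inductive step would not close.

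A smaller but real issue: you describe the $(s,t)$-deformation, the sets $C^\pm_s$, the Lyapunov family, and the stratified set $E$ as if they followed formally from the suspended normal form. In the paper these properties are verified on the explicit 2D model (Lemma~\ref{lm:2d-plug}, items (b)--(i)) and then propagated through the preliminary-plug computation (Proposition~\ref{prop:prelim-plug}, items (3)--(5)); in particular, absence of retrograde connections between the new zeroes and the ambient ones needs the stratified set $E$ and a genuine transversality argument (as in Proposition~\ref{prop:plugs-in-convex}), not just the remark that ``a good Lyapunov function separates them.''
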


 We will refer to the statement of Proposition \ref{prop:main-plug} as an {\em installation of a $\sigma$-plug  of height $\eps$   over $Q=D^{2n-1}\times [0,1]$}, where $(D^{2n-1},\alpha_\st)$ is  the standard  contact disc. The same statement with $Q, U_c$ are replaced by $Q^b$ and $U^b_c$   will be referred as an installation over $Q^b$. 
  Note that the contactomorphism
 $ U^b_a\to U^1_{ab}$ induced by the   linear map $(x,y)\mapsto(b x,\frac yb)$ of the second factor  always allows us to reduce the installation to the case   $b=1$.
 
  If we further replace $(D,\alpha_\st)$ in  the statement  of \ref{prop:main-plug}  by any compact  $(2n-1)$-dimensional manifold $V$ manifold  with boundary (and possibly with corners) and with a fixed contact form $\alpha$, we will  say that we are {\em installing  a $\sigma$-plug   of height $\eps$   over $V\times[0,b]$.} 
 \subsection{Plan of the proof of Proposition \ref{prop:main-plug}}
 We begin the proof in Section \ref{sec:plug-height} by showing that  Proposition \ref{prop:main-plug} can be deduced from a weaker  
Proposition \ref{prop:main-plug-weak}, where the required isotopy $h_s$ is constructed in $U_K$ for a large  $K$ which may depend on $\sigma$,    rather than $U_\eps$ for an arbitrary small $\eps$.  This is done by a scaling argument.  One  of the subtleties here is  that  contact scalings  are better adjusted  to  Carnot-Caratheodory type metrics, rather than  Riemannian ones. Thus, we have to analyze separately an effect of the scaling on measurements in   directions tangent and transverse to contact planes.

The continuation of the proof is by induction on dimension $2n-1$.  
   Lemma \ref{lm:2d-plug}  in Section \ref{sec:2-plug}   serves as the base of the induction for $n=2$, as well as an   important ingredient in the proof of  the induction  step.  The Giroux-Fuchs creation-elimination construction, which we recall in Section \ref{sec:GF}, is an essential ingredient to the proof of Lemma \ref{lm:2d-plug}.
  
By taking a product of the two-dimensional  plug constructed in Lemma \ref{lm:2d-plug}  with a $(2n-2)$-dimensional Weinstein domain $(W,\lambda)$  and appropriately adjusting the product over  $\Op \p W$ we construct in Section \ref{sec:prelim-plug}  a $2n$-dimensional {\em preliminary plug} over  $((W\times[0,1])\times[0,1]$. We call the contact domain $(W\times[0,1], \lambda+dz)$   a {\em Weinstein cylinder}. Similar to a $\sigma$-plug,  a  preliminary plug blocks all trajectories entering $W\times[0,1]\times0$ at a distance $\geq\sigma$ from the boundary of the Weinstein cylinder. However, one  has a much weaker control of the dynamics of the trajectories entering near the boundary.  Constructions of 2-dimensional and preliminary plugs are variations of similar constructions in  \cite{HH}.

Next, we show in Section \ref{sec:quasi-plug} that by a special arrangement  of Weinstein cylinders $(V_1=W_1\times[0,1],\dots, V_k=W_k\times[0,1])$, $k\geq 2$, see the definition of a {\em good position} in Section \ref{sec:good-pos}, and  by  composing   preliminary plugs over $V_j\times[\frac{j-1}k,\frac jk]$, $j=1,\dots, k$,   we   create, see Lemma \ref{lm:good-positions-cor}, a {\em $\sigma$-quasi-plug} over $(\wh V:=\bigcup\limits_1^k V_j)\times[0,1]$  which   blocks trajectories entering at a distance $\geq \sigma$ from $\p\wh V\times 0$, while a  non-blocked trajectory which enters at a point  $(p_0,0)\in\wh V\times 0$  with $\dist(p_0,\p\wh V)<\sigma$   exits at a  point $p_1\in \wh V\times 1  $  which satisfies the following condition: there exist  points  $p_0',p_1'\in\p\wh W$ such that $\dist(p_0,p_0'),\dist(p_1,p_1')<\sigma$ and $p_1'$ belongs to the forward trajectory of $p_0'$ for   a vector field directing the characteristic foliation $\ell_{\p\wh V}$ on $\p\wh V$. We note that if the characteristic foliation $\ell_{\p\wh V}$ is a $\sigma$-short, then any $\sigma$-quasi-plug  is automatically a $3\sigma$-plug.

Crucial Proposition \ref{prop:3-blocks} asserts   that  a contact domain $V$ with a Weinstein convex boundary  $\p V$ and a dividing  set $S\subset \p V$ can be $C^0$-approximated by standard contact balls which coincide with $V$ in $\Op S$ then $V$ can be approximated by  3 Weinstein cylinders in a good position.

In Section \ref{sec:making-short} we use  the induction hypothesis  to show  that the standard contact ball  $D^{2n-1}$ can be be deformed by a  $\sigma$-small in the $C^0$-sense isotopy to a ball $\wt D$  with Weinstein convex boundary and  a dividing set  $S\subset \p\wt D$  such that  the characteristic foliations $\ell_{\p \wt D}$  is $\sigma$-short  and   $\wt D$    can be  $C^0$-approximated by standard contact balls which coincide with $\wt D$ on a neighborhood of $S$.

Together with Proposition \ref{prop:3-blocks} and Lemma \ref{lm:good-positions-cor} this leads in Section \ref{sec:proof-main} to a  proof of Proposition \ref{prop:main-plug-weak}, and of all main results of the paper with it.

  \section{Reducing the height of a $\sigma$-plug}\label{sec:plug-height}

  The goal of  this section  is to reduce  Proposition \ref{prop:main-plug}  to the following weaker statement.
 \begin{prop}\label{prop:main-plug-weak}
 For any $\sigma>0$  there exists  $K=K(\sigma)$ and an    isotopy
 $ h_{s}: Q\to  U_K$, $s\in [0,1]$, which satisfy properties a) and b) from Proposition \ref{prop:main-plug}.
 In other words, one can install a $\sigma$-plug over $Q=D^{2n-1}\times[0,1]$ of height $K$ which may depend on $\sigma$.
 \end{prop}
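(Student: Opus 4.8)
The plan is to prove Proposition \ref{prop:main-plug-weak} by induction on the dimension $2n-1$ of the standard contact disc $D=D^{2n-1}$, carrying along at each stage not just property a) but the full package of auxiliary data in property b) (the cylinders $C^\pm_s$, the generalized Morse Lyapunov functions $\psi_{s,t}$, and the stratified set $E$), since it is this package that makes the induction self-sustaining. For the base case $n=2$ I would build the isotopy of $Q=D^3\times[0,1]$ explicitly (Lemma \ref{lm:2d-plug}): starting from the inclusion, use the Giroux--Fuchs creation-elimination construction (Section \ref{sec:GF}) on a family of parallel copies of $D^3$ inside the flow-box to alter the induced characteristic foliations so that, after the isotopy, every trajectory entering $D\times 0$ at distance $\geq\sigma$ from $\partial D$ is trapped by a newly created hyperbolic zero, while the few trajectories entering within $\sigma$ of the boundary exit close to their entry point. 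In this concrete two-dimensional model one reads off directly the cylinders $C^\pm_s$ (invariant under the backward/forward flow and containing the positive/negative zeros), the functions $\psi_{s,t}=y$ near $\partial Q$, and the subset $E$ carrying the trace of the stable manifolds of the positive zeros.

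\textbf{Inductive step, part I: preliminary plugs over Weinstein cylinders.} Assuming the statement in dimension $2n-3$, I would first construct a \emph{preliminary plug}: multiply the two-dimensional plug of Lemma \ref{lm:2d-plug} by a $(2n-2)$-dimensional Weinstein domain $(W,\lambda)$, interpolating over $\Op\partial W$ so that the product contact form agrees with $\alpha_\st+xdy$ near the boundary. This produces, over the \emph{Weinstein cylinder} $(W\times[0,1],\lambda+dz)$, a plug that blocks every trajectory entering $W\times[0,1]\times 0$ at distance $\geq\sigma$ from $\partial(W\times[0,1])$, although with only weak control of the trajectories entering near that boundary (Section \ref{sec:prelim-plug}). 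The product structure makes it straightforward to upgrade the Morse--Smale and Lyapunov data, and the sets $C^\pm_s$, $E$ are the obvious products.

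\textbf{Inductive step, part II: quasi-plugs, good position, shortening.} Next I would arrange finitely many Weinstein cylinders $V_1,\dots,V_k$ ($k\geq 2$) in \emph{good position} (Section \ref{sec:good-pos}) and compose preliminary plugs over the successive slabs $V_j\times[\tfrac{j-1}k,\tfrac jk]$; by Lemma \ref{lm:good-positions-cor} this yields a \emph{$\sigma$-quasi-plug} over $\wh V=\bigcup_1^k V_j$, for which the only uncontrolled trajectories are those entering within $\sigma$ of $\partial\wh V\times0$, and even those are forced to follow the characteristic foliation $\ell_{\partial\wh V}$. Proposition \ref{prop:3-blocks} provides the approximation of a contact domain with Weinstein convex boundary and dividing set $S$ (approximable by standard balls coinciding with it near $S$) by $3$ Weinstein cylinders in good position. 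Section \ref{sec:making-short}, in turn, uses the induction hypothesis to $\sigma$-isotope $D^{2n-1}$ to a ball $\wt D$ whose boundary is Weinstein convex with dividing set $S\subset\partial\wt D$, with $\ell_{\partial\wt D}$ made $\sigma$-short, and with $\wt D$ approximable by standard balls coinciding with $\wt D$ near $S$. Since a $\sigma$-quasi-plug built over a $\sigma$-short boundary foliation is automatically a $3\sigma$-plug, combining these three ingredients (after a harmless rescaling of $\sigma$) produces the required $\sigma$-plug over $Q$, with property b) assembled from the constituent preliminary plugs.

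\textbf{Main obstacle.} The crux is the passage from a \emph{quasi}-plug to a genuine plug, i.e. gaining control over the trajectories entering near $\partial\wh V$: this is precisely what forces one to first make the characteristic foliation on the boundary $\sigma$-short (where the induction hypothesis is used) and to establish the delicate "three Weinstein cylinders in good position" normal form of Proposition \ref{prop:3-blocks}. A secondary, but real, difficulty is the bookkeeping needed to guarantee that the many products, interpolations near $\partial W$, and compositions of preliminary plugs simultaneously preserve all of properties (i)--(v) of part b), since otherwise the induction cannot be iterated.
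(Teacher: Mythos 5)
Your proposal follows the paper's own proof essentially verbatim: induction on the dimension of the plug with Lemma \ref{lm:2d-plug} as the base, preliminary plugs over Weinstein cylinders, three cylinders in good position via Proposition \ref{prop:3-blocks}, quasi-plugs via Lemma \ref{lm:good-positions-cor}, and the shortening of the boundary characteristic foliation using the induction hypothesis, assembled exactly as in Section \ref{sec:proof-main}. The only slip is in the labeling of the base case: Lemma \ref{lm:2d-plug} installs a plug over the $2$-dimensional $Q=D^{1}\times[0,1]$ (the case $n=1$, ambient dimension $3$), not over $D^{3}\times[0,1]$.
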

   \subsection{Changing the base} \label{sec:base}

 Let $(V_1,\Ker\alpha_1), (V_2,\Ker\alpha_2)$ be two  contact manifolds  with boundary with corners endowed with contact forms and Riemannian metrics.    
 Any contactomorpohism $f:V_1\to V_2$ can be extended to a contactomorphism $$F:(V_1\times T^*\R,\Ker(\alpha_1+xdy))\to (V_2\times T^*\R, \Ker(\alpha_2+xdy)$$    by the formula $F(v,x, y)=(f(v), g(v)x, y), \; v\in V_1,\; x,y\in\R,$ where the function $g$ is defined by the equation $f^*\alpha_2=g\alpha_1$. 
   \begin{lemma}\label{lm:sigma-distortion}
 Let $h_{s,t}:V_1\times[0,1]\to V_1\times T^*[0,1]$  be an isotopy installing a $\sigma$-plug  of height $\eps$ over $V_1\times[0,1]$. Denote $C_1:=\mathop{\max}\limits_{v\in V_1}||d_vf||$, $C_2:=\max\limits_{V_1} g$. Then the isotopy $\wh h_{s}:= F\circ h_{s}\circ F^{-1}:V_2\times[0,1]\to V_2\times T^*[0,1]$ is installing a $C_1\sigma$-plug  of height $C_2\eps$ over $V_2\times[0,1]$. 
 \end{lemma}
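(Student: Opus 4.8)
The statement is essentially a bookkeeping lemma: we already have an installation of a $\sigma$-plug over $V_1\times[0,1]$, and we want to transport it via the extended contactomorphism $F$ to an installation over $V_2\times[0,1]$, tracking how the two metric constants $C_1=\max\|d_vf\|$ and $C_2=\max g$ distort the parameters ``$\sigma$'' and ``height $\eps$''. So the proof is: (1) check that $\wh h_s:=F\circ h_s\circ F^{-1}$, together with the pushed-forward two-parameter family $\wh h_{s,t}:=F\circ h_{s,t}\circ F^{-1}$, is a legitimate isotopy fixed near $\partial(V_2\times[0,1])$ with the correct boundary behavior; (2) verify properties (a) and (b)(i)--(v) of Proposition~\ref{prop:main-plug} transfer under $F$, with $\sigma$ replaced by $C_1\sigma$ and the height by $C_2\eps$.

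\textbf{First, the structural checks.} Since $f$ is a contactomorphism and $F(v,x,y)=(f(v),g(v)x,y)$ with $f^*\alpha_2=g\alpha_1$, the map $F$ is a contactomorphism $(V_1\times T^*\R,\Ker(\alpha_1+xdy))\to(V_2\times T^*\R,\Ker(\alpha_2+xdy))$ that preserves the $y$-coordinate and sends $\{x=0\}$ to $\{x=0\}$, hence sends $Q_1:=V_1\times[0,1]$ to $Q_2:=V_2\times[0,1]$. Because $h_{s,t}$ is fixed on $\Op\partial Q_1$ and equals the inclusion there, and $F$ conjugates the two inclusions $Q_i\hookrightarrow V_i\times T^*[0,1]$, the conjugated family $\wh h_{s,t}$ is fixed on $\Op\partial Q_2$, begins with the inclusion at $s=0$, and satisfies $\wh h_{s,0}=\wh h_s$, $\wh h_{0,t}=\wh h_0$. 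The characteristic foliation $\ell_s$ on $h_s(Q_1)$ induced by $\alpha_{\st,1}+xdy$ is carried by the contactomorphism $F$ to the characteristic foliation induced on $\wh h_s(Q_2)$ by $\alpha_{\st,2}+xdy$, so singularities, their signs, indices, stable/unstable manifolds, the no-retrograde-connection and Morse--Smale properties, and condition (L1) all transfer verbatim (these are contact-invariant, or diffeomorphism-invariant, notions). Likewise the generalized-Morse Lyapunov functions pull back: $\wh\psi_{s,t}:=\psi_{s,t}\circ F^{-1}$ is gradient-like for $\wh X_{s,t}=dF(X_{s,t})$ in the pushed-forward metric, and since $F$ preserves $y$ and is the identity in the $y$-direction on the collar, $\wh\psi_{s,t}|_{\Op\partial Q_2}=y$; the stratified set $E$ and the invariant sets $C^\pm_s$ are carried to $F(E)$, $F(C^\pm_s)$. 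The foliation being nonsingular on $\wh h_{s,1}(Q_2)$ follows from nonsingularity of $\ell_{s,1}$.

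\textbf{Second, the metric estimates --- the only real content.} Here is where $C_1$ and $C_2$ enter. For the \emph{height}: the plug over $V_1\times[0,1]$ lies in $U_\eps=\{|x|\le\eps\}$; applying $F$ multiplies the $x$-coordinate by $g(v)\le C_2$, so $\wh h_s(Q_2)\subset\{|x|\le C_2\eps\}=(U_2)_{C_2\eps}$, giving height $C_2\eps$. For the \emph{$\sigma$-smallness} (properties P3, P4 of a $\sigma$-plug, and properties (ii),(iii) of part (b)): these are all distance statements measured in the \emph{base} directions inside $D\times\{0\}$ (or $V_i\times\{0\}$), namely ``$\dist(p,\partial D)>\sigma$'' or ``$\dist(p',p)<\sigma$'', and $F$ restricted to the base is just $f$, so $\dist_{V_2}(f(p),f(q))\le C_1\dist_{V_1}(p,q)$ multiplies every such distance by at most $C_1$; thus a $\sigma$-plug becomes a $C_1\sigma$-plug, and a support-in-a-$\sigma_1$-neighborhood statement becomes support in a $C_1\sigma_1$-neighborhood (which is still ``$\sigma_1$-small'' after renaming, i.e.\ given the target $\sigma_1$ for the $V_2$-installation one runs the $V_1$-installation with parameter $\sigma_1/C_1$). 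One subtlety worth a sentence: the conditions involving the $x$-direction in P3/P4 --- that the trajectory ``exits $Q$ at a point $p'\times a$'' --- are about the boundary faces $\{y=0\},\{y=1\}$ of the flow-box, which $F$ preserves set-wise, and on which $F$ acts by $f$, so the exit statement transfers with the same $C_1$ distortion.

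\textbf{Main obstacle.} There is no deep obstacle; the one point requiring care is that ``$\sigma$-plug'' and the part-(b) conditions mix measurements in base directions (distorted by $C_1$) with measurements in the $x$-direction (distorted by $C_2$), so one must separate these and confirm that every distance appearing in P1--P4 and (i)--(v) of Proposition~\ref{prop:main-plug} is purely a base-distance (hence $C_1$) except for the ``height'' bound $|x|\le\eps$ (hence $C_2$); the $y$-direction is untouched by $F$. Because $F$ is an \emph{exact} contactomorphism intertwining $\alpha_1+xdy$ with $\alpha_2+xdy$, the directing vector fields correspond under $dF$ and no estimates on $\|dF^{-1}\|$ in the $x$- or $y$-directions are needed --- only the stated $C_1,C_2$. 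Hence the conclusion follows.
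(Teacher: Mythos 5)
Your proposal is correct and follows essentially the same route as the paper: conjugate the installation data by the extended contactomorphism $F$, note that $F$ carries characteristic foliations and their Lyapunov-theoretic structure to characteristic foliations (so the contact/diffeomorphism-invariant properties transfer automatically), observe that the $x$-coordinate is scaled by $g\le C_2$ which controls the height, and that the base $V_1\to V_2$ is distorted by at most $C_1=\max\|d_vf\|$, which multiplies the neighborhood/distortion parameter $\sigma$ by $C_1$. The paper's own proof is terser --- it records only the height bound, the relation $f(N^1_\sigma)\subset N^2_{C_1\sigma}$, and the resulting blocking/exit-distortion estimates --- and declares the remaining items of Proposition~\ref{prop:main-plug} straightforward, but the substance is identical to yours.
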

 \begin{proof} First, we note that  $\wh h_s(V_2\times[0,1])\subset \{|x|\leq C_2\eps\}$  because
  $h_s(V_1\times[0,1])\subset \{|x|\leq  \eps\}$ by assumption.  If $X_1^1$ is  the vector field on $V_1\times[0,1]$ directing the characteristic foliation
 of the form $h_s^*(\alpha_1+xdy)$ then  the vector field $X_1^2:=df(X_1^1)$   on $V_2\times[0,1]$ directs the characteristic foliation
defined by the form $h_s^*(\alpha_2+xdy)$.
 Let $N^1_a$ and $N^2_a$   denote   metric  $a$-neighborhoods    of  $\p V_1$ in $V_1$ and $\p V_2$ in $V_2$, respectively. Then $f(N^1_\sigma)\subset N^2_{C_1\sigma}$.  Hence, all trajectories of $X_1^2$ originated in $V_2\setminus  N^2_{C_1\sigma}$ are blocked. On the other hand, the non-blocked trajectories originated in $N^2_{C_1\sigma}$  exit with a distortion for no more than $C_1\sigma$. All other properties of a $C_1\sigma$-plug installation isotopy  listed in Proposition \ref{prop:main-plug} are straightforward. 
 \end{proof}
 
 Consider a class $\D$ of $(2n-1)$-dimensional compact manifolds  with  boundary  (and possibly with corners)  
   which are contactomorphic to a domain  in the standard contact $(\R^{2n-1},dz+\lambda_\st)$ with boundary transverse to the contact vector field $\Upsilon=2\frac\p{\p z}+\sum\limits_1^{n-1} x_j\frac{\p}{\p x_j}+y_j\frac{\p}{\p y_j}.$   For instance, the standard  $(2n-1)$-dimensional contact ball belongs to $\D$.
 \begin{lemma}\label{lm:changing-base}  If  there exists a domain $V\in\D$  such that for any $\sigma>0$ one can install a plug of height  $K=K(\sigma, V)$ over $V\times[0,1]$, then for  any domain $V'\in \D$ and   any $\sigma>0$ one can install a plug of height  $K':= K(\sigma, V'))$ over $V'\times[0,1]$.      If  there exists a domain $V\in\D$  such that for any $\sigma>0$ and any $\eps>0$  one can install a plug of height  $\eps$ over $V\times[0,1]$, then the same is true for any domain $V'\in \D$.
   \end{lemma}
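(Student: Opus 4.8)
The plan is to deduce this from a single mechanism — Lemma~\ref{lm:sigma-distortion}, which transports a plug installation along a contactomorphism at the price of a bounded distortion of both the parameter $\sigma$ and the height $\eps$ — together with the flexibility of contact embeddings into the standard contact $\R^{2n-1}$, all the rest being bookkeeping carried out uniformly for the two assertions. First I would reduce to the case in which $V$ and $V'$ are themselves domains in $(\R^{2n-1},dz+\lambda_\st)$ with boundary transverse to $\Upsilon$. By the definition of $\D$ there are contactomorphisms $V\cong\wt V$, $V'\cong\wt V'$ onto such domains; feeding the first into Lemma~\ref{lm:sigma-distortion} turns the hypothesised installations over $V\times[0,1]$ into installations over $\wt V\times[0,1]$, with the height function $K(\cdot,V)$ (resp.\ the freedom of arbitrarily small height) preserved up to the constants $C_1,C_2$ of that lemma, and symmetrically an installation over $\wt V'\times[0,1]$ yields one over $V'\times[0,1]$. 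Hence we may assume $V,V'\subset(\R^{2n-1},dz+\lambda_\st)$ with $\Upsilon$-transverse boundary.

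Next I would cover $V'$ by copies of $V$. Fix a small $\delta>0$, put $V'_\delta:=\{p\in V':\dist(p,\p V')\ge\delta\}$, and, using the standard flexibility fact that a compact domain of the standard contact $\R^{2n-1}$ admits a contact embedding into an arbitrarily small Darboux ball placed anywhere, produce contact embeddings $\phi_1,\dots,\phi_k\colon V\hookrightarrow\Int V'$ such that $V_j:=\phi_j(V)$ have $\rho$-interiors $\{\dist(\cdot,\p V_j)>\rho\}$ covering $V'_\delta$ for some fixed $\rho>0$. By Lemma~\ref{lm:sigma-distortion} each $\phi_j$ carries the installations over $V\times[0,1]$ to installations over $V_j\times[0,1]$; composing with the rescaling $U^{1/k}_a\cong U^1_{a/k}$ from the remark after Proposition~\ref{prop:main-plug} and translating in $y$, I obtain, for every small $\tau_j>0$, an installation of a $\tau_j$-plug over $V_j\times[\tfrac{j-1}{k},\tfrac jk]$ of some height $\eps_j$, with $\max_j\eps_j$ bounded by a function $K'(\tau,V')$ in the first case and chosen $\le\eps$ in the second.

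Finally I would concatenate. With $Q':=V'\times[0,1]$ and all $\tau_j$ chosen so that $k\tau_j\ll\rho\ll\delta$, define $h_s\colon Q'\to U_\eps$ to equal the $j$-th sub-plug isotopy on $V_j\times[\tfrac{j-1}{k},\tfrac jk]$ and the inclusion elsewhere; because each sub-plug is fixed near the boundary of its box and $V_j\subset\Int V'$, this is a well-defined smooth isotopy starting at the inclusion and fixed on $\Op\p Q'$, whose directing field $X_s$ is $\tfrac{\p}{\p y}$ off the boxes and the plug field inside them. A trajectory of $X_1$ entering $V'\times 0$ at $(p,0)$ with $\dist(p,\p V')\ge\delta$ has $\dist(p,\p V_j)>\rho$ for some $j$; as it rises each layer displaces its $V'$-coordinate by at most the corresponding $\tau_i$, so on reaching layer $j$ it sits within $k\tau_j\ll\rho$ of $p$, hence deep inside the blocking region of $V_j$, and layer $j$ drives it into a zero of $X_1$; a trajectory entering within $\delta$ of $\p V'$ stays in the trivial part and exits within $\delta$ of its entry point. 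Thus $(Q',X_1)$ is a $\delta$-plug, i.e.\ a $\sigma$-plug with $\sigma:=\delta$. The auxiliary data of Proposition~\ref{prop:main-plug}(b) is assembled layerwise: the generalized Morse Lyapunov functions $\psi_{s,t}$ are glued by stacking in $y$, the invariant compacta $C^+_s,C^-_s$ and the stratified set $E$ are unions of those of the sub-plugs, and the $2$-parametric isotopies $h_{s,t}$ are concatenated just as the $h_s$ are. The second assertion follows the same way, with sub-plugs of arbitrarily small height.

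The step I expect to be the main obstacle is the concatenation: the sub-plugs must be matched across the dividing levels $y=\tfrac jk$ precisely enough that \emph{all} of conditions a) and (i)--(v) of Proposition~\ref{prop:main-plug} survive on $Q'$, and the cumulative positional distortion $\sum_j\tau_j$ must be controlled against the covering scale $\rho$. The reduction to $\R^{2n-1}$ and the covering by copies of $V$ are, by contrast, formal once Lemma~\ref{lm:sigma-distortion} and the standard contact flexibility are available.
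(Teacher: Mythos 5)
The proposal is correct in its broad outline, but it takes a genuinely more elaborate route than the paper. The paper's argument is a one-shot exhaustion argument: since every $V\in\D$ has $\Int V$ contactomorphic to $(\R^{2n-1},dz+\lambda_\st)$ (by \cite{El17}) and $\p V$ is convex, flowing inward exhausts $\Int V$ by nested copies $V_j$ contactomorphic to $V$; transporting this exhaustion through $\Int V\cong\R^{2n-1}\cong\Int V'$ exhausts $\Int V'$ by nested copies of $V$. For a given $\sigma$ one then picks a single $V_{j_0}\subset\Int V'$ whose boundary is Hausdorff $\sigma/2$-close to $\p V'$, installs a $\sigma/2$-plug over $V_{j_0}\times[0,1]$ via Lemma~\ref{lm:sigma-distortion}, and extends by the identity on $(V'\setminus V_{j_0})\times[0,1]$ — no layering in $y$ is needed, and the auxiliary data b)(i)–(v) of Proposition~\ref{prop:main-plug} is inherited verbatim from the single sub-plug. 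Your approach instead covers a shrunk copy $V'_\delta$ by \emph{many} small Darboux-ball copies of $V$ and stacks $k$ plugs in disjoint $y$-layers, which is essentially the same mechanism as the height-reduction proof in Section~\ref{sec:plug-height}. That route also works, and has the advantage of not needing the exhaustion to be nested (any covering suffices), but it pays for this with the displacement bookkeeping $k\tau_j\ll\rho\ll\delta$ and, more seriously, with a nontrivial gluing verification for properties b)(i)–(v): the stratified set $E$ of a layer $j>1$ must be pulled back through the (nontrivially perturbed) layers below it, and the invariant compacta $C^\pm_s$ must be shown disjoint across layers. You flag this as the main obstacle and leave it at a sketch; the paper's single-copy argument sidesteps it entirely. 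Two small local corrections to your write-up: a trajectory entering within $\delta$ of $\p V'$ need not "stay in the trivial part" (it may enter the annular region or even the blocking region of some $V_j$), though your displacement bound and hence the $\delta$-plug conclusion still hold; and the rescaling $U^{1/k}_a\cong U^1_{a/k}$ multiplies heights by $k$ in the direction you need, so for the first assertion (fixed height $K$) the resulting $K'(\sigma,V')$ has to absorb the factor $k$ as well as the constants $C_1,C_2$ of Lemma~\ref{lm:sigma-distortion}, which is fine since $K'$ is allowed to depend on $\sigma$ and $V'$.
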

 \begin{proof} For   any domain
  $( V, \Ker\alpha)\in\D$ its interior $\Int V $ is contactomorphic to the standard contact $\R^{2n-1}$, see \cite{El17}.  Besides, the boundary $\p V$ is convex, and hence $\Int V=\bigcup V_j$, where $V_j$ is contactomorphic to $V$, Hence,    for any $V,V'\in\D$   $\Int V' =\bigcup V_j$, where $V_j$ is contactomorphic to $V$. Hence, the statement follows from
  Lemma \ref{lm:sigma-distortion}.
 \end{proof}
 
 \subsection{Scaling}\label{sec:scaling}

  For $a, b>0$ denote $R_{a, b}:=\{|x_j|, |y_j|\leq a, |z|\leq b,\; j=1,\dots, n-1\}\subset (\R^{2n-1},dz+\lambda_\st)$.   Note that   $R_{a, b}\in \D$.

     \begin{figure}[ht]
 
\includegraphics[scale=0.7]{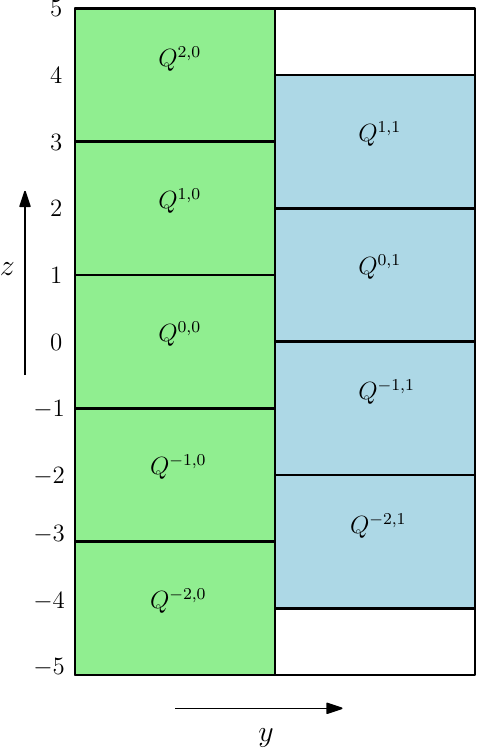}
 
\caption{The arrangement of  blocks  $Q^{i,0}$, $i=0,\pm1,\pm 2$, and $Q^{i,1}$, $i=-2,\dots, 1,$ for the case $N=2$. }
\label{fig:scaling_z}
\end{figure}

\begin{lemma}\label{lm:enlarging-height}
Choose    $\sigma>0$.  Suppose that one can install a $\sigma$-plug of height $K$  over  $R_{1,1}\times [0,1]$. Then  for any integer $N\geq 0$ one can install a $\sigma$-plug  of height $ K$    over $R_{1,2N+1}\times[0,2]$.
\end{lemma}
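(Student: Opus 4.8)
The plan is to build the required plug over $R_{1,2N+1}\times[0,2]$ out of two ``layers'' of the given plug over $R_{1,1}\times[0,1]$, stacked along the $y$-direction over $[0,1]$ and $[1,2]$ and translated with respect to one another along the $z$-axis, so that a trajectory that leaks through a block of the first layer — because it enters too close to a block boundary — is caught by a block of the second layer. Concretely: over $R_{1,2N+1}\times[0,1]$ place the $2N+1$ pairwise disjoint $z$-translates $Q^{i,0}$ ($i=-N,\dots,N$) of the flow-box $R_{1,1}\times[0,1]$ centered at $z=2i$, which tile this layer exactly; over $R_{1,2N+1}\times[1,2]$ place the $2N$ disjoint $z$-translates $Q^{i,1}$ ($i=-N,\dots,N-1$) centered at $z=2i+1$, which cover $\{|z|\le 2N\}\times[1,2]$ and leave two thin slabs next to $\p R_{1,2N+1}$ uncovered (Figure~\ref{fig:scaling_z}). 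On each block install the appropriate $z$-translate of the given plug isotopy (the $z$-translations are contactomorphisms, since $\alpha_\st$ is $z$-invariant); on the complement use the constant isotopy. Because each plug isotopy is fixed near the boundary of its flow-box, where the induced characteristic foliation is the vertical field $\frac{\p}{\p y}$ — which is also the characteristic foliation of $\{x=0\}$ on the complement — the pieces glue to an isotopy $H_s$ of $R_{1,2N+1}\times[0,2]$ that is fixed on $\Op\p(R_{1,2N+1}\times[0,2])$, begins with the inclusion, and, since the blocks are disjoint and each translated plug stays inside $\{|x|\le K\}$, has image in $\{|x|\le K\}$; so the height is again $K$. (The case $N=0$ is trivial.) As elsewhere in the paper, I allow $\sigma$ to be shrunk by a dimensional constant, or equivalently — using that in its intended application the hypothesis of Proposition~\ref{prop:main-plug-weak} is available for every value of the plug parameter — I take the building plug to have parameter $\sigma'\le\sigma/3$.

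Conditions P1 and P2 for $(R_{1,2N+1}\times[0,2],X_1)$ are immediate: on the complement $X_1=\frac{\p}{\p y}$, on each block $X_1$ directs the Morse--Smale foliation of the building plug, and since distinct blocks are disjoint flow-boxes no trajectory passes from one into another except by running straight through the vertical parts; hence the local (generalized) Morse Lyapunov functions assemble, after shifting levels, into a global one (layer-$0$ blocks carry levels in $[0,1]$, layer-$1$ blocks levels in $[1,2]$, matching $y$ on the complement), and transversality of invariant manifolds is arranged by a $C^1$-small perturbation supported away from the boundary, which disturbs nothing else. The heart of the matter is P3--P4. Take a trajectory through $p\times0$. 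If $\dist(p,\p R_{1,2N+1})>\sigma$, then $p$ lies in some $Q^{i,0}$, and — since the faces $\{x_j=\pm1\}$, $\{y_j=\pm1\}$ and the outermost $z$-faces of $Q^{i,0}$ all belong to $\p R_{1,2N+1}$ — the only way $p$ can fail to be $\sigma$-deep in $Q^{i,0}$ is to lie within $\sigma$ of one of the interior $z$-faces $\{z=2i\pm1\}$. If $p$ is $\sigma$-deep in $Q^{i,0}$, the trajectory converges to a zero inside $Q^{i,0}$ (P3 for the building plug). Otherwise it exits $Q^{i,0}$ at $y=1$ with horizontal displacement $<\sigma'$ (P4 for the building plug), hence enters the block $Q^{i,1}$ or $Q^{i-1,1}$ at a point whose $z$-coordinate lies within $2\sigma$ of the center of that block's $z$-span — so the point is $(1-2\sigma)$-deep in $z$ and, having moved by $<\sigma'$, stays more than $\sigma'$ away from the faces $\{x_j=\pm1\}$, $\{y_j=\pm1\}$ (here $\sigma-\sigma'>\sigma'$); being $\sigma'$-deep there, it is trapped. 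This proves P3. If $\dist(p,\p R_{1,2N+1})\le\sigma$, the trajectory may leak; it meets at most one block of each layer, picking up displacement $<\sigma'$ each time, so exits at $y=2$ with total displacement $<2\sigma'<\sigma$ — this is P4. The data of part b) are assembled blockwise in the same spirit: $\psi_{s,t}:=y$ on the complement and the shifted generalized Morse Lyapunov functions of the building plug on the blocks; $C^\pm_s$ the union of the $z$-translated $C^\pm_s$'s of the building plug, enlarged by backward/forward $X_s$-saturation; and $E$ the union of the $z$-translated $E$'s of the first-layer blocks together with the backward-$X_{s,t}$-traces in $\{y=0\}$ of the corresponding sets of the second-layer blocks — a stratified $(n-1)$-dimensional set, being swept from such sets by the flow; the two-parametric isotopy $h_{s,t}$ likewise comes blockwise, constant on the complement.

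The one genuinely delicate point is the distortion bookkeeping in P3--P4: this is exactly why the two layers must be staggered by a full unit (which is $\gg\sigma$), and why a single layer would not suffice — a point sitting over an interior block-boundary is $\sigma$-deep in $R_{1,2N+1}$ yet $\sigma$-deep in no block of one layer alone, and so would escape uncaught. Everything else is routine.
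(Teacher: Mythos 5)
Your construction---two layers of the given flow-box, $2N+1$ $z$-translates $Q^{i,0}$ tiling $R_{1,2N+1}\times[0,1]$ and $2N$ staggered translates $Q^{i,1}$ over $R_{1,2N+1}\times[1,2]$, with the building plug conjugated in by the contact translations $\Pi^{i,0},\Pi^{i,1}$ and glued along the fixed collars---is exactly the paper's (Fig.~\ref{fig:scaling_z}), and your blockwise assembly of the Lyapunov functions and of the data in part b) matches the paper's implicit treatment. Your one refinement is to flag a point the paper passes over silently: a non-blocked trajectory exiting a layer-$0$ block may have moved by up to the displacement bound in the $x_j,y_j$ directions (not just in $z$), so its entry point into a layer-$1$ block need not be $\sigma$-deep there; you repair this by taking the building plug with parameter $\sigma'\le\sigma/3$, which is legitimate since in the application the plug parameter is freely shrinkable (Proposition~\ref{prop:main-plug-weak} supplies a plug for every $\sigma$), and harmless for the ensuing scaling argument since $K=K(\sigma)$ anyway.
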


\begin{proof} Denote   $\wt Q:= R_{1,2N+1}\times[0,2]$, $\wh U:=R_{1,2N+1}\times T^*[0,2]$. We assume $\wh U$ is endowed with the contact form $dz+\lambda_\st+xdy$.
Furthermore,  for $i=0,\pm 1,\dots, \pm N$ denote $ Q^{i,0}:=\{z\in[2i-1 ,2i+1], y\in[0,1]\}\subset \wh Q$, $  U^{i,0}:=\{z\in[2i-1,2i+1], y\in[0,1]\}\subset \wh U$,   and  for $i=-N,\dots, N-1$  denote
  $  Q^{i,1}:=\{z\in[2i,2i+2], y\in[1,2]\}\subset \wh Q$,  $  U^{i,0}:=\{z\in[2i,2i+2], y\in[1,2]\}\subset \wh U$, see Fig. \ref{fig:scaling_z}.   Note that $Q^{0,0}=R_{1,1}\times [0,1]$   and  $U^{0,0} =R_{1,1}  \times T^*[0,1]$. The  diffeomorphisms  $(z,y)\mathop{\mapsto}\limits^{\Pi^{i,0}} (z+2i,y)$ and $(z,y)\mathop{\mapsto}\limits^{\Pi^{i,1}} (z+1+2i,y+1)$    preserves the contact form $dz+\lambda_\st+xdy$ and identify
$ (U:=R_{1,1}  \times T^*[0,1], Q:= R_{1,1}\times [0,1])$ with $(U^{i,0}, Q^{i,0})$ and $(U^{i,1}, Q^{i,1})$, respectively.  Let $h_s:Q\to U$ be an isotopy installing a plug  of height $K$
over $Q=R_{1,1}\times [0,1]$. Then the isotopy  $g_s:\wh Q\to\wh U$  which is equal to $\Pi^{i,0}\circ g_s\circ(\Pi^{i,0})^{-1} $ on $Q^{i,0}$, $i=0,\dots, \pm N$,  and to $\Pi^{i,1}\circ g_s\circ(\Pi^{i,1})^{-1} $ on $Q^{i,1}$, $i=-N,\dots, N-1$ is installing the required  $\sigma$-plug  of height $ K$   over $R_{1,2N+1}\times[0,2]$.
\end{proof}

\begin{lemma} \label{lm:scaling-plug}
 If one can install a $\sigma$-plug of height $ K $  over $ R_{1,1}\times[0,1]$ then  for any integer $N\geq 0$ one can install a $\frac\sigma {2N+1}$-plug  of height $ \frac{2K}{(2N+1)^2}$    over $R_{\frac1{2N+1},1}\times[0,1]$. 
\end{lemma}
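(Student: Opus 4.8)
The plan is to deduce this from Lemma~\ref{lm:enlarging-height} (to first stretch the base in the $z$--direction) combined with a contact dilation and the change--of--base Lemma~\ref{lm:sigma-distortion} (to shrink everything back while keeping track of distortion). Write $\mu:=\frac1{2N+1}$. The arithmetic that makes everything fit together is: $(2N+1)^2$ is odd, so $M:=\tfrac{(2N+1)^2-1}2=2N^2+2N$ is a non--negative integer with $2M+1=(2N+1)^2$; and $\mu^2(2N+1)^2=1$, which is exactly what is needed so that the dilation by $\mu$ sends the box $R_{1,(2N+1)^2}$ \emph{onto} $R_{\mu,1}=R_{\frac1{2N+1},1}$.

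\emph{Step 1 (stretch).} I would apply Lemma~\ref{lm:enlarging-height} with the integer $M$ to the given installation of a $\sigma$--plug of height $K$ over $R_{1,1}\times[0,1]$. Since $2M+1=(2N+1)^2$, this yields an installation of a $\sigma$--plug of height $K$ over $R_{1,(2N+1)^2}\times[0,2]$.

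\emph{Step 2 (normalize the flow interval).} Next I would apply the linear contactomorphism of the cotangent factor $(x,y)\mapsto(2x,y/2)$ --- the map of the remark following Proposition~\ref{prop:main-plug} with $b=2$ --- which carries $R_{1,(2N+1)^2}\times T^*[0,2]$ with its form $(dz+\lambda_\st)+x\,dy$ to $R_{1,(2N+1)^2}\times T^*[0,1]$, sends $\{|x|\le K\}$ to $\{|x|\le 2K\}$, and leaves the contact base untouched. So one obtains an installation of a $\sigma$--plug of height $2K$ over $R_{1,(2N+1)^2}\times[0,1]$.

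\emph{Step 3 (dilate back).} Finally I would take the contact dilation $\phi(x_1,\dots,x_{n-1},y_1,\dots,y_{n-1},z)=(\mu x_1,\dots,\mu x_{n-1},\mu y_1,\dots,\mu y_{n-1},\mu^2 z)$, which satisfies $\phi^*(dz+\lambda_\st)=\mu^2(dz+\lambda_\st)$ and, by the identity $\mu^2(2N+1)^2=1$, is a contactomorphism of $R_{1,(2N+1)^2}$ onto $R_{\frac1{2N+1},1}$. Applying Lemma~\ref{lm:sigma-distortion} with $f=\phi$ gives $g\equiv\mu^2$, hence $C_2=\max g=\mu^2$, while $d_v\phi$ is the constant diagonal linear map with eigenvalue $\mu$ in the Legendrian directions and $\mu^2$ in the $z$--direction, so $C_1=\max\|d_v\phi\|=\mu$. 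Lemma~\ref{lm:sigma-distortion} then transports the installation of Step~2 to an installation of a $C_1\sigma=\frac\sigma{2N+1}$--plug of height $C_2\cdot 2K=\frac{2K}{(2N+1)^2}$ over $R_{\frac1{2N+1},1}\times[0,1]$, which is the assertion.

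The step I expect to need the most care --- and the one that is the real content here --- is Step~3: a contact dilation scales the Legendrian directions and the transverse ($z$) direction by different powers of $\mu$, so one must be sure that the plug's control of distances to $\partial R$ degrades only by the factor $\|d\phi\|=\mu$ and not by something worse. This is precisely what is packaged into the constant $C_1=\max_v\|d_vf\|$ of Lemma~\ref{lm:sigma-distortion} (distances to the $z$--faces of the box contract by $\mu^2\le\mu$ and to the remaining faces by $\mu$, so $\mu$ is the right Lipschitz bound), so invoking that lemma disposes of the subtlety cleanly; the rest of Steps~1--2 is bookkeeping with the identities $2M+1=(2N+1)^2$ and $\mu^2(2N+1)^2=1$.
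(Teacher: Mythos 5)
Your proof is correct and follows essentially the same route as the paper: stretch in the $z$--direction via Lemma~\ref{lm:enlarging-height} with $2M+1=(2N+1)^2$, renormalize the flow interval to double the height, then conjugate by the contact dilation $(x_j,y_j,z)\mapsto(\mu x_j,\mu y_j,\mu^2 z)$ (whose canonical extension to the cotangent factor is precisely the map $f$ written out explicitly in the paper). Your only added value is that you route the final step through Lemma~\ref{lm:sigma-distortion} and verify $C_1=\mu$, $C_2=\mu^2$ explicitly, a small bit of care the paper leaves implicit.
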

\begin{proof} By applying Lemma \ref{lm:enlarging-height} we install a $\sigma$-plug  of height $ K$   over $R_{1,(2N+1)^2}\times[0,2]$. This is equivalent to a   $\sigma$-plug  of height $ 2K$   over $R_{1,(2N+1)^2}\times[0,1]$ . Let $h_s$  be an isotopy  which installs this plug. Consider a contactomorphism
\begin{align*}
&(x_1, y_1,\dots, x_{n-1}, y_{n-1},  x,y,z)\\
&\mathop{\mapsto}\limits^f  \left(\frac{x_1}{2N+1}, \frac{y_1}{2N+1},\dots,\frac{ x_{n-1}}{2N+1}, \frac{y_{n-1}}{2N+1}, \frac{x}{(2N+1)^2},y, \frac{z}{(2N+1)^2}\right).
\end{align*} Then the isotopy $f\circ h_s\circ (f)^{-1}$ is  installing the required   $\frac\sigma {2N+1}$-plug  of height $ \frac{2K}{(2N+1)^2}$    over $R_{\frac1{2n+1},1}\times[0,1]$. 
\end{proof}
 
  Lemma \ref{lm:scaling-plug} and  Proposition \ref{prop:main-plug-weak}  imply
  \begin{corollary}\label{cor:tiny-sigma-plug} For any $\sigma,\eps >0$ and $p\in D^{2n-2}$ there exists $N_0$ such that  for any $N\geq N_0$ one can install a  
$\frac\delta N$-plug of height $\eps$   over $R_{\frac1{N},1}\times[0,1]$.
\end{corollary}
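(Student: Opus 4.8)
The plan is purely to match up parameters: the corollary follows by pushing Proposition~\ref{prop:main-plug-weak} through Lemma~\ref{lm:changing-base} and then through the scaling Lemma~\ref{lm:scaling-plug}, with no extra geometric input. Fix $\sigma,\eps>0$; we will produce, for all large $N$, a $\frac\sigma N$-plug of height $\eps$ over $R_{\frac1N,1}\times[0,1]$ (so $\delta=\sigma$). First I would apply Proposition~\ref{prop:main-plug-weak} to get a constant $K=K(\sigma)$ and an installation of a $\sigma$-plug of height $K$ over $D^{2n-1}\times[0,1]$. Since both $D^{2n-1}$ and $R_{1,1}$ lie in the class $\D$, Lemma~\ref{lm:changing-base} turns this into an installation of a $\sigma$-plug of height $K'=K'(\sigma)$ over $R_{1,1}\times[0,1]$; the auxiliary structures of part~(b) of Proposition~\ref{prop:main-plug} ride along because, as in Lemma~\ref{lm:sigma-distortion}, the base change is carried out by conjugation with a contactomorphism.

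Next I would feed this into Lemma~\ref{lm:scaling-plug}: for every integer $M\ge 0$ it gives an installation of a $\frac{\sigma}{2M+1}$-plug of height $\frac{2K'}{(2M+1)^2}$ over $R_{\frac1{2M+1},1}\times[0,1]$. Pick $N_0$ so large that $\frac{2K'}{N^2}<\eps$ for all $N\ge N_0$. Then for odd $N=2M+1\ge N_0$ we have an installation of a $\frac\sigma N$-plug of height $\frac{2K'}{N^2}<\eps$ over $R_{\frac1N,1}\times[0,1]$. Two elementary remarks put this in the required form: an installation with image in a flow-box of height $<\eps$ is in particular an installation of height $\eps$; and examining conditions P3--P4 shows that a $\sigma_1$-plug is automatically a $\sigma_2$-plug whenever $\sigma_2>\sigma_1$. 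Hence this is a $\frac\sigma N$-plug of height $\eps$ over $R_{\frac1N,1}\times[0,1]$.

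There is no real obstacle here; the single piece of bookkeeping is that Lemma~\ref{lm:scaling-plug} delivers the scaling index $N$ only along the odd subsequence $N=2M+1$, whereas the corollary asks for all $N\ge N_0$. To cover an arbitrary (say even) $N$, I would take a neighboring odd integer $N'$, use the installation over $R_{\frac1{N'},1}\times[0,1]$ already built, and transfer it to $R_{\frac1N,1}\times[0,1]$ by Lemma~\ref{lm:changing-base} --- both boxes are in $\D$. The distortion constants $C_1,C_2$ of Lemma~\ref{lm:sigma-distortion} governing this transfer are controlled by the ratio $N'/N$, hence bounded uniformly in $N$, so carrying out the whole argument with $\sigma$ and $\eps$ replaced at the outset by suitably smaller values depending only on these bounds absorbs them. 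Finally, the point $p\in D^{2n-2}$ plays no role in the conclusion itself; the box $R_{\frac1N,1}$ is the shrinking model neighborhood that will be used in the subsequent sections.
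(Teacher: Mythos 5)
Your proof is correct and follows exactly the paper's route: the paper gives the corollary with the one-line justification that Lemma~\ref{lm:scaling-plug} together with Proposition~\ref{prop:main-plug-weak} (transported from $D^{2n-1}$ to $R_{1,1}$ via Lemma~\ref{lm:changing-base}) implies it, which is precisely the chain of reductions you spell out, together with the easy observations that a plug of smaller $\sigma$ or smaller height is a fortiori a plug for a larger one. The extra care you take over the parity of $N$ (Lemma~\ref{lm:scaling-plug} only produces odd $N$) addresses a real imprecision in the corollary as stated, though it is harmless for the only place the corollary is invoked --- the reduction argument just needs one sufficiently large $N$, which can be taken odd; you also correctly flag the typo $\delta$ for $\sigma$ and the unused parameter $p$.
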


For a point $p=(a_1,b_1,\dots, a_{n-1}, b_{n-1})\subset\R^{2n-2}$ consider a map $$\tau_p: \R^{2n-1}=\R^{2n-2}\times\R\to \R^{2n-2}\times\R$$  given by the formula
\begin{align*}
&\tau_p(x_1,y_1,\dots, x_{n-1}, y_{n-1},z)\\&=\left(x_1+a_1, y_1+b_1, \dots, x_{n-1}+a_{n-1}, y_{n-1}+ b_{n-1}, z-\sum \limits_1^{n-1}a_j y_j-b_jx_j\right).
\end{align*} Note   the $\tau_p$  preserves the contact form $\lambda_\st+dz$:
 $$\tau_p^*(\lambda_\st+dz)=\lambda_\st+dz.$$ 
  
 Fix an integer $N>1$. Given     an integer vector  $$I:=(i_1, j_1,\dots, i_{n-1}, j_{n-1})\in[1-N, N-1]^{2n-2}$$ denote $p_I =\frac IN, \;\delta_N:= \frac{2n-2}N $  and  \begin{align*}
    &P_I:=\tau_{p_I} \left(R_{\frac1{N},1}\right), \;
    P_{I, -}:=
    \tau_{p_I} \left(R_{\frac2{3N},1-\delta_N}\right),\;\;\hbox{see Fig. \ref{fig:scaling2}.}
    \end{align*}
     \begin{figure}[ht]
\includegraphics[scale=0.65]{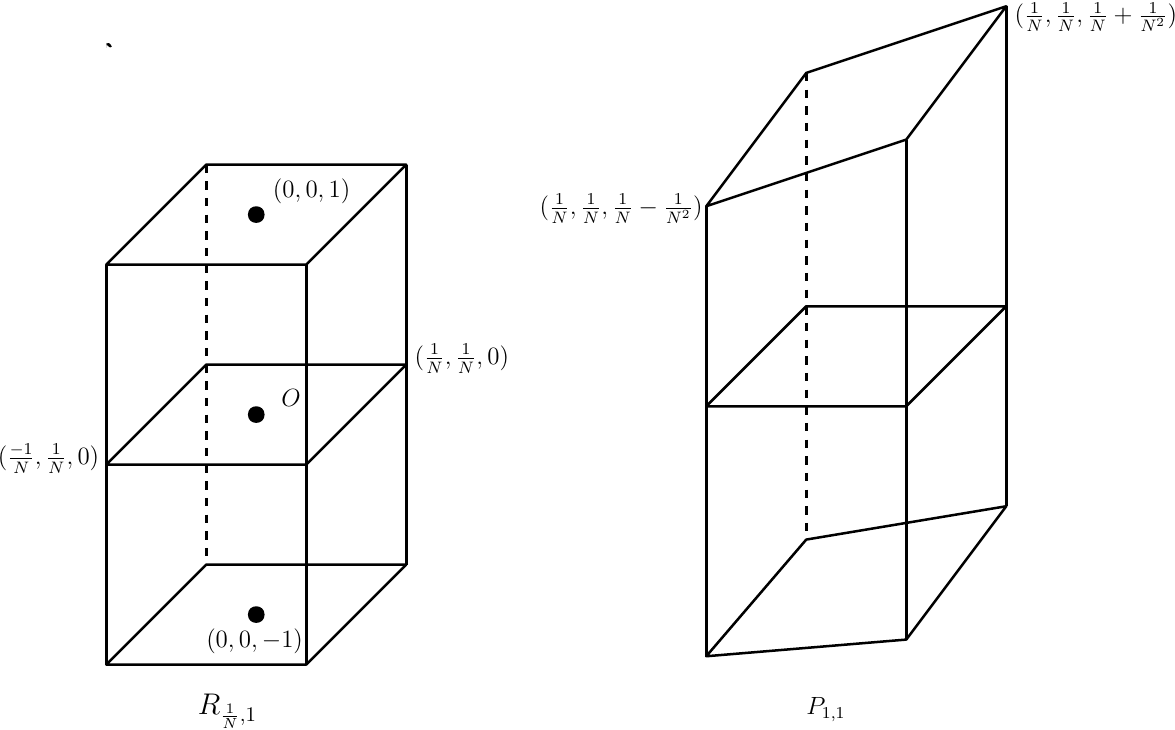}
\caption{Domain $R_{\frac1N,1}$ and its image $P_{1,1}$ under the shear $\tau_{p_{11}}$.}
\label{fig:scaling2}
\end{figure}
  Note  that we have
  \begin{lemma} $$\Int\left(R_{1-\frac1{3N},1-\frac{2n-2}N}\right) \subset\bigcup\limits_{I \in[1-N, N-1]^{2n-2}} \Int\left(P_{I,-}\right))\subset\Int\left(R_{1,1}\right) $$   and the multiplicity of the covering $\bigcup\limits_{I \in[1-N, N-1]^{2n-2}} \Int\left(P_{I,-}\right))\supset \Int\left(R_{1-\frac1{3N},1-\frac{2n-2}N}\right)$ is equal to $2^{2n-2}$.
  \end{lemma}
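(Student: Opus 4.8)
The plan is to prove the two claimed inclusions and the multiplicity statement by a direct, coordinate-level computation, reducing everything to a one-dimensional-per-symplectic-pair analysis after untwisting the shears $\tau_p$.

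First I would make explicit what the sets $P_{I,-}$ look like in the $z$-coordinate. By definition $P_{I,-}=\tau_{p_I}\bigl(R_{\frac{2}{3N},1-\delta_N}\bigr)$, and since $\tau_{p}$ only shifts the $(x_j,y_j)$ coordinates and adjusts $z$ by the affine term $-\sum_j(a_jy_j+b_jx_j)$, a point $(x_1,y_1,\dots,x_{n-1},y_{n-1},z)$ lies in $P_{I,-}$ iff each pair satisfies $|x_j-\frac{i_j}{N}|\le\frac{2}{3N}$ and $|y_j-\frac{j_j}{N}|\le\frac{2}{3N}$, and the shifted $z$-coordinate, namely $z+\sum_j(\frac{i_j}{N}y_j+\frac{j_j}{N}x_j)$ evaluated appropriately, has absolute value $\le 1-\delta_N$. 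The key elementary fact is that the intervals $\{|t-\frac{k}{N}|\le\frac{2}{3N}\}_{k=1-N}^{N-1}$ cover $[-1+\frac{1}{3N},1-\frac{1}{3N}]$, and cover each such point exactly twice (consecutive intervals overlap in a band of width $\frac{1}{3N}$, and no point is in three of them since the centers are $\frac{1}{N}$ apart while each interval has radius $\frac{2}{3N}<\frac{3}{2}\cdot\frac{1}{N}$). This gives the per-coordinate covering with multiplicity exactly $2$, hence multiplicity $2^{2n-2}$ over the $2n-2$ base coordinates.

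Next I would handle the $z$-direction, which is where the shear terms intervene. For a point $q$ in $\Int\bigl(R_{1-\frac{1}{3N},1-\frac{2n-2}{N}}\bigr)$ we have $|x_j|,|y_j|<1-\frac{1}{3N}$ and $|z|<1-\frac{2n-2}{N}$. For each $j$ choose, using the one-dimensional covering above, indices $i_j,j_j$ with $|x_j-\frac{i_j}{N}|<\frac{2}{3N}$ and $|y_j-\frac{j_j}{N}|<\frac{2}{3N}$ (note $|x_j|<1-\frac{1}{3N}$ is exactly the range covered). With $I=(i_1,j_1,\dots)$ one checks the first two families of inequalities defining $P_{I,-}$ hold automatically; the only thing to verify is the $z$-inequality. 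The shear shifts $z$ by at most $\sum_j\bigl(|\frac{i_j}{N}||y_j|+|\frac{j_j}{N}||x_j|\bigr)$, but since $|\frac{i_j}{N}-x_j|<\frac{2}{3N}$ one has $|\frac{i_j}{N}|<|x_j|+\frac{2}{3N}\le 1$; a cruder but sufficient bound, together with $|x_j|,|y_j|<1$, gives a total $z$-shift bounded by something of size $O(1/N)$ per pair... and here the constant $\delta_N=\frac{2n-2}{N}$ is exactly what is needed: the precise estimate to push through is that the displacement in $z$ is at most $\frac{2n-2}{N}$ minus the original $|z|$-margin, so the shifted $z$ still satisfies $|\cdot|\le 1-\delta_N$. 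This is the step I expect to be the main obstacle — not conceptually hard, but one must chase the affine $z$-correction carefully and confirm the bookkeeping constants $\frac{1}{3N}$, $\frac{2}{3N}$, $\delta_N=\frac{2n-2}{N}$ are mutually consistent; the natural way to organize it is to treat each symplectic pair's contribution to the $z$-shift separately and sum.

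Finally, for the second inclusion $\bigcup_I\Int(P_{I,-})\subset\Int(R_{1,1})$, I would simply note that for each $I\in[1-N,N-1]^{2n-2}$ and each pair, $|x_j|\le|\frac{i_j}{N}|+\frac{2}{3N}\le\frac{N-1}{N}+\frac{2}{3N}<1$, and similarly for $y_j$, while the $z$-coordinate of a point of $P_{I,-}$ is bounded by $(1-\delta_N)+\sum_j(|\frac{i_j}{N}||y_j|+|\frac{j_j}{N}||x_j|)$, which after the same per-pair estimate stays below $1$; this uses that the $z$-half-width $1-\delta_N$ was chosen with the slack $\delta_N$ precisely to absorb the shear. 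The multiplicity claim then follows by combining the exact two-fold overlap in each of the $2n-2$ base directions (the $z$-constraint does not create additional overlaps, only potentially removes points, but on the region $\Int(R_{1-\frac{1}{3N},1-\frac{2n-2}{N}})$ we showed all $2^{2n-2}$ candidate boxes genuinely contain the point), giving total multiplicity exactly $2^{2n-2}$ as asserted.
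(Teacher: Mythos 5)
There are substantive gaps, concentrated on the $z$-coordinate estimate that you correctly identify as the crux. The bound you display for the $z$-shear, namely $\sum_j\bigl(|\frac{i_j}{N}||y_j|+|\frac{j_j}{N}||x_j|\bigr)$, is of size $O(n)$, not $O(1/N)$: each summand is bounded only by $2$, and observing $|\frac{i_j}{N}|<1$ does not make it small. The genuine $O(1/N)$ smallness comes from the cancellation
$$\frac{i_j}{N}\,y_j-\frac{j_j}{N}\,x_j=\Bigl(\frac{i_j}{N}-x_j\Bigr)y_j-\Bigl(\frac{j_j}{N}-y_j\Bigr)x_j,$$
together with the base constraints $|\frac{i_j}{N}-x_j|,|\frac{j_j}{N}-y_j|<\frac{2}{3N}$; this rewriting is absent from your argument and is what is actually needed in both inclusions. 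More seriously, even after telescoping the constants do not close for the first inclusion: the best one can get is $|\mathrm{shift}|<\frac{4(n-1)}{3N}=\frac{2}{3}\delta_N$ (or $\frac{n-1}{N}=\frac{1}{2}\delta_N$ if one insists on nearest lattice points), so $|z'+\mathrm{shift}|$ may exceed $1-\delta_N$ when $|z'|$ is near $1-\delta_N$. Concretely, for $n=2$ take $q'=\bigl(\frac{1}{4N},\,-1+\frac{1}{2N},\,1-\frac{2}{N}-\epsilon\bigr)$ with $0<\epsilon<\frac{N-1}{4N^2}$. This point lies in $\Int R_{1-\frac{1}{3N},\,1-\frac{2}{N}}$, yet the only admissible pair is $(i_1,j_1)=(0,1-N)$, and the resulting shift $\frac{N-1}{4N^2}$ pushes $z'+\mathrm{shift}$ past $1-\frac{2}{N}$. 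So your proof cannot establish the first inclusion as written, and indeed that inclusion fails near the faces $\{|y_j|=1-\frac{1}{3N}\}$; what the surrounding argument actually invokes is the much stronger hypothesis $\dist(p,\partial R_{1,1})\geq\sigma>\frac{2n}{N}$, under which the estimate does close. A careful attempt should have surfaced this mismatch rather than asserting the bookkeeping works.

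A smaller but real error: the one-dimensional intervals $\bigl[\frac{k}{N}-\frac{2}{3N},\,\frac{k}{N}+\frac{2}{3N}\bigr]$ cover each point of the union at most twice, not \emph{exactly} twice; double coverage occurs only on overlap strips of width $\frac{1}{3N}$, while the middle $\frac{2}{3N}$ of each interval is covered once. Thus ``multiplicity'' in the lemma must mean maximum multiplicity, and your concluding claim that every point of $\Int R_{1-\frac{1}{3N},\,1-\frac{2n-2}{N}}$ lies in exactly $2^{2n-2}$ of the $\Int P_{I,-}$ is false.
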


 \begin{proof}[Reduction of  Proposition \ref{prop:main-plug} to Proposition \ref{prop:main-plug-weak}]
 Suppose  that for any $\sigma>0$ one  can install a $\sigma$-plug of some height $K=K(\sigma)$ over $D^{n-1}\times[0,1]$. 
As the domain $R_{1,1}$ belongs to the class $\D$ it follows  that  for any $\sigma>0$ one  can install a $\sigma$-plug of some height $K'=K'(\sigma)$ over $R_{1,1}\times[0,1]$. 

 We can assume $\sigma<\frac14.$
 Set $\wh\sigma:=\frac\sigma{2^{2n-2}\sqrt{2n}}, \wh\eps:=\frac{\eps}{2^{2n-2}}$. 
 Let $N$ be the integer  provided by  Corollary \ref{cor:tiny-sigma-plug} for the pair $(\wh\sigma,\wh\eps)$. 
 In other words,  one can install a $\frac{\wh\sigma}N$-plug of height $\wh\eps$ over  $R_{\frac1{N},1}\times[0,1]$.
Choosing $N$ large enough we will ensure that $\frac{2n}N< \sigma$ which implies that $\dist(\p R_{1-\frac1{3N},1-\frac{2n-2}N}, \p R_{1,1})<\sigma$.
 Note that  $||d\tau_p||\leq \sqrt{2n}$ for any $p=(a_1,b_1,\dots, a_{n-1}, b_{n-1})\in[-1,1]^{2n-2}$.  Hence, we can apply
 Lemma \ref{lm:sigma-distortion} to install a  $\frac{\sqrt{2n}\wh\sigma}N$-plug of height $\wh\eps$ over $P_I\times[0,1]$ for any $I$. 

 Let us partition  the set  $\I=[1-N,N-1]^{2n-2}$ of all indices into $2^{2n-2}$ subsets $\I_A$ indexed by subsets $A\subset\{1,\dots, 2n-2\}$: the subset $\I_A$ consists of  $I=(i_1,j_1,\dots, i_{n-1}, j_{n-1})$ which have    odd entries  at positions of the subset $A$, and even at other places.  For instance, for $A=\varnothing$ the set $\I_{\varnothing}$ consists of  $I=(i_1,j_1,\dots, i_{n-1}, j_{n-1})$, where all $i_k, j_k$ are even.  Note that for $I,I'\in \I_A$, $I\neq I'$ we have $\Int P_I\cap \Int P_{I'}=\varnothing$. We  enumerate all  subsets $A\subset\{1,\dots, 2n-2\}$  as $A_1,\dots, A_{2^{2n-2}}$, and write $\I_j$ instead of $\I_{A_j}$. 

We claim that by installing for  each  $I\in\I_j$, $j=1,\dots, 2^{2n-2}$   a 
  $\frac{\sqrt{2n}\wh\sigma}N$-plug   of height $2^{2n-2}\wh \eps=\eps$ over $P_I\times[\frac{j-1}{2^{2n-2}}, \frac{j}{2^{2n-2}}]$ we construct the required $\sigma$-plug of height $\eps$ over $R_{1,1}\times[0,1]$.
   Indeed,   let  $h_s:R_{1,1}\times[0,1]\to R_{1,1}\times T^*[0,1]$ be the resulting isotopy. First, note that $h_s(R_{1,1}\times[0,1])\subset \{|x|\leq 2^{2n-2}\wh\eps=\eps$.  Let us verify that the vector field $X_1$ directing the characteristic foliation $\ell_1$ induced by $h_1^*(\alpha_\st+xdy)$ is a $\sigma$-plug.
  As there are    $2^{2n-2}$ layers of plugs,  each  trajectory $\gamma$ of $X_1$ beginning at $(p,0)\in R_{1,1}\times 0$ intersects no more than $2^{2n-2}$ plugs $ P_I\times[\frac{j-1}{2^{2n-2}}, \frac{j}{2^{2n-2}}]$. Each of these plugs either blocks $\gamma$, or displaces it for no more than $\frac{\sqrt{2n}\wh\sigma}N$. Hence  if $\gamma$ exits through a point $(p',1)\in R_{1,1}\times 1$ then $\dist(p,p')< 2^{2n-2}\sqrt{2n}\frac{\wh\sigma}N=\frac\sigma N$. On the other hand, if $\dist(p,\p R_{1,1})\geq\sigma>\frac{2n}N$ then $p\in P_{I,-}$ for a multi-index $I\in\I_j$ for some $j=1,\dots, 2^{2n-2}$. If the trajectory $\gamma$ originates at $(p,0)$ and it is not blocked by any of the plugs on the layers $ [\frac{i-1}{2^{2n-2}}, \frac{i}{2^{2n-2}}]$ for $i<j$ then by the above argument it enters the plug $P_{I,-}\times [\frac{j-1}{2^{2n-2}}, \frac{j-1}{2^{2n-2}}] $ through a point $(p',  \frac{j-1}{2^{2n-2}})$ with $\dist(p,p')<\frac\sigma N$. Hence, $\dist (p',\p P_I)>\dist(\p P_I,\p P_{I,-})-\frac\sigma N>\frac1{3N}-\frac1{4N}>\frac{\sigma}{3N}>\frac{\sqrt{2n}\wh\sigma}N$. But   $(P_{I,-}\times [\frac{j-1}{2^{2n-2}}, \frac{j-1}{2^{2n-2}}], X_1) $  is a $\frac{\sqrt{2n}\wh\sigma}N$-plug, and therefore, the trajectory $\gamma$ is blocked inside $P_{I,-}\times [\frac{j-1}{2^{2n-2}}, \frac{j-1}{2^{2n-2}}] $.
 This verifies the property a) of  Proposition \ref{prop:main-plug}. Property b) follows from the fact that it holds for plugs  $(P_{I,-}\times [\frac{j-1}{2^{2n-2}}, \frac{j-1}{2^{2n-2}}],X_1) $ for each $I\in\I$ and transversality arguments. 
 
 Finally, we again apply Lemma \ref{lm:changing-base} to conclude that the installation for any $\sigma,\eps$ of a $\sigma$-plug of height $\eps$ over $R_{1,1}$ is equivalent to the   installation for any $\sigma,\eps$ of a $\sigma$-plug of height $\eps$ over $D^{2n-1}\times[0,1]$, because both domains $D^{2n-1}$ and $R_{1,1}$ belong to the class $\D$.
    \end{proof} 
    \begin{remark}\label{rem:2d}
    Note that the above proof of the height-reduction  for   $\sigma$-plugs significantly simplifies for $n=1$, i.e. when the plug is 2-dimensional.
    Indeed, in this case $D^{2n-1}=R_{1,1}=R_{\frac1N,1}=[-1,1]$, and the claim follows  directly from Corollary \ref{cor:tiny-sigma-plug}.
    \end{remark}
    
  \section{The 2-dimensional case}\label{sec:2d-plug}
We will prove in this section Proposition \ref{prop:main-plug-weak} (and hence,  Proposition \ref{prop:main-plug}) in the 2-dimensional case. In fact, we will establish a stronger statement, Lemma \ref{lm:2d-plug}, which will enable us to continue the construction by induction on dimension of the plug.

 \subsection{Creation and elimination of singularities of  a 2-dimensional characteristic foliation}\label{sec:GF}
 
The following statement is a slight modification of  the  Giroux-Fuchs elimination lemma, see \cite{Gi}.
 
 \begin{lemma} \label{lm:creation-elimination}
 Let $\Sigma$ be a 2-dimensional surface in a contact $3$-manifold $ (M,\xi=\Ker\alpha)$.
 Let $p\in\Sigma $ be a non-singular point of the characteristic foliation $\ell=\ell_{\Sigma,\xi}$. Let $\gamma\ni p$ be an arc of the leaf of $\ell$ through $p$.   Suppose that $(d\alpha)_p>0$
Then for any positive  $\eps$ and $\sigma\ll \eps$ there exists an $\eps$-small 2-parametric  isotopy $\phi_{t,s}:\Sigma\to (M,\xi)$ supported in an $\eps$-neighborhood of $p\in\Sigma$ with the following properties. Denote $\beta_{s,t}:=\phi_{s,t}^*\alpha$ and let $\ell_{s,t}$ be the characteristic foliation defined by $\beta_{s,t}$, and $X_{s,t}$ the vector field directing $\ell_{s,t}$.
 \begin{itemize} 
 \item[-] $\phi_{0,t}=\phi_{0,0}$ is the inclusion $\Sigma\hookrightarrow M$;
 \item[-] the $1$-form $\beta_{s,1}$ has  no zeros for all $s\in[0,1]$;
 \item[-] the $1$-form $\beta_{1,0}$ has exactly two zeros, one positive elliptic and one hyperbolic on the arc $\gamma$;
 \item[-] the arc $\gamma$  is tangent to $X_{s,t}$ for all $s,t\in[0,1]$.
 \item[-] $d\beta_{s,t}=d\beta_{0,0}$ for all $s,t\in[0,1]$;
 \item[-] If $\ell$ admits a Lyapunov function $f:\Sigma\to\R$ then for all $s,t\in[0,1]$  the characteristic foliation $\ell_{s,t}$    admits a Lyapunov function  $f_{s,t}$ which coincides with $f$ outside an $\eps$-neighborhood of $p$.
 \end{itemize}
 \end{lemma}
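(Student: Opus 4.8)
The strategy is to reduce to an explicit local model, carry out the classical Giroux--Fuchs construction there, and then adjust the construction so that it is compatible with a Lyapunov function. First I would set up coordinates: since $p$ is a non-singular point of $\ell$ with $(d\alpha)_p>0$, Darboux's theorem lets me choose coordinates $(x,z)$ on a small neighborhood of $p$ in $\Sigma$, with a complementary coordinate $w$ transverse to $\Sigma$, in which $\xi=\ker(dz+xdw)$ (or $\ker(dw-xdz)$, depending on the convention that makes the orientation come out right), $\Sigma=\{w=0\}$ near $p$, and the arc $\gamma$ lies along the $x$-axis. The characteristic foliation of $\{w=h(x,z)\}$ is then directed by the vector field determined by $d(h\,dz + \text{(lower order)})$; in the flat model $h\equiv 0$ the form $\beta_{0,0}=x\,dz$ (up to the convention) and $X_{0,0}$ points in the $z$-direction with a line of ``zeros'' along $\{x=0\}$ — but after a $C^\infty$-small perturbation of $\Sigma$ one starts in the situation where $\beta_{0,0}$ has no zeros, consistently with the hypothesis and with the stated conclusion that $\beta_{0,1}$ (here $\beta_{s,1}$) has no zeros.

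Next I would recall the Giroux--Fuchs birth construction: one perturbs $\Sigma$ to $\Sigma_{1,0}=\{w=\psi(x,z)\}$ where $\psi$ is an $\eps$-small bump function chosen so that the graph of $d\psi$ crosses the relevant ``contact section'' transversally, producing exactly one elliptic and one hyperbolic zero of the characteristic foliation on the arc $\gamma$; one checks via the normal-form computation that the elliptic point is positive (this is where $(d\alpha)_p>0$ enters — it pins down the sign of the quadratic part and hence the elliptic/positive type rather than elliptic/negative or a focus). The constraint $d\beta_{s,t}=d\beta_{0,0}$ is arranged by choosing the perturbation so that it changes $\alpha|_\Sigma$ only by an exact form: concretely, deform through forms $\beta_{s,t}=\beta_{0,0}+dg_{s,t}$ for a family of compactly supported functions $g_{s,t}$; since $d\beta$ depends only on the cohomology-free part, $d\beta_{s,t}\equiv d\beta_{0,0}$ automatically, and realizing $\beta_{0,0}+dg$ as $\phi^*\alpha$ for an ambient isotopy $\phi$ uses the standard fact (Givental/Gray-type, cf. Proposition~\ref{prop:AG}) that adjusting the restriction of the contact form within its conformal class is realized by an isotopy fixing $\Sigma$ pointwise at the endpoints — here I instead want $\phi$ to move $\Sigma$, so I use the elementary remark that $\beta+dg$ and $\beta$ restricted to nearby parallel copies of $\Sigma$ are related by the flow of an explicit vector field. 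The two-parameter structure is then: $t$ runs the ``birth'' (from no zeros at $t=1$ back to the two zeros at $t=0$, reversing the usual direction so that $\beta_{s,1}$ has no zeros), and $s$ turns the construction on, with everything trivial at $s=0$.

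The step I expect to be the main obstacle is the last bullet: propagating a global Lyapunov function $f$ for $\ell$ to a Lyapunov function $f_{s,t}$ for $\ell_{s,t}$ agreeing with $f$ outside an $\eps$-ball. The issue is that creating an elliptic--hyperbolic pair on a leaf $\gamma$ changes the dynamics precisely along $\gamma$, and one must certify that conditions (L1) and (L2) (equivalently, existence of a Lyapunov function, by Lemma~\ref{lm:Lyapunov-criterion}) are preserved. I would handle this by making the construction \emph{local in a flow-box}: choose the support of $\phi_{s,t}$ inside a flow-box $D\times[0,c]$ for $X_{0,0}$ around the arc $\gamma$ (possible since $p$ is non-singular), arrange that the newly created elliptic point is a source feeding only into the new hyperbolic point and thence back out along the same two boundary components of the flow-box that $\gamma$'s trajectory already used — so no new trajectory escapes the flow-box and no new connection between old zeros is created. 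Then (L1) and (L2) for $X_{s,t}$ follow from those for $X_{0,0}$, and one builds $f_{s,t}$ by running the inductive construction of Lemma~\ref{lm:Lyapunov-criterion} but only re-doing the step near the new pair, interpolating back to $f$ on the boundary of the flow-box exactly as in Lemma~\ref{lm:Lyapunov-criterion-rel}; contractibility of the space of Lyapunov functions (Lemma~\ref{lm:Lyapunov-criterion-param}) makes the family in $(s,t)$ go through. The $\eps$-smallness and the tangency of $\gamma$ to every $X_{s,t}$ are built into the choice of $\psi$ and are routine to verify from the normal form.
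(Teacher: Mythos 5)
Your overall strategy (Darboux normal form near a nonsingular point, an explicit Giroux--Fuchs bump creating the elliptic--hyperbolic pair along $\gamma$, and a gluing argument for the Lyapunov function) matches the paper's, which splits the proof into Lemma~\ref{lm:normal-form-non0} (local contactomorphism to $(\R^3, dz+dx+xdy)$ with $\Sigma$ flat) and Lemma~\ref{lm:GF-model} (the explicit 2-parameter family $\alpha_{s,t}=dx+xdy+dG_{s,t}$ with $G_s(x,y)=s\theta(y)\eta_1(x)$). Two points deserve comment. First, your local computation is off: with $\Sigma=\{w=0\}$ and $\xi=\ker(dz+x\,dw)$ one gets $\beta_{0,0}=dz$, not $x\,dz$, so there is no line of zeros to perturb away; the hypothesis that $p$ is nonsingular is what guarantees the paper's model $\delta|_{\{z=0\}}=dx+x\,dy$ is nonvanishing at the origin, and the deformation $\Sigma\rightsquigarrow\{z=G_{s,t}(x,y)\}$ then directly replaces the restricted form by $dx+x\,dy+dG_{s,t}$, so the constraint $d\beta_{s,t}=d\beta_{0,0}$ is automatic (you identify this mechanism but then unnecessarily detour through Givental/Gray). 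Second, for the Lyapunov compatibility the paper does not argue via flow-box confinement; it instead observes that for each $(s,t)$ the model field has at most two zeros connected by a separatrix, so by the Poincar\'e index argument plus Poincar\'e--Bendixson there are no periodic orbits, hence (L1) and Morse--Smale hold and a Lyapunov function equal to $y$ outside a compact set exists. Your flow-box isolation argument (no new trajectories escape the box and no new connections between pre-existing zeros arise, so one re-runs Lemma~\ref{lm:Lyapunov-criterion-rel} locally and uses contractibility) is a legitimate alternative with the same conclusion, and if anything it is closer to what one actually needs to splice the new local Lyapunov function into the given global $f$; the paper is terse at this gluing step, and your version makes it more explicit. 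Neither difference constitutes a gap, but the $\beta_{0,0}=x\,dz$ slip should be corrected, as it would lead a reader astray about the role of the nonsingularity hypothesis.
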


 Let us first observe that   Proposition \ref{prop:AG} allows us to normalize  the contact structure in a neighborhood of a points $p\in\Sigma$.
  \begin{lemma}\label{lm:normal-form-non0}
 Consider the form $\delta=dz+dx+xdy$ in $\R^3$.
 Under assumptions of Lemma \ref{lm:creation-elimination} there is a neighborhood $U\ni p$ in $M$,
 a neighborhood $U'$ of $0$ in $\R^3$ and   a contactomorphism $h:(U',\delta)\to (U,\alpha)$  
such that $h^{-1}(\Sigma\cap U)=\R^2\cap U'$.
\end{lemma}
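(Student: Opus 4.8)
The plan is to prove the normal form statement by a standard application of Gray-type stability / Moser's method, exploiting that the contact condition near a non-singular point of the characteristic foliation with $(d\alpha)_p > 0$ puts us in the simplest possible local model. First I would set up coordinates: choose coordinates $(x_1, x_2)$ on $\Sigma$ near $p$ so that the leaf $\gamma$ of $\ell$ is $\{x_1 = 0\}$, oriented by $\partial/\partial x_2$, and choose a coordinate $z$ transverse to $\Sigma$ in $M$ so that $\Sigma = \{z = 0\}$ locally. After rescaling $\alpha$ by a positive function (which does not change $\xi$), I can normalize $\alpha|_{T M}$ so that $\alpha = dz + \beta_0 + z\gamma_0$ with $\beta_0 = \alpha|_\Sigma$ and $\gamma_0$ some $1$-form; the hypothesis $(d\alpha)_p > 0$ means $d\beta_0$ is a positive area form near $p$, and the choice of characteristic direction means $\beta_0$ vanishes on $\partial/\partial x_1$ along $\gamma$, i.e. $\beta_0 = f\, dx_2$ with $f(0) = 0$ and $df_p \ne 0$ (using $d\beta_0 \ne 0$). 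After a further coordinate change in the $x_1$-variable straightening the foliation, one arranges $\beta_0 = x_1\, dx_2$ to first order, and in fact (using that $d\beta_0$ is nonvanishing, by a Moser argument on the surface) exactly $\beta_0 = x_1\, dx_2$ on a neighborhood.

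Next I would compare the two contact forms $\alpha$ and $\delta = dz + dx_1 + x_1\, dx_2$ (here renaming the $\R^3$ coordinates $x, y$ of $\delta$ as $x_1, x_2$ to match). Both restrict on $\{z=0\}$ to $\beta_0 = x_1\, dx_2$ after the normalization — note $\delta|_{\{z=0\}} = dx_1 + x_1\, dx_2$, so I actually want to first apply a diffeomorphism of the form $(x_1, x_2, z) \mapsto (x_1, x_2, z + \varphi(x_1))$ with $\varphi' = -1$, i.e. $z \mapsto z - x_1$, under which $\delta$ pulls back to $dz + x_1\, dx_2$; this is the form whose restriction to $\{z = 0\}$ equals $\beta_0$. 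So it suffices to produce a contactomorphism between $\alpha$ and $dz + x_1\, dx_2$ fixing $\Sigma = \{z=0\}$ pointwise. This is exactly the content of Givental's theorem, Proposition~\ref{prop:AG}, applied with $\xi = \Ker\alpha$ and $\xi' = \Ker(dz + x_1\,dx_2)$ and $h \equiv 1$: both contact structures have the same restriction $\beta_0$ to $\Sigma$, hence there is a diffeomorphism of a neighborhood of $\Sigma$, fixed on $\Sigma$, carrying one to the other. Composing this with the shear $z \mapsto z - x_1$ and with the coordinate changes of the first paragraph yields the desired $h : (U', \delta) \to (U, \alpha)$ with $h^{-1}(\Sigma \cap U) = \R^2 \cap U'$.

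The main obstacle — really the only nontrivial point — is the normalization $\beta_0 = x_1\, dx_2$ on an \emph{open neighborhood} of $p$, not just to first order, since everything downstream is then a direct invocation of Givental's proposition. This is a $2$-dimensional Moser/Darboux argument: given that $d\beta_0$ is a positive area form near $p$ and $\beta_0 = f\,dx_2$ with $f(0)=0$, $df_p \neq 0$, one uses the flow of a suitable time-dependent vector field to interpolate between $\beta_0$ and the model $x_1\,dx_2$, or more elementarily one observes that $d\beta_0 = dx_1 \wedge dx_2$ after a coordinate change (Moser for area forms on a surface, or just an explicit change of variables), and then $\beta_0 - x_1\,dx_2$ is a closed $1$-form vanishing at $p$, hence equals $dg$ for a function $g$ with $dg_p = 0$, so absorbing $g$ into the $z$-coordinate via $z \mapsto z + g(x_1,x_2)$ — which preserves being a graph over $\Sigma$ — removes it. I would remark that this step is standard and cite it rather than carry out the computation, and I would be careful that the coordinate change straightening $\gamma$ to $\{x_1=0\}$ and the orientation conventions are compatible with the sign $(d\alpha)_p > 0$ so that no spurious minus sign appears in the model.
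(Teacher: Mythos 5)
You have the characteristic direction backwards, and this error propagates through the whole argument. You set up coordinates so that $\gamma=\{x_1=0\}$ is directed by $\partial/\partial x_2$. In dimension three, the characteristic line at a non-singular point is $\ell_p=\xi_p\cap T_p\Sigma$, i.e.\ the characteristic direction is precisely the direction in $T_p\Sigma$ annihilated by $\alpha$. Since $\gamma$ is a leaf of $\ell$, the tangent vector $\partial/\partial x_2$ lies in $\xi$, so $\beta_0(\partial/\partial x_2)=0$ along $\gamma$, while $\beta_0(\partial/\partial x_1)\neq 0$ at $p$ (otherwise $T_p\Sigma\subset\xi_p$ and $p$ would be singular). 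In particular $\beta_0(p)\neq 0$. Your claim ``$\beta_0=f\,dx_2$ with $f(0)=0$'' has the indices swapped and asserts $\beta_0(p)=0$, which contradicts the non-singularity hypothesis.

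This makes the target normal form $x_1\,dx_2$ impossible: $x_1\,dx_2$ vanishes at the origin, while $\beta_0$ does not. Indeed $dz+x_1\,dx_2$ is the standard contact form, and $\{z=0\}$ is a pre-Lagrangian plane for it with a degenerate singular locus $\{x_1=0\}$ in its characteristic foliation --- the opposite of the non-singular local model you want. The correct target is $\delta|_{\{z=0\}}=dx_1+x_1\,dx_2$, which is nonzero at $0$ and has a non-singular characteristic foliation. The attempted repair via the shear $z\mapsto z-x_1$ also does not work: that map does not preserve $\{z=0\}$ (it sends it to $\{z=\mp x_1\}$), so after composing with it the preimage of $\Sigma$ is no longer $\R^2\cap U'$, which is exactly the property the lemma asks for.

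The paper's proof follows your intended outline but with the correct model: first a two-dimensional Moser argument to arrange $d(\alpha|_\Sigma)=dx\wedge dy$; then, since $\alpha|_\Sigma$ and $dx+x\,dy$ are both primitives of $dx\wedge dy$ that are \emph{nonvanishing} at the base point, a local symplectomorphism of $(\R^2,dx\wedge dy)$ carries one to the other (a standard Liouville-flow normal-form argument --- both forms have nowhere-vanishing Liouville fields); finally Proposition~\ref{prop:AG}. Your approach is salvageable by simply fixing the direction and replacing $x_1\,dx_2$ by $dx_1+x_1\,dx_2$ throughout, at which point the shear becomes unnecessary and the ``absorb $dg$ into $z$'' step must be replaced by the symplectomorphism step just described (your closed one-form $\beta_0-(dx_1+x_1\,dx_2)$ need not vanish at $p$, so it cannot in general be absorbed into the $z$-coordinate while keeping $\Sigma$ a graph).
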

Hence, Lemma \ref{lm:creation-elimination} follows from the following statement.

 \begin{lemma}\label{lm:GF-model}
 For any $\eps\gg\sigma>0$ there exists a 2-parametric  family of $C^\infty$-functions $G_{s,t}:\R^2\to\R$, $s,t\in[0,1]$ which are supported in  $\{|x|,|y|<\eps\}$ which have the following properties. Denote
 $\alpha_{s,t}:=xdy+dx+dG_{s,t}.$
 \begin{itemize}
 \item[-] $G_{0,0}=0$;
 \item[-] $G_{s,t}-G_{s,0}$ is supported in $\{x<\sigma, y<\eps\}$.
 \item[-] the $1$-form $\alpha_{s,1}$ has  no zeros for all $s\in[0,1]$;
 \item[-] the $1$-form $\alpha_{1,0}$ has exactly two positive zeroes, one elliptic and one hyperbolic, on the line $\{x=0\}$.
 \item[-] for all $s,t\in[0,1]$ the vector field $Y_{s,t}$  directing the characteristic foliation on $\{x = 0\}$ generated by $ \alpha_{s,t}$ admits a Lyapunov function which is equal to $y$ outside of a compact set.
 \end{itemize}
 \end{lemma}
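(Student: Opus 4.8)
The idea is to write down explicitly a one-variable mechanism on the $x=0$ line that creates, then destroys, a canceling pair of zeros, and graft it into a two-dimensional gradient-like picture. I would look for $G_{s,t}$ of the product form $G_{s,t}(x,y)=x\cdot u_{s,t}(y)+x^2 v_{s,t}(x,y)$, or more simply of the form $G_{s,t}(x,y)=\rho(x)w_{s,t}(y)$ for a bump $\rho$ with $\rho(0)=0,\ \rho'(0)=1$, cut off in $x$ so that the support lies in $\{|x|<\eps\}$. Then $\alpha_{s,t}=xdy+dx+dG_{s,t}$ restricted to $\{x=0\}$ reads (using $\rho(0)=0$, $\rho'(0)=1$) as $dx+ w_{s,t}'(y)dy + \text{(}dx\text{-term)}$, so along $\{x=0\}$ the characteristic direction $Y_{s,t}$ is governed by the one-variable function whose derivative is essentially $1+w'_{s,t}(y)$ together with the normal $dx$-component. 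Zeros of the restricted foliation on $\{x=0\}$ occur exactly where the $dy$-coefficient vanishes, i.e. where $1+\partial_y G_{s,t}(0,y)=0$; their elliptic/hyperbolic and sign type is read off from the sign of the $x$-Hessian term, which I can prescribe freely through the $x^2$-part of $G$. Concretely: for the slice $t=0$ I let $s$ sweep a one-parameter family $w_{s,0}(y)$ of functions on $\R$, compactly supported in $\{|y|<\eps\}$, starting at $w_{0,0}\equiv 0$ and ending at a function $w_{1,0}$ whose derivative dips below $-1$ on an interval, so that $1+w_{1,0}'$ has exactly two simple zeros; adding a small positive multiple of $x^2$ near those two points with the correct relative sign makes one zero a positive elliptic point and the other a positive hyperbolic point, which is exactly the Giroux–Fuchs birth model. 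For the slice parameter $t$ I then push $w_{s,0}$ to $w_{s,1}$ by making its derivative uniformly $>-1$ again — a homotopy of compactly supported functions — which is unobstructed and, crucially, can be arranged to move things only in the region $\{x<\sigma,\ y<\eps\}$ by localizing the death deformation near the (just-born) hyperbolic point, giving the "$G_{s,t}-G_{s,0}$ supported in $\{x<\sigma,y<\eps\}$" clause.

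The next step is the Lyapunov function. On the two-dimensional model $(\R^2, \alpha_{s,t}|_{\R^2})$, before any deformation the characteristic foliation is the nonsingular foliation directed by (roughly) $\partial_y + \text{(small)}\partial_x$, for which $y$ is a Lyapunov function. After the birth, the pair elliptic+hyperbolic sits on the $y$-axis; since the pair is a standard canceling Morse pair in the $y$-direction, I can modify $y$ to a function $g_{s,t}(x,y)$ that is still equal to $y$ outside $\{|x|,|y|<\eps\}$ and is a genuine Lyapunov (Morse, in fact, except at the embryo moment) function for $X_{s,t}$ — this is precisely the local handle-attachment computation behind Lemma \ref{lm:Lyapunov-criterion}, applied in reverse to a cancellation, and I would invoke \cite{CE12} for the standard local normal form rather than redo it. The very last bullet of the lemma asks only for a Lyapunov function of the restricted foliation on $\{x=0\}$ that equals $y$ outside a compact set; that is the trace of $g_{s,t}$ on $\{x=0\}$, and one checks $dg_{s,t}(Y_{s,t})>0$ away from the zeros directly from the explicit formulas.

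Finally I would verify the remaining bullets, all of which are built into the construction: $G_{0,0}=0$ by definition of the starting point; $\alpha_{s,1}$ has no zeros because $1+w_{s,1}'>0$ everywhere and the $x$-Hessian perturbation is turned off in the $t=1$ slice; $\alpha_{1,0}$ has exactly one positive elliptic and one positive hyperbolic zero on $\{x=0\}$ by the choice of $w_{1,0}$ and the sign of the $x^2$-term; the forms stay $C^\infty$ and compactly supported in $\{|x|,|y|<\eps\}$ by the cutoffs. Pulling this back through the contactomorphism $h$ of Lemma \ref{lm:normal-form-non0} (noting $h^{-1}(\Sigma\cap U)=\R^2\cap U'$, that $\gamma$ corresponds to the $y$-axis which stays tangent to the directing field, and that $d\beta_{s,t}=d\beta_{0,0}$ since we only add exact forms $dG_{s,t}$) then gives Lemma \ref{lm:creation-elimination} from Lemma \ref{lm:GF-model}.

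\textbf{Main obstacle.} The genuinely delicate point is the simultaneous satisfaction of the support constraint "$G_{s,t}-G_{s,0}$ supported in $\{x<\sigma,\ y<\eps\}$" with $\sigma\ll\eps$ and the requirement that $\alpha_{s,1}$ be \emph{zero-free for every} $s$, not just at $s=1$: one must run the death move in a thin $x$-slab near the hyperbolic zero without ever re-creating a zero, and without disturbing the elliptic zero, while keeping the accompanying Lyapunov functions coherent in $(s,t)$. I expect this to require a careful choice of the order in which the $x$-Hessian term and the $w'$-profile are deformed (kill the hyperbolic point first by locally steepening $w$ in $y$ inside the thin slab, only then relax globally), plus an appeal to the contractibility/micro-fibration argument of Lemma \ref{lm:Lyapunov-criterion-param} to get the two-parameter family of Lyapunov functions rather than constructing it by hand.
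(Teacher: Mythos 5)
Your product ansatz $G_{s,t}=\rho(x)w_{s,t}(y)$ with $\rho(0)=0$, $\rho'(0)=1$ is in substance the paper's own (with $\rho=\eta_1$ and $w_{s,0}=s\theta$), but there are two concrete computational errors. First, the zero condition is wrong: one has $\alpha=(1+\rho'(x)w(y))\,dx+(x+\rho(x)w'(y))\,dy$, so on $\{x=0\}$ the $dy$-coefficient vanishes \emph{identically}, and the zeros of $\alpha$ occur where the $dx$-coefficient vanishes, i.e.\ where $1+w(y)=0$. Thus it is $w$, not $w'$, that must dip below $-1$; consequently the zero-free claim for $\alpha_{s,1}$ requires $1+w_{s,1}\neq0$, and does not follow from $1+w'_{s,1}>0$. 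Second, the $x^2$-correction is unnecessary: at the two roots $y_1<y_2$ of $w(y)=-1$ one has $w'(y_1)<0<w'(y_2)$, and the linearization of the directing field at $(0,y_i)$ has determinant $-w'(y_i)\bigl(1+w'(y_i)\bigr)$, which automatically makes one a node (elliptic) and the other a saddle (hyperbolic); both are positive because $d\alpha=dx\wedge dy$ there.

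The genuine gap is exactly the issue you flag as the ``main obstacle'', and your proposed remedy cannot work. With a pure product $\rho(x)w_{s,t}(y)$, the $t$-difference $G_{s,t}-G_{s,0}=\rho(x)\bigl(w_{s,t}(y)-w_{s,0}(y)\bigr)$ has $x$-support equal to the full support of $\rho$, which is of size $\eps$, not $\sigma$; no choice of $w_{s,t}$ can localize the death deformation in a thin $x$-slab, and ``locally steepening $w$ in $y$ inside the thin slab'' is incoherent since $w$ is a function of $y$ alone. The missing idea is to break the product form in the $t$-direction: introduce a second, much narrower odd cutoff $\eta_\sigma$ (supported in $\{|x|\leq\sigma\eps\}$ and equal to $x$ on $\{|x|\leq\sigma^2\}$) and set $G_{s,t}=s\theta(y)\eta_1(x)-st\theta(y)\eta_\sigma(x)$. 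The crucial feature is that $\eta_1$ and $\eta_\sigma$ agree near $x=0$, so at $t=1$ they cancel there and $\alpha_{s,1}=dx+xdy$ is literally the standard form on a neighborhood of $\{x=0\}$, hence zero-free precisely where the $t=0$ zeros sat; zero-freeness for larger $|x|$ is then checked by the same elementary estimates used for $\alpha_{s,0}$. Once this device is in place, the rest of your argument — Poincar\'e--Bendixson or a parametric-contractibility argument to supply the Lyapunov functions, pull-back through Lemma \ref{lm:normal-form-non0} — follows the paper.
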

   
  \begin{figure}[ht]
 
\includegraphics[scale=0.45]{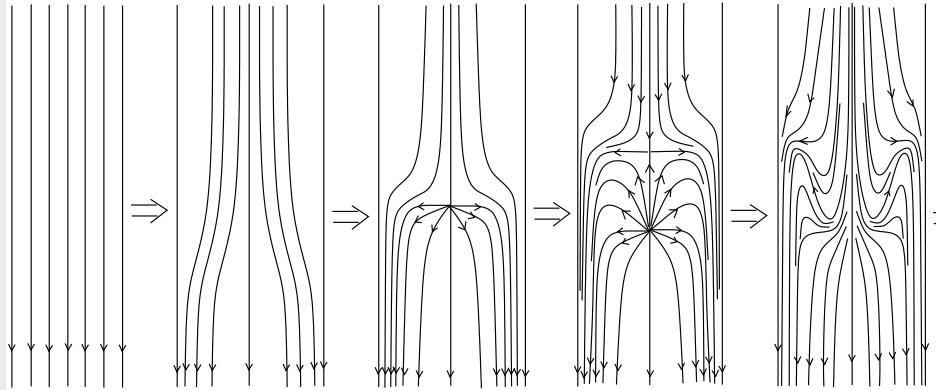}
 
\caption{Creating   and eliminating  zeroes.}
\label{fig:birthdeath}
\end{figure}

 \begin{proof}   
 Consider  an even    function  $\theta :\R \to\R_+$, 
 $\theta(u)=\theta(-u)$, and for any $c>0$ consider an odd  functions $\eta_c:\R\to\R, \; \eta_c(u)=-\eta_c(u)$
 such that   the following properties are satisfied:
 
 \begin{itemize}
 \item[-]$\theta(u)= u^2-1-\frac{\eps^2}9$   on $u\in[-\eps/2,\eps/2]$,  $\theta (u)=0$ for $ |u|\geq \eps$  and $0\leq\theta'(u)$ for $u\geq 0$;
 \item[-] $\eta_c(u)=u$  for  $u\in[-c\sigma, c\sigma]$, $\eta_c(u)=0$  for $ |u|\geq c\eps$, $|\eta_c(u)|\leq |u|$ and $- \frac{2\sigma}\eps\leq    \eta'(u)\leq 1$ for $u\geq 0$.
 \end{itemize}
Consider  a family of functions $G_{s}, \; s\in[0,1]$ by the formula
 \begin{align*}G_{s }(x,y)= s \theta(y )\eta_1 (x ) \end{align*}
  and a family of 1-forms
  \begin{align*} \alpha_s&=dx+xdy+dG_s= (1+ s \theta ( y )\eta_1' (x) )dx 
   + (x+s\theta'(y)\eta_1(x)) dy =f_s(x,y)dx+g_s(x,y)dy.
  \end{align*}

  Let us check  that the form $\alpha_1$ have a hyperbolic $0$ at the points $(x=0, y=-\frac\eps 3)$, an elliptic zero at $(x=0, y=\frac\eps 3)$ and no other zeroes.
  We have    \begin{equation}
 \label{eq:1-param}\begin{split}
  & |g_1(x,y)|=|x+2y\eta_1(x)|\geq|x|(1-2|y|)\geq(1- \eps )|x|\;\;\hbox{for}\;\; |y|\leq\frac\eps 2 ;\\
  &f_1(x,y)=1+  \left(y^2-1-\frac{\eps^2}9\right)\eta_1' (x) \geq  \frac{5\eps^2}{36}>0,\;\;  |y|\geq\frac\eps 2.
  \end{split}
  \end{equation}
  
   Hence $\alpha_1$ has only zeros along the interval $\{x=0, -\frac\eps2<y<\frac\eps2\}$. In the neighborhood of this interval 
we have $\alpha_1=(y^2 -\frac{\eps^2}9)dx+ (1+2y)x dy$, which has 2 zeroes, elliptic and hyperbolic, respectively  at the points  $(x=0, y=\frac\eps 3)$ and   $(x=0, y=-\frac\eps 3)$.

Let us now extend the family $G_s$ to the 2-parametric family of functions
$G_{s,t}:\R^2\to\R$ by setting
$$G_{s,t}=H_s(x,y)-st\theta(y)\eta_\sigma(x)=f_{s,t}(x,y)dx+f_{s,t}(x,y)dy.$$
Let us verify that the form
$\alpha_{s,1}=dx+xdy+ dG_{s,1}$ has no zeros.
First, note that for $|x|\leq \frac{\sigma^2}\eps,$ we have
$f_{s,1}=1$ and 
$g_{s,1}(x,y)=x$.

Similarly to the above estimates \eqref{eq:1-param} for $f_s$ and $g_s$ we conclude
that $f_{s,1}\neq 0 $ for $|y|\geq \frac{\eps}2$ and $x\neq 0$  and $g_{s,1}\neq 0$  $|y|\leq \frac{\eps}2$.

It remains to show  
 existence of a family of Lyapunov functions for  the family of   vector fields $Y_{s,t}.$   According to Corollary  \ref{cor:good-Lyapunov} it is sufficient to verify  for $Y_{s,t}$ the property (L1)  and the Morse-Smale condition.
 Because it is the 2-dimensional case then by  Poincar\'e-Bendixson's theorem it is sufficient to show  that there are no periodic orbits. But any periodic orbit in $R$ bounds a disc and the sum of indices of singular points in this disc should be equal to $1$. On the other hand,  the only 2 singular points of $Y_{s,t}$ are connected by a separatrix trajectory, and hence the disc bounded by  a  periodic orbit must enclose both singular points, whose sum of indices is equal to  $0$. 
   \end{proof}

 \subsection{Special 2-dimensional plug}\label{sec:2-plug}
  
  \begin{figure}[ht]
 
\includegraphics[scale=0.8]{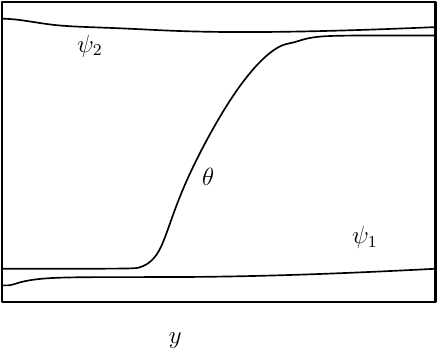}
 
\caption{Graphs of functions  $\Theta$, $\Psi_1$,  and $\Psi_2$. }
\label{fig:graphs}
\end{figure}

We  construct in this section  a  special 2-dimensional plug. 
In  the  contact  space $(\R^3, \Ker\{dz+xdy\})$ consider
  $$ O :=\{0\leq y, z\leq 1,  -4\leq x\leq 0\} ,\;\;R:= O \cap\{x=0\}.$$
For a   sufficiently small $\eps$  let us choose     non-decreasing  $C^\infty$-functions $\psi, \theta:[0,1]\to\R$ such that
\begin{itemize}
\item  $ \theta(y)=  \eps$ for $y\in[0,\frac13]$; $ \theta(y)=1- \eps$ for $y\in[\frac23,1]$ and 
$0<\theta'(y)<4$ for $y\in(\frac13,\frac23)$.
\item  $\psi(0)=0$, $\psi_1(1)<\frac\eps2$;
\item $\psi$  has  vanishing   derivatives of all orders at the points $0$ and $1$, and  $0< \psi'(y)< \eps$ for $y\in(0,1)$. 
\end{itemize} 
  Denote $$\psi_1:=\psi+\frac\eps2,\;
   \psi_2:=-\psi+(1-\psi(1)-\frac\eps2).
   $$ and consider the graphs
 $\Theta, \Psi_1, \Psi_2$   of the functions $\theta,\psi_1$ and $\psi_2$:
  \begin{align*}
  &\Theta:=\{(y,\theta(y));\;y\in[0,1]\},\;
  \Psi_1:=\{(y,\psi_1(y)); \; y\in[0,1]\},\\
 & \Psi_2:=\{(y,\psi_2(y)) ;\; y\in[0,1]\}\subset R.
  \end{align*}
 Denote $$R_+:=\{(y,z)\in R, z\leq\theta(y)\}, \; R_-:=\{(y,z)\in R, z\geq\theta(y)\}.$$
 
\begin{lemma}\label{lm:2d-plug} For any $  \eps>0$
there exists an      isotopy 
$h_s:R\to O$, $s\in[0,1] $, which is fixed together with its $\infty$-jet along $\p R$,   constant for $s\in[0,\frac18]$,  and such that the following properties  a)--i) are satisfied.
Denote $\beta_s:=h_s^* (dz + x dy)$. Let $Y_s$ be the vector field directing the characteristic foliation $\ell_s$ of $\beta_s$.  
\begin{itemize}
   \item[a)] $d\beta$ restricted to the interior $ \Int R_{+}$ of $R_ +$  is positive, and  $ d\beta$ restricted to the interior $\Int R_{-}$ of $R_-$ is negative
for all   $s\in(\frac18,1]$;
\item[b)] $\beta_s$ has
\begin{itemize}
\item no zeros    for $s<\frac34$,
\item  a positive and negative embryos $o_+:=(\frac16,\psi_1(\frac16)),
o_-=(\frac56,\psi_2(\frac56))$ for $s=\frac34$,
 \item a pair 
 \begin{align*}
 &e_+(s)=(e_+^1(s),\psi_1(e_+^1(s))), \hbar_+(s)=(\hbar_+^1(s),\psi_1(\hbar_+^1(s))),\; 0<e_+^1(s)<\hbar_+^1(s)<\frac13
 \end{align*} of positive elliptic and  hyperbolic   points, and a pair
 \begin{align*}
  &e_-(s)=(e_-^1(s),\psi_1(e_-^1(s))), \hbar_-(s)=(\hbar_-^1(s),\psi_1(\hbar_-^1(s))),\; 1>e_+^1(s)>\hbar_+^1(s)>\frac23
 \end{align*} of negative elliptic and  hyperbolic points for $s>\frac34$.
\end{itemize}
 
\item[c)] the incoming separatrices of $\hbar_+(s)$   and $o_+$  for $Y_s$,    are  contained in $\Psi_1$, and outgoing separatrices
of $\hbar_-(s)$   and $o_-$  for $Y_s$ are  contained in $\Psi_2$,  $s\geq 
\frac34$;
 \item[d)] there exists $\eps_1\in(0,\eps)$ such that the outgoing separatrices of $\hbar_+(s)$ for $Y_1$ terminate at $e_-$ and $(1,\eps_1 )$, and the   incoming separatrices of $\hbar_-(s)$ for $Y_1$ originate at $e_+$ and $(0,1-\eps_1)$; 
\item[f)]   $Y_s$  for $s\in[\frac12,1]$ is outward transverse  to the graph 
$\Theta$, viewed as a  part of the boundary of the domain $R_+$;
\item[g)]   $Y_{s}$ admits   a  family of good Lyapunov function  $\psi_{s}:R\to\R$   such that  $\psi_{s}|_{\Op\p R}=y$;
\item[h)] $\beta_s(\frac{\p}{\p z})>0$  everywhere in $R$ for $s\in[\frac12,1]$;
\item[i)]  for any $\sigma>0$  the isotopy $h_s$, $s\in[0,1]$,  can be extended to a  2-parametric isotopy $h_{s,t}, 0\leq s,t\leq1$,  such that
\begin{itemize}
\item $h_{s,0}=h_s$, $h_{0,t}=h_0$ for all $s,t\in[0,1]$;
\item $h_{s,t}=h_{s}$   for $s\leq\frac34-\sigma, t\in[0,1]$;
\item for each $s\in(\frac34,1]$  the isotopy $h_{s,t},t\in[0,1]$, is supported in a    $\sigma$-neighborhood of the separatrices  connecting $\hbar_\pm(s)$ with $e_\pm(s)$;   for each $s\in[\frac34-\sigma,1]$ $h_{s,t}$ is supported in a    $\sigma$-neighborhood of $o_\pm$;  
 \item $h_{s,t}$ is   $\sigma$-close in the $C^0$-sense to $h_{s,0}$ for all $s,t\in[0,1]$;
\item  the family of vector fields  $Y_{s,t}$ directing the characteristic foliations $\ell_{s,t}$ of $\beta_{s,t}:=h_{s,t}^*\alpha$ admits a  family  good Lyapunov functions  $\psi_{s,t}:R\to\R$   such that  $h_{s,t}|_{\Op\p R}=y$;
\item $Y_{s,t}$ has a pair  of positive elliptic and hyperbolic zeroes at
$e_\pm(s(1-2t) +\frac{3t}2)$ and $\hbar_\pm(s(1-2t) +\frac{3t}2)$ for $s>\frac34, t<\frac12$, pairs of embryos at $o_\pm$ for $s=\frac34, t=\frac12$ and no zeroes otherwise;
\item $h_{s,t}=h_{0}$ for $s\in[0,\frac18], t\in[0,1],$ and $h_{s,t}=h_{1,t}$ for $s\in[\frac78,1]$    $t\in[0,1]$;
\item $X_{s,t}$ is outwardly transverse to $\Theta$ for all $s\in[\frac12,1],\; t\in[0,1]$.
  \end{itemize}
 
\end{itemize}

\end{lemma}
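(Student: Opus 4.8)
The plan is to build the isotopy $h_s$ by transporting the model creation-elimination construction of Lemma~\ref{lm:creation-elimination} (equivalently the explicit model of Lemma~\ref{lm:GF-model}) onto the two curves $\Psi_1$ and $\Psi_2$ in $R$, once for the positive pair of zeroes and once (with reversed coorientation) for the negative pair. Concretely, I would first fix a Darboux chart near a point of $\Psi_1\subset R=\{x=0\}$ in which the leaf $\Psi_1$ of the characteristic foliation plays the role of the line $\{x=0\}$ in Lemma~\ref{lm:GF-model}, and similarly near $\Psi_2$. Then the family $G_{s,t}$ from Lemma~\ref{lm:GF-model}, after cutting off, defines a $C^0$-small ambient isotopy $h_{s,t}$ of $R$ inside $O$ (using that $O=R\times[-4,0]_x$ is a flow-box for $\frac{\p}{\p x}$ so the normal pushoff of $R$ stays in $O$), fixed near $\p R$; setting $t=0$ gives $h_s$. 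The time parameter is set up so that $\beta_s$ has no zeroes for $s<\tfrac34$, the two embryos $o_\pm$ appear at $s=\tfrac34$, and for $s>\tfrac34$ the embryos split into elliptic--hyperbolic pairs $e_\pm(s),\hbar_\pm(s)$ gliding along $\Psi_1$, resp.\ $\Psi_2$; reparametrising in $s\in[\tfrac18,\tfrac78]$ gives the prescribed constancy at the endpoints.

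Next I would verify the list of properties one by one. Properties (a) and (h): since $h_{s,t}$ is $C^0$-small and fixes the $2$-jet along $\p R$, the pull-back $\beta_{s,t}=h_{s,t}^*(dz+xdy)$ stays $C^1$-close to the original, and $d\beta$ on $R_\pm$ (the regions below/above the graph $\Theta$) is controlled by choosing the perturbation small relative to $\eps$ and the slopes $\theta',\psi'$; one arranges $d\beta>0$ on $\Int R_+$ and $<0$ on $\Int R_-$, and $\beta_s(\partial_z)>0$. Property (b) is built into the model: the elliptic/hyperbolic pairs and the embryos are exactly the zeroes of $\alpha_{1,0}$, resp.\ $\alpha_{3/4,0}$, in Lemma~\ref{lm:GF-model}. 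Properties (c), (d), (f): the incoming/outgoing separatrices of the model hyperbolic points lie along $\{x=0\}$, hence along $\Psi_1,\Psi_2$; the global destinations in (d) (landing at $(1,\eps_1)$ and $(0,1-\eps_1)$) are arranged by choosing $\Psi_1$ to run monotonically from $(0,\tfrac\eps2)$ to a point of height $<\tfrac\eps2$ and similarly for $\Psi_2$, and the transversality of $Y_s$ to $\Theta$ in (f) follows from $d\beta>0$ on $R_+$ together with the coorientation conventions (outside the support of the perturbation $Y_s$ is $\pm\partial_z$, which is transverse to the graph $\Theta$ as $0<\theta'<4$). Property (g) is Corollary~\ref{cor:good-Lyapunov} applied to $Y_s$: by Poincar\'e--Bendixson there are no periodic orbits (a periodic orbit would bound a disc of index $1$, but the only zeroes are an elliptic--hyperbolic pair joined by a separatrix, of total index $0$, and an elliptic or a hyperbolic alone cannot give index $1$ after accounting for the boundary behaviour), so (L1) and Morse--Smale hold, yielding a good Lyapunov function equal to $y$ near $\p R$; contractibility of the space of such functions (Corollary~\ref{cor:Lyapunov-convexity-param}) makes the choice family-wise.

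For part (i), the second parameter $t$ implements the death of the zeroes: running the Giroux--Fuchs elimination (the $t$-direction in Lemma~\ref{lm:GF-model}, where $G_{s,t}-G_{s,0}$ is supported in a thin strip $\{x<\sigma\}$ near $\Psi_1$, resp.\ $\Psi_2$) cancels $e_\pm$ against $\hbar_\pm$ along the connecting separatrix. Choosing the cut-off scale equal to the given $\sigma$ makes $h_{s,t}$ supported in a $\sigma$-neighbourhood of those separatrices (and of $o_\pm$ near $s=\tfrac34$) and $\sigma$-$C^0$-small; the bookkeeping parameter $s(1-2t)+\tfrac{3t}2$ slides the pair back to the embryo position as $t\to\tfrac12$ and off to no zeroes for $t>\tfrac12$, and the support and constancy statements for $s\le\tfrac34-\sigma$ and $s\in[0,\tfrac18]\cup[\tfrac78,1]$ are immediate from the shape of $G_{s,t}$. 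The Lyapunov condition for $Y_{s,t}$ is again Corollary~\ref{cor:good-Lyapunov} plus Poincar\'e--Bendixson, and outward transversality to $\Theta$ persists because the perturbation is supported away from $\Theta$ (we take $\Psi_1,\Psi_2$ disjoint from $\Theta$, which is possible since $\psi_1<\tfrac\eps2\le\theta$ on $[0,\tfrac13]$ and symmetrically for $\psi_2$).

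The main obstacle, and the point requiring real care, is the \emph{global} routing of separatrices demanded by (c)--(d): the local model only controls the foliation in a neighbourhood of $\Psi_1\cup\Psi_2$, and one must check that the outgoing separatrices of $\hbar_+(s)$ genuinely reach $e_-(s)$ and the boundary point $(1,\eps_1)$ — and not, say, spiral or hit $\Theta$ — uniformly in $s$. This is handled by arranging that outside a $\sigma$-tube around $\Psi_1\cup\Psi_2$ the field $Y_{s,t}$ equals the unperturbed directing field of $dz+xdy$ on $R$, whose trajectories are the lines $\{x=0=\text{const}\}$ reparametrised — i.e.\ monotone in $y$ — so that every trajectory not captured by a zero runs monotonically from $\{y=0\}$ to $\{y=1\}$; then the possible $\omega$-limits are exactly the sinks $e_\pm$ or the boundary, and a compactness/continuity argument in $s$ pins down the destinations as stated. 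A secondary technical point is ensuring that the various smallness choices ($\eps_1<\eps$, the $C^1$-closeness needed for (a) and (h), the cut-off $\sigma$ in (i)) can be made compatibly; this is routine once the order of quantifiers is respected ($\eps$ first, then the isotopy, then $\sigma\ll\eps$).
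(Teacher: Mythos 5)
Your proposal diverges from the paper's proof at a structural point, and the divergence is not repairable: you try to localize all of the deformation to a $\sigma$-tube around $\Psi_1\cup\Psi_2$, with the field equal to the unperturbed characteristic foliation of $dz+xdy$ on $R$ outside that tube. That unperturbed foliation on $\{x=0\}$ is the horizontal foliation by $\{z=\mathrm{const}\}$, so $d\beta=d(dz)=0$ off the tube. This immediately contradicts property (a), which demands $d\beta>0$ throughout $\Int R_+$ and $d\beta<0$ throughout $\Int R_-$ — not just near $\Psi_1\cup\Psi_2$ — for all $s\in(\tfrac18,1]$. A localized perturbation cannot make an exact form non-closed away from its support.

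The same localization also kills property (d). The outgoing separatrix of $\hbar_+(1)$ must travel from a neighbourhood of $\Psi_1$ (which lives at heights $z\approx\tfrac\eps2$) all the way up to $e_-(1)\in\Psi_2$ (heights $z\approx 1-\tfrac\eps2$). If the flow is horizontal outside the tubes around $\Psi_1\cup\Psi_2$, a trajectory leaving the bottom tube simply runs to the right-hand boundary at essentially the same height; it can never climb to $\Psi_2$. Your closing claim that ``the possible $\omega$-limits are exactly the sinks $e_\pm$ or the boundary'' is therefore false under your own hypotheses: only $e_+$ or the boundary are attainable from $\hbar_+$. The paper resolves this with a genuinely \emph{global} ingredient that your proposal omits: the first half of the isotopy pushes $R$ to the graph $\{x=2sH(y,z)\}$ for a function $H$ with $0\ge H\ge -4$ that is nonzero across the whole interior, is prescribed along $\Psi_1,\Psi_2$ so that they become leaves at $s=\tfrac12$, and — crucially, condition (C6) — satisfies $H<-3$ on a full middle strip, which forces the leaves to climb steeply there and carries the separatrix of $\hbar_+$ up to $e_-$. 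Only after this global deformation is in place does the paper invoke the localized creation–elimination model (your Lemma~\ref{lm:GF-model}) to plant the elliptic–hyperbolic pairs on $\Psi_1,\Psi_2$. You would need to restore this global graph-deformation step; the rest of your outline (the use of Lemma~\ref{lm:creation-elimination}/\ref{lm:GF-model} for (b),(c),(i), Corollary~\ref{cor:good-Lyapunov} plus Poincar\'e–Bendixson for (g), and transversality to $\Theta$ for (f)) does match the paper's plan once that step is present.
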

\begin{figure}[ht]
 \begin{center}
\includegraphics[scale=0.7]{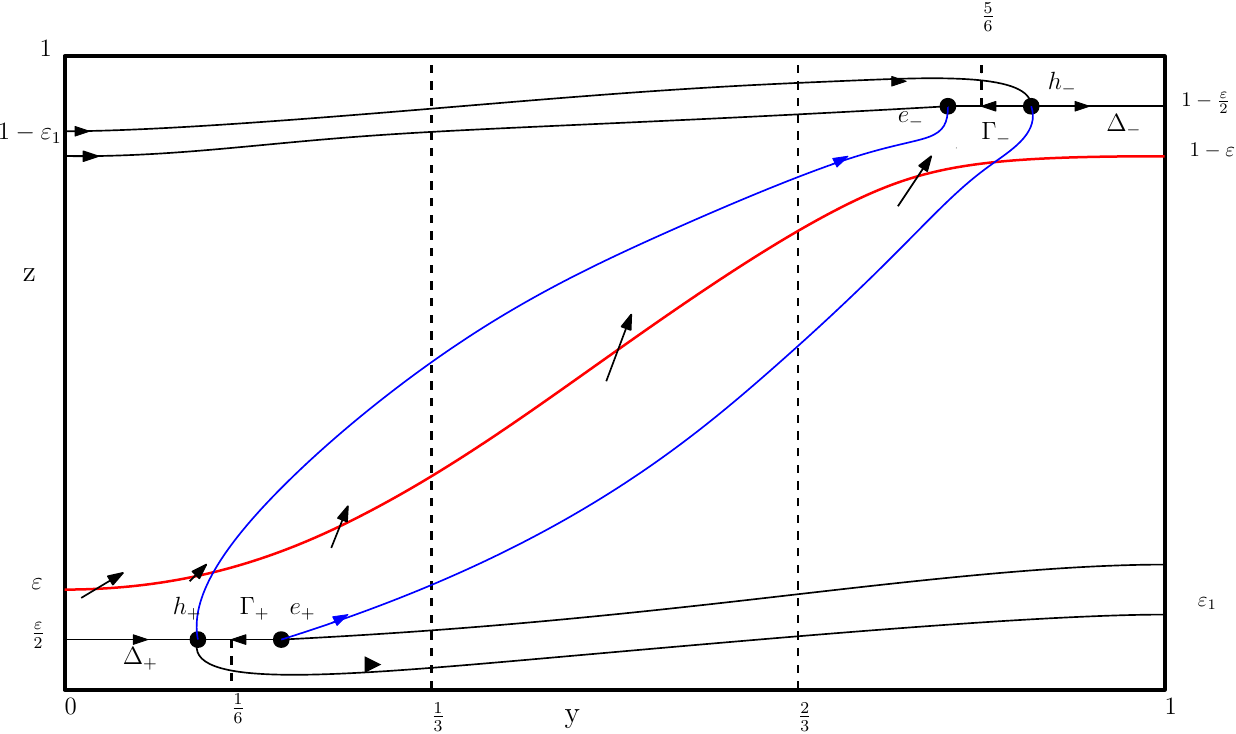}
 \end{center}
\caption{The characteristic foliation on $h_{\frac{4}{5}}(R).$ The red curve is the dividing set $\Gamma = \{d \mu = 0\}$ while blue curves
depict separatrices connecting $e^{\pm}$ to $h^{\mp}$.  }
\label{fig:char_fol}
\end{figure}

\begin{proof}
  Choose a function $H:R\to\R$ such that
  \begin{itemize}
\item[(C1)] $H$ vanishes on  $\p R$ together with all its derivatives;
\item[(C2)]   $0\geq  H(y,z)\geq -4$, $y,z\in[0,1]$;
\item[(C3)] $H(y,\psi_1(y)  )=H(y,\psi_2(y) )=-\psi'(y).$ 
\item[(C4)]   
   $\frac{\p H}{\p z}(y,z)=\begin{cases}<0, 
   &(y,z)\in R_+\\
   >0,&(y,z)\in R_-;
\end{cases}
$ 
\item[(C5)] $H(y,\theta(y))\leq-\theta'(y)$.
\end{itemize}
An additional property (C6) will be imposed  later.

Define an isotopy
$ h_s:R\to\R$, $s\in[0,1]$, as follows.  
For $s\in[0,1/2]$ we define  $$h_s(y,z):=(y,z, 2sH(y,z)),\;\;  (y,z)\in R .$$  
Let $\ell_s$ be the characteristic foliation defined by $h_s^*\wh\mu$ on $R$.
 Leaves of the characteristic foliation on $\ell_s$  are
 graphs of  solutions of the equation
\begin{equation}\label{eq:leaves}
\frac{dz}{dy}=-2sH(y,z).
\end{equation}
 
   For $\zeta\in[0,1]$  we denote by
$ \ell_{\zeta} $ and $\ell^\zeta$  the   solutions of \eqref{eq:leaves} with the initial data
$\ell_\zeta(0)=\zeta$ and $\ell^\zeta(3)=\zeta$, respectively.  Condition (C3) ensures that  $\ell_{\frac\eps2}=\psi_1 $, $\ell^{\frac\eps2}=
\psi_2$.

For $s\in[1/2,1]$ we use Lemma \ref{lm:creation-elimination} to create for $s>\frac34$ pairs of elliptic-hyperbolic positive and negative points at $e_\pm(s),\hbar_\pm(s)$ through   embryos at  $o_\pm$ for $s=\frac34$. The isotopy  can be constructed arbitrary $C^0$-small and supported in a neighborhood of separatrices connecting $e_\pm$ and $\hbar_\pm$.
It can also be arranged     that the isotopy also  fixes the leaves $\Psi_1$ and $\Psi_2$ of the foliation $\ell_{\frac12}$,    
     so that these  leaves become broken leaves of the characteristic foliation $\ell_s$ for $s\in[\frac34,1]$. In particular,   these curves contain, respectively,  the incoming separatrix of $\hbar_+(s)$  and outgoing separatrix of $\hbar_-(s)$.  It then follows that one of the outgoing separatrices  of $\hbar_+(s)$ terminates at $(1,\eps_1) $ for $\eps_1<\eps$, and it could be arranged that  one of the incoming separatrices  of $\hbar_-$ originates at $(1,1-\eps_1) $.
       
  Suppose that the second outgoing separatrix of $\hbar_+(1)$ intersects the line $y=\frac13$ at a point $(\frac13,a)$, $a\in(\psi_1(\frac13),\theta(\frac13))$, while  the second  incoming separatrix of $\hbar_-(1)$ intersects the line $y=\frac23$ at a point $(\frac23,b)$, $b\in(\theta(\frac23),\psi_2(\frac23)) $. We  now impose the remaining condition on the function $H$:
  \begin{itemize}
  \item[(C6)] 
  $H(y,z)<-3$ for $y\in[1,2], z\in[a, b]$;
  \end{itemize}
   This guarantees that one of the outgoing separatrices of $\hbar_+(1)$ terminates at $e_-(1)$, and one of the incoming separatrices of $\hbar_-(1)$ originates at $e_+(1)$.
 
   Using the extension to the 2-parametric isotopy  in Lemma \ref{lm:creation-elimination} we extend the isotopy $h_s$ to a 2-parametric isotopy $h_{s,t}$ for $s,t\in[0,1]$ with the required properties.  \end{proof}

Let us denote by $\Gamma_\pm(s)$, $s\in[\frac34,1]$ the (closure of the) trajectory of $Y_s$ connecting $e_\pm(s)$ and $\hbar_\pm(s)$ and by $\Delta_+(s)$ (resp. $\Delta_-(s))$  the (closure of the) incoming (resp. outgoing) separatrix of $\hbar_+(s)$ (resp. $\hbar_-(s)$). For $s=\frac34$ we assume that $e_\pm(\frac34)=\hbar_\pm(\frac34)=o_\pm$.  We extend the definition of $\Gamma_\pm(s)$ and $\Delta_\pm(s)$ to all $s\in[0,1]$
by setting   $\Gamma_\pm(s)=\Delta_\pm(s)=\varnothing$ for $s<\frac34$.

\section{Preliminary and quasi-plugs}\label{sec:prelim-quasi-plugs}
 \subsection{Preliminary plug}\label{sec:prelim-plug}
 Let $(W,\lambda)$ be a Weinstein domain. We denote by $Z$ the Liouville field dual to $\lambda$. Consider an interior  boundary collar $C:=\p W\times[  1-\eps,1]$ such that $\p W\times1=\p W$ and $\lambda|_C=\tau\gamma$, $\tau\in[1-\eps,1],$ for a contact form $\gamma=\lambda|_{\p W}$.
 Denote $W_0:=W\setminus C$.  Furthermore, denote by $\Skel(W,Z)$ the skeleton of $W$, i.e. the union of stable manifolds of zeroes of $Z$.
 Alternatively, $\Skel(W,Z)=\bigcap\limits_{s\in[0,\infty)}Z^{-s}(W)$. Here we denote by $Z^{-s}$ the    flow of  $-Z$, which is defined for all $s\geq 0$.
 
 \smallskip
 Consider a contact manifold $(V:=W\times[0,1], \Ker (\lambda+dz))$, and in $(V\times T^*[0,1], \Ker(\lambda+dz+xdy))$ take the domain
 $ U=\{-4\leq x\leq 0\}$ and a hypersurface $Q=\{x=0\}$. Note that $Q=V\times[0,1]$, and we can naturally identify 
$ V\times T^*[0,1]$ and $ Q$   with $W\times O$ and $W\times R$ respectively, where  we  use the notations $O$ and  $R$ from 
 Section \ref{sec:2-plug} which denoted a three dimensional cube and the rectangle $\{x = 0\}$ contained in the cube. .

Let $h_{s,t}:R\to O$ be the isotopy constructed in Lemma \ref{lm:2d-plug}. Define an isotopy $$g_{s,t}:Q=W\times R\to U=W\times O$$ by the formula
\begin{align*}
 g_{s,t}(w,q)=\begin{cases}
 (w, h_{s,t}(q)),& w\in W_0,q\in R\\
 (w, h_{s\ol\tau,t}(q)),& w=(v,\tau)\in C=\p W\times[1-\eps,1],\;
 \ol\tau=\frac{1-\tau}{\eps} 
 \end{cases}  
\end{align*}
 Denote $\wh \beta_{s,t}:=g_{s,t}^*(\lambda + dz + x dy)$ and let $X_{s,t}$ be the vector field directing the characteristic foliation defined by the form $\wh \beta_{s,t}$.    Set  $\wh\beta_s:=\wh\beta_{s,0}$ and $X_s:=X_{s,0}$.
It follows from the corresponding properties of  the isotopy $h_{s,t}$ that for each fixed  $s\in[0,1]$ the isotopy $g_{s,t}$, $t\in[0,1]$, is $\sigma$-close to $g_s$ in the $C^0$-sense.

Denote  \begin{align*}
&    {}^{in}P:=W_0\times[\eps, 1-\eps_1]\times 0 ,\;
 \;{}^{out} P:=W_0\times[\eps_1, 1-\eps]\times 1 ,\\
 &  {}^{in}T:=W_0\times(1-\eps_1,1]\times 0 , \;\;
  {}^{out}T:=W_0\times[0,\eps_1)\times 1 ;\\
  &{}^{in} S=W_0\times[0,\frac\eps2]\times0 ,\;\;
  {}^{out}S=W_0\times[1-\frac\eps2,1]\times 1
  \end{align*}
 
 Furthermore, denote
 $$\wh\Gamma_\pm:=W_0\times \Gamma_\pm(1)\cup\bigcup\limits_{u\in\p W,\;1-\eps\leq \tau\leq 1}(u,\tau)\times\Gamma_\pm({\ol\tau}) \subset Q,\;
\ol\tau=\frac{1-\tau}{\eps}. $$

\begin{figure}[ht]
 \begin{center}
\includegraphics[scale=0.7]{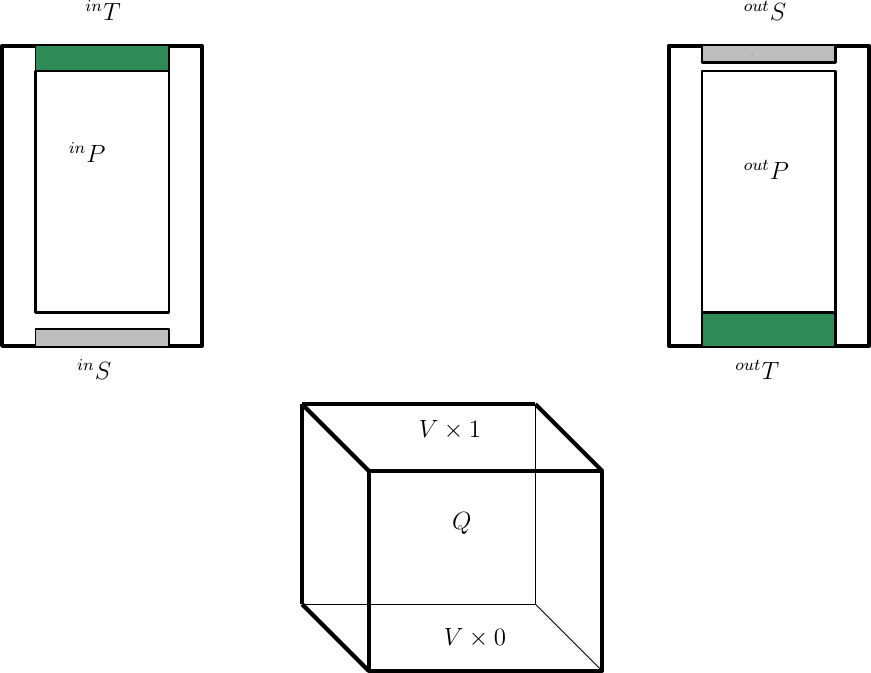}
 \end{center}
\caption{Regions in the preliminary block with  controlled dynamics.}
\label{fig:block_dynamics}
\end{figure}

The following proposition lists  properties of $X_{s,t}$ which will be  necessary for  constructions of $\sigma$-plugs. 
\begin{prop}\label{prop:prelim-plug}
 The vector field   $X_{s,t}$ on $Q$, $s,t \in[0,1] $, has the following properties.  
\begin{enumerate}
 
\item  every trajectory of $X_{1}$
\begin{enumerate}
\item  which starts  at
 ${}^{in}P  $ converges to a zero of $X_{1}$;

 \item  which starts  at a point $(u,z,0) \in {}^{in}T$    exits   at a point $(u',z',1)\in {}^{out}S   $,   where  $u'=Z^{-a}(u) , a>0$  and  $z'\in(1-\frac\eps2,1]$;  
\item   which ends 
 ${}^{out}P $ originates at a zero of $X_{1}$;
 \item which terminates  at a point $(u,z,1) \in {}^{out}T $ begins at   a point $(u',z',0)  \in {}^{in}S \subset {}^{in}V, $  where  $u'=Z^{-a}(u) , a>0$, and  $z'\in[0,\frac\eps2)$;  
\end{enumerate}
 
\item  vector fields $X_{s,1}$, $s\in[0,1] $, have no zeroes;
\item $\wh\Gamma_+$ is invariant with respect to the negative flow of $X_1$, and $\wh\Gamma_-$ is invariant with respect to the positive one;
\item all trajectories of $X_s$ which converge to positive singularities either do not intersect $\p Q$ and are contained in $\wh\Gamma_+$, or intersect it at points of $\Skel(W)\times\frac\eps2\subset {}^{in}V$; all trajectories of $X_s$ which originate at negative singularities either do not intersect $\p Q$ and are contained in $\wh \Gamma_-$, or intersect it at points of $\Skel(W)\times(1-\frac{\eps}2)\subset {}^{out}V;$
\item  the  family of vector fields $X_{s,t}$ admits a family of good Lyapunov functions $ \psi_{s,t}$, equal to $y$ on $\p Q$;
\end{enumerate}
\end{prop}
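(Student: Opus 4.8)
The plan is to deduce all six properties from the corresponding properties of the two-dimensional isotopy $h_{s,t}$ in Lemma~\ref{lm:2d-plug}, together with the product structure $Q = W\times R$ and the interpolation that takes place on the collar $C = \p W\times[1-\eps,1]$. The key observation is the following local coordinate description of $X_{s,t}$: over the interior region $W_0$ the form $\wh\beta_{s,t}$ is literally $\lambda + h_{s,t}^*(dz+xdy)$, so $X_{s,t}$ splits as a sum of a vector field tangent to the $R$-factor (essentially the $Y_{s,t}$ of Lemma~\ref{lm:2d-plug}) and the Liouville field $Z$ dual to $\lambda$ in the $W$-direction; over the collar $C$ one has $\lambda|_C = \tau\gamma$ with $\tau$ the collar parameter, and the isotopy is reparametrized by $\ol\tau = (1-\tau)/\eps$, so there the $R$-dynamics degenerates to the identity as $\tau\to 1$ (i.e.\ as $\ol\tau\to 0$) and the $\p W$-direction carries the Reeb-type flow of $\gamma$. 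I would first write out this coordinate description carefully as a preliminary step, since every subsequent item is then a short computation.

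First I would prove items (2) and (6): these follow immediately because $Y_{s,1}$ has no zeros (Lemma~\ref{lm:2d-plug}b) and the $Z$-direction contributes a genuine Liouville term, so $X_{s,1}$ has no zeros; and because $Y_{s,t}$ admits a family of good Lyapunov functions $\psi_{s,t}$ with $\psi_{s,t}|_{\Op\p R} = y$ (Lemma~\ref{lm:2d-plug}g,i), the function $\Psi_{s,t}(w,q) := \psi_{s,t}(q) + \phi(w)$, where $\phi$ is a Lyapunov function for the Weinstein field $Z$ on $W$ scaled to be small, is a good Lyapunov function for $X_{s,t}$; the separating level set is inherited from the $R$-factor and the collar interpolation preserves the boundary normalization $\Psi_{s,t}|_{\p Q} = y$ because there $\psi_{s,0} = y$. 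Next I would treat (3) and (4): the zeros of $X_s$ are exactly products (zeros of $Y_s$ in $R$) $\times$ (zeros of $Z$ in $W$), the positive ones being $e_+(s)\times\Skel(W,Z)$ sitting over $z = \tfrac\eps2$ and the negative ones over $z = 1-\tfrac\eps2$; a trajectory converging to a positive zero must have its $W$-component converge into $\Skel(W,Z)$, hence either it stays in $\Skel(W,Z)\times(\text{stable set of }e_+)$ — which is $\wh\Gamma_+$ by definition, using that $\Gamma_+(s)$ is the stable separatrix of $\hbar_+(s)$ closed up to $e_+(s)$ — or it meets $\p Q = W\times\p R$ at a point whose $R$-coordinate lies on $\Delta_+(s)\subset\Psi_1$, which over $\{z=\tfrac\eps2\}$ is $\Skel(W)\times\tfrac\eps2$; the invariance statement (3) of $\wh\Gamma_\pm$ under the backward/forward flow is then the product of the $W$-invariance of $\Skel(W,Z)$ under $-Z$ with the $R$-invariance of $\Gamma_\pm(1)$, done separately on $W_0$ and on each collar slice.

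The main obstacle, and the step I would spend the most care on, is item (1), the precise description of trajectories through $X_1$ that enter or exit near the boundary collar — i.e.\ parts (b) and (d), the assertions that a trajectory starting at $(u,z,0)\in{}^{in}T$ with $z\in(1-\tfrac\eps2,1]$ exits at $(u',z',1)$ with $u' = Z^{-a}(u)$, $a>0$ and $z'\in(1-\tfrac\eps2,1]$. Here the difficulty is genuine coupling: on the collar the $R$-dynamics is a reparametrized copy of $Y_{s,t}$ with $s$ replaced by $s\ol\tau$, so as the trajectory's $W$-component flows toward $\p W$ under $Z$ (and $\ol\tau$ correspondingly changes), the effective two-dimensional parameter is not constant along the trajectory. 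I would handle this by the following argument: in the region ${}^{in}T$, property (h) of Lemma~\ref{lm:2d-plug} ($\beta_s(\p/\p z)>0$) guarantees the $z$-coordinate increases along the flow for all relevant $s$, hence also along $X_1$; property (f) (outward transversality of $Y_s$ to the graph $\Theta$) confines the trajectory to the region $z > \theta(y)\cdot(\text{something})$, and one checks that the region $W_0\times{}^{in}T$ is positively invariant until the trajectory reaches $\{z = 1\}$; meanwhile in the $W$-direction the field is $-$(the collar's outward Liouville direction) composed with the Reeb flow of $\gamma$, whose time-advance is exactly $Z^{-a}$ for some $a>0$ determined by how long the trajectory spends in the collar. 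Establishing that $a>0$ strictly (the trajectory genuinely enters the collar and does not escape through $\p W\times\{1\}$) uses that on the innermost part the $R$-dynamics pushes $z$ up past $1-\tfrac\eps2$ before the $W$-component can reach $\p W$; I would make this quantitative by choosing $\eps$ small relative to the Liouville escape time of the collar. Parts (a) and (c) are then the easier "interior" analogues, following directly from Lemma~\ref{lm:2d-plug}(a),(d) (convergence of $Y_1$-trajectories from ${}^{in}P$ to zeros) together with convergence of $Z$-trajectories to $\Skel(W,Z)$, and part (5) is exactly (3)+(4) specialized; I would close by remarking that all six properties are stable under the $t$-parameter since the whole analysis only used the $C^0$-closeness of $h_{s,t}$ to $h_{s,0}$ and the qualitative features (h), (f), (g) which persist for all $t$.
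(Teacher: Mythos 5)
Your proposal captures the correct high-level structure (reduce everything to Lemma~\ref{lm:2d-plug} via the product $Q = W\times R$, with a collar interpolation), but it is missing the crucial structural input that the paper isolates as Lemma~\ref{lm:widehatX}: over $W_0$ the vector field decomposes as $X = Y + aZ$, and over the collar as $X = Y + aZ + bR$, where $a$ is a \emph{function} whose sign tracks whether the $R$-component lies in $R_+$ or $R_-$. You write that $X_{s,t}$ ``splits as a sum of a vector field tangent to the $R$-factor and the Liouville field $Z$,'' omitting $a$ entirely. That sign is the engine of the whole argument, and its omission leads to two concrete errors.

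First, your treatment of item (1)(b) is based on the wrong picture. A trajectory entering ${}^{in}T = W_0\times(1-\tfrac\eps2,1]\times 0$ has its $R$-component in $R_-$, where $a<0$; hence the $W$-component flows \emph{backward} along $Z$, toward $\Skel(W,Z)$, and never enters the collar $C$. There is no ``genuine coupling'' with a reparametrized $Y_{s\ol\tau}$, no slow drift toward $\p W$, and no need to choose $\eps$ small relative to a ``Liouville escape time.'' The paper's argument for (1)(a)--(d) is a clean separation of variables inside $W_0\times R_\mp$ precisely because the sign of $a$ forces the $W$-component to stay inside $W_0$. Your proposed quantitative estimate is solving a problem that the actual geometry does not present, while the case you do need (the behaviour over $Q_b = (W\setminus W_1)\times R$, used in item (5)) is not addressed.

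Second, the Lyapunov function you propose for item (5), $\Psi_{s,t}(w,q)=\psi_{s,t}(q)+\phi(w)$ with $\phi$ a small Lyapunov function for $Z$, does not work. Near a \emph{negative} zero $q$ of $Y_s$ one has $a\approx -1$, so $X\approx Y - Z$; on $\{q\}\times W$ away from $\Skel(W,Z)$ one gets $d\Psi(X) = d\psi(Y) - d\phi(Z) = -d\phi(Z) < 0$. No fixed function $\phi$ of $w$ alone can increase along $Z$ near positive zeros of $Y_s$ and decrease along $Z$ near negative ones. The paper therefore does not attempt to write down an explicit Lyapunov function; instead it verifies the hypotheses of Corollary~\ref{cor:good-Lyapunov} (conditions (L1), (L2), and absence of retrograde connections) by a region-by-region analysis using the regions $Q_\pm = W_1\times R_\pm$ and $Q_b$. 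You will need to replace your construction by this, or by some variant that takes the sign of $a$ into account.

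A smaller but real issue is your characterization of $\wh\Gamma_+$ as ``$\Skel(W,Z)\times(\text{stable set of }e_+)$.'' By definition $\wh\Gamma_+ = W_0\times\Gamma_+(1)\cup\bigcup_{(u,\tau)\in C}(u,\tau)\times\Gamma_+(\ol\tau)$, i.e.\ its $W$-footprint is all of $W$, not just the skeleton. The invariance in item (3) then comes from the coupled fact that the $W$-component moves by $-Z$ (which maps $W_0$ into $W_0$) while the $R$-component stays on $\Gamma_+(1)$, and from a matching statement over the collar; it is not a product of two separate invariances as you claim.
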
  
 We begin the proof with an explicit computation of the vector field $X_{s,t}$.  Let $Z$ denote the Liouville field on $W$ corresponding to the Liouville form $\lambda$. Recall that $\lambda|_C=\tau\gamma$, $\tau\in[1-\eps,1]$, where $\gamma$ is a contact form on $\p W$. We have $Z|_C=\tau\frac{\p}{\p\tau}.$ 
 Let $R$ be the Reeb vector field on $\p W$ lifted to $C=\p W\times[1-\eps,1]$ via the projection to the first factor.
 
 Let us view  $Q=W\times R$ as the fiber bundle over $W$.
 The form $\wh\beta_{s,t}$ restricts to the fiber $w\times R,\; w\in W$ as $\beta_{s,t}$ if $w\in W_0$ and  for $w=(u,\tau)\in C=\p W\times[1-\eps, 1]$ as $\beta_{s\ol\tau,t},$ where
 $ \ol\tau=\frac{1-\tau}{\eps} $. 
 To simplify the notation we will write $\wh\mu$ instead of $\wh\beta_{s,t}$, $X$ instead of $X_{s,t}$, $\mu$ instead of
 $\beta_{s,t}$ and  $\mu_\tau$ instead of $\beta_{s\ol\tau,t}$ while writing the proof of Proposition~\ref{prop:prelim-plug}. On the other hand
 $X_1$ will continue to denote $X_{1,0}$.  Finally, we will denote
 $\dot\mu:=\frac{d\mu_\tau}{d\tau}$. 
 
For each $\tau\in[1-\eps,1]$ consider a  form  $\kappa_\tau:=\mu_\tau-\tau\dot\mu_\tau$ on $Q$ and choose a vector field $Y_\tau$ directing $\Ker\,\kappa_\tau$.
 
Consider a tangent to fibers $w\times R, w\in W$, vector field $  Y$ which is equal to $X_{s,t}$ on $w\times R$ for $w\in W_0$  and to $Y_\tau$ on $w\times R$  for $w=(u,\tau)\in C=\p W\times[1-\eps, 1]$.

 \begin{lemma}\label{lm:widehatX}
 We have 
\begin{itemize}
\item[1.] $X= Y+aZ$ over $W_0\times Q$, where the function $a:Q\to\R$ is determined by the equation 
\begin{align}\label{eq:function-a1}
a\mu=\iota(Y)d\mu.
\end{align}
\item[2.] $X=Y+aZ+bR$ over $C\times Q$, where the functions
  $a,b:Q\to\R$ are determined  by the equations
  \begin{equation}\label{lm:widehatX2}
  \begin{split}
 & a\kappa_\tau=\iota(Y_\tau)d\mu_\tau, \\
 & b\tau+\mu_\tau(Y_\tau)=0.
 \end{split}
 \end{equation}
 \end{itemize}
 \end{lemma}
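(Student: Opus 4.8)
The proof is a direct computation. I will verify that the vector fields on the right-hand sides direct the characteristic foliation $\ell$ of $\wh\mu$; since $\ell$ is a line field this determines them up to rescaling and proves the formulas. Recall that $\ell_p=\Ker\{d\wh\mu|_{\Ker\wh\mu_p}\}$, so $\wh X$ directs $\ell$ exactly when $\wh\mu(\wh X)=0$, $\iota(\wh X)d\wh\mu$ is pointwise a multiple of $\wh\mu$, and $\wh X$ is nonzero on the nonsingular locus of $\ell$ — this is precisely the computation in the proof of Lemma~\ref{lm:Dietmar2}. Besides the Liouville identity $\iota(Z)d\lambda=\lambda$ I will use its consequence $\lambda(Z)=\iota(Z)\iota(Z)d\lambda=0$.

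\emph{The region $W_0\times Q$.} Here $g_{s,t}=\Id_{W_0}\times h_{s,t}$, so $\wh\mu=\lambda+\mu$ with $\mu=\beta_{s,t}$ and $d\wh\mu=d\lambda+d\mu$, where $\lambda,d\lambda$ are pulled back from $W$ and $\mu,d\mu$ from the fibers of $Q\to W$. Since $Y$ is tangent to the fibers, $\lambda(Y)=0$ and $\iota(Y)d\lambda=0$; since $Z$ is tangent to $W$, $\mu(Z)=0$ and $\iota(Z)d\mu=0$. Together with $\mu(Y)=0$ and $\lambda(Z)=0$ these give $\wh\mu(Y+aZ)=0$, while
\[
\iota(Y+aZ)\,d\wh\mu=\iota(Y)d\mu+a\,\iota(Z)d\lambda=a\mu+a\lambda=a\,\wh\mu ,
\]
the first term being equation~\eqref{eq:function-a1} and the second the Liouville identity. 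Finally $Y+aZ$ is nonzero off the singular locus of $\ell$ — this one checks from the local normal forms of the fiberwise singularities, noting that $Y$ is vertical and $Z$ horizontal. This proves part~1.

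\emph{The region $C\times Q$.} The only new feature is that on $C=\p W\times[1-\eps,1]$ both $\mu_\tau=\beta_{s\ol\tau,t}$ and the factor $\tau$ in $\lambda|_C=\tau\gamma$ depend on the collar coordinate $\tau$, so that $\wh\mu=\tau\gamma+\mu_\tau$ and $d\wh\mu$ carries, besides the $\p W$- and fiberwise terms, an extra summand $d\tau\wedge\dot\mu_\tau$ coming from $\p_\tau\mu_\tau$, where $\dot\mu_\tau=d\mu_\tau/d\tau$. The form $\kappa_\tau=\mu_\tau-\tau\dot\mu_\tau$ and the equations~\eqref{lm:widehatX2} are arranged precisely so that all $d\tau$-contributions cancel. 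Using $Z=\tau\p_\tau$, that $R$ is the lift of the Reeb field of $\gamma$ (so $\gamma(R)=1$, $\iota(R)d\gamma=0$), that $Y=Y_\tau$ is tangent to the fibers, and that $\kappa_\tau(Y_\tau)=0$ — whence $\dot\mu_\tau(Y_\tau)=\mu_\tau(Y_\tau)/\tau=-b$ by the second line of~\eqref{lm:widehatX2} — a termwise calculation yields $\iota(Y)d\wh\mu=b\,d\tau+a\kappa_\tau$, $\iota(Z)d\wh\mu=\tau(\gamma+\dot\mu_\tau)$ and $\iota(R)d\wh\mu=-d\tau$. Hence
\[
\wh\mu(Y+aZ+bR)=\mu_\tau(Y_\tau)+b\tau=0 ,\qquad
\iota(Y+aZ+bR)\,d\wh\mu=a\kappa_\tau+a\tau\gamma+a\tau\dot\mu_\tau=a\,\wh\mu ,
\]
the last equality by $\kappa_\tau+\tau\dot\mu_\tau=\mu_\tau$. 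Again $Y+aZ+bR$ is nonzero on the nonsingular locus of $\ell$, so it directs $\ell$; this proves part~2.

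The only genuine work is the collar bookkeeping in the last paragraph: one has to recognize that, once the $\tau$-dependence of $\mu_\tau$ and of $\lambda|_C$ is accounted for, the two requirements $\wh\mu(X)=0$ and $\iota(X)d\wh\mu\parallel\wh\mu$ force exactly the form $\kappa_\tau=\mu_\tau-\tau\dot\mu_\tau$ and the Reeb correction term $bR$ with the normalizations in~\eqref{lm:widehatX2}. Over $W_0\times Q$ the statement is the elementary product computation above.
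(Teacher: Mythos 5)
Your proof is correct and follows the same route as the paper: verify the two defining identities $\wh\mu(X)=0$ and $\iota(X)\,d\wh\mu = a\wh\mu$ by direct contraction, using the Liouville/Reeb/fiber-tangency properties of $Z$, $R$, $Y$ and the defining equations for $a$ and $b$. Your writeup is slightly more complete in one spot: where the paper cryptically says the second line of \eqref{lm:widehatX2} gives $\dot\mu_\tau(Y_\tau)+b=0$, you correctly point out that one also needs $\kappa_\tau(Y_\tau)=0$, i.e.\ the defining property of $Y_\tau$, to convert $b\tau+\mu_\tau(Y_\tau)=0$ into $\dot\mu_\tau(Y_\tau)=-b$.
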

 
\begin{remark}\label{rem:a} Note   that  for any vector $v$ and a  symplectic form $\omega$ we have
 $\iota(v)\omega(v)=0$. Hence,    the equation  \eqref{eq:function-a1}  and the first equation \eqref{lm:widehatX2} are always solvable for some function $a:Q\to\R$.
 \end{remark}

\begin{proof}
1. According to Remark \ref{rem:a} we can solve equation \eqref{eq:function-a1} with respect to a function $a$.
 We  have  $$\iota(aZ+ Y)d (\mu + \lambda) = \iota(Y)d\mu   +a\iota(Z) d \lambda =a(\mu+\lambda).$$
 But this means that $aZ+Y$ is tangent to the characteristic foliation defined by the form $\mu+\lambda=\wh\mu$.

\medskip 2.  
 Note that $d \wh\mu = d \mu_{\tau} + d (\tau \gamma)  = d \tau \wedge \dot{\mu_{\tau}} + d \mu_{\tau} + d \tau \wedge \gamma  +  \tau d \gamma.$ Hence, we get that 
\begin{align*} \iota(Y+aZ+bR)d \wh\mu =-(\dot\mu(Y_\tau)+b)d\tau+a\mu+a\tau\gamma=a(\mu+\tau\gamma),
\end{align*}
where 
we used the  second equation \eqref{lm:widehatX2} to conclude that $\dot\mu(Y_\tau)+b=0$.
But  this implies that $$\wh\mu\wedge  \left(\iota( Y+aZ+bR)d\wh\mu\right)=0,$$ which means that  $Y+aZ+bR$ generates the characteristic foliation on $C\times Q$ defined by the form $\wh\mu$, as required.
 
\end{proof}

\begin{proof}[Proof of Proposition \ref{prop:prelim-plug}]
(1a) When a   trajectory  of $X_1$ which enters at a point of  $ {}^{in}P$ it is in  the region where  $X_1=Y_1 + a Z$ with $a<0$. Since $a$ is negative the trajectory continues to remain in the region where the plug is given by $ W_0\times R,$  and hence projects onto trajectories of $-Z$ and $Y_1$, when projected to the corresponding factors. Moreover, the projection to $R$ remains in $R_-$, according to Property f) of Lemma \ref{lm:2d-plug}, 
 and therefore remains  in the region where the coefficient $a$ is negative.
 But in $R_-$ any trajectory of $Y_1$ entering at a point in $[\eps,1-\eps_1)\times 0$ converges to a negative  zero of $Y_1$, while every trajectory of $-Z$ converges to a zero of $Z$. Hence, any trajectory of $Y_1$ 
 entering through $ {}^{in}P$ converges to a $0$ of the vector field $X_1$.
 \medskip
 
 (1b)
  If a trajectory enters  at a  $(u, z)\in{}^{in}T  $ then similarly to 1a) we have  $X_1=Y_1+aZ$ for a negative coefficient $a$, and therefore, it projects    onto trajectories of $-Z$ and $Y_1$ in the factors $W$ and $R$ respectively. But every trajectory of $Y_1$ which enters through $(1-\eps_1,1]\times 0$ exits $R$ at a point of $[1-\frac\eps2,1]$, and hence the corresponding trajectory of $X_1$ exits ${}^{out}V$ at a point $(u'=Z^{-c}(u), z')$ for some positive $c>0$ and $z'\in(-\frac\eps2.1]$.
  \medskip
  
 (1c) and (1d) follows from the same arguments as, respectively, (1a) and (1b) applied to the vector field $-X_1$.
 
 \medskip
 Properties (2)-(4) are straightforward from the corresponding properties in Lemmas \ref{lm:2d-plug} and  \ref{lm:widehatX}.
 
(5) According to  Corollary \ref{cor:good-Lyapunov} it is sufficient to verify that
\begin{itemize}
\item each trajectory of $X_{s,t}$  either originates at $V\times 0$, or at a critical point of $X_{s,t}$;
\item  each trajectories of $X_{s,t}$  either terminates at $V\times 1$, or at a critical point of $X_{s,t}$;
\item $X_{s,t}$ has no retrograde connections.
\end{itemize}
In addition to the Weinstein subdomain $W_0=W\setminus\p W\times(1-\eps,1]$ consider also a larger subdomain  $W_1=W\setminus\p W\times(1-\frac\eps2,1]$.
Denote $Q_\pm:=W_1\times R_\pm,  Q_b:=(W\setminus W_1)\times R\subset Q.$  We have $Q=Q_+\cup Q_-\cup Q_b$.
Let us first analyze  the forward trajectory $X_{s,t}^u(p)=(w(u),r(u))$, $u\in\R_+$ of a  point 
 $p=(w,r)\in Q$. 
 
 If $p\in Q_-$ and $w\in W_0$ then $w(u)$  belongs to the negative Liouville trajectory $\bigcup\limits_{\tau\geq 0}Z^{-\tau}(w)$ as long as $r(u)\in R_-$.
 Similarly, if $w\in W_1\setminus W_0=\p W\times[1-\eps,1-\frac\eps2]$ the second coordinate of $w$ decreases as long as $r(u)\in R_-$.
But  the $R$-component $Y_{s,t}$  of $X_{s,t}$ is by construction inwardly transverse to the boundary of $R_-$, and hence,  remains in $R_-$ for all $u\geq 0$. Therefore, the trajectory either converges to a singular point of $X_{s,t}$, or exits through $V\times 1$.

Suppose $p\in Q_b$. Recall that the  $R$-component $Y_{s,t}$  of $X_{s,t}$ has in $Q_b$ a positive projection to  the  $y$-direction. Hence,  $X_{s,t}^u(p)$  either  exits through $V\times 1$, or enters $Q_+\cup Q_-$. But  in that case it only can enter $Q_-$, and therefore, the analysis of the previous case does apply.
 
 Suppose now that $p\in Q_+$.    If $w\in W_0$ then $w(u)$ moves along a positive Liouville trajectory of $w$, and if  $w\in \p W\times [1-\eps ,1-\frac\eps2]$,  then the second coordinate of $w(u)$ increases   as long as $r(u)\in R_+$. Hence, the trajectory either exits through $V\times 1$, or  enters either $Q_b$ or $Q_-$, and therefore, the previous analysis applies.
 
  Backward trajectories could be analyzed  similarly, with exchanging $Q_+$ and $Q_-$ cases.
 The absence of retrograde connections follows from (4).

 \end{proof}

 \subsection{Approximating balls by Weinstein cylinders}\label{sec:approx-W-cyl-balls}
 A hypersurface $\Sigma\subset(M,\xi)$ is said to have an {\em admissible corner}  along a smooth hypersurface $S\subset\Sigma$ if 
 \begin{itemize}
 \item $S$ is a codimension $2$ contact submanifold of $(M,\xi)$;
 \item $\Sigma\cap\Op S=\Sigma_1\cup\Sigma_2$, where $\Sigma_1$ and $\Sigma_2$ are two manifolds with common boundary $S$ which transversely intersect along $S$. 
 \end{itemize}
 We will  call the hypersurface $S\subset \Sigma$   the {\em corner locus} of $\Sigma$ and denoted by $\corner(\Sigma)$.

  Suppose $\wt \Sigma$ is a smooth hypersurface, let us choose its tubular $\eps$-neighborhood $N$ and denote by $\wh \tau$ the hyperplane field on $N$ orthogonal to the fibers of the projection $N\to\wt \Sigma.$
  We say that  a hypersurface with admissible corners  $ \Sigma\subset N$  
  is  {\em $C^1$  $\eps$-close to  $\wt\Sigma$}
 if  all its tangent planes    do not deviate for more than $\eps$ from $\wh\tau$.

 Given a Weinstein domain $(W,\lambda)$,  a domain $U$ in a
 contact manifold $(M,\xi)$ is called a {\em Weinstein cylinder}
 if there is given a  contactomorphism  $\phi:(W\times [0,a], \Ker(\lambda+dz))\to (U,\xi|_U)$.  We already encountered Weinstein cylinders in Proposition \ref{prop:prelim-plug}.

   Note that  the boundary of a Weinstein cylinder  $\phi(W\times[0,a])$ is a hypersurface with admissible corners along $\corner(U)=\p W\times 0  \cup \p W\times a$. We denote $\p_-U:=\phi(W\times 0),\; \p_+U:=\phi(W\times a)$.

For  a  general  $W$ the contact topology of the Weinstein cylinder $W\times[0,a]$ is very  sensitive to the value of the parameter $a$.  However, there is one exception (see e.g. \cite{EKP06}): 
 \begin{lemma}\label{lm:stand-Wein}
 Let $D=D^{2n-2}$ be the unit ball on $\R^{2n-2}$ endowed with the Liouville form $\lambda_\st:=\sum\limits_1^{n-1}(x_jdy_j-y_jdx_j)$.  Then for any $a>0$ there is a contactomorphism
 $$\Delta_a:=(D\times [0,a], \Ker(\lambda+dz))\to \Delta_1:=(D\times [0,1], \Ker(\lambda+dz)).$$
 \end{lemma}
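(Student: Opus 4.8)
The plan is a scaling reduction followed by a Gray–Moser stability argument, the only delicate point being the choice of contact Hamiltonian so that the deforming isotopy is tangent to the (cornered) boundary of the ball.

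\textbf{Step 1 (reduction by scaling).} For $c>0$ the diffeomorphism $(p,z)\mapsto(cp,c^{2}z)$ of $\R^{2n-2}\times\R$ multiplies $\lambda_\st+dz$ by $c^{2}$, hence preserves $\Ker(\lambda_\st+dz)$, and carries $\Delta_a=D^{2n-2}(1)\times[0,a]$ onto $D^{2n-2}(a^{-1/2})\times[0,1]$; composing with the linear dilation $p\mapsto a^{1/2}p$ (which pulls $\lambda_\st$ back to $a\lambda_\st$) identifies $\Delta_a$ with $N:=D^{2n-2}(1)\times[0,1]$ carrying $\Ker(\lambda_\st+a^{-1}dz)$, by a map respecting the product structure and the corner $\p D^{2n-2}\times\{0,a\}$. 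Thus it suffices to prove that on $N$ the contact structures $\xi_t:=\Ker\alpha_t$, $\alpha_t:=\lambda_\st+t\,dz$, $t>0$ (each contact, since $\alpha_t\wedge(d\alpha_t)^{n-1}=t\,dz\wedge(d\lambda_\st)^{n-1}\ne0$), are all contactomorphic as contact manifolds with corners.

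\textbf{Step 2 (Gray stability with a boundary-tangent vector field).} I would apply Gray's lemma to the path $\{\alpha_t\}$. Solving the Gray equation $L_{V_t}\alpha_t+\p_t\alpha_t=g_t\alpha_t$ leaves the contact Hamiltonian $H_t:=\alpha_t(V_t)$ free, with $g_t=t^{-1}(\p_zH_t+1)$; since $\alpha_t$ genuinely varies near $\p N$, the whole point is to choose $H_t$ so that $V_t$ is tangent to every face of $\p N$. The naive choice $H_t=H_t(z)$ fails (it forces $\p_zH_t\equiv-1$ along $\p D^{2n-2}\times[0,1]$, which is incompatible with tangency along both end disks $D^{2n-2}\times\{0,1\}$). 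Instead take
\[
H_t:=(1-z)^{2}|p|^{2}+z(1-z),
\]
which vanishes to second order on the end disks and equals $1-z$ on $\p D^{2n-2}\times[0,1]$. Then $g_t=\tfrac{2(1-z)(1-|p|^{2})}{t}$ and the Gray vector field works out to
\[
V_t=\frac{z(1-z)}{t}\,\p_z+\frac{2(1-z)(1-|p|^{2})}{t}\,Z+(1-z)^{2}R_{\mathrm{Hopf}},
\]
where $Z=\tfrac12\sum(x_j\p_{x_j}+y_j\p_{y_j})$ is the Liouville field of $\lambda_\st$ and $R_{\mathrm{Hopf}}=\sum(x_j\p_{y_j}-y_j\p_{x_j})$ generates the Hopf rotation. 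This $V_t$ is smooth (including at $p=0$), its $\p_z$–component vanishes on $z=0$ and $z=1$, and its radial component vanishes on $|p|=1$ (the $Z$–term carries the factor $1-|p|^{2}$, while $R_{\mathrm{Hopf}}$ is tangent to the spheres $|p|=\mathrm{const}$), so $V_t$ is tangent to all of $\p N$; a direct computation confirms $L_{V_t}\alpha_t+\p_t\alpha_t=g_t\alpha_t$. Its flow $\psi_\tau$ in the parameter $\tau$ is therefore a contact isotopy of $N$ with $\psi_\tau^{*}\alpha_\tau=h_\tau\alpha_1$, $h_\tau>0$, preserving each boundary face and hence the corner locus; running it from $\tau=1$ to $\tau=a^{-1}$ yields the required contactomorphism of cornered manifolds, and with it Lemma~\ref{lm:stand-Wein}.

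\textbf{Main obstacle.} The crux is Step 2 — producing a contact Hamiltonian whose Gray vector field is tangent to $\p N$. Once one observes that such a Hamiltonian must vanish to order two along the two end disks and be governed by the Hopf flow on $\p D^{2n-2}$, the explicit $H_t$ above does the job; the scaling reduction and the verification of the Gray identity are routine. (The references surrounding this lemma, e.g. \cite{EKP06}, concern the \emph{failure} of the analogous statement for a general Weinstein base $W$, where the periods of closed Reeb orbits of $\p W$ obstruct the existence of such a boundary-tangent Gray field.)
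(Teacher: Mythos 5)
The paper states Lemma~\ref{lm:stand-Wein} without proof, pointing only to \cite{EKP06}, so there is no in-text argument to compare against; your self-contained Gray-stability proof is correct and fills that gap. The scaling in Step~1 reduces the claim to comparing $\Ker(\lambda_\st + t\,dz)$ on the fixed cube $N=D^{2n-2}\times[0,1]$ for varying $t>0$ (you wrote $t=a^{-1}$ where the reduction actually gives $t=a$, but since one quantifies over all $a>0$ this is immaterial). In Step~2 the crux — choosing a contact Hamiltonian whose Gray vector field is tangent to every face of the cornered boundary — is handled correctly: with $H_t=(1-z)^2|p|^2+z(1-z)$ one indeed gets $g_t=\frac{2(1-z)(1-|p|^2)}{t}$ and $V_t=\frac{z(1-z)}{t}\p_z+\frac{2(1-z)(1-|p|^2)}{t}Z+(1-z)^2R_{\mathrm{Hopf}}$, which solves $L_{V_t}\alpha_t+dz=g_t\alpha_t$ and whose $\p_z$ and $Z$ components vanish on the respective faces, with $R_{\mathrm{Hopf}}$ tangent to $\p D$. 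One small misstatement: $H_t$ does \emph{not} "vanish to second order on the end disks" — on $z=0$ it equals $|p|^2$, and on $z=1$ it vanishes only to first order. This is harmless, because the condition that actually matters for tangency at $z\in\{0,1\}$ is the vanishing of $dz(V_t)=\frac1t\bigl(H_t-\lambda_\st(V_h)\bigr)$, which you verify directly via $dz(V_t)=\frac{z(1-z)}{t}$; but you should replace that sentence with the correct reason. Your closing observation — that tangency of $R_{\mathrm{Hopf}}$ to $\p D$, i.e.\ periodicity of the boundary Reeb flow, is the special feature of the ball that makes the Gray field tangent, and is exactly what fails for a general Weinstein base $W$ — is the right way to reconcile this lemma with the paper's caveat immediately preceding it.
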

Hence,  we will  use the notation $\Delta$   for any Weinstein cylinder of the type $D \times [0,a]. $

        \begin{lemma} \label{lm:approx-ball-by-cyl}
      Let $D$ be the standard contact ball and $p_\pm\in\p D$ its poles.
    Then for any $\eps>0$ there exists
  a contact embedding $h:\Delta\to D $ such that
    $h(\p_+\Delta)\subset \p D$, $\p D\setminus h(\p_+\Delta)$ is contained in   an $\eps$-neighborhood of  the pole $p_-$ and $D\setminus\Delta$ is contained in the $\eps$-neighborhood of $\p D$.
  \end{lemma}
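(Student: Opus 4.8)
The plan is to realize $D=D^{2n-1}$ concretely and to produce $h$ by presenting $D$ as a standard Weinstein cylinder over $D^{2n-2}$ with a small standard half-ball glued onto it near the pole $p_-$. First I would fix a model: take $D$ to be the round unit ball in $(\R^{2n-1},dz+\lambda_\st)$, a representative of the standard contact ball; its poles — the two singular points of $\ell_{\partial D}$ — are $p_\pm=(0,\dots,0,\pm1)$, and its dividing set is the equator $\{z=0\}\cap\partial D$, since $\Upsilon:=z\partial_z+\sum_j(x_j\partial_{x_j}+y_j\partial_{y_j})$ is a transverse contact vector field with $\alpha(\Upsilon)=z$. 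Near $p_-$ one has, in suitable coordinates $q=(x,y)$, $\partial D=\{z=\tfrac12|q|^2\}$ and $\beta:=\alpha|_{\partial D}=\lambda_\st+\tfrac12\,d(|q|^2)$, and symmetrically near $p_+$; in particular $\beta$ vanishes exactly at $p_\pm$ and equals $\lambda_\st$ plus an exact form near each pole.

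For small $\delta>0$ I would cut out of $D$ a standard neighbourhood $B$ of $p_-$ of diameter $<\delta$, glued to its complement along a disc $F$, and set $C:=\ol{D\setminus B}$, choosing the shape of $B$ so that $\partial C$ carries two corner spheres near $p_-$ (one bounding the disc $\partial D\setminus(B\cap\partial D)$, one interior to $F$), so that $C$ is a priori a manifold of the combinatorial type of $\Delta$. The crux is the claim that, with the product structure whose $\partial_+$-face is $\partial D\setminus(B\cap\partial D)\subset\partial D$, the domain $C$ is a standard Weinstein cylinder over $D^{2n-2}$. Granting the claim, Lemma~\ref{lm:stand-Wein} provides a contactomorphism $g\colon\Delta\to C$ respecting the product structures, in particular carrying $\partial_+\Delta$ onto $\partial D\setminus(B\cap\partial D)$, the lateral face $\partial_0\Delta:=\partial D^{2n-2}\times[0,a]$ onto a thin strip, and $\partial_-\Delta$ onto an interior disc — the latter three pieces lying near $p_-$. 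Composing with $C\hookrightarrow D$ gives a contact embedding $h_0$ with $h_0(\partial_+\Delta)\subset\partial D$, with $\partial D\setminus h_0(\partial_+\Delta)=B\cap\partial D$ contained in the $\eps$-neighbourhood of $p_-$ (take $\delta<\eps$), and with $D\setminus h_0(\Delta)=B$ contained in the $\eps$-neighbourhood of $\partial D$, since every point of $B$ lies within $\delta$ of $p_-\in\partial D$. Should $h_0(\partial_-\Delta)$ and $h_0(\partial_0\Delta)$ protrude more than $\eps$ into the interior, I would precompose with a $C^1$-small isotopy of $D$ supported away from $\partial_+\Delta$ that pushes them inward; this keeps it a contact embedding, and the result is the required $h$.

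To prove the claim I would invoke Givental's theorem (Proposition~\ref{prop:AG}): applied to one-sided collars of $\partial_+\Delta$ in $\Delta$ and of $\partial D\setminus(B\cap\partial D)$ in $D$, it reduces the claim to producing a diffeomorphism $\phi$ of $D^{2n-2}$ onto $\partial D\setminus(B\cap\partial D)$ with $\phi^*\beta=\rho\,\lambda_\st$ for a positive function $\rho$; the induced contact germ then extends over all of $\Delta$ and all of $C$, both being Weinstein cylinders over $D^{2n-2}$ of some finite length, by a further application of Lemma~\ref{lm:stand-Wein}. I would build $\phi$ by hand: send $0\in D^{2n-2}$ to $p_+$ and interpolate, across the equator, the ``$\lambda_\st$-type'' primitives that $\beta$ carries near each pole, so as to arrange $\phi^*\beta=\rho\,\lambda_\st$; near the boundary sphere of $\partial D\setminus(B\cap\partial D)$, i.e.\ near $p_-$, this simply records that the dividing sphere of $\partial D$ has been isotoped out through that sphere.

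The only genuinely non-formal step, and the one I expect to be the main obstacle, is the construction of $\phi$ — equivalently, recognizing the one-sided contact germ along $\partial D$ minus a small polar cap as that of the boundary of the standard Weinstein cylinder over $D^{2n-2}$. I would carry it out by an explicit normal-form computation based on the descriptions of $\partial D$ and $\beta$ near $p_\pm$ recorded above, the key point being that $d\beta$ degenerates along the equator, which is no obstruction since only $\beta$, and not $d\beta$, must be matched. The remaining ingredients — the scalings and reparametrizations drawn from Lemma~\ref{lm:stand-Wein} and the inward isotopy needed for the last position property — are routine.
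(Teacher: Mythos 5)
Your approach is genuinely different from the paper's, and the difference is exactly where the gap lies. The paper does not try to show that $D$ minus a polar cap is a Weinstein cylinder; instead it produces a \emph{thin} cylinder $\bigcup_{t\in[0,\delta]}\Upsilon^t(D_+)$ (manifestly of the form $D^{2n-2}\times[0,\delta]$ by construction, once one has normalized $\beta|_{D_+}$ to be conformally $\lambda_\st$) and then enlarges it: pick standard balls $D'\subset\Int\Delta$ and $D''\supset D'$ with $D\setminus D''$ in the $\eps$-collar of $\p D$, use connectedness of the space of contact embeddings of the standard ball to get a contact diffeotopy of $D$ fixed near $\p D$ carrying $D'$ to $D''$, and push $\Delta$ forward by it. The image is automatically a Weinstein cylinder, being the contactomorphic image of one, and its shape can be made to fill $D$ up to the required collars. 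The global structure of $C=\ol{D\setminus B}$ is never analyzed.

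The gap in your argument is the step from matching a boundary germ to identifying the whole domain. After invoking Givental's theorem to match the one-sided contact germ along $\p_+C$ with that along $\p_+\Delta$, you assert that ``the induced contact germ then extends over all of $\Delta$ and all of $C$, both being Weinstein cylinders over $D^{2n-2}$ of some finite length, by a further application of Lemma~\ref{lm:stand-Wein}.'' But Lemma~\ref{lm:stand-Wein} only compares two \emph{a priori} standard cylinders $D^{2n-2}\times[0,a]$ and $D^{2n-2}\times[0,1]$; it contains no mechanism for concluding that an arbitrary compact contact domain with a matching germ along one boundary face is itself a Weinstein cylinder. That would be a uniqueness-of-filling statement, which is nontrivial in dimension $>3$, and it is precisely the difficulty that the paper's flow-for-short-time-then-expand argument is engineered to avoid. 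If you want to salvage your approach, you would have to show that the Reeb flow starting from $\p_+C$ survives long enough inside $C$ to sweep out all but a small neighbourhood of $\p D$, but controlling this requires global information about the contact structure on $C$, and at that point you are effectively redoing the paper's expansion step in a harder way.

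A secondary point: your ``genuinely non-formal step'' — constructing $\phi\colon D^{2n-2}\to\p D\setminus(B\cap\p D)$ with $\phi^*\beta=\rho\,\lambda_\st$ — is indeed present in the paper (``by scaling the contact form along $\p D$ we can arrange that $D_+$ with the resulted form is the standard Liouville ball''), and you are right that it deserves more than the paper's one-line assertion. But it is a different step from the one above, and even granting it, your argument does not close.
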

  \begin{proof}
Consider a   $(2n-2)$-dimensional  open disc $B_-\subset\p D$ of radius $\eps$ centered at  $p_-$ with boundary $\p B_-$ transverse to the characteristic foliation $\ell_{\p D}$.  Denote $D_+:=\p D\setminus B_-$  By scaling the contact form along $\p D$   we can arrange that $D_+$ with the resulted form is the standard Liouville ball, and flowing for some time $\delta$ with the corresponding Reeb field $\Upsilon$, inwardly transverse to $D_+$, we  construct a  Weinstein cylinder $\Delta =\bigcup\limits_{t\in [0,\delta]}\Upsilon^t(D_+)\subset D$.  
  Let $D', D''\subset D$ be smaller standard  contact balls such that $D'\subset\Int  \Delta$, $D'\subset\Int D''$ and $D\setminus D''$ is in an $\eps$-neighborhood of $\p D$.
  Note that the space of contact embeddings of a standard contact ball into any connected contact manifold is connected. 
Hence, there exists a  contact diffeotopy $h_t:D\to D$, $t\in[0,1]$,  which is fixed on $\Op\p D$, and such that
$h_1(D')=D''$. Then the Weinstein cylinder  $h_1(\Delta)$
  has the required properties.
  \end{proof}

     \subsection{Weinstein cylinders in a good position}\label{sec:good-pos}

We say that Weinstein  cylinders $V_1,\dots, V_k$ are in a {\em good position}, see 
Fig.~\ref{fig:good_position}, if
  \begin{itemize}
  \item $V_j\cap V_{j+2}=\varnothing$ for  all $j=1,\dots, k-2$;
    \item $\p V_1\setminus V_2 \subset \Int\, \p_-V_1$;
  \item  for each $j=2,\dots, k$ we have $\p V_j\setminus  V_{j-1}\subset \Int\, \p_+V_j$;
\item  for each $j=2,\dots, k-q$ we have $\p V_j\setminus  V_{j+1}\subset \Int\, \p_+V_j$;
  \item $\p V_j$ and $\p V_{j+1}$,  $j=1,\dots,k-1$, intersect transversely along a codimension 2 contact submanifold  $S_j$, and the orientations  induced on $S_j$ from $\p V_{j+1}\setminus\Int V_j$ and from   $\p V_{j }\setminus\Int V_{j+1}$ are opposite.
  \end{itemize}
       
\begin{figure}[ht]
 
\includegraphics[scale=0.8]{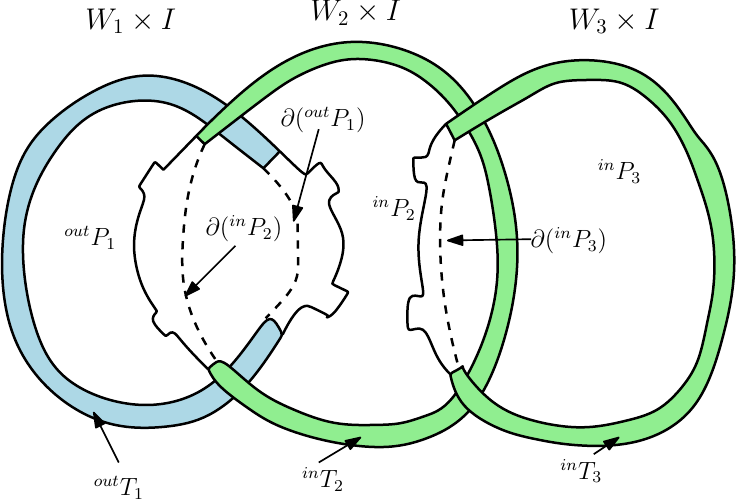}
 
\caption{Three Weinstein cylinders $V_1 = W_1 \times I,$ $V_2 = W_2 \times I,$ and $ V_3 = W_3 \times I$ in a good position.  The blue region  is 
$^{out}T_1$, while the green ones are
$^{out}T_2$  and $^{out}T_3$. }
\label{fig:good_position}
\end{figure}
  Note that if $V_1,\dots, V_k$ are in a good position then $\p\left(\bigcup\limits_1^k V_i\right)$ is a  piecewise smooth hypersurface with admissible corners, and it can be made Weinstein convex  by a $C^\infty$-small perturbation.

  \medskip
  
 Let $V\subset (M,\xi)$ be a domain diffeomorphic to a closed ball with a Weinstein convex boundary $\p V$. Let $S\subset \p V$ be a dividing set.  We say that $(V,S)$ {\em can be approximated by standard contact balls} if there exists a neighborhood $N_M$ of $S$ in $M$ such that for every $\sigma>0$ there is a (iso-)contact embedding $g:D\to (M,\xi)$ of the standard contact ball such that 
 \begin{itemize} 
 \item[-] $g(\p D)$ is contained in a $\sigma$-neighborhood of $\p V$;
 \item[-] $g(\p D)\cap N_M=\p V\cap N_M$.
 \end{itemize}
  \begin{prop}\label{prop:3-blocks} Let $V\subset (M,\xi)$ be a domain with a Weinstein convex boundary $\p V$ and  $S\subset \p V$  be a dividing set. Suppose that $(V,S)$ can be approximated by standard contact balls. Then for any $\eps>0$ there exist  three Weinstein cylinders $V_1, V_2, V_3\subset V$ in a good  position such that  a piecewise smooth hypersurface $\p(V_1\cup V_2\cup V_3)$ is $C^1$ $\eps$-close to $\p V$, see Fig.~\ref{fig:block_arrangement}.
 \end{prop}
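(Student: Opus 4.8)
\emph{Strategy.} Using that $\p V$ is Weinstein convex, I would first record the Weinstein structure attached to the dividing decomposition $\p V=\p V_{+}\cup_{S}\p V_{-}$, build two of the three cylinders as thin collars of (slightly shrunk copies of) $\p V_{\mp}$ pushed into $V$, and build the third, which sits over a neighbourhood of the dividing set $S$, out of the standard contact ball provided by the approximation hypothesis --- which near $S$ coincides with $\p V$ and therefore supplies an explicit local model there. One then glues the three pieces along codimension--two corners and checks the axioms of a good position together with $C^{1}$-$\eps$-closeness.

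\emph{Setup.} By Weinstein convexity the characteristic foliation $\ell_{\p V}$ carries a good Lyapunov function, and by Lemma \ref{lm:div-form} there is a defining function $f$ for $S$ so that $S$ is a codimension--two contact submanifold transverse to $\ell_{\p V}$ and, on $\p V_{\pm}=\{\pm f>0\}$, the forms $\lambda_{\pm}=(\beta/f)|_{\p V_{\pm}}$ are Liouville with Liouville fields directing $\ell_{\p V}$ (see the discussion preceding Lemma \ref{lm:Giroux-inverse}). Together with the good Lyapunov function --- whose critical points in $\p V_{\pm}$ satisfy the index bounds recalled above --- this makes $(\p V_{+},\lambda_{+})$ and $(\p V_{-},\lambda_{-})$ Weinstein domains with common convex boundary $S$. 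Fix a thin collar $\Sigma_{0}\cong S\times[-1,1]$ of $S$ inside $\p V$, chosen inside the neighbourhood $N$ furnished by the approximation hypothesis, and set $\Sigma_{\pm}=\p V_{\pm}\setminus\Int\Sigma_{0}$, slightly shrunk Weinstein domains with convex boundary a copy of $S$ pushed into $N$, so that $\p V=\Sigma_{-}\cup\Sigma_{0}\cup\Sigma_{+}$.

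\emph{The three cylinders.} Let $\Upsilon$ be a contact vector field transverse to $\p V$ pointing into $V$ (it exists since $\p V$ is convex, Lemma \ref{lm:W-convex}), with flow $\phi^{t}$. Near $\p V$ the contact structure is $\Ker(fdt+\beta)=\Ker(dt+\lambda_{\pm})$ over $\p V_{\pm}$, so for small $a>0$ the collars $V_{1}:=\bigcup_{t\in[0,a]}\phi^{t}(\Sigma_{-})$ and $V_{3}:=\bigcup_{t\in[0,a]}\phi^{t}(\Sigma_{+})$ are Weinstein cylinders, isocontact to $(\Sigma_{-},\lambda_{-})\times[0,a]$ and $(\Sigma_{+},\lambda_{+})\times[0,a]$; choosing how we label the $[0,a]$-factor we arrange that the exposed faces are $\p_{-}V_{1}=\Sigma_{-}$ and $\p_{+}V_{3}=\Sigma_{+}$, and since $\Sigma_{+}\cap\Sigma_{-}=\varnothing$ and $a$ is small, $V_{1}\cap V_{3}=\varnothing$; being honest collars, $\p V_{1},\p V_{3}$ are $C^{1}$-$O(a)$-close to $\p V$ along $\Sigma_{\mp}$. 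For the middle cylinder fix $\sigma\ll a,\eps$ and a contact embedding $g\colon D\hookrightarrow M$ of the standard ball with $g(\p D)$ in the $\sigma$-neighbourhood of $\p V$ and $g(\p D)\cap N=\p V\cap N$; we may take the contact vector field transverse to $g(\p D)$ to agree with that of $\p V$ on $N$, so the dividing set of $g(\p D)$ equals $S$ there. In the explicit standard model a neighbourhood of the dividing set of $D$ carries a Weinstein--cylinder structure; using Lemma \ref{lm:approx-ball-by-cyl} and the normalisation of Lemma \ref{lm:stand-Wein} one extracts from $g(D)$ a Weinstein cylinder $V_{2}$, pushed slightly into $V$, whose exposed face covers a neighbourhood of $S$ in $\p V$ together with thin tongues into $\Sigma_{\pm}$ (all inside $N$, hence inside $V$ after the $O(\sigma)$-push) and whose corner locus is a pair of parallel copies $S_{1},S_{2}$ of $S$ lying in $\Int V_{1}$ and $\Int V_{3}$. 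One matches the germ of $V_{2}$ along $S_{1}$ (resp. $S_{2}$) with that of $V_{1}$ (resp. $V_{3}$): because $\p V=g(\p D)$ on $N$, the collar description above and the standard--model description of $V_{2}$ agree near $S_{1},S_{2}$, so we may choose $\Sigma_{\mp}$ to end exactly along $S_{1},S_{2}$ and fit the pieces together.

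\emph{Verification and the obstacle.} It then remains to check the axioms of a good position: $V_{1}\cap V_{3}=\varnothing$; the placement of the $\p_{\pm}$-faces and of the exposed parts as above; and that $S_{1}=\p V_{1}\cap\p V_{2}$, $S_{2}=\p V_{2}\cap\p V_{3}$ are contact submanifolds met transversely by the adjacent boundaries, with opposite induced orientations --- this last point because the two local sides of $S$ in $\p V$ carry opposite coorientations relative to $\ell_{\p V}$, which propagates to $S_{1}$ and $S_{2}$. Then $\p(V_{1}\cup V_{2}\cup V_{3})$ is obtained from $\p V=\Sigma_{-}\cup\Sigma_{0}\cup\Sigma_{+}$ by $O(a,\sigma)$-small modifications near the corners $S_{1},S_{2}$, hence is $C^{1}$-$\eps$-close to $\p V$ once $a$ and $\sigma$ are small, and it may be made Weinstein convex by a $C^{\infty}$-small perturbation as noted right after the definition of good position. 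The main difficulty is the construction of the middle cylinder $V_{2}$ and its gluing to $V_{1}$ and $V_{3}$: realising $V_{2}$ as a bona fide Weinstein cylinder over the dividing--set region, with corner loci matching the two caps and the correct orientations, requires an explicit local model near $S$, and this is exactly what the hypothesis that $(V,S)$ is approximated by standard contact balls coinciding with $\p V$ near $S$ is there to provide.
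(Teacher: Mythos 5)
Your overall strategy --- two collar cylinders over the Weinstein halves $\p V_\pm$ of the dividing decomposition, plus a third cylinder produced from the approximating standard ball --- does follow the paper, and the construction of $V_1,V_3$ as thin $\Upsilon$-collars over slightly shrunk copies of $\p V_\mp$ matches the paper's $\wh W_\pm$. The gap is in the construction of the middle cylinder $V_2$, which you yourself flag as ``the main difficulty''; the geometry you propose for it diverges from the paper's and cannot be made to work as stated.

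You want $V_2$ to be a Weinstein cylinder $W\times[0,a]$ whose exposed face covers only a tubular neighbourhood of $S$ in $\p V$, with two corner loci $S_1,S_2$ being parallel copies of $S$. This runs into two problems. First, a tubular collar of $S$ inside a convex hypersurface does not naturally underlie a Weinstein domain: for such a $V_2$ the face $W$ would have to be $C^1$-close to the bicollar $S\times(-\eps,\eps)$, hence homotopy equivalent to $S$, which is a closed $(2n-3)$-manifold; but a $(2n-2)$-dimensional Weinstein domain has the homotopy type of a CW complex of dimension $\le n-1$, and $2n-3>n-1$ once $n>2$. So for $n>2$ there is no such Weinstein cylinder, and even for $n=2$ the natural structure near $S$ coming from the splitting $\alpha=f\,dt+\beta$ is of symplectization type (Liouville field pointing \emph{across} $S$), not a Weinstein domain with outward Liouville field. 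Second, Lemma~\ref{lm:approx-ball-by-cyl}, which you invoke to extract $V_2$ from $g(D)$, does not produce a thin cylinder over $S$: it produces a cylinder $\Delta\subset D$ filling almost \emph{all} of $D$, with $D\setminus\Delta$ contained in an $\eps$-neighbourhood of $\p D$ and a \emph{single} corner locus concentrated near one pole. There is no way to carve out of it a thin cylinder over $S$ while keeping the Weinstein-cylinder structure, because the cut faces would not be $\lambda$-level sets.

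The paper resolves this by inverting the relative sizes: the middle cylinder is the \emph{large} one, $V_2=f(\Delta)$, the cylinder filling almost the whole approximating ball, obtained from Lemma~\ref{lm:approx-ball-by-cyl}, with its own corner locus $f(\corner(\p\Delta))$ pushed inside $\Int\wh W_-$ where it plays no role. The ``good position'' corner loci $S_1,S_2$ are then the transverse intersections $\p\wh W_\pm\cap\p f(\Delta)$, which are \emph{not} corners of $\Delta$ itself. The shells $\wh W_\pm$ (your $V_1,V_3$) are the two outer cylinders, and $f(\Delta)$ sits between them. The additional care in the paper --- the isotopy $g_s$ built from the bump function $\theta$, extended via Lemma~\ref{lm:making-standard} to a global contact diffeotopy $G_s$ --- serves to push the pole of the standard sphere into $\Int\wh W_-$ while keeping the sphere standard, which is what makes Lemma~\ref{lm:approx-ball-by-cyl} applicable with the corner in the right place; none of this appears in your sketch, and it cannot be replaced by the thin-cylinder-over-$S$ picture.
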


\begin{figure}[ht]
 
\includegraphics[scale=0.8]{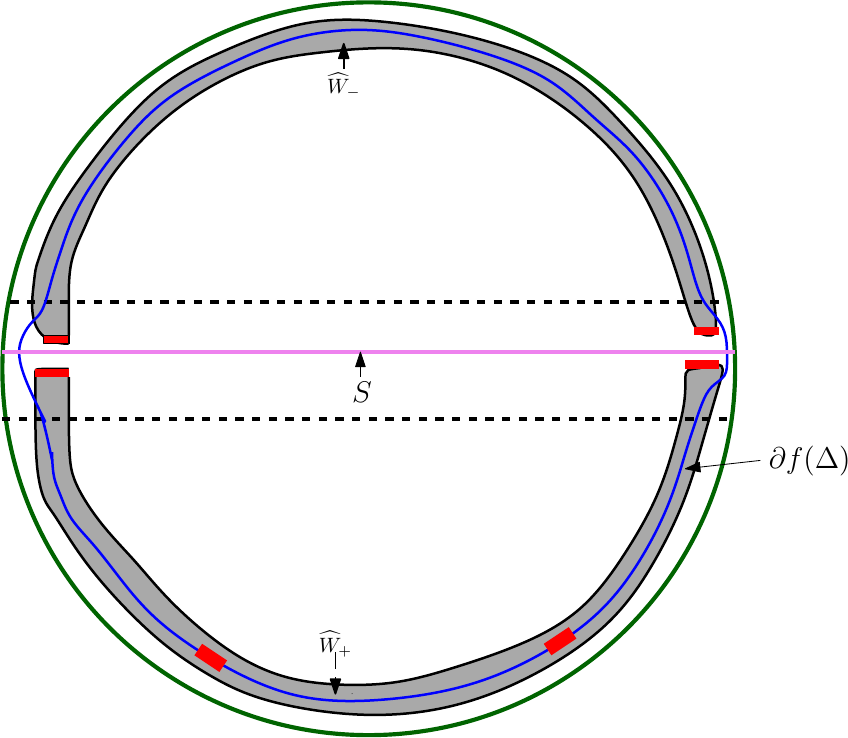}
 
 \caption{Three Weinstein cylinders   in good position approximating a  domain which admits an approximation by standard balls.} 
 
\label{fig:block_arrangement}
\end{figure}

 \begin{proof}
 Choose an inwardly pointing  transverse  to $\Sigma:=\p V$    contact vector field and  consider an interior collar $C:=\Sigma\times[0,1]\subset V$ such that $\Sigma\times 0=\Sigma$,  $x\times[0,1], x\in\Sigma$ are trajectories of $\Upsilon$ and $\Upsilon^t(\Sigma)=\Sigma\times t$. By scaling $X$ we can assume that $C$ is contained in  an $\eps$-small neighborhood of $\Sigma$.    
Choose an $\frac\eps4$-approximation of $(X^{\frac12}(V), X^{\frac12}(S))$ by a standard ball $ h(D)$.  By assumption the standard sphere $\wt\Sigma:=h(\p D)$ coincides with $\Sigma_{\frac12}:=\Sigma\times\frac12$ along a tubular neighborhood $N=S\times[-1,1]\subset\Sigma_{\frac12}$ contained in
a tubular neighborhood $N_M$  of $\Sigma_{\frac12}$ in $M.$ We can assume that $S\times t\subset N$ is transverse to the characteristic foliation on $\Sigma_{\frac12}$ for all $t\in[-1,1]$. Take a $C^\infty$-function
 $\theta:[0,1]\to\R$ which is equal to $1 -u $  for  $u\in [ \frac14,\frac34]$, equal to $0$ near $1$  and has non-positive derivative everywhere.
 Let $\Theta:\Sigma\to\R$  be a function supported in $N$ and defined
on $N$ by the formula  $\Theta(x,u)=\theta(|u|),\; (x,u)\in N=S\times[-1,1]. $
For   $\sigma\in(0,\frac12)$ define an isotopy $g_s:\Sigma\to V$:
$$g_s(w)=\Upsilon^{-s\Theta(u)}(u),\; u\in\Sigma, s\in[0,\sigma].$$

If $\sigma$ is chosen sufficiently small then the spheres  $g_s(\wt\Sigma)$, $s\in[0,\sigma]$, 
are almost standard, and hence, by Lemma \ref{lm:making-standard} a $C^0$-small adjustment near  one of the poles  makes them    standard. Assuming this is done we can extend  the isotopy to  a global compactly supported contact diffeotopy $G_s:V\to  V, s\in[0,\sigma].$

The dividing set $S\subset \Sigma_{\frac12}$ divides  $\Sigma$ into  domains $\Sigma_{\frac12,\pm}$ with the common boundary $S$. Denote
\begin{align*}
&\wh\Sigma_+:=\Sigma_{\frac12,+}\setminus \left(S\times\big(-\frac12,0\big]\right);\\
&\wh\Sigma_-:=\Sigma_{\frac12,-}\setminus \left(S\times\big[0,\frac12\big)\right).
\end{align*}
Consider two  Weinstein cylinders: $\wh W_\pm=\wh\Sigma_\pm\times[-\frac\sigma2,\frac\sigma2]=\bigcup\limits_{|s|\leq\frac\sigma2}\Upsilon^s(\wh\Sigma_\pm),$  see Fig. \ref{fig:block_arrangement}. We have
$\p_\pm \wh W_+=\Upsilon^{\mp\frac\sigma2}(\wh\Sigma_+)$ and
$\p_\pm \wh W_-=\Upsilon^{\pm\frac\sigma2}(\wh\Sigma_-)$. 
Consider the standard ball $\wh D:=G_\sigma(h(D))\subset V$.  The standard sphere $\p\wh D$ transversely intersects $\p \wh W_+$ along $\Upsilon^{\frac\sigma2}(S\times(-\frac12))\in\p_+\wh W_+$, and 
transversely intersects $\p \wh W_-$  along $\Upsilon^{-\frac\sigma2}(S\times\frac12)\in\p_-\wh W_-$. Note that the South pole of the sphere $\p \wh D$ is contained in $\Int\wh W_-$. Hence, using Lemma \ref{lm:approx-ball-by-cyl} we can  $C^1$-approximate   $\wh D$ by a contact embedding   $f:\Delta=D\times I\to \wh D$ such that $f(\corner(\p\Delta))$ is contained in $\Int\wh W_-$. It then follows that the Weinstein cylinders $\wh W_+,f(\Delta)$ and $\wh W_-$ are in good position, while the  boundary  $\wh W_+\cup f(\Delta)\cup\wh W_-$ \;$C^1$-approximates $\Sigma$.
    \end{proof}
 
     \subsection{Quasi-plugs}\label{sec:quasi-plug}

     Let $V\subset (M,\xi)$ be a domain whose boundary $\p U$  is a hypersurface with admissible corners. Denote  $Q:=V\times[0,a]$ and let $y$ be the coordinate corresponding to the second factor.
    Given  sufficiently small   $\sigma>0 $ and  a vector field $Y$ on  $Q$,  we call $(Q,Y)$ a {\em    $ \sigma  $-quasi-plug}   if the following conditions are satisfied:
\begin{itemize}
\item[QP1.] $Y$ coincides with $\frac{\p}{\p y}$ on $ \Op\p Q$;
\item[QP2.] $Y$ admits a Morse Lyapunov function  which is equal to $y$ on $\Op\p Q$;  
\item[QP3.] for  any point $p\in V$   with $\dist (p,\p V)>\sigma$ the  trajectory of $Y$    through     $ p\times 0$ converges  to a critical point of $Y$;  
 \item[QP4.] given any  point $p\in V$ with  $\dist (p,\p V)\leq\sigma$, there exists a point $p'\in\p V$ with $\dist(p,p') < \sigma$ and a positive $u(p)$ such that   the  trajectory of $Y$    through  a  point $ p\times 0 $  either converges  to a critical point of $Y$, or exit $Q$ at a point $ p''\times a $   with  $\dist (p'', L^{u(p)}p') <\sigma,$ where $L$ denotes a characteristic vectorfield for $\p V.$  
\end{itemize}
 We will use quasi-plugs in combination with the following simple observation.
\begin{lemma}
\label{lm:remove-quasi} Suppose that the characteristic foliation on $\p V$ is $\sigma$-short. Then any $ \sigma $-quasi-plug is a $3\sigma$-plug. 
\end{lemma}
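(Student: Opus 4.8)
The plan is to unwind the definitions of $\sigma$-quasi-plug (QP1--QP4) and of $\sigma$-plug (P1--P4) from Section~\ref{sec:plug}, and to show that each of the four defining properties of a $3\sigma$-plug follows, using the extra hypothesis that the characteristic foliation $\ell_{\p V}$ on $\p V$ is $\sigma$-short, i.e. every trajectory arc of a vector field directing $\ell_{\p V}$ has diameter $<\sigma$.

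\begin{proof}
Let $(Q=V\times[0,a],Y)$ be a $\sigma$-quasi-plug and suppose the characteristic foliation on $\p V$ is $\sigma$-short. Properties P1 and P2 of a $\sigma$-plug are literally QP1 and QP2 (with ``Morse--Smale'' following from genericity, as in the standing conventions), so they hold for the $3\sigma$-plug as well, since enlarging the parameter from $\sigma$ to $3\sigma$ only weakens P1--P2. For P3, let $p\in V$ with $\dist(p,\p V)>3\sigma$; then in particular $\dist(p,\p V)>\sigma$, so QP3 gives that the trajectory of $Y$ through $p\times0$ converges to a critical point of $Y$, which is exactly P3 for the parameter $3\sigma$.

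The substantive point is P4. Let $p\in V$ be arbitrary and suppose the trajectory of $Y$ through $p\times 0$ does not converge to a critical point of $Y$. If $\dist(p,\p V)>\sigma$ this contradicts QP3, so $\dist(p,\p V)\le\sigma$; then QP4 applies and produces a point $p'\in\p V$ and a time $u=u(p)>0$ so that the trajectory exits $Q$ at a point $p''\times a$ with $\dist(p'',Y^{u}p')<\sigma$. Here $Y^{u}p'$ is the time-$u$ image of $p'$ under the flow of $Y$; since $Y$ on $\Op\p Q$ equals $\tfrac{\p}{\p y}$ and $\p V$ is (a piece of) $\p Q$, the relevant flow on $\p V$ is the one directing the characteristic foliation $\ell_{\p V}$, which is $\sigma$-short. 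Hence the arc of this trajectory from $p'$ to $Y^{u}p'$ has diameter $<\sigma$, so $\dist(Y^{u}p',p')<\sigma$. Combining with $\dist(p,p')\le\dist(p,\p V)\le\sigma$ and $\dist(p'',Y^{u}p')<\sigma$ by the triangle inequality we get
\[
\dist(p'',p)\le \dist(p'',Y^{u}p')+\dist(Y^{u}p',p')+\dist(p',p)<\sigma+\sigma+\sigma=3\sigma,
\]
which is exactly the displacement bound required in P4 for the parameter $3\sigma$. Thus $(Q,Y)$ is a $3\sigma$-plug.
\end{proof}

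I expect the only genuine subtlety to be bookkeeping: making sure that the ``intermediate'' point appearing in QP4 is measured along a $\ell_{\p V}$-trajectory of bounded diameter (so that $\sigma$-shortness of $\ell_{\p V}$ is exactly what is needed), and then assembling the three $\sigma$-sized contributions into the single $3\sigma$ estimate of P4; everything else is a direct translation of definitions, with the passage from parameter $\sigma$ to $3\sigma$ only loosening P1--P3.
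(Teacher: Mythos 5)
The paper itself does not prove this lemma --- it is stated after the phrase ``the following simple observation'' and left without a proof --- so there is no paper argument to compare against. Your proof captures the intended reasoning: the only nontrivial case is P4, which you handle by a triangle inequality assembling three $\sigma$-sized contributions. That is indeed the content of the observation.

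Two remarks on details, both fixable but worth noting. First, your justification for treating $Y^{u(p)}p'$ as the characteristic-foliation flow on $\p V$ is off: $Y=\tfrac{\p}{\p y}$ near $\p Q$ has trajectories that only translate in the $y$-direction, which is certainly not the characteristic foliation of $\p V$. The actual reason is that in condition QP4 the notation is overloaded --- $Y^{u(p)}p'$ is meant to denote the flow of a vector field \emph{on $\p V$} directing $\ell_{\p V}$, as spelled out explicitly in the plan-of-proof paragraph of Section~\ref{sec:plugs} (``there exist points $p_0',p_1'\in\p\wh W$ such that $\dist(p_0,p_0'),\dist(p_1,p_1')<\sigma$ and $p_1'$ belongs to the forward trajectory of $p_0'$ for a vector field directing the characteristic foliation $\ell_{\p\wh V}$''). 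You reached the right interpretation, but the stated justification is wrong. Second, the inequality ``$\dist(p,p')\le\dist(p,\p V)$'' is backwards: since $p'\in\p V$, one always has $\dist(p,\p V)\le\dist(p,p')$, with equality only when $p'$ is a nearest boundary point. The bound $\dist(p,p')<\sigma$ you need is an implicit (or, if one is strict about it, missing) ingredient of QP4, again made explicit only in the plan-of-proof paragraph. With that reading the triangle inequality gives $\dist(p,p'')<3\sigma$ exactly as you wrote. Finally, you gloss over the fact that P2 demands the Morse--Smale condition while QP2 does not, dismissing it as a genericity convention; that is consistent with the level of rigor in the paper here, but it is a real mismatch between the two definitions and deserves at least the word of acknowledgment you gave it.
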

    \begin{lemma}\label{lm:good-positions-cor}
    Let $V_1,\dots, V_k$, $k\geq 2$  be $k$ Weinstein cylinders in a good position.   
  Denote $\wh V=\bigcup\limits_1^k V_j$, $\wh Q:=\wh V\times[0,k]$,
  $ Q_j:=V_j \times[j-1,j] , \; j=1,\dots,k$.  Denote by $Y_j$   the  vector field $X_1$ constructed in Proposition \ref{prop:prelim-plug} and implanted into $ Q_j$, $ j=1,\dots, k$.  Let  $Y$ be  the resulting  field on $\wh Q$.  Then $(\wh Q,Y)$ is a  $C \sigma $-quasi-plug for some constant $C$.                                                                                                                                                                                                                                                                                                                                                                                                                                                                                                            
     \end{lemma}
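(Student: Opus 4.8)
The plan is to track a trajectory of the implanted field $Y$ as it passes successively through the blocks $Q_1,\dots,Q_k$, using at each stage the dynamical control provided by Proposition \ref{prop:prelim-plug} for a single preliminary plug, together with the good-position hypothesis that controls how the blocks are glued. First I would verify the easy structural properties: (QP1) is immediate since each $Y_j=X_1$ equals $\frac{\p}{\p y}$ near $\p Q_j$ by item P1/P-type normalization in Proposition \ref{prop:prelim-plug}, and the boundary faces that are not glued to a neighboring block lie in the part of $\p\wh V$ where the field is $\frac{\p}{\p y}$; at the interfaces $S_j\times[\cdot]$ the two normalizations agree because both cylinders carry $\frac{\p}{\p y}$ near their boundary, so $Y$ is well-defined and smooth on $\wh Q$. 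For (QP2), each block admits a good Lyapunov function equal to $y$ near $\p Q_j$ (Proposition \ref{prop:prelim-plug}(5), $t=0$ case), and since the blocks are stacked in the $y=$ (block-index) direction one concatenates these, shifting each by the block index and rescaling near the gluing regions exactly as in the proof of Lemma \ref{lm:Lyapunov-criterion-rel}; the Morse-Smale/no-retrograde-connection property needed to invoke Corollary \ref{cor:good-Lyapunov} for the concatenation follows blockwise plus the observation that a trajectory entering a later block from an earlier one cannot return (it only moves forward in the block-index direction), so (L1)+(L2) and absence of retrograde connections hold for $Y$ on $\wh Q$.

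The substantive content is (QP3) and (QP4). For (QP3), take $p\in\wh V$ with $\dist(p,\p\wh V)>\sigma$. Since the $V_j$ cover $\wh V$ and are in good position, $p$ lies in some $V_{j_0}$ at distance $>\sigma$ (up to a uniform constant $C$ absorbing the overlap geometry) from $\p V_{j_0}$, hence in the ``interior'' region $^{in}P$ of the block $Q_{j_0}$ after the identification of coordinates. By Proposition \ref{prop:prelim-plug}(1a), once the trajectory of $Y$ enters $^{in}P\subset Q_{j_0}$ it converges to a zero of $X_1$ and never leaves that block. The only subtlety is that the trajectory must reach $Q_{j_0}$ starting from $\wh V\times 0$: it starts in $Q_1$, and I would argue inductively that at each interface it either has already been trapped in an earlier block, or it crosses into the next block within a $C\sigma$-distortion of its $\p\wh V$-nearest point (this is the content of Proposition \ref{prop:prelim-plug}(1b): a trajectory entering the ``trivial transverse'' region $^{in}T$ exits through $^{out}V$ with its $W$-component moved only by the Liouville flow and its $z$-component staying near the boundary) — so by the time it reaches $Q_{j_0}$ it still enters well inside $^{in}P$ of that block, provided $\sigma$ was taken small relative to the (finitely many) width parameters of the good position. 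For (QP4), a point $p$ with $\dist(p,\p\wh V)\le\sigma$ lies, at every block it visits, in the ``transverse collar'' region $^{in}T$ (again up to the constant $C$), so by Proposition \ref{prop:prelim-plug}(1b) and (4) the trajectory either gets trapped on one of the invariant sets $\wh\Gamma_\pm$ (hence converges to a critical point) or passes through every block with its $W$-component following the Liouville flow $Z^{-a}$ of $\p V_j$; composing these across $j=1,\dots,k$ gives an exit point $p''\times k$ whose distance from $Y^{u(p)}(p')$, for a suitable $p'\in\p\wh V$ and time $u(p)>0$, is controlled by the sum of the $k$ individual $\sigma$-distortions, i.e.\ by $C\sigma$ with $C$ depending only on $k$ and the good-position data.

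The main obstacle I anticipate is the bookkeeping at the interfaces $S_j$ between consecutive Weinstein cylinders: one must check that the ``near-boundary'' region $^{out}T$ of block $Q_j$ (where the trajectory of $X_1$ exits with its $z$-coordinate pushed to $[0,\eps_1)$ or $(1-\tfrac\eps2,1]$) maps, under the identification of $\p_+ V_j$ with a subset of $\p_- V_{j+1}$ coming from the good position and the collar structure of the next preliminary plug, into the region $^{in}T$ of $Q_{j+1}$ where Proposition \ref{prop:prelim-plug}(1b) again applies — and similarly that $^{out}P$ of $Q_j$ is not re-entered. This is exactly where the good-position axioms ($\p V_j\setminus V_{j\pm1}\subset\Int\p_\pm V_j$, the opposite-orientation condition on $S_j$, and $V_j\cap V_{j+2}=\varnothing$) are used: they guarantee that the forward flow cannot leak back into an earlier block and that the ``active'' face of $Q_j$ is glued to the ``incoming'' face of $Q_{j+1}$. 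Once this interface compatibility is pinned down, the constant $C$ is simply the product/sum of the finitely many Lipschitz constants of the gluing maps and the Liouville flows, and Lemma \ref{lm:remove-quasi} then upgrades the $C\sigma$-quasi-plug to an honest plug whenever $\ell_{\p\wh V}$ is short.
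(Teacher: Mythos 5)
Your overall strategy matches the paper's: track a trajectory block by block, use Proposition \ref{prop:prelim-plug} for the single-block dynamics, and lean on the good-position axioms to control the interfaces. You also correctly flag the interface bookkeeping as the hard part. However, your QP3 argument has a genuine gap that the paper's proof carefully avoids.

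The problem is your phrase ``crosses into the next block within a $C\sigma$-distortion of its $\p\wh V$-nearest point.'' A non-blocked trajectory that enters a block through the region $^{in}T$ exits with its $W_j$-component moved by $Z^{-a}$ for a time $a>0$ that is \emph{not} bounded in terms of $\sigma$ (Proposition \ref{prop:prelim-plug}(1b) gives no bound on $a$). So the displacement between entry and exit is not $O(\sigma)$; it can be large. In particular, after passing through the blocks $Q_1,\dots,Q_{j_0-1}$ the trajectory has no reason to arrive in $Q_{j_0}$ near where $p$ sat at $y=0$, and the conclusion ``it still enters well inside $^{in}P$ of $Q_{j_0}$'' does not follow from smallness of $\sigma$ alone. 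The paper sidesteps this by not trying to control the size of the displacement at all. Instead it proves, by induction on the number of blocks, the sharper invariant that every non-blocked trajectory entering $\wh V_{\le i}\times 0$ exits level $y=i$ through the specific set $^{out}U_1\cup\bigcup_2^i{}^{out}T_j$, and then uses the good-position inclusions (at the base case $i=2$, $^{out}V_1\setminus{}^{out}T_1\subset{}^{in}P_2$, and at the inductive step, $V_i\cap\bigcup_2^{i-1}{}^{out}T_j\subset{}^{out}T_{i-1}\cap({}^{in}T_i\cup{}^{in}P_i)$) to show that whatever is not yet blocked enters the next block either in $^{in}P$ (and is then blocked) or in the narrow collar region $^{in}T$ (and is pushed further along the characteristic direction of $\p\wh V$). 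This set-containment invariant is precisely what replaces your metric $C\sigma$-estimate, and it is also what makes QP4 come out right: the composition of the Liouville pushes across the $^{in}T$/$^{out}T$ chains is exactly the forward motion along $\ell_{\p\wh V}$, with a $\sigma$-error at each of the finitely many interfaces.

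A second, smaller omission: your phrase ``its $W$-component following the Liouville flow $Z^{-a}$ of $\p V_j$'' glosses over the fact that the direction of the relevant Liouville push flips between the first cylinder (positive direction, exit through $^{out}U_1$) and the later ones (negative direction, exit through $^{out}T_j$), and that both of these are the \emph{forward} direction of the characteristic foliation $\ell_{\p\wh V}$ once one checks the orientations coming from the opposite-orientation condition on $S_j$ in the good-position definition. The paper states this explicitly in the $i=2$ base case; it is essential for QP4, since without the orientation check one would not know that the exit point is reached by the \emph{forward} flow $Y^{u(p)}$ from $p'$, as QP4 demands.

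So: your decomposition and your use of Proposition \ref{prop:prelim-plug} are right, but the inductive invariant must be formulated as a containment of exit regions rather than as a $C\sigma$ metric bound, and the orientation consistency of the Liouville pushes with $\ell_{\p\wh V}$ must be verified. With those two fixes your plan becomes the paper's proof.
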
 
     \begin{proof}  Recall the notations ${}^{in}P, {}^{out}P, {}^{in}T, {}^{out}T, {}^{in}S, {}^{out}S$ introduced in 
     Proposition \ref{prop:prelim-plug} for a plug $Q=V\times[0,1]$. As defined in \ref{prop:prelim-plug}, we have
      ${}^{in}P,  {}^{in}T,  {}^{in}S \subset V\times 0$ and   ${}^{out}P,  {}^{out}T,  {}^{out}S \subset V\times 1.$
      Let us introduce analogous domains
  ${}^{in}P_j,    {}^{in} T_j ,      {}^{in} S_j ,    {}^{out}P_j,  {}^{out} T_j,  {}^{out}S_j$ for plugs $Q_j= V_j\times[j-1,j]$, $j=1,\dots, k$.
     It will be, however, more convenient for us to view these domains as subdomains of $V_j$.  Denote  $\wh V_{\leq i}=\bigcup\limits_1^{i}V_j$,
     $\wh Q_{\leq i}:= \wh V_{\leq i}\times[0,i]$. 
         We will prove the  following more precise  statement by induction in   $i$: 
 
   \smallskip  {\em   $(\wh Q_{\leq i}, Y|_{ \wh Q_{\leq i}})$ is a $C\sigma$-quasi-plug for any $i\geq 2$. Moreover, any non-blocked (i.e. not converging  in any direction to a zero of $Y$)   trajectory $\gamma$  enters through $$\left({}^{in}S_1\cup( { }^{in}T_2\setminus V_1)\cup\dots\cup(
   {}^{in}T_i\setminus V_{i-1})\right)\times 0$$ and exits through  $$\left(({}^{out}T_1\setminus V_2)\cup( { }^{out}S_2\setminus (V_1\cup V_3))\cup( { }^{out}S_3\setminus (V_2\cup V_4))\cup\dots\cup({}^{out}S_i\setminus V_{i-1})\right)\times i.$$
  }
   
    Suppose  $i=2$.  
   For a sufficiently small  $\sigma$  we have   $ V_1\setminus ({}^{out}T_1\cup{}^{out}P_1) \subset  {}^{in}P_2 $.
   Hence,  a non-blocked trajectory which enters $V_1\times 0$ either  intersects   ${}^{out}T^1\times 1$, or enters $Q_2$  through $^{in}P_2$, and hence, is  blocked inside $Q_2$. If $\gamma\cap  (V_{\leq 2}\times 1)=\{(x,1)\subset {}^{out}T^1\times 1 $, then $\gamma\cap  (V_{\leq 2}\times 0)=\{(x',0)\}$, where $x'\in{}^{in}S_1$. Moreover, $x$ is moved from $x'$ in the positive direction of the Liouville flow of $W_1$ (or equivalently, in the positive direction of the characteristic foliation on $\p V_1$),   possibly with a $\sigma$-error. If $x\in  {}^{out}T^1\setminus V_2$ then $\gamma$ exists $Q_2$ at a point $(x,2)$. Otherwise $x\in {}^{in}T_2\cup{}^{in}P_2$. Hence, if the trajectory $\gamma$ is not blocked in $Q_2$ it exists at a point $(x'',2)\in \left({}^{out}S_2\right)\times 2$, where $x''$ is moved from $x$ in the negative direction of the Liouville flow of $W_2$ (or equivalently, in the {\em positive} direction of the characteristic foliation on $\p V_2$),   possibly with a $\sigma$-error.

    Suppose now that the statement holds for  $(\wh Q_{\leq i-1}, Y|_{\wh Q_{\leq i-1}})$.   By the induction assumption  a   trajectory $\gamma$ which  enters through $\wh V_{\leq i-1}\times 0 $ and is not blocked by $\wh Q_{\leq i-1}$  enters at a point $(x',0)$, $x'\in\left({}^{in}S_1\cup( { }^{in}T_2\setminus V_1)\cup\dots\cup(
   {}^{in}T_{i-1}\setminus V_{i-2})\right)$  and exits the plug $\wh Q_{\leq i-1}$  at a point
   $(x, i-1)$, $x\in  ({}^{out}T_1\setminus V_2)\cup( { }^{out}S_2\setminus (V_1\cup V_3))\cup( { }^{out}S_3\setminus (V_2\cup V_4))\cup\dots\cup({}^{out}S_{i-1}\setminus V_{i-2})$. Moreover, up to a $C\sigma$-error, $x$ moved from $x'$ in the positive direction of the characteristic foliation of $\p\wh V_{\leq i-1}$.  If $x\notin  V_i$ then $\gamma$ exits $Q_i$ at a point $(x,i)$. Otherwise $x\in {}^{in}T_i\cup{}^{in}P_i$. Hence, if the trajectory $\gamma$ is not blocked in $Q_i$ it exits at a point $(x'',i)\in \left({}^{out}S_i\right)\times i$, where $x''$ is moved from $x$ in the negative direction of the Liouville flow of $W_i$ (or equivalently, in the {\em positive} direction of the characteristic foliation on $\p V_i$),   possibly with a $\sigma$-error. 
     \end{proof}
     \begin{remark} For the purposes of this paper we will use Lemma \ref{lm:good-positions-cor}  in Section \ref{sec:proof-main} only for the case $k=3$ for  3 Weinstein cylinders in good position constructed in Proposition \ref{prop:3-blocks}.
     \end{remark}

 \section{From a quasi-plug to  a $\sigma$-plug}\label{sec:main_plug}
 As we already mentioned above, we prove Proposition \ref{prop:main-plug} by induction on dimension.
 Lemma \ref{lm:2d-plug} (together with  the height  reduction argument from Section \ref{sec:plug-height}, see also Remark \ref{rem:2d})  serves as  the base of the induction for  $2n=2$.
 Suppose that Proposition \ref{prop:main-plug} is already proven in dimension $<2n$.

\subsection{Standard and almost standard spheres in a contact manifold}
 Recall that we defined the standard contact $(2n+1)$-ball as  the upper hemisphere $D:=S^{2n+1}_+=\{y_{n+1}\geq 0\}$ in the unit  sphere  $ S^{2n+1}=\left\{\;\sum\limits_1^{n+1}x_j^2+y_j^2=1\right\}\subset \R^{2n+2}$ endowed with the standard contact structure $\xi_\st=\{ \sum\limits_1^{n+1} x_jdy_j-y_jdx_j =0\}.$

We call a germ of a contact structure  on $S^{2n}=\p D$ {\em standard} if it is contactomorphic to the germ  of $\xi_\st$ along $\p D$ and coincides with $\xi_\st$ on a neighborhood of poles $$p_\pm:=\{x_{n+1}=\pm 1,\; x_j=0\;\hbox{for}\;j=1,\dots, n,\; y_j=0\;\hbox{for}\; j=1,\dots, n+1\}.$$

 A germ $\xi$ along a sphere is called  {\em almost standard} if it coincides with $\xi_\st$ on a neighborhood of poles $p_\pm$ and its is   characteristic foliation on $S^{2n}$   admits a Lyapunov  Morse function with  exactly 2 critical points.  Recall (see Proposition \ref{prop:AG}) that a germ of a contact structure along a hypersurface $\Sigma$ is determined  by its restriction to $\Sigma$  up to a diffeomorphism fixed on $\Sigma$. Hence, we will not distinguish below between the germs and their restrictions.
 
 Recall that any linear (conformal) symplectic structure $\om$ on a vector space $E$ defines a canonical contact structure $\zeta_\om$
  on its sphere at infinity $S(E)$, i.e. the space of oriented lines through the origin.  The group $Sp(E,\om)$ of linear symplectic transformations acts by linear projective contactomorphisms on $(S(E),\zeta_\om)$  and this representation is faithful. Hence, we can view  $Sp(E,\om)$ as a subgroup of the group of contactomorphisms of $(S(E),\zeta_\om)$.
 In particular, given a hypersurface $\Sigma$ in a contact manifold $(M,\xi),$  the contact structure $\xi$ defines a conformal symplectic structure on $T_p\Sigma$ for each singularity $p$ of the characteristic foliation $\ell_{\Sigma,\xi}$. We denote the corresponding contact structure on the sphere ( $S_p=S(T_p\Sigma)$ at infinity)  by $\zeta_{p,\xi}$.
 
 For the  contact structure $\xi_\st $ along $S^{2n}=\p D$ the holonomy along the leaves of the characteristic foliation $\ell_{\xi_\st}$ allows us to identify the contact spheres at infinity $(S_\pm,\zeta_\pm):=\left(S \left(T_{p_\pm}(S^{2n})\right),\zeta_{p_\pm}\right).$ In turn, the contact sphere $(S_+,\zeta_+)$ can be canonically identified with the  standard contact sphere $(S^{2n-1},\xi_\st)$
 Hence, for any almost standard germ $\xi$ along $S^{2n}$ the holonomy along the leaves
 of $\ell_\xi$ can be viewed as a contactomorphism $h_\xi:(S^{2n-1},\xi_\st)\to (S^{2n-1},\xi_\st)$. 
 We will call $h_\xi$ the {\em clutching contactomorphism} of an almost standard germ $\xi$.
 Let $AlSt$  be the space of almost standard contact germs on $S^{2n}$  and $\D$   the group of contactomorphisms  of  the standard contact sphere $(S^{2n-1},\xi_\st)$.  The image of the map $\pi: AlSt\to\D$ is the subgroup $\D_0\subset\D$ which consists of  contactomorphisms which are pseudo-isotopic  to the identity.  Denote by $St$ the subspace of $AlSt$ which consists of standard contact germs.  For  $\xi\in St$  we have $h_\xi\in \D_1:=Sp(\R^{2n},\om_\st)\subset \D_0$. 
  The following lemma is straightforward.
\begin{lemma}\label{lm:al-st-holonomy} The projection  $\pi:AlSt\to\D_0$  is a Serre fibration. If $h_\xi\in Sp(\R^{2n},\om_\st)$ then $\xi$ is standard, i.e. $St=\pi^{-1}(\D_1)$.  
\end{lemma}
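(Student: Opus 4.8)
The plan is to establish the two assertions in turn. For the fibration statement I would show that $\pi\colon AlSt\to\D_0$ is a micro-fibration with contractible fibres and then conclude that it is a Serre fibration exactly as in the proof of Lemma~\ref{lm:Lyapunov-criterion-param}, by appealing to \cite{Gro86,We05}.

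To see that $\pi$ is a micro-fibration, suppose we are given $\xi_0\in AlSt$ lying over $h_0=\pi(\xi_0)$ together with a short path $h_s$, $s\in[0,\eps]$, in $\D_0$ starting at $h_0$. Since $\xi_0$ is almost standard it coincides with $\xi_\st$ near the poles and its characteristic foliation on $S^{2n}$ has exactly two zeroes, which are therefore precisely $p_\pm$; thus $S^{2n}\setminus\{p_\pm\}$ is foliated by characteristic arcs running from $p_+$ to $p_-$ and contains an equatorial sphere $E\cong S^{2n-1}$ transverse to $\ell_{\xi_0}$ and disjoint from $\Op\{p_\pm\}$. I would then modify the contact germ only in a neighbourhood of $E$ so that the holonomy of the characteristic foliation across this neighbourhood is changed by the correction $h_sh_0^{-1}$, which is $C^{\infty}$-close to the identity for small $s$; this standard construction leaves the poles and the number of singularities untouched, so $\xi_s$ stays in $AlSt$, $\pi(\xi_s)=h_s$, and $\xi_0$ is recovered at $s=0$. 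Carrying this out family-wise yields the required local lifting of homotopies. Contractibility of the fibre $\pi^{-1}(h)$ I would deduce from a Gray--Moser argument: two almost standard germs with holonomy $h$ agree with $\xi_\st$ near the poles and their characteristic foliations on the waist $S^{2n}\setminus\Op\{p_\pm\}$ are conjugate, so they can be joined within $\pi^{-1}(h)$, and the space of such interpolations is convex up to contractible choices; alternatively one contracts $\pi^{-1}(h)$ onto a linear model via the Liouville scaling near the source and the sink.

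For the second assertion I would argue both inclusions. If $\xi\in St$, write $\xi=g^{*}\xi_\st$ for a diffeomorphism $g$ which near each pole is a local contactomorphism of $\xi_\st$. By naturality of the clutching construction, $h_\xi$ equals $h_{\xi_\st}$ conjugated by the linearizations $d_{p_\pm}g$, and these lie in $Sp(\R^{2n},\om_\st)$ because $g$ preserves $\xi$ and hence the conformal symplectic structures at the zeroes of $\ell_\xi$; as $h_{\xi_\st}\in Sp(\R^{2n},\om_\st)$ this gives $h_\xi\in\D_1$, i.e.\ $St\subset\pi^{-1}(\D_1)$. Conversely, assume $h_\xi\in\D_1=Sp(\R^{2n},\om_\st)$. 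Using that every linear conformal symplectic map is the $1$-jet at a pole of some local contactomorphism of $\xi_\st$, I would first produce a genuinely standard germ $\xi'\in St$ with $h_{\xi'}=h_\xi$ by applying to $\xi_\st$ a contactomorphism having the appropriate symplectic linearization at $p_+$; in particular $\pi(St)=\D_1$. Since $\pi$ is a Serre fibration with contractible, hence path-connected, fibres, $\xi$ and $\xi'$ lie in one path component of $\pi^{-1}(h_\xi)$, so they are joined by a path of contact germs along $S^{2n}$ which is constant near the poles; Gray stability on the compact waist region then yields a contactomorphism carrying $\xi$ to $\xi'$. As $\xi'$ is contactomorphic to $\xi_\st$ so is $\xi$, and since $\xi$ already coincides with $\xi_\st$ near the poles it is a standard germ, i.e.\ $\pi^{-1}(\D_1)\subset St$.

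The step I expect to be the main obstacle is the holonomy-correction construction underlying the micro-fibration property — making precise that an arbitrarily small change of the clutching contactomorphism can be realised by a small, controlled deformation of the contact germ supported near the equator which keeps it almost standard — together with the companion verification that the fibres of $\pi$ are contractible; once these are in place, the reduction to \cite{Gro86,We05} and the Gray-stability endgame for the second assertion are routine.
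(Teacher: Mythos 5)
The paper offers no proof of this lemma at all — the text simply declares it ``straightforward'' — so there is no argument in the paper to compare yours against. That said, your plan follows exactly the two templates already present in the surrounding text, and is therefore almost certainly the intended route: the deduction ``micro-fibration with non-empty contractible fibre $\Rightarrow$ Serre fibration'' is the same one used in the proof of Lemma~\ref{lm:Lyapunov-criterion-param}, and the holonomy-correction deformation you appeal to is carried out concretely in the proof of Lemma~\ref{lm:making-standard}, where an arbitrary small change of the clutching map is realized by a deformation of the hypersurface supported in a thin conical annulus near the pole $p_+$ (the graphs $\Gamma_K$ with holonomy the flow of the positive contact Hamiltonian $1/K$). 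You might consider performing your correction there rather than near your equator $E$, since the conical model near the pole makes it transparent both that the deformation realizes the prescribed small holonomy change and that it stays inside $AlSt$ (no new zeros of the characteristic foliation are created because the Hamiltonian is positive, after composing with a unitary rotation); near a generic equator you would have to reprove these facts from scratch, which is exactly the load your phrase ``this standard construction'' is carrying without support. Two further points to tighten: your Gray--Moser argument for contractibility of $\pi^{-1}(h)$ has to produce an actual contraction that remains in $AlSt$ throughout — ``convex up to contractible choices'' is hiding the real work, since a convex combination of forms does not preserve the holonomy. And in the forward inclusion $St\subset\pi^{-1}(\D_1)$ you write that ``$g$ preserves $\xi$''; it does not — $g^{*}\xi_\st=\xi$, so $g$ carries $\xi$ to $\xi_\st$. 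The correct justification is that $\xi$ and $\xi_\st$ agree near $p_\pm$ and $g$ fixes the poles, so $g$ is a local contactomorphism of $\xi_\st$ near $p_\pm$ and hence $d_{p_\pm}g$ is (conformally) symplectic; also, $h_\xi$ is obtained from $h_{\xi_\st}=\mathrm{Id}$ by pre- and post-composing with $d_{p_+}g$ and $(d_{p_-}g)^{-1}$ (not by conjugation, as those two linearizations are in general different), which still lands in $Sp(\R^{2n},\om_\st)$, so the conclusion stands.
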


     \begin{figure}[ht]
 
\includegraphics[scale=0.85]{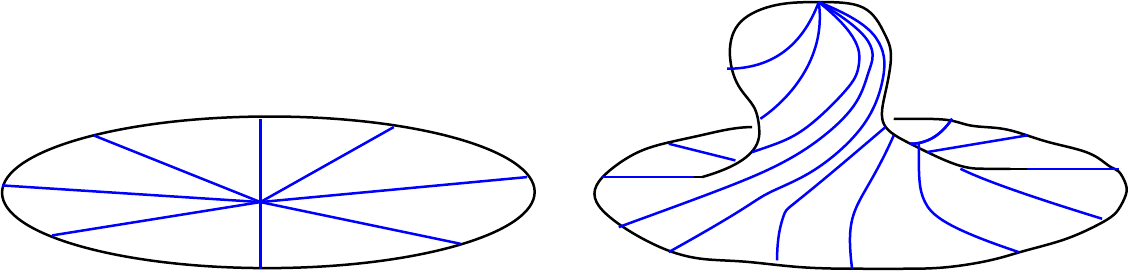}
 
\caption{Adjusting a characteristic foliation near a pole.}
\label{fig:adjust_near_pole}
\end{figure}

 \begin{lemma}\label{lm:making-standard}
 Let $\xi_t$ be a family  of  contact structures on $\Op\p D\subset(S^{2n+1},\xi_\st)$ such that their  germs along $\p D$ are almost standard. Suppose that for $t\in[0,\frac18]$ the germ $\xi_t$ is standard.
 Then there exists a diffeotopy $g_t:  \Op\p D\to\Op\p D$ supported in an arbitrarily  small neighborhood of $p_+$ such that    the germs $g_t^*\xi_t$ along $\p D$ are standard and $g_0$ is the identity.  See Fig. \ref{fig:adjust_near_pole}.
      \end{lemma}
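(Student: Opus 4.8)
The plan is to follow the clutching contactomorphisms $h_{\xi_t}\in\D_0$ and to arrange, by a family of diffeotopies supported near $p_+$, that all of them come to lie in $\D_1:=Sp(\R^{2n},\om_\st)$; since by Lemma~\ref{lm:al-st-holonomy} an almost standard germ whose clutching contactomorphism lies in $\D_1$ is standard (the equality $St=\pi^{-1}(\D_1)$), this is exactly what is needed. Write $\gamma(t):=h_{\xi_t}$, a smooth path $[0,1]\to\D_0$ with $\gamma([0,\tfrac18])\subset\D_1$ by the hypothesis and Lemma~\ref{lm:al-st-holonomy}. Throughout I would use Proposition~\ref{prop:AG} to pass freely between germs of contact structures along $S^{2n}=\p D$ and their characteristic foliations.

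First I would record how a diffeotopy $g_t$ of $\Op\p D$, supported in a small ball $U$ about $p_+$, acts on the characteristic foliation and on the clutching contactomorphism, when $g_t$ is a contactomorphism of $\xi_\st$ preserving $S^{2n}$ on a smaller ball $U'\subset U$ and is an arbitrary diffeotopy preserving the radial coordinate on the collar $U\setminus U'$, equal to the identity near $\p U$. Then $g_t^*\xi_t$ still coincides with $\xi_\st$ near both poles (near $p_+$ since $g_t$ is there a contactomorphism of $\xi_\st=\xi_t$, near $p_-$ since $g_t$ is supported near $p_+$), and $\ell_{g_t^*\xi_t}$ differs from $\ell_{\xi_t}$ only inside $U\setminus U'$, where every leaf still runs monotonically towards $p_+$; hence $g_t^*\xi_t$ again has exactly the two hyperbolic singularities $p_\pm$ — a source and a sink — with every trajectory running from the source to the sink, and so, by the construction in the proof of Lemma~\ref{lm:Lyapunov-criterion} with $N=2$, it is again almost standard. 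Its clutching contactomorphism becomes $h_{\xi_t}$ composed (on the appropriate side, with $\chi_t$ or its inverse — the convention being fixed by a direct computation of the holonomy across the collar) with the contactomorphism $\chi_t$ of the leaf space $(S^{2n-1},\xi_\st)$ induced by $g_t|_{U'}$; since $g_0=\Id$, the path $\chi_t$ is a contact isotopy with $\chi_0=\Id$. Conversely, every contact isotopy $\chi_t$ of $(S^{2n-1},\xi_\st)$ starting at $\Id$ arises this way: lift $\chi_t$ to a $\xi_\st$-contactomorphism of $U'$ preserving $S^{2n}$ and inducing the prescribed action on leaves, then taper it off to the identity across the collar. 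This last construction — together with the verification that it preserves almost-standardness — is the technical heart; it is a holonomy installation of the same kind used in the plug constructions of Sections~\ref{sec:prelim-plug}--\ref{sec:main_plug}.

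Granting this mechanism, I would finish quickly. Choose a smooth $\tau\colon[0,1]\to[0,\tfrac18]$ equal to the identity near $0$, and put $\beta:=\gamma\circ\tau\colon[0,1]\to\D_1$, so that $\beta(0)=\gamma(0)$ and $\beta$ takes values in $\D_1$. Let $\chi_t$ be the contact isotopy, starting at $\chi_0=\gamma(0)\gamma(0)^{-1}=\Id$, for which the above modification turns the clutching contactomorphism $h_{\xi_t}$ into $\beta(t)$ (concretely $\chi_t=h_{\xi_t}\beta(t)^{-1}$ or $\beta(t)^{-1}h_{\xi_t}$ according to the convention found above; note $\chi_t=\Id$ for $t$ near $0$). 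Realize $\chi_t$ by a diffeotopy $g_t$ supported in an arbitrarily small neighborhood of $p_+$ with $g_0=\Id$. Then $g_t^*\xi_t$ is almost standard with clutching contactomorphism $\beta(t)\in\D_1$, hence standard by Lemma~\ref{lm:al-st-holonomy}; this proves the lemma.

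The step I expect to be hardest is the second paragraph: verifying rigorously that a diffeotopy supported near $p_+$, which is a $\xi_\st$-contactomorphism only on a smaller ball, changes the clutching contactomorphism precisely by the prescribed leaf-space contact isotopy and leaves the germ almost standard. Whether the multiplication is on the left or the right, and whether it is by $\chi_t$ or $\chi_t^{-1}$, is immaterial: in every case $\chi_t$ can be solved for with $\chi_0=\Id$.
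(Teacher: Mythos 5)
Your overall skeleton is the same as the paper's: compute the clutching contactomorphisms $h_{\xi_t}\in\D_0$, modify them by a diffeotopy supported near $p_+$ so that they all land in $\D_1=Sp(\R^{2n},\om_\st)$, and invoke Lemma~\ref{lm:al-st-holonomy} ($St=\pi^{-1}(\D_1)$). The algebraic bookkeeping at the end (reparametrize $\gamma$ by $\tau$ to produce $\beta\subset\D_1$ with $\beta(0)=\gamma(0)$ and $\chi_0=\Id$) is also essentially what the paper does. Where you genuinely diverge is in the \emph{mechanism} used to realize the holonomy correction $\chi_t$, and that is where the real work lives, so the difference matters.

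The paper does not rotate $\p D$ tangentially. It pushes $\p D$ \emph{transversally}, into the region $z<0$, via the cutoff $\theta(u)$, so that in the annular region $\{u\in(\sigma,3\sigma),\;z\in[-2\sigma,-\sigma]\}$ the deformed sphere is a graph $\{u=K(z,w)\}$ with $K>0$, and the contribution to the clutching map is then the time-$\sigma$ map of the contact flow of the Hamiltonian $1/K(z,w)$. This mechanism only realizes contact isotopies generated by \emph{positive} Hamiltonians, and the paper has to work around this: it first composes with unitary rotations $w\mapsto e^{\wt C(t)i\phi}w$ to shift the Hamiltonian $G_t$ of the target isotopy until it is positive, and only then solves for $K_t$. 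In other words, the positivity constraint is a genuine feature of the radial-pushing mechanism, and a substantial part of the paper's proof is devoted to defeating it. Your tangential-rotation mechanism, in which $g_t$ preserves $\p D$, is a $\xi_\st$-contactomorphism on the small ball $U'$, and is tapered off to the identity across the collar, sidesteps this positivity issue entirely: since you are free to realize $\chi_t$ directly as the leaf-space action of $g_t|_{U'}$, there is no Hamiltonian to keep positive. That is a real conceptual simplification, if it can be made to close up. Also note that the paper's initial pinch $\phi_1$ already changes the clutching map by a unitary rotation (the Reeb flow of $1/(\sigma-z)$), which your version does not need at all.

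However, two points in your second paragraph should make you uneasy, because that paragraph is precisely where the difficulty has been relocated. First, there is an internal inconsistency in the prescription for $g_t$: you ask $g_t$ to be a $\xi_\st$-contactomorphism of $U'$ preserving $\p D$, and to preserve the radial coordinate $u$ on the collar $U\setminus U'$. But a $\xi_\st$-contactomorphism of the model $(D^{2n}\times(-\eps,\eps),\Ker(dz+u\alpha_\st))$ preserving $\{z=0\}$ and inducing $\chi_t$ on the leaf space must rescale $u$ by the conformal factor $e^{-g_t}$ of $\chi_t$ (the natural such contactomorphism is $(z,u,w)\mapsto(z,e^{-g_t(w)}u,\chi_t(w))$); it therefore does \emph{not} preserve $u$ on $\p U'$, so the two conditions you impose on $g_t$ do not match up across $\p U'$. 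You can fix this by requiring only that the taper keep the pulled-back characteristic foliation transverse to the $u$-spheres, or by building a further collar inside $U'$ in which the rescaling itself is damped out, but as written the prescription is inconsistent. Second, your claim that the taper ``is a holonomy installation of the same kind used in the plug constructions of Sections~\ref{sec:prelim-plug}--\ref{sec:main_plug}'' is not accurate: the plug constructions (Lemma~\ref{lm:2d-plug}, Proposition~\ref{prop:prelim-plug}) create and eliminate singularities and block trajectories, whereas what you need here is precisely the opposite — a deformation that \emph{introduces no new singularities} while steering the holonomy. The closest thing in the paper to what you need is in fact the very computation of the holonomy across $\Gamma_K$ in the proof you are proposing to replace. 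So the ``technical heart'' you flag is not covered elsewhere in the paper; it is a different argument that would have to be written out, and it is not a priori shorter than the paper's graph-plus-Hamiltonian computation, though it does have the appealing feature of eliminating the positivity detour.
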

 \begin{proof}  Recall that  by assumption $\xi_t=\xi_0=\xi_\st$ in a neighborhood  $U\supset \{p_+\}, U\in\Op\p D$. There exists a smaller neighborhood $U'\subset U$ such that
 the pair $(U',\p D\cap U';\xi_\st)$ is contactomorphic to $(D^{2n}\times(-\eps,\eps), D^{2n}\times0; \Ker(\gamma:=\sum\limits_1^n x_i dy_i-y_idx_i +dz)).$
 Denote $u:=\sum\limits_1^n(x_j^2+y_j^2)$, and consider the splitting $D^{2n}\setminus 0=S^{2n-1}\times(0,1]$, given by the radial projection to the unit sphere and the $u$-coordinate.
 With respect to this splitting the form $\gamma$ can be  written as $dz+u\alpha_\st$, where $\alpha_\st$ is the standard contact form of the standard contact $(2n-1)$-sphere.
 
    \begin{figure}[ht]
 
\includegraphics[scale=0.8]{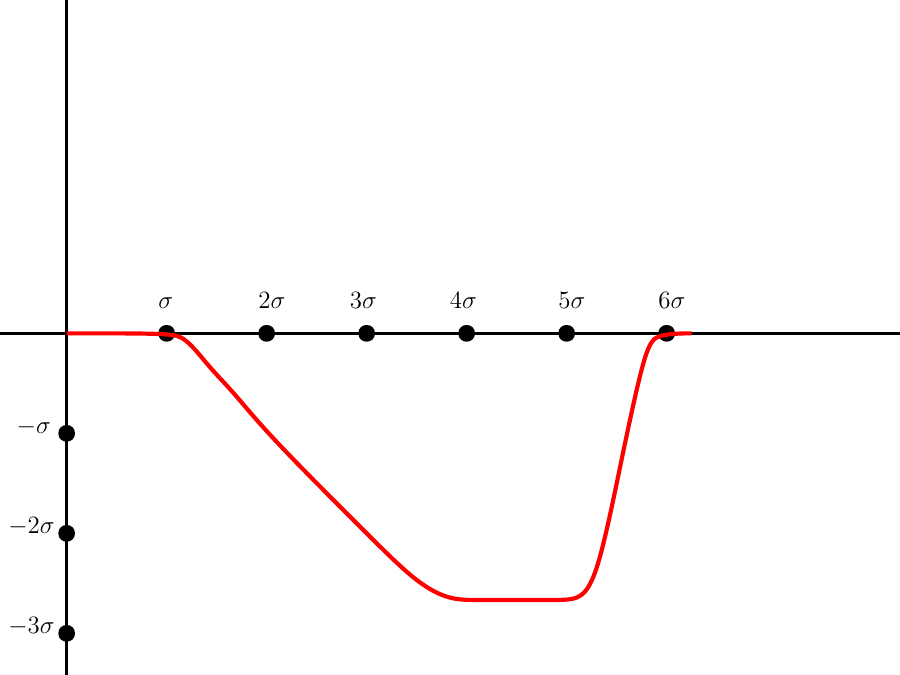}
 
\caption{The function $\theta(u)$.}
\label{fig:theta}
\end{figure}
 Choose a positive  $  \sigma\ll\eps,$ and consider  a  $C^\infty$-function $\theta:[0,\infty)\to[-3\sigma,0]$, see \ref{fig:theta} which is supported in $[0,6\sigma]  $, and such that $\theta(u)=0$ for $u<\sigma$, $\theta(u)=-3\sigma$ for $u\in[4\sigma, 5\sigma], $ $\theta(u)= \sigma -u$ for $u\in[2\sigma,3\sigma]$ and $\theta'(u)\leq 0$ for $u\in[0,5\sigma]$. Let us view $u\in[0,1],z\in(-\eps,\eps)$ and $w\in S^{2n-1}$ as coordinates in the neighborhood $U'$, so that the equation $z=0$ defines $\p D\cap U'$. Consider the family of  hypersurfaces $\Theta_{s}\subset \Op\p D$ which coincides with graphs  $\{z=s\theta(u)\}$ in  $U'$ and equal to $\p D$ elsewhere. Take a neighborhood  $U'':=\{u<7\sigma,
   |z|<\sigma\}\Subset U'$.  There exists a   supported  in $U'' $ diffeotopy $\phi_s:\Op\p D\to\Op \p D$  such that $\phi_0=\Id$,  $\phi_s(\p D)=\Theta_s$. Define  a   diffeotopy $\psi_t:\Op\p D\to\Op\p D$   as   $\psi_t:=\phi_{8t}$ for $t\in[0,\frac18]$, $\psi_t=\phi_1$ for $t\in[\frac18,\frac78]$ and $\psi_t=\phi_{8-8t}$ for $t\in[\frac78,1]$.
     Note that the germs  of  contact structures  $\wt\xi_{t}:=\psi_t^*\xi_t$ along $ \p D$ are almost standard, and moreover, for each $t\in[0,1]$ the  clutching diffeomorphism   $\wt h_t:=h_{\wt\xi_{t}}$ differs from $h_{\xi_t}$  by a unitary rotation  $w\mapsto e^{C(t)i\phi}w$ of the sphere $S^{2n-1}\subset \C^n$.
    Consider the domain   $\wh U:=\{u\leq 3\sigma,\; -2\sigma\leq z\leq-\sigma\}\subset U''$. Note  that for $t\in[\frac18,\frac78]$ we have $\psi_t(\p D) \cap\wh U=\{ u=\sigma-z,\;-2\sigma\leq z\leq-\sigma\}$.   Consider the space $\cK$ of functions    $K:[-2\sigma,-\sigma]\times S^{2n-1}\to (0,3\sigma]$ which are  equal to $\sigma-z$ near the boundary. Given $K\in\cK$   consider its graph $\Gamma_K:= \{u=K(z,w);\; (z,w)\in  [-2\sigma,-\sigma]\times S^{2n-1}\}\subset \wh U$. The contact structures $\xi_t$ in $\wh U$ is given by the form $\frac1udz+\alpha_\st$ and hence, the holonomy along the leaves of the characteristic foliation $\ell_{\Gamma_K}$ is equal to the time $\sigma$ map of the contact flow    of the contact Hamiltonian $\frac1{K(z,w)}$. We view  here $z\in[-2\sigma,-\sigma]$ as the time parameter. 
    
    Define a  contact isotopy $g_t:=\wt h_t^{-1}=(h_{\wt\xi_t})^{-1}:S^{2n-1}\to S^{2n-1}$. While its contact Hamiltonian $G_t:S^{2n-1}\to S^{2n-1}$   is not necessarily positive, it can be made positive and  even arbitrarily large by composing $g_t$ with appropriate unitary  rotations $w\mapsto e^{\wt C(t)i\phi}$ of the sphere
   $S^{2n-1}\subset \C^n$. We  will keep the notation   $g_t$  for the modified isotopy.  Hence, there exists a family  of functions $K_t\in \cK$, $t\in[\frac18,\frac78]$  such that  $K_t=\sigma-z$ for $t=\frac18,\frac78,$  and  such that the holonomy along the leaves of the characteristic foliation $\ell_{\Gamma_{K_t}}$  coincide with $g_t$ up to a unitary rotation of the sphere $S^{2n-1}$. Let us modify the diffeotopy $\psi_t$ for $t\in[\frac18,\frac78]$, keeping it supported in $\wh U$, so that $\psi_t(\p D\cap\wh U)=\Gamma_{K_t}$.
  Denote   $\wh\xi_t:=\psi_t^*\xi_t$. By construction the clutching diffeomorphisms $h_{\wh\xi_t}:S^{2n-1}\to S^{2n-1}$ are unitary rotations, and hence, the germs of contact structures $\wh\xi_t$ along $\p D$ are standard.
  
 \end{proof}

  \subsection{Making the characteristic foliation short} \label{sec:making-short}

 \begin{prop}\label{prop:plugs-in-convex} Suppose that Proposition \ref{prop:main-plug}  holds for plug installation over $D^{2n-3}\times[0,1]$.
 Let $\Sigma$ be the standard $(2n-2)$-dimensional sphere in  $(M^{2n-1},\xi=\Ker\alpha)$. Then  for any $\sigma>0$ there exists  an $\sigma$-small in the $C^0$-sense isotopy $f_{s}:\Sigma\to M$ starting  with  the inclusion $f_0:\Sigma\hookrightarrow M$ such that
  \begin{itemize} 
 \item[a)] $f_{s}$ is fixed on a neighborhood of poles of the characteristic foliation $\ell$  of $\Sigma$;
 \item[b)] the family of characteristic foliations $\ell_{s}$, $s\in[0,1]$, induced on $\Sigma$ by $f_s^*\alpha$;
  admits a family of good Lyapunov functions $F_s:\Sigma\to\R$;
  \item[c)] the characteristic foliation $\ell_{1}$  
 is $\sigma$-short;
 \item[d)] for any $\sigma>0$ the isotopy $f_{s}$ can be included into a 2-parametric isotopy $f_{s,t}, \;s,t\in[0,1]$, such that
 \begin{itemize}
 \item[(i)] $f_{s,0}=f_s$, $f_{0,t}=f_0$ for all $s,t\in[0,1]$;
 \item[(ii)] the  spheres $f_{s,1}(\Sigma)$ are almost standard for all $s\in[0,1]$;
 \item[(iii)] $f_{s,t}$ is         $\sigma$-close to $f_{s,0}$ for all $(s,t)\in[0,1]$;
 \item[(iv)]  the isotopy $f_{1,t}, t\in[0,1]$, is fixed on a neighborhood of a dividing set $S_1$ of  $\Sigma_1=f_1(\Sigma)$.
 \end{itemize}
 \end{itemize}
  \end{prop}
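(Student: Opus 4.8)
The plan is to reduce the statement to an application of the inductive hypothesis (Proposition \ref{prop:main-plug} in dimension $2n-1$, i.e. for plugs over $D^{2n-3}\times[0,1]$) combined with Corollary \ref{cor:good-Lyapunov} and the $\eps$-blocking / plug machinery of Sections \ref{sec:dynamics} and \ref{sec:plugs}. The starting point is that $\Sigma=S^{2n-2}$ carries the standard characteristic foliation $\ell$, whose directing vector field $X_0$ has exactly two zeros, the elliptic poles $p_\pm$, and whose nonsingular part is a ``long'' flow from $p_+$ to $p_-$. What we want is to replace $\ell$, by a $C^0$-small isotopy fixed near the poles, by a foliation all of whose nonsingular leaves are short. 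The mechanism for breaking up long trajectories is exactly a blocking collection of discs together with plugs installed in their flow-boxes, but here the ambient dimension of $\Sigma$ is $2n-2$, so the discs are standard contact $(2n-3)$-discs and the plugs are $(2n-2)$-dimensional; hence they are supplied precisely by the induction hypothesis.

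\emph{Step 1 (blocking collection on $\Sigma$).} First I would perturb $\Sigma$ $C^\infty$-small, fixed near $p_\pm$, so that $X$ directing $\ell$ has only non-degenerate hyperbolic zeros and is in general position with respect to the boundaries of small Darboux balls; then apply Lemma \ref{lm:exist-discs-cont} to produce, for a given small $\delta$, a $\delta$-blocking collection $\{D_j\}$ of standard contact $(2n-3)$-discs in $\Sigma\setminus\Op\{p_\pm\}$, with disjoint flow-boxes $Q_j=D_j\times[0,a]$ of diameter $<2\delta$, made generic in the sense of Section \ref{sec:blocking}.

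\emph{Step 2 (install plugs).} By the induction hypothesis, for any $\sigma'\ll\delta$ one can install a $\sigma'$-plug of arbitrarily small height over each $D_j\times[0,1]\cong Q_j$ by a $C^0$-small isotopy supported in $Q_j$, together with the auxiliary data (b)(i)--(v) of Proposition \ref{prop:main-plug}; in particular each plug is Morse--Smale, satisfies (L1), and admits a (generalized Morse) Lyapunov function equal to $y$ near $\p Q_j$. Performing these isotopies simultaneously in the disjoint flow-boxes yields the isotopy $f_s:\Sigma\to M$, fixed near $p_\pm$ and $\sigma$-small in $C^0$ (by choosing the heights small). Lemma \ref{lm:sigma-plug} (applied on $\Sigma$, a $(2n-2)$-manifold, with $X$ directing $\ell$) then guarantees that the resulting foliation $\ell_1$ satisfies condition (L1) and that all its trajectories have diameter $<3\delta$, i.e. $\ell_1$ is $\sigma$-short once $\delta$ is chosen $<\sigma/3$. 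Since each plug is a characteristic foliation with no retrograde connections (positive zeros of a plug can be ordered before negative ones — this is where property (iii) of Proposition \ref{prop:main-plug}, invariance of $C^\pm_s$ under the backward/forward flow, is used to glue local orderings), Corollary \ref{cor:good-Lyapunov} gives a good Lyapunov function $F_1$ for $\ell_1$; applying the same plug installation to the family $X_s$ and using Lemma \ref{lm:Lyapunov-criterion-param} / Corollary \ref{cor:Lyapunov-convexity-param} yields the family $F_s$, proving (a), (b), (c).

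\emph{Step 3 (the 2-parametric extension).} For (d), I would take the second parameter $t$ of each plug, given by property b) of Proposition \ref{prop:main-plug}: for each $j$ there is a $2$-parametric extension $h^{(j)}_{s,t}$ killing all singularities of the plug at $t=1$, supported in a $\sigma_1$-neighborhood of the invariant sets $C^\pm_s\subset\Int Q_j$, with a family of Morse Lyapunov functions $\psi^{(j)}_{s,t}$ equal to $y$ near $\p Q_j$. Assembling these gives $f_{s,t}$ with $f_{s,0}=f_s$, $f_{0,t}=f_0$, and $\sigma$-small in $C^0$. At $t=1$ all plug singularities are removed, so the only remaining zeros of the directing field of $\ell_{s,1}$ are the poles $p_\pm$, which are elliptic; by the same (L1)+Morse--Smale argument the characteristic foliation of $f_{s,1}(\Sigma)$ then admits a Morse Lyapunov function with exactly two critical points, i.e. $f_{s,1}(\Sigma)$ is almost standard, giving (ii). Finally, by placing the blocking discs $D_j$ (hence the flow-boxes and the supports of $f_{s,t}$) away from a chosen regular level set of $F_1$ separating positive and negative zeros, that level set — a dividing set $S_1$ of $\Sigma_1$ — may be assumed disjoint from all the $Q_j$, so $f_{1,t}$ is fixed near $S_1$, proving (iv).

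\emph{Main obstacle.} The delicate point is bookkeeping the auxiliary structures through the gluing: one must ensure that the local orderings of zeros inside the separate plugs are globally compatible (no retrograde connections appear \emph{between} different plugs or between a plug and the poles), that the local Morse Lyapunov functions — all normalized to equal $y=$ the flow parameter near the flow-box boundaries — patch to a single good Lyapunov function on $\Sigma$, and that the stratified exceptional set $E$ and the invariant subsets $C^\pm_s$ from different plugs remain disjoint and controlled. All of this is exactly what the extra clauses (i)--(v) of Proposition \ref{prop:main-plug} were designed to provide, so the proof is essentially a careful orchestration of the induction hypothesis rather than a new construction; the one genuinely new ingredient is observing that the nonsingular-at-$t=1$ foliation on a sphere with only the two elliptic poles is automatically almost standard, which follows from Poincaré--Bendixson-type reasoning together with Lemma \ref{lm:Lyapunov-criterion}.
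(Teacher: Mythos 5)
Your high-level strategy matches the paper's: install $(2n-2)$-dimensional plugs, supplied by the inductive hypothesis, in the flow-boxes of a blocking collection of standard contact $(2n-3)$-discs on $\Sigma$, and invoke Lemma \ref{lm:sigma-plug} and the Lyapunov-function machinery. However, there are two places where your argument genuinely fails to close the gap that the paper's proof is designed to handle.

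\textbf{Retrograde connections.} You install the plugs \emph{simultaneously} in the disjoint flow-boxes and then assert that absence of retrograde connections inside each plug glues to absence of retrograde connections globally, via property (iii) of Proposition \ref{prop:main-plug}. This does not follow. A retrograde connection is a trajectory from a \emph{negative} zero to a \emph{positive} zero; nothing about each plug being internally fine prevents a trajectory from exiting one plug from a negative singularity, flowing across $\Sigma$, and entering another plug into the stable manifold of a positive singularity. The paper's proof handles exactly this by installing the plugs \emph{successively}, with $Q_1,\dots,Q_N$ ordered by the values $c_1<\dots<c_N$ of the ambient Lyapunov function $F$, and by using property (v) of Proposition \ref{prop:main-plug}: for the plug in $Q_j$, the trajectories converging to its positive singularities enter through a fixed stratified $(n-1)$-dimensional subset $E_j\subset D_j$. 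Because the stage-$j$ isotopy leaves the vector field in $\{F\le c_j\}$ untouched, one can perturb the embeddings $\Phi_j$ generically so that the closure $\overline{G_j}$ of the backward orbit of $E_j$ misses every negative zero already present at stage $j-1$, and this persists for $s\ge\frac{j-1}{N}$. Only then does absence of retrograde connections, hence existence of a \emph{good} Lyapunov family (property b)), actually follow. Your phrase ``glue local orderings'' is not a substitute for this transversality argument, and with simultaneous installation you cannot even set it up.

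\textbf{Property d)(iv).} You propose to position the flow-boxes $Q_j$ away from a level set of $F$ so that the dividing set $S_1$ of $\ell_1$ misses all the $Q_j$. This is false: each installed plug contains \emph{both} positive and negative singularities of $\ell_1$ (the $\sigma$-plugs built from preliminary plugs in Section \ref{sec:prelim-plug} create elliptic/hyperbolic pairs of each sign), so any dividing set of $\ell_1$ must pass through every $Q_j$ in order to separate them. The paper instead uses property b)(iii) of Proposition \ref{prop:main-plug}: the positive and negative singularities inside each plug sit in disjoint compact invariant subsets $C^\pm_1\subset\Int Q$, and by b)(ii) the $t$-isotopy $f_{1,t}$ is supported in a small neighborhood of $C^\pm_1$ rather than in all of $Q_j$. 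One then isotopes $S_1$ off of the union of the $C^\pm_1$ (which is much smaller than $\bigcup Q_j$), and d)(iv) follows. You should replace your argument with this one.

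Your treatment of d)(ii) (at $t=1$ the plug singularities disappear, leaving only the two elliptic poles, so $f_{s,1}(\Sigma)$ is almost standard) is essentially the paper's and is fine, as is the reduction to $\sigma$-shortness via Lemma \ref{lm:sigma-plug} with $\delta<\sigma/3$.
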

 
  \begin{proof}

   Using Corollary \ref{cor:good-Lyapunov} we can find  an  $\sigma$-blocking  system  $\{D_j\}  $, $j=1,\dots, N$,   of transverse  standard contact  discs. We can assume that  
   the Lyapunov function $F$ for $X$ on $\Sigma$ is constant on each $D_j$. Denote $c_j=F|_{D_j}$ and assume   that $c_1< c_2<\dots< c_N$.
   Let $  Q_j\subset\Sigma $ be  disjoint flow-boxes of $D_j$ of diameter $<2\sigma$. 
 Let $h_{s,t}:  Q:=D\times[0,1]\to D\times T^*[0,1]$ be a $C^0$ small isotopy constructed in Proposition \ref{prop:main-plug}.  
We define the required isotopy $f_{s,t}$ by successively deforming flow-boxes $ Q_j$. Denote $\Delta_j:=[\frac{j-1}N,\frac{j}N]\subset[0,1]$, $1\leq j\leq N$.
For $  s\in\Delta_j, t\in[0,1]$ 
  define
 $f_{s,t}:=\Phi_j\circ h_{Ns-j+1,t}\circ \Phi_j^{-1}$ on $  Q_j$, 
 $f_{s,t}= \Phi_i\circ h_{1,t}\circ \Phi_i^{-1}$ on $\  Q_i$ for $i<j$, and fixed elsewhere on $\Sigma$. Then Proposition \ref{prop:main-plug} guarantees  all  the  required properties  of the isotopy $f_{s,t}$ except b), d)(ii) and d)(iv). More precisely, for b) we automatically get a family of Lyapunov functions for $X_s$, but not necessarily good ones. Moreover, we get a family of Lyapunov functions for the whole 2-parametric family  $X_{s,t}$. In particular, Lyapunov functions for $X_{s,1}$ have no critical points except the  maximum and the minimum of the  original Lyapunov function $F$. This implies that the sphere $f_{s,1}(\Sigma)$ are almost standard.   Recall that according to Corollary \ref{cor:Lyapunov-convexity-param} it is sufficient to  ensure properties (L1) and (L2) and absence  of retrograde connections.
    According to property c)(v) of Proposition \ref{prop:main-plug} trajectories of $X_{s,t}$ converging to positive zeroes enter each plug  through the same $(n-2)$-dimensional stratified subset $E_j\subset D_j\subset \p\wh Q_j$.  According to our staged construction of the isotopy, the isotopy $f_{s,0}$ for $s\in[\frac{j-1}N,\frac jN]$, $j=1,\dots, N$, which creates a plug in $\wh Q_j$  does not change  trajectories of $X_s=X_{s,0}$ in $F\leq c_j$. Hence, by a $C^\infty$-small adjustment of embeddings $\phi_j$ before each step of the isotopy we can arrange that the closure   $\overline{G_j}$  of the negative tail $\bigcup\limits_{u\geq 0}X_{\frac{j-1}N}(E_j)$ of the set $E_j$ does not contain any negative zeroes of $X_{\frac{j-1}N}$, and hence,  the same property holds for all $s\geq \frac{j-1}N$. The deformation $f_{s,t}$ for a fixed $s\in\Delta_j$ changes the field $X_{s,t}$ only in an arbitrarily small neighborhood of $\ol G_j$, and hence thanks to compactness of the set of zeroes, one can arrange that $X_{s,t}$ have no retrograde connections. Corollary \ref{cor:good-Lyapunov} then guaranteed that $f_{s,t}(\Sigma)$ are Weinstein convex. It remains to satisfy property
d)(iv). Consider the dividing set $S$ for $X_1$. Using  Property c)(iii) of Proposition \ref{prop:main-plug} we can find an isotopy of $\Sigma$ preserving leaves of $\ell_1$ with disjoins $S$ with  compact subsets $C_1^\pm$, and hence with their neighborhoods $U^\pm\supset C_1^\pm$.  According to  c)(ii)  we can arrange the isotopy $f_{1,t}$      to be supported in $U^\pm$, which implies property d)(iv).

  \end{proof}
  
 \begin{prop}\label{prop:making-short-approx} Suppose that Proposition \ref{prop:main-plug}  holds for plug installation over $D^{2n-3}\times[0,1]$. Let $D=(D^{2n-1},\Ker\alpha_\st)$ be the standard contact disc. Then for any $\sigma>0$ there exists a  $\sigma$-small in the  $C^0$-sense  isotopy $h_s:D\to D$     such that $h_s(D)\subset\Int D$ for all $s>0$ and 
 \begin{itemize}
 \item[(i)] the ball $\wt D:=h_1(D)$ has a Weinstein convex boundary  $\p\wt D$ with the dividing set $S\subset\p\wt D$;
 \item[(ii)] the characteristic foliation $\ell_{\p\wt D}$ is $\sigma$-short;
 \item[(iii)] $(\wt D,S)$ can be approximated by standard contact balls.
 \end{itemize}
 \end{prop}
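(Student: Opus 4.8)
\subsection*{Proof proposal}

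The plan is to produce $\wt D$ by first shrinking $D$ slightly into its own interior and then deforming the resulting boundary sphere by Proposition~\ref{prop:plugs-in-convex}, whose hypothesis is precisely the one assumed here. View $D$ as the closed hemisphere $S^{2n-1}_+\subset(S^{2n-1},\xi_\st)$ and fix a contact vector field $\Upsilon$ transverse to $\p D$ and pointing into $D$ (it exists because $\p D=S^{2n-2}$ is a convex hypersurface of $S^{2n-1}$). For small $\delta>0$ the ball $D_\delta:=\Upsilon^{-\delta}(D)$ is a standard contact ball compactly contained in $\Int D$, its boundary a standard $(2n-2)$-sphere. Apply Proposition~\ref{prop:plugs-in-convex} to $\Sigma:=\p D_\delta$ with a deformation parameter $\ll\sigma$: we obtain a $C^0$-small isotopy $f_s$ of $\p D_\delta$, fixed near the poles of the characteristic foliation, together with its two-parameter extension $f_{s,t}$, the family of good Lyapunov functions $F_s$ of Proposition~\ref{prop:plugs-in-convex}(b), the $\sigma$-shortness of $\ell_1$ of (c), and the ``almost standard'' data of (d). Since the isotopy is $C^0$-tiny, each sphere $f_s(\p D_\delta)$ still bounds a ball $D_s\Subset\Int D$; let $h_s\co D\to D$ be the isotopy which first flows by $\Upsilon^{-\delta}$ (over $s\in[0,\tfrac12]$) and then carries $D_\delta$ onto $D_s$ by a diffeotopy supported near the boundary (over $s\in[\tfrac12,1]$). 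Then $h_0=\Id$, $h_s(D)\Subset\Int D$ for every $s>0$, and $\sup_x\dist(h_s(x),x)<\sigma$. Put $\wt D:=h_1(D)$, so that $\p\wt D$ is contact-diffeomorphic to $f_1(\p D_\delta)$.

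Properties (i) and (ii) are then immediate. Up to the contactomorphism $\Upsilon^{-\delta}$, the characteristic foliation $\ell_{\p\wt D}$ coincides with the foliation $\ell_1$ on $f_1(\p D_\delta)$; by Proposition~\ref{prop:plugs-in-convex}(b) the latter carries a good Lyapunov function (the transport of $F_1$), so $\p\wt D$ is Weinstein convex, and we take $S\subset\p\wt D$ to be its dividing set --- this is (i). By Proposition~\ref{prop:plugs-in-convex}(c), $\ell_1$, hence $\ell_{\p\wt D}$, is $\sigma$-short, which is (ii).

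For (iii) we exploit the two-parameter family. Fix, once and for all, a neighborhood $N\supset S$ which is disjoint from the fixed compact loci on which the extensions $f_{1,t}$ are supported (these loci contain the singularities and are disjoint from the dividing set by Proposition~\ref{prop:plugs-in-convex}(d)(iv)), so that every $f_{1,t}$ is stationary on $N$ no matter how small its $C^0$-size is chosen. Given $\sigma'>0$, run the extension $f_{1,t}$ with $C^0$-size $\ll\sigma'$: it deforms $\p\wt D=f_{1,0}(\p D_\delta)$ to the \emph{almost standard} sphere $f_{1,1}(\p D_\delta)$, staying $\sigma'$-close to $\p\wt D$ and stationary on $N$. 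Because the isotopy $f_s$ is constant for $s$ near $0$, the family of germs along $f_{s,1}(\p D_\delta)$ is standard for $s\in[0,\tfrac18]$, so Lemma~\ref{lm:making-standard} (with $s$ as parameter) provides a diffeotopy supported near one pole --- hence disjoint from $N$ --- after which $f_{1,1}(\p D_\delta)$ has a genuinely standard germ while remaining $\sigma'$-close to $\p\wt D$ and coinciding with it on $N$. Such a standard-germ sphere is trapped between a slightly shrunken and a slightly enlarged copy of $D_\delta$ cut out by the flow of $\Upsilon$, so it bounds a standard contact ball $B$; the resulting iso-contact embedding $g\co D\to B$ witnesses that $(\wt D,S)$ is approximated by standard contact balls, proving (iii).

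The crux is this last step, where three things must be arranged simultaneously: that the \emph{almost} standard sphere produced by Proposition~\ref{prop:plugs-in-convex}(d) can be upgraded to one with a genuinely standard germ via Lemma~\ref{lm:making-standard} (this is why one needs $f_{s,1}$ to be standard for small $s$); that this correction, being pole-supported, does not disturb the match with $\p\wt D$ along $N$; and that a standard-germ sphere which is a $C^0$-small perturbation of $\p D_\delta$ actually bounds a \emph{standard} contact ball --- a point secured by sandwiching it between the two standard balls cut out by the contact flow of $\Upsilon$ and invoking uniqueness of contact balls (cf.\ \cite{El17}). Everything else is isotopy-extension bookkeeping together with the transport of good Lyapunov functions and shortness under the contactomorphism $\Upsilon^{-\delta}$.
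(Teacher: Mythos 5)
Your proof follows the same overall skeleton as the paper's: shrink $D$ into its interior, apply Proposition~\ref{prop:plugs-in-convex} to the boundary sphere of the shrunk ball, extend to an isotopy of $D$, and take $\wt D$ to be the final image. Properties (i) and (ii) are handled identically. For (iii) you also correctly set up the parametric application of Lemma~\ref{lm:making-standard} to the family $f_{s,1}(\p D_\delta)$, and you correctly arrange that the resulting standard-germ sphere is stationary near the dividing set $S$ and $C^0$-close to $\p\wt D$. Up to that point everything matches the paper.

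The gap is in the very last step, where you argue that the corrected sphere bounds a \emph{standard} contact ball. You claim this follows because the sphere is ``trapped between a slightly shrunken and a slightly enlarged copy of $D_\delta$ cut out by the flow of $\Upsilon$'' and then invoke ``uniqueness of contact balls.'' This sandwiching argument is not valid: being squeezed between two standard balls does not force the intermediate region to be standard, and the sphere you constructed is only $C^0$-close to $\p D_\delta$, so it need not be transverse to $\Upsilon$ (so the star-shaped/$\D$-type arguments of \cite{El17} do not directly apply). More fundamentally, in dimension $\geq 5$ there is no known uniqueness theorem for tight contact structures on the ball with standard boundary germ --- that is exactly why Remark~\ref{rem:tight} singles out $n=2$, where Eliashberg's classification \cite{El-tight} makes property (iii) automatic. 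The paper instead uses the family you already built: after applying Lemma~\ref{lm:making-standard} parametrically, the spheres $\Sigma_s = f_{s,1}(\p D')$ all have standard germs, $\Sigma_0 = \p D'$ manifestly bounds a standard ball, and by continuation of the contact isotopy (Gray-stability type argument) $\Sigma_1$ bounds a standard ball too. You have all the ingredients for this continuation argument in hand --- you even note that $f_{s,1}$ is standard for $s$ near $0$ --- but you abandon it at the last moment in favor of the sandwiching claim, which does not hold.
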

 \begin{remark} \label{rem:tight}
 In the case $n=2$ property (iii) is automatic from the classification of tight contact structures on the $3$-ball, see \cite{El-tight}.
 \end{remark}

\begin{proof}
 Let us first shrink $D\to \Int D$ by a $C^\infty$-small contracting contact isotopy and then apply to the image $D'$   Proposition \ref{prop:plugs-in-convex}. Let us extend the constructed there isotopy   $f_{s,t}:\p D'\to D$, $s,t\in[0,1]$,  to an isotopy $D'\to D$. We will continue using the notation $f_{s,t}$ for the extension.  We claim that that the isotopy obtained by concatenating the shrinking isotopy with the isotopy $f_{s,0}$ has the required properties. Indeed, the balls 
 $D'_s:=f_{s,0}(D')$ have Weinstein convex boundaries with dividing sets $S_s\subset \p D'_s$  and the characteristic foliation on  $\p D'_1$ is   $\sigma$-short 
 and the family of spheres $\Sigma_s:=f_{s,1}(\p D')$, $s\in[0,1]$ are almost standard  and hence, can be made standard by an arbitrarily $C^0$-small adjustment away from the poles and dividing sets by applying 
  Lemma \ref{lm:making-standard}. The sphere $\Sigma_0=\p D'$ bounds the standard ball, and hence, the same holds, by continuation of the contact isotopy argument, for $\Sigma_1$. But by construction $\Sigma_1$ coincides with $\p D'_1$ on a neighborhood of the dividing set $S_1\subset \p D'_1$ and  can be made arbitrarily $C^0$-close to $\p D'_1$, i.e. $(D'_1,S_1)$ can be approximated by standard contact balls.
\end{proof}

   \subsection{Proofs on main results}   \label{sec:proof-main}

  \begin{proof} [Proof of Proposition \ref{prop:main-plug-weak}]
   Applying the induction hypothesis  and Proposition \ref{prop:making-short-approx}  we find a disc $\wt D\subset D$ such that
    \begin{itemize}
 \item[-]  $\wt D$ has a Weinstein convex boundary  $\p\wh D$ with the dividing set $S\subset\p\wh D$;
 \item[-] the characteristic foliation $\ell_{\p\wt D}$ is $\sigma$-short;
 \item[-] $(\wt D,S)$ can be approximated by standard contact balls;
 \item[-]  $D\setminus f(\p D)$ is contained in $\sigma$-small neighborhood of $\p D$,
 \end{itemize}
         Next, we apply Proposition \ref{prop:3-blocks} and find $3$ Weinstein cylinders $V_1, V_2, V_3\subset f(D)$  such that  a piecewise smooth hypersurface $\p(V_1\cup V_2\cup V_3)$ is $C^1$ $\sigma$-close to $\p f(D)$. Applying now Lemma \ref{lm:good-positions-cor} we  install into $D\times[0,1]$ a $\sigma$-quasi-plug in $(V_1\cup V_2\cup V_3)\times[0,1]\subset f(D)\times[0,1]$. But the characteristic foliation on $f(\p D)$ is $\sigma$-short, and hence the constructed plug  is a genuine $C\sigma$-plug for $D\times [0,1]$ for some universal constant $C>0$.
 
 This concludes the proof    of   Proposition \ref{prop:main-plug-weak}, and hence, of Proposition \ref{prop:main-plug}.
\end{proof}
 \begin{proof}[Proof of Theorem \ref{thm:main-precise}]
 First, we adjust $\Sigma$ by  a  $C^\infty$-isotopy to make all singularities of its characteristic foliation $\ell_\Sigma$ non-degenerate and hyperbolic. Next we apply Lemma~\ref{lm:exist-discs-cont} to find a blocking collection of transverse standard contact discs $D_j\subset \Sigma$. According to Lemma~\ref{lm:sigma-plug} there exists  $\sigma>0$  such that by  installing $\sigma$-plugs instead of flow boxes one  can arrange  the resulting flow to satisfy condition (L1).
 Proposition \ref{prop:main-plug} asserts that such plugs can be installed by deforming flow-boxes via an arbitrarily small in the $C^0$-sense isotopy.   By an additional $C^\infty$-perturbation of the hypersurface outside plugs we can satisfy the  Morse-Smale property, while still preserving condition (L1), see Lemma~\ref{lm:sigma-plug}, and
hence,  by Corollary  \ref{cor:good-Lyapunov}   the resulting $\Sigma$ is Weinstein convex.
 This concludes the proof  of  Theorem ~\ref{thm:main-precise},  and  in combination with  
Lemma~\ref{lm:W-convex}   of Theorem~\ref{thm:HH}.
 \end{proof}
\bibliographystyle{amsplain}

\end{document}